\newtheorem{theorem}{Theorem}[section]
\newtheorem{lemma}[theorem]{Lemma}
\newtheorem{prop}[theorem]{Proposition}
\newtheorem{cor}[theorem]{Corollary}
\newtheorem{Step_2}{Step}
\theoremstyle{definition}
\newtheorem{definition}[theorem]{Definition}
\newtheorem{remark}[theorem]{Remark}
\newtheorem{conv}[theorem]{Convention}
\theoremstyle{remark}
\newenvironment{steps}{\setcounter{step}{0}}{}
\newcounter{step}
\newcommand{\proofstep}{\par \noindent \refstepcounter{step}\textbf{Step~\thestep:}\space}
\newcommand{\C}{{\mathbb C}}
\newcommand{\Q}{{\mathbb Q}}
\newcommand{\LL}{{\mathscr L}}
\newcommand{\PP}{{\mathbb P}}
\newcommand{\calO}{\mathcal O}
\newcommand{\calM}{\mathcal M}
\newcommand{\calF}{\mathcal F}
\newcommand{\calA}{\mathcal A}
\newcommand{\calB}{\mathcal B}
\newcommand{\calE}{\mathcal E}
\newcommand{\calU}{\mathcal U}
\newcommand{\calX}{\mathcal X}
\newcommand{\calC}{\mathcal C}
\newcommand{\calY}{\mathcal Y}
\newcommand{\calZ}{\mathcal Z}
\newcommand{\calT}{\mathcal T}
\newcommand{\calD}{\mathcal D}
\newcommand{\calW}{\mathcal W}
\newcommand{\frakU}{\mathfrak U}
\newcommand{\frakF}{\mathfrak F}
\newcommand{\cA}{{{\mathcal A}'}}
\newcommand{\Supp}{\mathrm{Supp}}
\newcommand{\Spec}{\mathrm{Spec}}
\newcommand{\Proj}{\mathrm{Proj}}
\newcommand{\wi}{\mathrm{W}_{\mathrm{I}}}
\newcommand{\wii}{\mathrm{W}_{\mathrm{II}}}
\newcommand{\wiii}{\mathrm{W}_{\mathrm{III}}}
\newcommand{\mc}[1]{\mathcal{#1}}
\newcommand{\mb}[1]{\mathbb{#1}}
\newcommand{\note}[1]{\textcolor{red}{#1}}
\subjclass[2010]{14J10, 14J27,14D23, 14E30}
\title{Moduli of Weighted Stable Elliptic Surfaces and Invariance of Log Plurigenera}
\begin{document}

\author[Ascher]{Kenneth Ascher}
\email{kascher@mit.edu}
\author[Bejleri]{Dori Bejleri}
\email{dbejleri@math.brown.edu}

\dedicatory{with an appendix by Giovanni Inchiostro}
\email{giovanni\_inchiostro@math.brown.edu}

\maketitle

\begin{abstract}
This is the third paper in a series of work on weighted stable elliptic surfaces -- elliptic fibrations with section and marked fibers each weighted  between zero and one. Motivated by Hassett's weighted pointed stable curves, we use the log minimal model program to construct compact moduli spaces parameterizing these objects. Moreover, we show that the domain of weights admits a wall and chamber structure, describe the induced wall-crossing morphisms on the moduli spaces as the weight vector varies, and describe the surfaces that appear on the boundary of the moduli space. The main technical result is a proof of invariance of log plurigenera for slc elliptic surface pairs with arbitrary weights.
\end{abstract}

\section{Introduction}
Elliptic fibrations are ubiquitous in mathematics, and the study of their moduli has been approached from many directions; e.g. via Hodge theory \cite{hl} and geometric invariant theory \cite{mir}. At the same time, moduli spaces often have many geometrically meaningful compactifications leading to different birational models. This leads to rich interactions between moduli theory and birational geometry. 

The compact moduli space $\overline{\calM}_g$ of genus $g$ stable curves and its pointed analogue $\overline{\calM}_{g,n}$ is exemplary. Studying the birational geometry of the moduli space of stable curves by varying the moduli problem has been a subject of active research over the past decade known as the \emph{Hassett-Keel program} (see \cite{fedorchuk} for a survey). Our hope is to produce one of the first instances of this line of study for surfaces (see also \cite{ol} which initiates a similar line of study for quartic K3 surfaces). This paper, along with our work in \cite{calculations} and \cite{tsm}, continues a study of the birational geometry of the moduli space of \emph{stable elliptic surfaces} initiated by La Nave \cite{ln}. 

One particular instance of the birational geometry of $\overline{\calM}_{g,n}$ is developed by Hassett in \cite{has}, where various compactifications $\overline{\calM}_{g, \calA}$ of the moduli space of \emph{weighted pointed curves} (see Section \ref{sec:hassettreduction}) are constructed. These compact moduli spaces parameterize degenerations of genus $g$ curves with marked points weighted by a vector $\calA$. A natural question is: what happens to the moduli spaces as one varies the weight vector? Among other things, Hassett proves that there are birational morphisms $\overline{\calM}_{g, \calB} \to \overline{\calM}_{g, \calA}$ when $\calA \leq \calB$ (Theorem \ref{hastheorem}). Furthermore, there is a wall-and-chamber decomposition of the space of weight vectors $\calA$ -- inside a chamber the moduli spaces are isomorphic and there are explicit birational morphisms when crossing a wall.

Hassett's space is the one dimensional example of moduli spaces of \emph{stable pairs}: pairs $(X,D)$ of a variety along with a divisor having mild singularities and satisfying a positivity condition (see Definition \ref{def:pair}). In this case, the variety is a curve $C$ with at worst nodal singularities, the divisor is a weighted sum $D = \sum a_i p_i$ of smooth points on the curve, and one requires $\omega_C(D)$ to be an ample line bundle. 

In this paper, we use stable pairs in higher dimensions to construct analogous compactifications of the moduli space of elliptic surfaces where the pair is given by an elliptic surface with section as well as $\calA$-weighted marked fibers. The outcome is a picture for surface pairs which is more intricate, but analogous to that of $\overline{\calM}_{g,\calA}$. 

In general, the story of compactifications of moduli spaces in higher dimensions is quite subtle and relies on the full power of the minimal model program (mmp). Many fundamental constructions have been carried out over the past few decades (e.g. \cite{ksb}, \cite{boundedness}, \& \cite{kp}). Although stable pairs have been identified as the right analogue of stable curves in higher dimensions, it has proven difficult to find explicit examples of compactifications of moduli spaces in higher dimensions (see \cite{aleicm} for some examples), and thus we take as one goal of this paper to establish a wealth of examples of compact moduli spaces of surfaces that illustrate both the difficulties, as well as methods necessary to overcome them.  

More specifically, for admissible weights $\calA$ (see Section \ref{sec:walls}), we construct and study $\calE_{v,\calA}$ (Definition \ref{def:eva}): the compactification by stable pairs of the moduli space of $(f: X \to C, S + F_{\calA})$, where $f: X \to C$ is a minimal elliptic surface with chosen section $S$, marked fibers weighted by $\calA$, and fixed volume $v$. 

\begin{theorem}[see Theorem \ref{thm:stack}] For admissible weights $\calA$, there exists a moduli pseudofunctor of $\calA$-weighted stable elliptic surfaces (see Definition \ref{def:es}) of volume $v$ so that the main component $\calE_{v,\calA}$ is representable by a finite type separated Deligne-Mumford stack. 
\end{theorem}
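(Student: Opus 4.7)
The plan is to follow the standard template for producing a moduli stack of stable pairs, namely: verify boundedness, local closedness, separatedness, and finiteness of automorphisms, and then invoke the general machinery that assembles these into a Deligne--Mumford stack. The definition of the pseudofunctor is presumably spelled out at Definition~\ref{def:es}: objects over $T$ are flat proper families $(f_T\colon X_T \to C_T, S_T + F_{\calA,T}) \to T$ of elliptic surface pairs whose geometric fibers are $\calA$-weighted stable elliptic surfaces of volume $v$, subject to Koll\'ar's condition on the reflexive powers of the log canonical bundle so that $\omega_{X_T/T}^{[N]}(N(S_T+F_{\calA,T}))$ commutes with base change for sufficiently divisible $N$. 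The central technical input that makes this last condition reasonable is the invariance of log plurigenera for slc elliptic surface pairs advertised in the abstract, so I would assume that result and check that the pseudofunctor thus defined is a limit-preserving stack for the \'etale topology by the usual descent argument for polarized pairs.

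For boundedness, the first move is to reduce to a universal family parameterized by a finite type scheme. The volume $v$ and the weight vector $\calA$ together with minimality of the elliptic fibration bound the invariants of $(X,S+F_{\calA})$: the arithmetic genus of the base curve, the Euler number of $X$, and the number and type of singular fibers are all controlled, as was established earlier in the series (the papers \cite{calculations} and \cite{tsm}). Combined with ampleness of $\omega_X(S+F_{\calA})$ and a uniform bound on the index, this places all such pairs into a single Hilbert scheme of log canonically embedded pairs; boundedness for the associated slc models then follows from the general boundedness results of Hacon--McKernan--Xu for stable pairs, or alternatively directly from the geometry of elliptic fibrations.

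Local closedness inside this Hilbert scheme is where the invariance of log plurigenera does its work: given a family over a reduced base, the loci where the fibers are slc, where $S$ and $F_{\calA}$ are $\Q$-Cartier of the expected index, and where $K+S+F_{\calA}$ is ample, are each locally closed, and invariance of log plurigenera ensures these conditions assemble into a locally closed substack rather than a merely set-theoretic locus. Finiteness of automorphism groups, and hence the Deligne--Mumford property, is immediate from ampleness of $\omega_X(S+F_{\calA})$: any automorphism of the pair acts faithfully on a fixed embedding by pluri-log-canonical sections, and the fixed section $S$ together with the rigidity of an elliptic fibration with section forces the automorphism group to be finite and reduced in characteristic zero.

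Separatedness is proved by the valuative criterion. Given two families over the spectrum of a DVR $R$ with isomorphic generic fibers, I would run the log MMP relative to the base on the graph closure to produce a common slc model; uniqueness of the log canonical model of a pair of log general type forces the two central fibers to agree. Here the role of invariance of log plurigenera is again crucial, since it guarantees that the relative log canonical ring is finitely generated and commutes with passing to the special fiber, so the canonical model computed on the total space restricts to the canonical model of the special fiber. I expect this separatedness step, together with the subtle interaction between the section $S$, the weighted fiber divisor $F_{\calA}$, and reducible or non-reduced Kodaira fibers that can appear in the limit, to be the main obstacle; it is precisely what requires the full strength of the invariance of log plurigenera result. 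Once these four ingredients are in place, the existence of $\calE_{v,\calA}$ as a finite type separated Deligne--Mumford stack follows from standard stacky quotient constructions applied to the appropriate open subscheme of the Hilbert scheme cut out above.
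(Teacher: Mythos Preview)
Your proposal has two genuine gaps.

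\textbf{The fibration datum.} The moduli problem parametrizes not just stable pairs $(X, S+F_\calA)$ but triples $(f\colon X\to C, S+F_\calA)$ with the map to a prestable curve. You mention $f_T$ in your description of the functor but then your entire construction---Hilbert scheme, local closedness, quotient---treats only the pair. How is the morphism $f$ to a varying nodal curve $C$ encoded in your Hilbert scheme? The paper handles this by working inside the Hom-stack
\[
\mathscr{H}om_{\calM\times\mathfrak{M}_g}(\calX\times\mathfrak{M}_g,\ \calM\times\mathfrak{C}_g),
\]
where $\calM$ is the Kov\'acs--Patakfalvi stack of stable pairs and $\mathfrak{M}_g$ is the stack of prestable curves with universal family $\mathfrak{C}_g$. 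Algebraicity then comes for free from Hall--Rydh's representability of Hom-stacks, and the substack of elliptic surfaces is cut out inside. Your Hilbert-scheme approach would need a separate argument to parametrize the map $f$, and you have not supplied one.

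\textbf{Misplaced use of invariance of log plurigenera.} You invoke invariance of log plurigenera as the ``central technical input'' for local closedness and for separatedness. In the paper this is backwards: invariance of log plurigenera (Theorem~\ref{thm:vanishing} and Theorem~\ref{thm:basechange}) is proved in Section~\ref{sec:vanishing}, \emph{after} Theorem~\ref{thm:stack}, and is used for the reduction morphisms between moduli spaces at different weights, not for constructing any single $\calE_{v,\calA}$. The paper's separatedness argument is elementary: given two extensions over a DVR, uniqueness of log canonical models forces $(X,S+F_\calA)=(X',S'+F'_\calA)$; the section identifies $C\cong S\cong S'\cong C'$; and then $f=f'$ by density of the generic fiber and separatedness of $C$. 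No MMP on a graph closure, no base-change of plurigenera. Your proposed separatedness argument is not wrong in spirit, but it imports heavy machinery that is neither available at this point in the paper nor needed.

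Finally, finiteness of automorphisms in the paper uses that an automorphism of the triple induces an automorphism of the base curve fixing the marked points, and the base curve is shown (Corollary~\ref{cor:stablecurve}) to be a Hassett stable pointed curve; combined with finiteness of automorphisms of the stable surface pair, this gives the Deligne--Mumford property. Your argument via faithful action on pluricanonical sections is fine for the pair but again does not address automorphisms of the full datum $(f\colon X\to C, S+F_\calA)$.
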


To construct $\calE_{v,\calA}$ as an algebraic stack, we use the notion of a family of stable pairs given by Kov\'acs-Patakfalvi in \cite{kp} and the construction of the moduli stack of stable pairs therein. However, representability of our functor does not follow immediately as we include the additional the data of the map $f : X \to C$ (see Section \ref{sec:modulifunctor}). Furthermore, the correct deformation theory for stable pairs has not yet been settled. As we are interested in the global geometry of the moduli space in this paper, we circumvent this issue by working exclusively with the normalization of the moduli stack. By the results of Appendix \ref{appendix}, this amounts to only considering the functor on the subcategory of normal varieties. 

\begin{theorem}[see Theorem \ref{thm:proper} and Theorem \ref{thm:boundary}]\label{intro:proper} The moduli space $\calE_{v,\calA}$ is proper. Its boundary parametrizes $\calA$-broken elliptic surfaces (see Definition \ref{def:broken} and Figure \ref{fig:brokensurface}). \end{theorem}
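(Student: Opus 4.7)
The plan is to verify the valuative criterion for properness and then analyze the structure of the limit to identify the boundary. Since we have already constructed $\calE_{v,\calA}$ as a finite type separated Deligne-Mumford stack, properness reduces to the existence part of the valuative criterion: given a DVR $(R, \mathfrak{m})$ with fraction field $K$ and a family $(f_K: X_K \to C_K, S_K + F_{\calA,K})$ of $\calA$-weighted stable elliptic surfaces over $\Spec K$, we must (after a finite base change) produce an extension $(f: X \to C, S + F_{\calA})$ over $T := \Spec R$ whose central fiber is a stable object of the moduli problem.

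First I would extend the base curves. The data $(C_K, \Sigma_K)$ consisting of the base curve together with the marked points (images of the $\calA$-weighted fibers and of the section's ramification data) defines a family of $\calA$-weighted pointed stable curves in Hassett's sense, so by properness of $\overline{\calM}_{g,\calA}$ we obtain, after finite base change, a canonical limit $(C, \Sigma) \to T$. Next I would extend the elliptic surface itself. Starting from any projective extension $\widetilde{X} \to T$ of $X_K$ mapping to $C$ (use e.g. semistable reduction to arrange that the total space is sufficiently mild), the desired family is the relative log canonical model
\[
 X := \Proj_T \bigoplus_{m \geq 0} f_* \calO_{\widetilde{X}}\bigl(m(K_{\widetilde{X}/T} + \widetilde{S} + \widetilde{F}_{\calA})\bigr),
\]
where $\widetilde{S}$ and $\widetilde{F}_{\calA}$ are the closures of the boundary divisors. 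The main technical input, invariance of log plurigenera for slc elliptic surface pairs (the main theorem of the paper), ensures this sheaf of algebras is finitely generated and commutes with base change to the central fiber, so the central fiber of $X$ is itself the log canonical model of its normalization, hence an slc stable pair.

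Separatedness of limits (already known from the general KSBA formalism, and giving us that $\calE_{v,\calA}$ is already separated) then implies the limit is unique in the moduli functor, completing the proof of Theorem \ref{thm:proper}. The hard part here is not the formal verification but precisely the invariance of log plurigenera: without it, there is no guarantee that the central fiber of the log canonical model is itself the log canonical model of the central slc pair, and the elliptic fibration structure could be lost or the marked fibers could collide in an uncontrolled way.

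For the boundary description (Theorem \ref{thm:boundary}), I would analyze the central fiber of the construction above. The base $C_0$ is a nodal weighted stable curve by Hassett, giving a chain/tree structure; pulling back along $f: X \to C$, the central fiber $X_0 = \sum X_i$ decomposes into components $X_i$ fibered over the components $C_i$ of $C_0$, glued along fibers over the nodes of $C_0$. A local analysis at each node using the classification of slc elliptic singularities shows that these gluing fibers are either twisted/semistable fibers of Kodaira type (in the interior components) or pseudoelliptic components in the sense of \cite{ln}, \cite{calculations} (when the section has been contracted on a component whose $K + S + F_{\calA}$ is no longer big over the base). Comparing this local picture component by component with Definition \ref{def:broken} identifies $X_0$ as an $\calA$-broken elliptic surface, as desired. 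The main obstacle in this second part is the case analysis of exactly which pseudoelliptic contractions occur as the weights $\calA$ and the degenerations vary; this is where the wall-and-chamber structure on weight space enters.
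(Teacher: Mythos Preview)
Your approach has a genuine gap. The moduli functor $\calE_{v,\calA}$ records not just a stable pair $(X, S + F_\calA)$ but the fibration $f:X\to C$; a generic KSBA limit produced by taking $\Proj$ of a pluri-log-canonical algebra on an arbitrary semistable extension $\widetilde{X}$ gives a stable pair, but there is no mechanism in your argument producing the limiting map to a nodal curve, nor identifying which components become pseudoelliptic. Moreover, the invariance of log plurigenera statement in the paper (Theorems \ref{thm:vanishing} and \ref{thm:basechange}) has as hypothesis that the input is already a family of $\calB$-\emph{broken stable elliptic surfaces}; it is not a general base-change statement for semistable models, so you cannot invoke it on ``any projective extension $\widetilde{X}$ with semistable reduction.'' There is thus a circularity: you need a broken-elliptic limit to apply the theorem, but the theorem is what you wanted to use to construct the limit.

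The paper's argument resolves this by first \emph{increasing} all coefficients to $1$ and marking every unstable fiber. At weight $(1,\dots,1)$ the problem becomes one of twisted stable maps to $\overline{\calM}_{1,1}$, and properness of the Abramovich--Vistoli space \cite{av2,tsm} furnishes (after base change) a limit that is a \emph{twisted} slc elliptic surface with an honest fibration over a nodal curve. Only then does one lower the weights back to $\calA$ along the segment $\calA(t)=t\calB+(1-t)\calA$: the invariance theorem now applies (the family is broken-elliptic by construction), and the finitely many wall-crossings are controlled explicitly --- divisorial contractions at $\mathrm{W_I}$ and $\mathrm{W_{III}}$ walls, and La Nave's flip at $\mathrm{W_{II}}$ walls. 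Lemma \ref{factoring} guarantees the fibration $X\to C$ survives each step, and Theorem \ref{Teo:appendix:giovanni} (Appendix \ref{sec:appendix}) shows no other small contractions occur. This explicit tracking of the MMP steps is also what yields the boundary description; your sketch of ``local analysis at each node'' does not supply the content of that appendix, which is precisely where the hard case analysis you allude to is carried out.
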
 

As with the previous theorem, it does not follow immediately from known results about stable pairs because of the data of the map $f : X \to C$. Rather, we prove Theorem \ref{intro:proper} by explicitly describing in Section \ref{sec:stablereduction} an algorithm for stable reduction that produces, as a limit, a stable pair as well as a map to a nodal curve. This is a generalization of the work of La Nave in \cite{ln}. The main input is the use of twisted stable maps of Abramovich-Vistoli to produce limits of fibered surface pairs as discussed in \cite{tsm} as well as previous results in \cite{calculations} and \cite{ln}, that describe the steps of the minimal model program on a one parameter family of elliptic surfaces. The final key input is a theorem of Inchiostro in Appendix \ref{sec:appendix} (Theorem \ref{Teo:appendix:giovanni}) which guarantees these are the only steps that occur in the mmp.

Following Hassett, it is natural to ask how the moduli spaces change as we vary $\calA$. The strategy in \cite{has} is to understand how the objects themselves change as one varies $\calA$, and then prove a strong vanishing theorem which guarantees that the formation of the relative log canonical sheaf commutes with base change. This ensures that the process of producing an $\calA$-stable pointed curve from a $\calB$-stable pointed curve with $\calA \le \calB$ is functorial in families and leads to reduction morphisms on moduli spaces and universal families. 

In \cite{calculations}, we carried out a complete classification of the relative log canonical models of elliptic surface fibrations, and we extend this result here (see Section \ref{sec:local} and Theorem \ref{thm:thm1}).

In Section \ref{sec:vanishing}, we prove an analogous base change theorem which implies that the steps of the minimal model program described in Section \ref{sec:local} are functorial in families of elliptic surfaces. The main technical tool is a vanishing theorem (Theorem \ref{thm:vanishing}) which relies on a careful analysis of the geometry of broken slc elliptic surfaces. We do not expect this vanishing theorem to hold in full generality for other classes of slc surfaces. 

\begin{theorem}[Invariance of log plurigenera, Theorems \ref{thm:vanishing} and \ref{thm:basechange}]
Let $\pi: (X \to C, S + F_\calB) \to B$ be a family of $\calB$-broken stable elliptic surfaces  over a reduced base $B$. Let $0 \le \calA \le \calB$ be such that the divisor $K_{X/B} + S + F_\calA$ is $\pi$-nef and $\Q$-Cartier. Then $\pi_*\calO_X\big(m(K_{X/B} + S + F_\calA)\big)$ is a vector bundle on $B$ whose formation is compatible with base change $B' \to B$ for $m \geq2$ divisible enough.  
\end{theorem}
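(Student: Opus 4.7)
The plan is to bootstrap the invariance of log plurigenera from a fiberwise cohomology vanishing statement. By the cohomology and base change theorem (applied over a reduced base), it suffices to prove that for every geometric point $b \in B$ and every $i > 0$,
\[
H^i\bigl(X_b,\, \calO_{X_b}(m(K_{X_b} + S_b + F_{\calA,b}))\bigr) = 0.
\]
Given this vanishing, flatness of $\pi$ makes the Euler characteristic $\chi(X_b, \calO_{X_b}(m(K_{X_b} + S_b + F_{\calA,b})))$ locally constant in $b$, which forces $\pi_*\calO_X(m(K_{X/B} + S + F_\calA))$ to be locally free of the expected rank and makes its formation compatible with arbitrary base change $B' \to B$. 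So the entire content of the theorem reduces to the fiberwise vanishing, i.e.\ Theorem \ref{thm:vanishing}.

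The main step is therefore to prove the vanishing on a single $\calB$-broken slc elliptic surface pair $(f: X \to C, S + F_\calA)$. The strategy is to exploit the elliptic fibration $f$ via the Leray spectral sequence: since $f$ has relative dimension one, $R^j f_* = 0$ for $j \ge 2$, and $H^i(X, \calL)$ (with $\calL = \calO_X(m(K_X + S + F_\calA))$) is controlled by $H^i(C, f_*\calL)$ and $H^{i-1}(C, R^1 f_*\calL)$. On genuinely elliptic components, relative Serre duality together with the fact that $m \cdot S$ contributes degree $m$ on each fiber shows $R^1 f_* \calL = 0$ once $m$ is divisible enough. On pseudo-elliptic components, where $f$ collapses to a point, one uses the explicit classification from Section \ref{sec:local} to compute the relevant pushforwards directly. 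On the nodal base curve $C$, one applies Kodaira vanishing componentwise combined with a Mayer--Vietoris sequence across the nodes of $C$; the positivity of $f_*\calL$ on each component of $C$ follows from the canonical bundle formula together with the numerical data imposed by $\calA$-stability.

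The main obstacle will be to handle the slc gluing locus and the pseudo-elliptic components simultaneously, since Kawamata--Viehweg vanishing is not directly available for semi-log-canonical pairs and the divisor $K_X + S + F_\calA$ may fail to be big on pseudo-elliptic components. To overcome this, I would stratify $X$ into its twisted, intermediate, and pseudo-elliptic components (as in Definition \ref{def:broken}) and work inductively using the normalization sequence on the non-normal locus together with a Mayer--Vietoris long exact sequence for the union of components. The detailed geometry of $\calB$-broken surfaces is essential here: the rigid component structure forces the relevant connecting homomorphisms to be surjective onto the potentially obstructing cokernels, so the vanishing propagates across the gluing. This explains why the theorem is tailored to elliptic pairs: for general slc surfaces the numerical control afforded by the section $S$ and the canonical bundle formula is unavailable, and the vanishing genuinely fails.
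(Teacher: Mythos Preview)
Your reduction of the base-change statement to a fiberwise cohomology computation via cohomology and base change over a reduced base is correct and matches the paper's overall architecture. However, there are two substantive gaps in your proposal.

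First, your strategy for the fiberwise vanishing differs from the paper's and does not obviously work. You propose to use the Leray spectral sequence for $f : X \to C$, but $f$ is not equidimensional: the pseudoelliptic components are contracted to points, so the higher direct images $R^i f_*\calL$ are not governed by relative Serre duality on fibers, and the pushforward $f_*\calL$ is not a vector bundle on $C$. The paper instead works directly on $X$ and applies Fujino's vanishing theorem for slc pairs (Theorem~\ref{fujinovanishing}), which requires $L - (K_X + \Delta)$ to be nef and big on every slc stratum. The nine-step proof carefully breaks $X$ into pieces along attaching sequences (not the normalization sequence you suggest), at each stage checking that the twisted restriction $L^{[m]}|_{X'}(-M)$ remains nef---this is the content of Lemma~\ref{lemma:separatingtypeI} and is not automatic. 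When $K_X + S + F_\calA$ fails to be big on a component (trivial elliptic components, or pseudoelliptics with Iitaka dimension $0$ or $1$), the paper first contracts that component via $\mu$ and verifies $R^i\mu_*L^{[m]} = 0$ using Grauert--Riemenschneider (Proposition~\ref{prop:gr}) or a direct argument (Lemma~\ref{semipositivity}), then applies Fujino's theorem to the image. Your Mayer--Vietoris sketch does not address how to guarantee bigness on slc strata or how to propagate nefness across the attaching loci.

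Second, and more seriously, you assert that ``the entire content of the theorem reduces to the fiberwise vanishing.'' This is false: the vanishing theorem (Theorem~\ref{thm:vanishing}) explicitly excludes the cases $\calA = 0$ with $p_g(C) = 1$, and $\calA = 0$ with $p_g(C) = 0$, $\deg\LL = 2$. In these cases $H^1(X_b, L^{[m]}_b)$ is nonzero (equal to $1$ or $m$ respectively, as computed in the proof of Theorem~\ref{thm:basechange}), so vanishing genuinely fails. The paper salvages base change by showing $h^1$ is \emph{constant} in $b$, which over a reduced base still suffices for cohomology and base change. Your proposal does not account for these edge cases at all.
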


The main difficulty in the above theorem, and in the study of stable pairs in general, is that smooth varieties will degenerate into non-normal varieties with several irreducible components. In dimension greater than one these \emph{slc} varieties can become quite complicated: see Figure \ref{fig:brokensurface} for a $\calB$-broken elliptic surface that appears in the limit of such a degeneration. Note in particular the map $f : X \to C$ is not equidimensional; there are irreducible components of $X$ contracted to a point by $f$. 

These components were first observed in the work of La Nave \cite{ln} and were coined \emph{pseudoelliptic} surfaces. They are the result of contracting the section of an elliptic surface. In fact La Nave noticed in the study of stable reduction for elliptic surfaces with no marked fibers ($\calA = 0$), that a component of the section of $f$ is contracted by the minimal model program if and only if the corresponding component of the base nodal curve $C$ needs to be contracted to obtain a stable curve. We generalize this (Proposition \ref{prop:adjunction}) to the case of marked fibers and as a result obtain a morphism to the corresponding Hassett space by forgetting the elliptic surface and remembering only the base weighted curve:

\begin{theorem}[See Corollary \ref{cor:forget}] There are forgetful morphisms $\calE_{\calA} \to \overline{\calM}_{g, \calA}$. \end{theorem}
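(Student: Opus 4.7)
The plan is to define the forgetful morphism on moduli functors by sending $(f: X \to C, S + F_\calA) \mapsto (C, \sum a_i p_i)$, where $p_i := f(F_i)$ is the image of the $i$-th marked fiber. The two things to verify are that $(C, \sum a_i p_i)$ is an $\calA$-stable weighted pointed curve of genus $g$ in Hassett's sense, and that the assignment is compatible with base change.

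For the pointwise verification, nodality of $C$ and the local conditions on the marked points (placement in the smooth locus and compatibility with the weights when collisions occur) follow from the definition of an $\calA$-broken stable elliptic surface together with the local classification of Section \ref{sec:local}. The essential condition is ampleness of $\omega_C(\sum a_i p_i)$, which is precisely the content of Proposition \ref{prop:adjunction}: this generalizes La Nave's correspondence to the weighted case, asserting that a component of the section of $f$ is contracted by the mmp if and only if the corresponding component of $C$ must be contracted to achieve Hassett-$\calA$-stability. Since $(X, S + F_\calA)$ is already an slc stable pair, the section $S$ is not further contracted on any non-pseudoelliptic component of $X$, and pseudoelliptic components by definition map to points rather than to curve components of $C$. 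Hence every component of $C$ carries a genuine elliptic fibration and is Hassett-$\calA$-stable, so the pair $(C, \sum a_i p_i)$ lies in $\overline{\calM}_{g,\calA}$.

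For families $\pi: (X \to C, S + F_\calA) \to B$ in $\calE_\calA$, the base family $C \to B$ together with the sections $p_i: B \to C$ obtained as images of the flat divisors $F_i \subset X$ is a family of $\calA$-stable pointed curves: fiberwise stability follows from the previous step, and the formation of $f$ is compatible with base change as part of the definition of a family in $\calE_\calA$. Naturality in $B$ is then immediate, giving a morphism of Deligne--Mumford stacks $\calE_{\calA} \to \overline{\calM}_{g, \calA}$. The main obstacle in the whole argument is the adjunction-type computation of Proposition \ref{prop:adjunction}, which relates positivity of $K_X + S + F_\calA$ along the section to positivity of $K_C + \sum a_i p_i$ on the base; once this correspondence is in hand, the construction of the forgetful morphism is essentially formal.
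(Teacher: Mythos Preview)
Your approach is essentially the same as the paper's: both reduce the pointwise stability of $(C,\sum a_i p_i)$ to the adjunction computation of Proposition \ref{prop:adjunction} (packaged in the paper as Corollary \ref{cor:stablecurve}), and both observe that the curve $C \to B$ is already part of the data of a family in $\calE_{\calA}$.

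The one place where the paper is more careful than your write-up is base change compatibility. You assert that ``the formation of $f$ is compatible with base change as part of the definition,'' but what actually needs to be verified is that the family $(C,D_\calA) \to B$ of weighted pointed curves is compatible with base change, i.e.\ that fiberwise Hassett-stability suffices to give a morphism to $\overline{\calM}_{g,\calA}$. The paper handles this via Lemma \ref{basechangeforC}, which shows that $q_*\omega_q(D_\calA)^{[m]}$ commutes with base change by reducing (through the push-pull identity $p_*f^* = q_*$) to Hassett's own base change result \cite[Proposition 3.3]{has}. Your claim that the $p_i$ arise as ``images of the flat divisors $F_i$'' implicitly assumes flatness of the individual marked fibers over $B$, which is not part of Definition \ref{def:stablefamily}; the paper's route via adjunction and the pluricanonical sheaf sidesteps this. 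This is a minor technical point rather than a genuine gap, and your identification of Proposition \ref{prop:adjunction} as the main input is correct.
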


Next we identify a wall and chamber decomposition of the space of admissible weights $\calA$. In particular, we describe at which $\calA$ a one parameter family of broken elliptic surfaces undergo birational transformations leading to different objects on the boundary of the moduli stack. In Section \ref{sec:stablereduction} we classify three types of birational transformations leading to three types of walls:
\begin{itemize}
\item there are $\wi$ walls coming from the relative log minimal model program for the map $f : X \to C$ at which singular fibers change;

\item there are $\wii$ walls where a component of the section contracts to form a pseudoelliptic surface;

\item there are $\wiii$ walls where an entire component of a broken elliptic surface may contract onto a curve or point. 
\end{itemize}

Type $\wi$ and $\wiii$ transformations result in divisorial contractions of the total space of a family of elliptic surfaces while type $\wii$ result in small contractions which must then be resolved by a log flip. La Nave constructed this log flip explicitly and we show that this construction leads to a log flip of the universal family (see Section \ref{sec:flip} and Figure \ref{figure:3}). Putting this all together, our main theorem may be summarized as follows: 

\begin{theorem} \label{thm:main1}
Let $\calA, \calB \in \Q^r$ be weight vectors such that $0 \le \calA \le \calB \le 1$. We have the following:

\begin{enumerate}
\item If $\calA$ and $\calB$ are in the same chamber, then the moduli spaces and universal families are isomorphic.
\item If $\calA \le \calB$ then there are reduction morphisms $\calE_{v,\calB} \to \calE_{v,\calA}$ on moduli spaces which are compatible with the reduction morphisms on the Hassett spaces:
$$
\xymatrix{\calE_{v,\calB} \ar[r] \ar[d] & \calE_{v,\calA} \ar[d] \\ \overline{\calM}_{g,\calB} \ar[r] & \overline{\calM}_{g,\calA}}
$$

\item The universal families are related by a sequence of explicit divisorial contractions and flips $\calU_{v,\calB} \dashrightarrow \calU_{v,\calA}$ such that the following diagram commutes:
$$
\xymatrix{\calU_{v,\calB} \ar@{-->}[r] \ar[d] & \calU_{v,\calA} \ar[d] \\ \calE_{v,\calB} \ar[r] & \calE_{v,\calA}}
$$
More precisely, across $\wi$ and $\wiii$ walls there is a divisorial contraction of the universal family and across a $\wii$ wall the universal family undergoes a log flip. 
\end{enumerate}

\end{theorem}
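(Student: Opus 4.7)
The plan is to reduce all three parts of the theorem to a single universal construction: given a family $\pi \colon (X \to C, S + F_\calB) \to B$ of $\calB$-broken stable elliptic surfaces with $\calA \le \calB$, form the relative log canonical algebra
\[
R_\calA := \bigoplus_{m \ge 0} \pi_* \calO_X\bigl(m(K_{X/B} + S + F_\calA)\bigr),
\]
take the relative $\mathrm{Proj}$, and show that it realizes the desired reduction morphism. By the invariance of log plurigenera theorem stated just above, each graded piece is a vector bundle on $B$ whose formation commutes with arbitrary base change, so $R_\calA$ is a finitely generated sheaf of $\calO_B$-algebras whose fiberwise $\mathrm{Proj}$ is the log canonical model of the corresponding slc pair $(X_b, S_b + F_{\calA,b})$. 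This immediately produces a $B$-family $X^{\mathrm{can}} \to B$ of pairs, and the universal property, together with the classification of log canonical models carried out in Section \ref{sec:local} and Theorem \ref{thm:thm1}, identifies the fibers as $\calA$-broken stable elliptic surfaces. This gives the reduction morphism $\calE_{v,\calB} \to \calE_{v,\calA}$ of part (2). Composing with the forgetful morphism of Corollary \ref{cor:forget} and comparing with Hassett's reduction morphism (Theorem \ref{hastheorem}) produces the commuting square, since both horizontal maps are induced by log canonical models on the respective base curves.

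For part (1), I would argue that within a single chamber the identity map $(X, S + F_\calB) \to (X, S + F_\calA)$ is already the relative log canonical contraction: by the wall definitions in Section \ref{sec:stablereduction}, no wall being crossed means that neither of the three types of birational transformations ($\wi$, $\wii$, $\wiii$) is triggered, i.e.\ $K_{X/B} + S + F_\calA$ remains $\pi$-ample on every fiber. Invariance of log plurigenera then shows the resulting $R_\calA$ recovers $X$ itself, so the reduction map is an isomorphism of stacks, and the universal family is unchanged.

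For part (3), I would run the same $\mathrm{Proj}$ construction on the universal family $\calU_{v,\calB} \to \calE_{v,\calB}$ directly, now one wall at a time, and compare the outcome with the explicit birational surgeries classified in Section \ref{sec:stablereduction}. Across a type $\wi$ or $\wiii$ wall, $K + S + F_\calA$ fails to be $\pi$-ample only along a divisor (a component of a singular fiber or an entire pseudoelliptic component), and the $\mathrm{Proj}$ produces a divisorial contraction, giving the claimed morphism on universal families. Across a type $\wii$ wall the locus being contracted is the section, which is a curve in the relative surface, hence has codimension two in the total space $\calU_{v,\calB}$, giving a small contraction; the required flip is produced by invoking La Nave's explicit flip construction from Section \ref{sec:flip} and checking, via invariance of log plurigenera applied on both sides of the flip, that the flipped total space is exactly $\calU_{v,\calA}$.

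The hard part will be verifying that the fibers of the $\mathrm{Proj}$ construction are genuinely $\calA$-broken stable elliptic surfaces and not merely slc pairs — in particular, that the contracted base $C$ remains a nodal curve and that the induced map $X^{\mathrm{can}} \to C^{\mathrm{can}}$ exists globally in families. This requires combining the fiberwise classification of Section \ref{sec:local}, the stable reduction algorithm of Section \ref{sec:stablereduction} (which identifies these transformations on one-parameter families), and the base-change-compatibility of $R_\calA$ to glue the fiberwise picture into a global statement. Once this is in place, parts (1), (2), and (3) follow by bookkeeping across the finitely many walls separating $\calA$ from $\calB$.
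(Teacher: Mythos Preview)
Your overall strategy is the paper's strategy, and your treatment of part (1) and of the $\wi$/$\wiii$ walls in parts (2)--(3) matches the paper closely. But there is a genuine gap at the $\wii$ walls, and it is precisely the point the paper singles out as the delicate one.

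Your argument for part (2) relies on applying invariance of log plurigenera to $K_{X/B} + S + F_\calA$ on the $\calB$-stable family. That theorem has the hypothesis that $K_{X/B} + S + F_\calA$ be $\pi$-nef, and this \emph{fails} when the segment from $\calB$ to $\calA$ crosses a $\wii$ wall: on the section component $S_0$ that must contract one has $(K_X + S + F_\calA).S_0 < 0$. So the single Proj construction does not produce the reduction morphism across such a wall, and your sentence ``This gives the reduction morphism $\calE_{v,\calB}\to\calE_{v,\calA}$ of part (2)'' is unjustified in that case. The paper's fix (Section~\ref{sec:flip}) is not to construct $\calE_{v,\calA_+}\to\calE_{v,\calA_-}$ directly, but to go the \emph{other} way: on the $\calA_-$-stable family the divisor $K_X + S + F_{\calA_0}$ \emph{is} nef and $\Q$-Cartier, so Proj produces a morphism $\epsilon_{\calA_0}^{-}:\calE_{v,\calA_-}\to\calE_{v,\calA_0}$, which is then shown to be a proper bijection of normal stacks, hence an isomorphism (Corollary~\ref{cor:decrease}). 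The reduction morphism across the wall is obtained as $(\epsilon_{\calA_0}^{-})^{-1}\circ\rho_{\calA_0,\calA_+}$.

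Relatedly, in part (3) you propose to ``flip the universal family'' using La Nave's construction, but La Nave's flip is built for one-parameter degenerations, and you give no argument that it globalizes to the universal family over the whole moduli stack. The paper does not attempt a direct global flip either; instead it builds both contractions $\calU_{v,\calA_\pm}\to\calU_{v,\calA_0}$ via the Proj/vanishing machinery (Theorem~\ref{thm:increase} for the $\calA_-$ side), and only afterwards checks (Proposition~\ref{prop:flip}) that the resulting roof, when pulled back to any smooth curve mapping generically to the interior, is a $(K+S+F_{\calA_-})$-flip in the sense of La Nave. So the logical order is reversed from what you wrote: one first constructs the morphisms on moduli and universal families by cohomological means, and then identifies the resulting birational map as a flip, rather than performing the flip and deducing the morphism.
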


The precise descriptions of the various wall crossing morphisms described above are given in Theorem \ref{thm:reduction}, Corollary \ref{cor:chambers}, Proposition \ref{prop:1-e}, Theorem \ref{thm:increase} and Proposition \ref{prop:flip}. \\

Now we will describe the objects that appear the boundary of $\calE_{v,\calA}$. While the minimal model program lends itself to an algorithmic approach towards finding minimal birational representatives of an equivalence class, it generally does \emph{not} lead to an explicit stable reduction process as prevalent in $\overline{\calM}_{g,n}$. However, using the minimal model program in addition to the theory of twisted stable maps developed by Abramovich-Vistoli \cite{av}, we are able to run an explicit stable reduction process, and classify precisely what objects live on the boundary of our moduli spaces. This is inspired by the work of \cite{ln}. 

The idea is that an elliptic fibration $f : X \to C$ with section $S$ can be viewed as a rational map from the base curve to $\overline{\calM}_{1,1}$, the stack of stable pointed genus one curves. One can use this to produce a birational model of $f$ which can then be studied using twisted stable maps. The outcome is a compact moduli space of \emph{twisted} fibered surface pairs studied in \cite{tsm} which forms the starting point of our analysis of one parameter degenerations in $\calE_{v,\calA}$.  

Combining these degenerations produced by twisted stable maps with the wall crossing transformations discussed above and Theorem \ref{Teo:appendix:giovanni}, in Section \ref{sec:stablereduction} we identify the boundary objects parametrized by $\calE_{v,\calA}$:

\begin{theorem}[see Theorem \ref{thm:boundary}]\label{thm:introboundary}
	The boundary of the proper moduli space $\calE_{v, \calA}$ parametrizes $\calA$-broken stable elliptic surfaces (see Definition \ref{def:broken}) which are pairs $(f : X \to C, S + F_{\calA})$ coming from a stable pair $(X, S + F_\calA)$ with a map to a nodal curve $C$ such that:
	\begin{itemize}
	\item $X$ is an slc union of elliptic surfaces with section $S$ and marked fibers, as well as
	\item chains of pseudoelliptic surfaces of Type $\mathrm{I}$ and $\mathrm{II}$ (Definitions \ref{def:pseudoI} and \ref{def:pseudoII}) contracted by $f$ with marked pseudofibers (Definition \ref{def:pseudo}). \end{itemize} 
\end{theorem}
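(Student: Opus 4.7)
My strategy is to compute limits of one-parameter families of $\calA$-stable elliptic surfaces explicitly and verify that they exhibit the claimed broken structure. By the properness result (Theorem \ref{intro:proper}) and the valuative criterion, it suffices to analyze a family $(f_\eta : X_\eta \to C_\eta, S_\eta + F_{\calA,\eta})$ over the generic point of a discrete valuation ring $R$ and identify the unique limit over the special point, then check that every possible such limit is an $\calA$-broken stable elliptic surface in the sense of Definition \ref{def:broken}.

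The first step is to produce a \emph{fibered surface} limit. The elliptic fibration with section determines a moduli map $C_\eta \to \overline{\calM}_{1,1}$, and the theory of twisted stable maps of Abramovich--Vistoli \cite{av}, as developed in this setting in \cite{tsm}, canonically extends this data over the special point: the limit base is a nodal curve $\widetilde C$ equipped with a fibered surface $\widetilde X \to \widetilde C$. Taking closures of the section and marked fibers produces a (generally not yet stable) fibered surface pair over $\Spec R$. The second step is to run the relative log minimal model program with respect to $K + S + F_\calA$ on this family, using the local classification in \cite{calculations} (recalled in Theorem \ref{thm:thm1}) to describe exactly how the program transforms each singular fiber.

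Each step of the MMP will be one of the three wall-crossing types: $\wi$ divisorial contractions inside fibers (changing singular fiber types), $\wii$ small contractions of a section component requiring a log flip as constructed in \cite{ln} (producing a pseudoelliptic surface of Type I), or $\wiii$ divisorial contractions of an entire irreducible component onto a curve or a point (producing pseudoelliptic components of Type II). The essential input guaranteeing that \emph{no other} steps can occur in the relative MMP is Theorem \ref{Teo:appendix:giovanni} from the appendix; this is the main obstacle, since without such an a priori restriction one could in principle encounter unexpected modifications outside our classification. Iterating until the MMP terminates yields a stable pair $(X, S + F_\calA)$ together with a morphism $f : X \to C$ to a nodal base curve.

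A concluding combinatorial bookkeeping verifies that the components of $X$ split along $f$ as described in the theorem: those mapping onto a component of $\widetilde C$ that is not contracted give minimal elliptic surface components carrying the section and honest marked fibers, while those contracted to a point assemble into chains of pseudoelliptic surfaces of Type I or II with marked pseudofibers in the sense of Definitions \ref{def:pseudoI}, \ref{def:pseudoII}, and \ref{def:pseudo}. The chain structure arises because each $\wii$ flip contracts a component of the section (propagating pseudoelliptic structure along the dual graph), and each subsequent $\wiii$ contraction is compatible with the combinatorics recorded in Definition \ref{def:broken}. Conversely, by construction any such $\calA$-broken stable elliptic surface arises as such a limit, so the boundary is precisely the locus described.
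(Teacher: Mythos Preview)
Your approach is essentially the paper's own: the proof of Theorem \ref{thm:boundary} is a one-line reference to the stable reduction algorithm in Theorem \ref{thm:proper}, which proceeds exactly as you outline (twisted stable maps limit, then MMP across walls, with Theorem \ref{Teo:appendix:giovanni} controlling the flips).

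Two small corrections to your bookkeeping. First, the paper is explicit that before invoking twisted stable maps one must \emph{increase} all coefficients to $1$ and mark every unstable fiber, so that the family becomes a family of twisted elliptic surfaces in the sense of \cite{tsm}; only then does the Abramovich--Vistoli machinery apply. You then \emph{decrease} weights along the segment back to $\calA$, crossing finitely many walls. Your sketch implicitly does this but never says so. Second, your attribution of which wall produces which pseudoelliptic type is off: Type II pseudoelliptics arise at \emph{boundary} $\wii$ walls, where the section of a non-leaf component is contracted by a log canonical (not flipping) contraction; Type I pseudoelliptics arise at interior $\wii$ walls via La Nave's flip. Type $\wiii$ walls do not \emph{produce} pseudoelliptic components---they \emph{contract} already-existing rational pseudoelliptics onto a curve or point. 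This misreading does not affect the logical structure of your argument, but it would confuse the final ``combinatorial bookkeeping'' paragraph if carried through.
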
 

	\begin{figure}[h!]
			\includegraphics[scale=1]{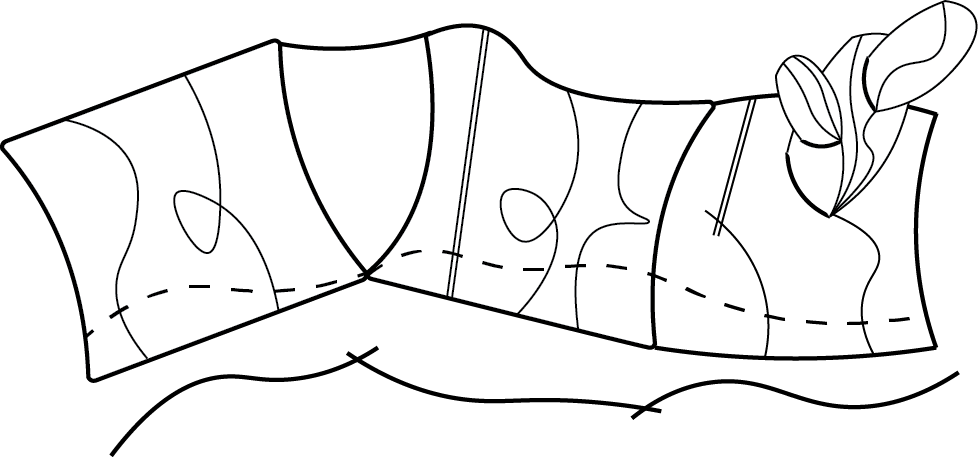}
		\caption{An $\calA$-weighted broken elliptic surface.}\label{fig:brokensurface}
		\end{figure}
		
Finally, in Appendix \ref{appendix}, we prove that in certain situations the normalization of an algebraic stack is uniquely determined by its values on normal base schemes (Proposition \ref{uniquenorm}) and that a morphism between normalizations of algebraic stacks can be constructed by specifying it on the category of normal schemes (Proposition \ref{normfunctor}). This material is probably well known but we include it here for lack of a suitable reference. 

\subsection{An example} We illustrate the main results in a specific example. Figure \ref{fig:intro} depicts the central fiber of a particular one parameter stable degeneration of a rational elliptic surface with twelve marked nodal fibers, ten of which are marked with coefficient one, and the other two with coefficient $\alpha$, as the coefficient $\alpha$ varies. The arrows depict the directions of the morphisms between the various models of the total space of the degeneration. 

\begin{figure}[h!]\includegraphics[scale=.30]{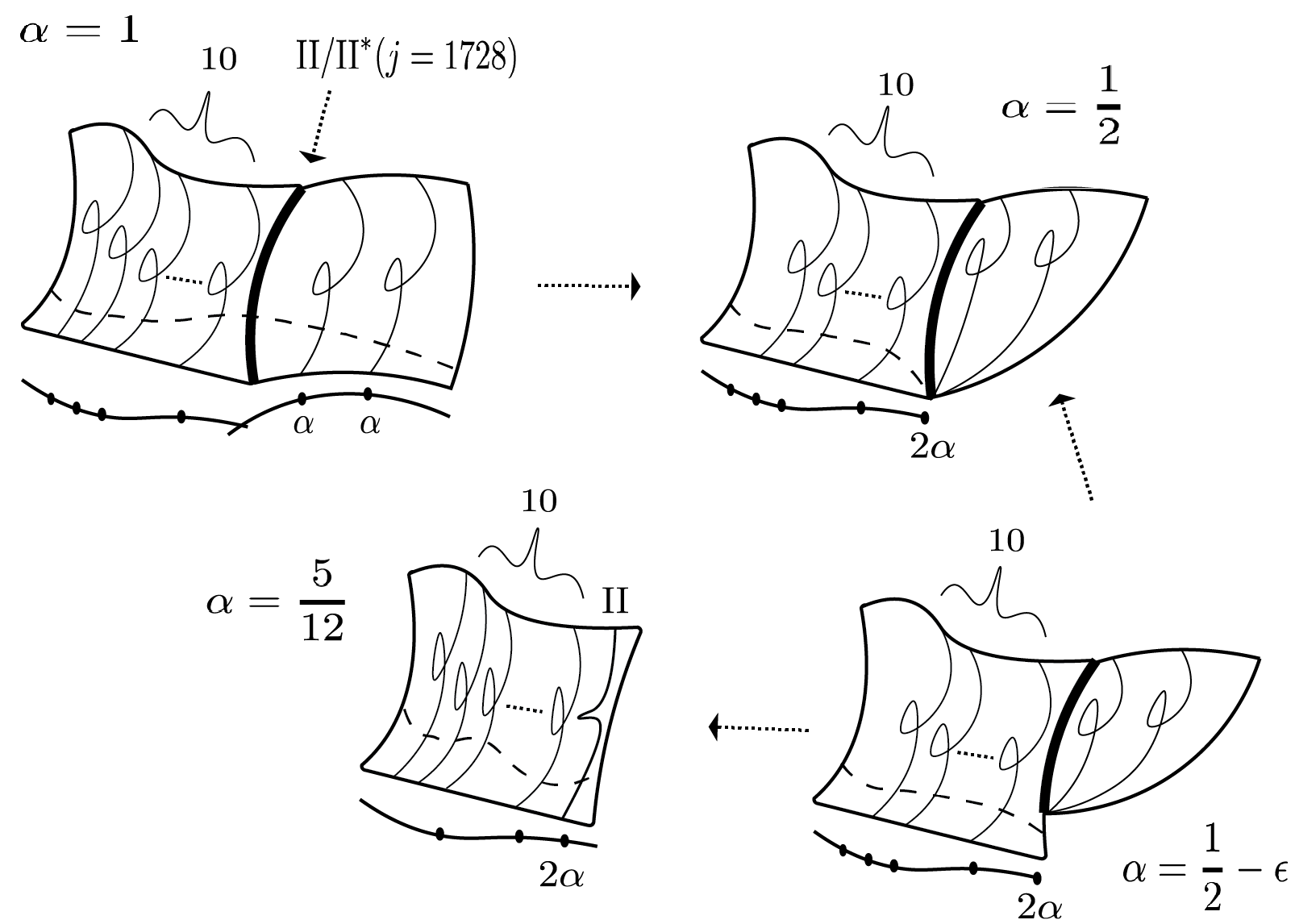}
\caption{The wall crossing transformations on the central fiber of a stable degeneration of a rational elliptic surface.}
\label{fig:intro}
\end{figure} 

The central fiber breaks up into a union of two components glued along twisted fibers of type $\mathrm{II}$ and $\mathrm{II}^*$, one containing $10$ marked nodal fibers with coefficient one and a type $\mathrm{II}$ twisted fiber, and the other containing two nodal fibers marked with coefficient $\alpha$ and a type $\mathrm{II}^*$ twisted fiber. At $\alpha = 1/2$ the section of the second component contracts to form a pseudoelliptic surface. At $\alpha = 1/2 - \epsilon$ for any small enough $\epsilon > 0$, this contraction of the section is a log flipping contraction of the total space of the degeneration and a flip results in a different stable model. Finally at $\alpha = 5/12$ the whole pseudoelliptic component contracts to a point yielding an elliptic surface with $10$ nodal fibers marked with coefficient one and a type $\mathrm{II}$ Weierstrass fiber with coefficient $2\alpha$. Each surface maps to the corresponding Hassett stable base curve as depicted.

\subsection{Applications and further work} A simple corollary of the results in this paper is a classification of the singularities of stable degenerations of smooth elliptic surfaces. Combining Theorem \ref{thm:introboundary} with the results of \cite{calculations} on singularities of log canonical models of elliptic surfaces (see also Section \ref{sec:local}) as well as Proposition \ref{stableattaching} we obtain the following:

\begin{cor} Let $\mathscr{X}^0 \to \mathscr{C}^0 \to \Delta^0$ be a family of smooth relatively minimal elliptic surfaces over the punctured disc $\Delta^0 = \Delta \setminus \{0\}$ and with a fixed section and all singular fibers marked by a nonzero coefficient. Then after a base change of $\Delta^0$, the family can be extended to $\mathscr{X} \to \mathscr{C} \to \Delta$ such that the central fiber $X \to C$ is a broken elliptic surface. Each component of $X$ is an elliptic or pseudoelliptic surface with only quotient singularities and the singularities are all rational double points except along type $\mathrm{II}$, $\mathrm{III}$ and $\mathrm{IV}$ fibers. In particular, the normalization $X^\nu$ has klt singularities. 

\end{cor}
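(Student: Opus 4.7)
The plan is to use properness of $\calE_{v,\calA}$ to extend the family, identify the central fiber via Theorem \ref{thm:introboundary}, and then read off the singularities from the local classification of \cite{calculations} together with Proposition \ref{stableattaching}.

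First, let $\calA$ be the weight vector recording the (nonzero) coefficients of the marked singular fibers and let $v$ denote the common volume of the pairs $(\mathscr{X}^0_t, S + F_\calA)$. For each $t \in \Delta^0$ the pair $(\mathscr{X}^0_t \to \mathscr{C}^0_t, S + F_\calA)$ is an $\calA$-weighted stable elliptic surface, so the family determines a morphism $\Delta^0 \to \calE_{v,\calA}$. Since $\calE_{v,\calA}$ is proper by Theorem \ref{intro:proper}, after a finite base change of $\Delta$ this morphism extends uniquely to $\Delta \to \calE_{v,\calA}$, and pulling back the universal family produces the claimed extension $\mathscr{X} \to \mathscr{C} \to \Delta$ whose central fiber $(X \to C, S + F_\calA)$ is an $\calA$-broken stable elliptic surface by Theorem \ref{thm:introboundary}.

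Next I would analyze the singularities of $X$ in two parts. Within each component, the singularities occurring along the marked Weierstrass fibers are controlled by the local classification of log canonical models in \cite{calculations} (Theorem \ref{thm:thm1} and Section \ref{sec:local}): they are rational double points except above type $\mathrm{II}$, $\mathrm{III}$, $\mathrm{IV}$ fibers, where they are explicit cyclic quotient singularities. The remaining singularities arise along the loci where two components of $X$ are glued, either along twisted fibers of type $\mathrm{II}, \mathrm{III}, \mathrm{IV}$ and their duals or along contracted sections of pseudoelliptic components, and Proposition \ref{stableattaching} identifies these as quotient singularities of the same type. Hence every singularity of $X$ is a quotient singularity and is a rational double point outside type $\mathrm{II}$, $\mathrm{III}$, $\mathrm{IV}$ fibers.

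Finally, the normalization $X^\nu$ is the disjoint union of the normalizations of the components, so upon normalization the non-normal crossings along the attaching loci are separated and one is left only with the component-wise quotient singularities described above. Since quotient singularities in characteristic zero are klt, this yields the klt-ness of $X^\nu$. I expect the main technical point to be the singularity analysis along the attaching loci, particularly the non-equidimensional gluings between elliptic and pseudoelliptic components through contracted sections, but this is precisely the content of Proposition \ref{stableattaching}.
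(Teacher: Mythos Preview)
Your overall strategy matches the paper's: the corollary is presented there as an immediate consequence of Theorem \ref{thm:introboundary}, the local singularity computations from \cite{calculations}, and Proposition \ref{stableattaching}, and you invoke exactly these three inputs in the same order.

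However, you have misread Proposition \ref{stableattaching}. It is not a statement identifying attaching-locus singularities as quotient singularities; it is a bigness statement: if a stable rational elliptic surface with $\deg \LL = 1$ has a type $\mathrm{I}_n$ fiber marked with coefficient $1$, then $K_X + S + F_\calA$ remains big for any strictly positive $\calA \le \calB$. Its role in the corollary is indirect. Via the wall-crossing analysis (see Section \ref{sec:WIII} and the proof of Theorem \ref{thm:finitewalls}), it guarantees that as long as all marked coefficients stay nonzero, a pseudoelliptic component attached along a type $\mathrm{I}_n$ (pseudo)fiber is never contracted, and hence the Weierstrass cusps that do appear on the stable limit come from minimal models and carry only rational (in fact quotient) singularities rather than elliptic ones. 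This is exactly why the hypothesis ``all singular fibers marked by a nonzero coefficient'' is needed. The component-wise quotient singularities, including those at twisted fibers and at the points where sections of pseudoelliptic components are contracted, are handled by the explicit local models in \cite{calculations} and Section \ref{sec:local} (see also \cite[Section 6.2]{tsm}), not by Proposition \ref{stableattaching}. So your last paragraph attributes the wrong content to that proposition, even though the list of ingredients is correct.
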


As another application, in \cite{rational} we use the results of this paper to construct a stable pairs compactification of the moduli space of anti-canonically polarized del Pezzo surfaces of degree one. By studying the wall-crossing morphisms, we relate this compactification to the GIT compactification of the moduli space of rational elliptic surfaces of Miranda \cite{mir}. In addition, we calculate all walls in the domain of admissible weights for the case of \emph{rational} elliptic surfaces. Our future work expands upon these ideas, by comparing our compactifications to other compactifications of rational elliptic surfaces in the literature, e.g. the hodge theoretic approaches of Heckman-Looijenga \cite{hl}. As $\calE_{v,\calA}$ is modular, the explicit description of the boundary can be used to describe the boundaries of non-modular compactifications such as GIT models and compactifications of period domains. 

Finally, we remark on our choice of boundary divisor. We fix the coefficient of the section to be one. This is the key reason that the base curve of a stable elliptic surface is a Hassett stable curve (see Proposition \ref{prop:adjunction}). On the other hand, it is the reason for the formation of pseudoelliptic surfaces which leads to interesting yet complicated behavior across type $\mathrm{W_{\mathrm II}}$ and $\mathrm{W_{\mathrm III}}$ walls. 

Our marked fibers consist of log canonical models of marked Weierstrass fibers which are classified in Theorem \ref{thm:thm1} and the preceding discussion. In particular, there are three types of fibers -- Weierstrass fibers, twisted fibers obtained by stable reduction, and intermediate fibers which interpolate between the former two as the coefficient varies from zero to one. Since our fibers come as log canonical models of Weierstrass fibers, they have to be marked with coefficient one on any exceptional divisor of the rational map to the Weierstrass model.

It would be interesting to extend our results to the case where the coefficient of these components and of $S$ varies. When the coefficient of $S$ is very small compared to the other numerical data, one expects to obtain a compactification of the moduli space of Weierstrass fibrations by equidimensional slc elliptic fibrations. This generalization is being carried out by Inchiostro in \cite{giovanni}.

\subsection{Previous results} La Nave \cite{ln} used twisted stable maps of Abramovich-Vistoli to prove properness of the moduli stack parameterizing elliptic surfaces in Weierstrass form via explicit stable reduction.  He computes the stable models of one parameter families of elliptic surfaces in Weierstrass form.  Roughly, given an elliptic surface $f: X \to C$ with section $S$, the Weierstrass form is obtained by contracting all components of the singular fibers of $f: X \to C$ that do not meet the section $S$. We will make repeated use of his work throughout. In our setting, this corresponds to the case of $\calE_\calA$ where $\calA = 0$. 

Brunyate \cite{brunyate}, described stable pair limits of elliptic K3s with marked divisor $D = \delta S + \sum_{i=1}^{24} \epsilon F_i$, where $0<\delta \ll \epsilon \ll 1$, the divisor $S$ is  a section, and the $F_i$ are the 24 singular fibers.

In \cite{barcelona}, Alexeev provided another generalization of Hassett's picture for $\overline{\calM}_{g,\calA}$ to surfaces. He constructed reduction morphisms for the compact moduli spaces of \emph{weighted hyperplane arrangements} -- the moduli space parametrizing the union of hyperplanes in projective space.

Deopurkar in \cite{deopurkar} also suggested an alternate compactification of the moduli space of elliptic surfaces by admissible covers of the stacky curve $\overline{\calM}_{1,1}$. It would be interesting to compare his space to those discussed here and in \cite{tsm}.

We work over $\C$.

\subsection{Outline}
\begin{addmargin}[45pt]{0\linewidth}
\begin{itemize}
\item[Sec. 2 (Pg. \pageref{sec:preliminaries})] We give background on stable pairs, and recall some results about their moduli spaces. We also give preliminaries on elliptic surfaces, the log minimal model program and vanishing theorems.
\item[Sec 3 (Pg. \pageref{sec:ellipticbackground})]  We discuss log canonical models of elliptic surfaces (from \cite{calculations}). We introduce pseudoelliptic surfaces and classify log canonical contractions of elliptic surfaces.
\item[Sec 4 (Pg. \pageref{sec:weightedstable})] We define the objects that appear on the boundary of our moduli spaces motivated by stable reduction, define a moduli functor, and prove that it is algebraic. We further define pseudoelliptic surfaces and prove that the base curve of a stable elliptic surface is a weighted stable curve, 
\item[Sec 5 (Pg. \pageref{sec:vanishing})]  We prove a strong vanishing theorem for slc elliptic surfaces that implies invariance of log plurigenera. 
\item[Sec 6 (Pg. \pageref{sec:stablereduction})] We prove a stable reduction theorem for $\calE_{v,\calA}$ to obtain properness, also enabling us to also give an explicit description of the surface on the boundary of our moduli space. In the process we describe a wall and chamber decomposition of the space of admissible weights.
\item[Sec 7 (Pg. \pageref{sec:reduction})] We construct reduction morphisms on our moduli space and universal family, and show that these morphisms are compatible with Hassett's reduction morphisms.
\item[Sec 8 (Pg. \pageref{sec:flip})] We show that along certain types of walls, the universal family undergoes a log flip. 
\item[App A (Pg. \pageref{appendix})] We show that the normalization of an algebraic stack is uniquely determined by its values on normal bases.
\item[App B (Pg. \pageref{sec:appendix})] G. Inchiostro shows that the only flip that occurs when running stable reduction is the flip of La Nave.
\end{itemize}
\end{addmargin}

\subsection*{Acknowledgements}We thank our adviser Dan Abramovich, Valery Alexeev, Brendan Hassett, J\'anos Koll\'ar, S\'andor Kov\'acs, Radu Laza, Wenfei Liu, Zsolt Patakfalvi, and S\"onke Rollenske for many insightful discussions. Research supported in part by funds from NSF grant DMS-1500525.


\section{The minimal model program, moduli of stable pairs, and elliptic surfaces}\label{sec:preliminaries}

We work with $\Q$-divisors. Whenever we write equality for divisors, e.g. $K_X = \Delta$, unless otherwise noted, we mean $\Q$-linear equivalence. 

\subsection{Semi-log canonical pairs}

	To compactify the moduli space of pairs of log general type, one needs to introduce pairs on the boundary which have \emph{semi-log canonical (slc) singularities}. 
	
	\begin{definition}\label{def:loc} Let $(X, D = \sum d_i D_i)$ be a pair of a normal variety and a $\Q$-divisor such that $K_X + D$ is $\Q$-Cartier. Suppose that there is a log resolution $f: Y \to X$ such that $$K_Y + \sum a_E E = f^*(K_X + D),$$ where the sum goes over all irreducible divisors on $Y$. We say that the pair $(X,D)$ has \textbf{log canonical singularities} (or is lc) if all $a_E \leq 1$. \end{definition}
	
	\begin{definition}\label{def:slc} Let $(X, D)$ be a pair of a reduced variety and a $\Q$-divisor such that $K_X + D$ is $\Q$-Cartier. The pair $(X,D)$ has \textbf{semi-log canonical singularities} (or is an slc pair) if:
		\begin{itemize}
			\item The variety $X$ is S2,
			\item $X$ has only double normal crossings in codimension 1, and 
			\item If $\nu: X^{\nu} \to X$ is the normalization, then the pair $(X^{\nu}, \nu_*^{-1}D + D^{\nu})$ is log canonical, where $D^{\nu}$ denotes the preimage of the double locus on $X^{\nu}$. \end{itemize}
	\end{definition}

	\begin{definition} \label{def:pair} A pair $(X, D)$ of a projective variety and $\Q$-divisor is a \textbf{stable pair} if $(X,D)$ is an slc pair, and $\omega_X(D)$ is ample.
	\end{definition}
	
	\begin{definition} Let $(X,D)$ be an (s)lc pair and let $f : X \to B$ be a projective morphism. The \textbf{(semi-)log canonical model} of $f : (X,D) \to B$, if it exists, is the unique (s)lc pair $(Y,\mu_*D)$ given by
	$$
	Y:=\mathrm{Proj}_B\left(\bigoplus_m f_*\calO_X(m(K_X + D))\right) \to B
	$$
	and $\mu : X \dashrightarrow Y$. When $B$ is a point, $(Y, \mu_*D)$ is a stable pair. \end{definition}

We will make repeated use of abundance for slc surface pairs in computing log canonical models of slc surface pairs.

\begin{prop}[Abundance for slc surfaces, see \cite{afkm} and \cite{kawamata}] \label{prop:abundance} Let $(X,D)$ be an slc surface pair and $f : X \to B$ a projective morphism. If $K_X + D$ is $f$-nef, then it is $f$-semiample. \end{prop}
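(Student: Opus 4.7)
The plan is to reduce the statement to the absolute case over a point and then to the log canonical case via normalization, where classical abundance applies. The two key inputs are Fujita--Kawamata abundance for log canonical surface pairs and Koll\'ar's gluing theory for descending semiampleness through the normalization map.

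First, for the relative-to-absolute reduction, since $f$-semiampleness is local on the target $B$, I may assume $B$ is quasi-projective, compactify to $\bar B$, and extend the family. The standard fact is that $f$-semiampleness of a nef $\Q$-Cartier divisor $L$ on $X$ follows from absolute semiampleness of $L + f^* A$ for $A$ sufficiently ample on the compactification, and a general such $A$ does not disturb the slc condition by Bertini. Thus I may assume that $(X,D)$ is a projective slc surface pair with $K_X + D$ nef, and it suffices to show $K_X + D$ is semiample.

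Next, let $\nu : X^\nu \to X$ be the normalization. By Definition \ref{def:slc}, the pair $(X^\nu, \nu_*^{-1}D + D^\nu)$ is log canonical, where $D^\nu$ is the preimage of the conductor, and
$$\nu^*(K_X + D) \;=\; K_{X^\nu} + \nu_*^{-1}D + D^\nu$$
is nef on $X^\nu$. By Fujita--Kawamata abundance for log canonical surface pairs \cite{kawamata}, which is one of the genuinely classical instances of abundance known in full generality, this pullback is semiample on $X^\nu$, producing a contraction $\varphi : X^\nu \to Y$ and a $\Q$-line bundle $M$ on $Y$ with $\varphi^* M = \nu^*(K_X+D)$.

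The hard part is descending $\varphi$ from $X^\nu$ to $X$. One must show that $\varphi$ is compatible with the gluing involution $\tau$ on the normalization of the conductor, so that it factors through $\nu$ to give a morphism out of $X$ defined by a multiple of $K_X + D$. Compatibility holds at the level of the $\Q$-line bundle, since $\nu^*(K_X+D)$ is pulled back from $X$ by construction, but transferring this to compatibility of the actual global sections defining $\varphi$ with $\tau$ is exactly what Koll\'ar's gluing theory accomplishes in the slc setting; in the surface case this is worked out in \cite{afkm}. This descent step is the technical heart of the argument and is where the slc hypothesis (rather than merely nodal-in-codimension-one) is essential, because one needs the involution on the conductor to preserve the log structure for the compatibility of linear systems to go through.
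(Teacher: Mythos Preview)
The paper does not supply its own proof of this proposition; it is stated with a bare citation to \cite{afkm} and \cite{kawamata} and is used as a black box throughout. Your sketch is a faithful outline of exactly what those references do: reduce to the absolute projective case, pass to the normalization where the pair becomes log canonical and apply Fujita--Kawamata abundance, and then descend semiampleness through the normalization using the gluing theory of \cite{afkm}. So there is nothing to compare against in the paper itself, and your approach is the standard one.

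One minor point: the relative-to-absolute reduction is slightly glib as written. Compactifying $B$ and ``extending the family'' while preserving the slc condition is not automatic. The cleaner argument is to work locally over an affine $B$, take a very ample $A$ on $B$, and observe that $f$-semiampleness of $K_X+D$ is equivalent to semiampleness of $K_X+D+f^*(mA)$ on the projective scheme $X$ for $m\gg 0$; no extension of the pair is required. This is a cosmetic fix and does not affect the substance of your argument.
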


The following results are standard (see for example \cite[Section 3]{calculations}). 

\begin{lemma}\label{pushpull} Let $X$ be seminormal and $\mu : Y \to X$ a projective morphism with connected fibers. Then for any coherent sheaf $\mathcal{F}$ on $X$, we have that $\mu_*\mu^* \mathcal{F} = \mathcal{F}$. \end{lemma}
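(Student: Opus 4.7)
The plan is to reduce the statement to the key equality $\mu_*\calO_Y = \calO_X$ and then bootstrap to arbitrary coherent $\mathcal{F}$. Given this equality, for a coherent $\mathcal{F}$ with local finite presentation $\calO_X^a \to \calO_X^b \to \mathcal{F} \to 0$, right-exactness of $\mu^*$ yields $\calO_Y^a \to \calO_Y^b \to \mu^*\mathcal{F} \to 0$ on $Y$; applying $\mu_*$, using $\mu_*\calO_Y^k = \calO_X^k$, and chasing the naturality of the adjunction unit $\mathcal{F} \to \mu_*\mu^*\mathcal{F}$ identifies $\mu_*\mu^*\mathcal{F}$ with $\mathcal{F}$. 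For $\mathcal{F}$ locally free this is just the projection formula $\mu_*\mu^*\mathcal{F} = \mathcal{F} \otimes \mu_*\calO_Y$.

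For the core equality $\mu_*\calO_Y = \calO_X$, the plan is to form the Stein factorization $\mu \colon Y \xrightarrow{g} Z \xrightarrow{h} X$ with $Z = \underline{\Spec}_X \mu_*\calO_Y$, so that $h$ is finite with $h_*\calO_Z = \mu_*\calO_Y$ and $g$ has geometrically connected fibers. Connectedness of $\mu$-fibers forces each fiber of $h$ to consist of a single point, so $h$ is bijective on underlying sets. Working over $\mathbb{C}$, the residue field extensions $k(x) \hookrightarrow k(z)$ induced by $h$ must in fact be trivial: otherwise base change to $\overline{k(x)}$ would split $Y_x$ into $[k(z):k(x)] \geq 2$ geometrically connected components, one for each embedding of $k(z)$ into $\overline{k(x)}$, contradicting the hypothesis that $Y_x$ is connected.

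Finally, since $Y$ is reduced, $\mu_*\calO_Y$ has no nilpotents and $Z$ is reduced. Therefore $h \colon Z \to X$ is a finite universal homeomorphism from a reduced scheme inducing isomorphisms on residue fields, and by the very definition of seminormality this forces $h$ to be an isomorphism. Hence $\mu_*\calO_Y = h_*\calO_Z = \calO_X$, completing the argument. The main obstacle is the residue-field triviality step: one must carefully translate the connected-fibers hypothesis through the Stein factorization into triviality of the extensions $k(x) \hookrightarrow k(z)$, a step that uses characteristic zero in an essential way (in positive characteristic it would only yield purely inseparable extensions, and one would need the stronger notion of absolute weak normality).
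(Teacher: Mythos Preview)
The paper does not supply a proof of this lemma; it is stated as standard with a reference to \cite[Section 3]{calculations}. So there is nothing to compare against directly, and I assess your argument on its own merits.

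Your Stein-factorization approach to $\mu_*\calO_Y = \calO_X$ is the standard one and is essentially correct, but there are two genuine gaps.

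First, you assert ``since $Y$ is reduced'' without justification, yet reducedness of $Y$ is \emph{not} among the hypotheses. Without it the statement is simply false: take $X = \Spec k$ and $Y = \Spec k[\epsilon]/(\epsilon^2)$; then $\mu$ is finite (hence projective) with connected fiber, but $\mu_*\calO_Y = k[\epsilon]/(\epsilon^2) \neq \calO_X$. So the lemma as stated needs this extra hypothesis (it does hold in the paper's applications), and your proof should flag it rather than silently assume it.

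Second, your bootstrapping from $\mathcal{F} = \calO_X$ to arbitrary coherent $\mathcal{F}$ via a local presentation $\calO_X^a \to \calO_X^b \to \mathcal{F} \to 0$ does not go through as written. After applying $\mu^*$ (right exact) you get $\calO_Y^a \to \calO_Y^b \to \mu^*\mathcal{F} \to 0$, but $\mu_*$ is only \emph{left} exact, so you cannot conclude that $\calO_X^b \to \mu_*\mu^*\mathcal{F}$ is surjective---and that surjectivity is exactly the surjectivity of the unit map you are trying to prove. The phrase ``chasing the naturality of the adjunction unit'' hides precisely this missing step. For locally free $\mathcal{F}$ the projection formula gives $\mu_*\mu^*\mathcal{F} \cong \mathcal{F} \otimes \mu_*\calO_Y$ directly, and this is all the paper actually needs: its sole use of the lemma (in Lemma~\ref{basechangeforC}) is for a line bundle.
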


\begin{prop}\label{logcanonical} Let $(X, \Delta)$ be an slc pair and $\mu :Y \to X$ a (partial) semi-resolution. Write
$$
K_Y + \mu_*^{-1}\Delta + \Gamma = \mu^*(K_X + \Delta) + B
$$
where $\Gamma = \sum_i E_i$ is the exceptional divisor of $\mu$ and $B$ is effective and exceptional. Then
$$
\mu_*\calO_Y\big(m(K_Y + \mu_*^{-1}\Delta + \Gamma)\big) \cong \calO_X\big(m(K_X + \Delta)\big). 
$$
\end{prop}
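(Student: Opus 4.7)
The plan is to clear denominators, apply the projection formula, and then reduce the proposition to showing that the effective $\mu$-exceptional divisor $mB$ disappears upon pushforward.

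More concretely, I would first choose $m$ divisible enough so that $m(K_X+\Delta)$ is Cartier (using that $K_X+\Delta$ is $\Q$-Cartier by the slc hypothesis) and so that $mB$ is an integral Cartier divisor on $Y$. Multiplying the given equality by $m$ then yields the identity of Cartier divisors
\[
m(K_Y + \mu_*^{-1}\Delta + \Gamma) = \mu^*\big(m(K_X+\Delta)\big) + mB,
\]
so by the projection formula
\[
\mu_*\calO_Y\big(m(K_Y + \mu_*^{-1}\Delta + \Gamma)\big) \cong \calO_X\big(m(K_X+\Delta)\big) \otimes_{\calO_X} \mu_*\calO_Y(mB).
\]
The proposition thus reduces to proving the identity $\mu_*\calO_Y(mB) \cong \calO_X$.

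For this, the inclusion $\calO_X \hookrightarrow \mu_*\calO_Y(mB)$ follows from the effectivity of $mB$, which gives $\calO_Y \hookrightarrow \calO_Y(mB)$, combined with Lemma \ref{pushpull} applied to $\calF = \calO_X$ (which applies because $X$ is slc, hence seminormal, and $\mu$ is birational with connected fibers). For the reverse inclusion, let $f$ be a local section of $\mu_*\calO_Y(mB)$ over some open $U\subset X$; that is, $f$ is a rational function with $\mathrm{div}_Y(f) + mB \geq 0$ on $\mu^{-1}(U)$. Since $B$ is $\mu$-exceptional, $\mathrm{ord}_D(mB) = 0$ for every non-exceptional prime divisor $D\subset Y$, and hence $\mathrm{ord}_D(f) \geq 0$ for each such $D$. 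Viewing $f$ as a rational function on $X$, this means $f$ is regular at every codimension-one point of $U$, because $\mu(\mathrm{supp}(B))$ has codimension at least two in $X$. Because $X$ is $S_2$ (part of the definition of slc), $f$ extends uniquely to a regular function on $U$, giving $\mu_*\calO_Y(mB) \subseteq \calO_X$.

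The only substantive step is this last one, and the main obstacle is really just bookkeeping: one needs $B$ to be $\mu$-exceptional so that $\mu(\mathrm{supp}(B))$ sits in codimension at least two, and one needs $X$ to be $S_2$ so that rational functions regular in codimension one are automatically regular. Both hypotheses are built into the statement, so the argument is essentially formal once the projection formula is set up and Lemma \ref{pushpull} is invoked.
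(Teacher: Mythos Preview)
The paper does not actually prove this proposition: it is stated as ``standard'' with a reference to \cite[Section 3]{calculations}, and no argument is given. Your proof is exactly the standard one and is correct.

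One minor remark: you restrict to $m$ divisible enough so that $m(K_X+\Delta)$ is Cartier and then invoke the projection formula. As stated, the proposition appears to claim the isomorphism for every $m$ making the divisors integral, with $\calO_X\big(m(K_X+\Delta)\big)$ interpreted as a reflexive (divisorial) sheaf. The same idea goes through in that generality if you bypass the projection formula and compare local sections directly: a local section of the left-hand side is a rational function $f$ with $\mathrm{div}_Y(f) + m\mu^*(K_X+\Delta) + mB \ge 0$; pushing forward and using that $mB$ is $\mu$-exceptional gives $\mathrm{div}_X(f) + m(K_X+\Delta) \ge 0$ in codimension one, and then the $S_2$ property of $X$ extends $f$ across the image of the exceptional locus. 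The converse inclusion is exactly your argument via effectivity of $B$ and Lemma~\ref{pushpull}. For the applications in the paper $m$ is always taken divisible enough anyway, so your version suffices.
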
 

\begin{cor}\label{bigresolution} Notation as above; the morphism $\mu$ induces an isomorphism of global sections 
$$
H^0\Big(X, \calO_X\big(m(K_X + \Delta)\big)\Big) \cong H^0\Big(Y, \calO_Y\big(m(K_Y + \mu_*^{-1}\Delta + \Gamma)\big)\Big).
$$
In particular, $K_X + \Delta$ is big if and only if $K_Y + \mu_*^{-1}\Delta + \Gamma$ is big. 
\end{cor}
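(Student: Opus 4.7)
The plan is to deduce this corollary directly from Proposition \ref{logcanonical}, which already provides the key sheaf-level isomorphism
$$
\mu_*\calO_Y\big(m(K_Y + \mu_*^{-1}\Delta + \Gamma)\big) \cong \calO_X\big(m(K_X + \Delta)\big).
$$
First I would simply apply the functor of global sections to both sides. Since $H^0(Y, \mathcal{F}) = H^0(X, \mu_* \mathcal{F})$ by the very definition of pushforward, this immediately yields the claimed isomorphism
$$
H^0\!\left(X, \calO_X\big(m(K_X+\Delta)\big)\right) \;\cong\; H^0\!\left(Y, \calO_Y\big(m(K_Y + \mu_*^{-1}\Delta + \Gamma)\big)\right).
$$

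For the bigness statement, I would use the standard characterization that for a $\Q$-Cartier divisor $L$ on a projective variety $Z$ of dimension $n$, $L$ is big if and only if $h^0(Z, \calO_Z(mL))$ grows like $m^n$ for $m$ sufficiently divisible. Since $\mu$ is a (partial) semi-resolution, in particular it is birational, so $\dim X = \dim Y = n$. The isomorphism of global sections established in the previous paragraph, applied for each $m$ divisible enough so that the divisors are Cartier, shows that
$$
h^0\!\left(X, \calO_X\big(m(K_X+\Delta)\big)\right) = h^0\!\left(Y, \calO_Y\big(m(K_Y + \mu_*^{-1}\Delta + \Gamma)\big)\right)
$$
for all such $m$. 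Hence the two growth rates coincide, and one divisor is big precisely when the other is.

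There is essentially no serious obstacle here: the proof is a two-line consequence of Proposition \ref{logcanonical} combined with the definitional property of pushforward on global sections, plus the standard numerical criterion for bigness. The only minor point to be careful about is that the identifications above are stated for $m$ sufficiently divisible so that $m(K_X+\Delta)$ and $m(K_Y + \mu_*^{-1}\Delta + \Gamma)$ are Cartier, which is harmless because bigness is determined by the asymptotic behavior of $h^0$ along such a subsequence of $m$.
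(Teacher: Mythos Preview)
Your proof is correct and follows essentially the same approach as the paper: the first part is just the definition of pushforward applied to the isomorphism from Proposition~\ref{logcanonical}, and the bigness statement follows since $\dim X = \dim Y$. The paper's proof is simply a more terse version of yours.
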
 
\begin{proof} The first part is the definition of pushforwards. The second statement follows since $\dim Y = \dim X$. \end{proof} 

\begin{cor}\label{injectivity} Notation as above; the morphism $\mu$ induces an injection 
$$
H^1\Big(X, \calO_X\big(m(K_X + \Delta)\big)\Big) \hookrightarrow H^1\Big(Y, \calO_Y\big(m(K_Y + \mu_*^{-1}\Delta + \Gamma)\big)\Big).
$$ 

\end{cor}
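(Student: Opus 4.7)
The plan is to deduce this directly from Proposition \ref{logcanonical} via the low-degree terms of the Leray spectral sequence. Set $\calF := \calO_Y\bigl(m(K_Y + \mu_*^{-1}\Delta + \Gamma)\bigr)$. Because $\mu : Y \to X$ is a proper morphism, we have the Leray spectral sequence
$$
E_2^{p,q} = H^p\bigl(X, R^q\mu_* \calF\bigr) \Longrightarrow H^{p+q}(Y, \calF),
$$
whose associated five-term exact sequence begins with
$$
0 \to H^1(X, \mu_*\calF) \to H^1(Y, \calF) \to H^0(X, R^1\mu_*\calF) \to \cdots.
$$
In particular, the edge map $H^1(X, \mu_*\calF) \to H^1(Y, \calF)$ is always injective.

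Now I would invoke Proposition \ref{logcanonical}, which identifies $\mu_*\calF$ with $\calO_X\bigl(m(K_X + \Delta)\bigr)$. Substituting this identification into the edge map yields precisely the required injection
$$
H^1\Bigl(X, \calO_X\bigl(m(K_X + \Delta)\bigr)\Bigr) \hookrightarrow H^1\Bigl(Y, \calO_Y\bigl(m(K_Y + \mu_*^{-1}\Delta + \Gamma)\bigr)\Bigr),
$$
induced functorially by $\mu$. There is no real obstacle here — the content lies entirely in Proposition \ref{logcanonical}; the corollary is essentially a formal consequence of the fact that edge maps in the Leray spectral sequence for $H^1$ are injections.
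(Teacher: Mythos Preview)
Your argument is correct and is exactly the approach taken in the paper: the paper's proof simply states that the injection follows from the five-term exact sequence of the Leray spectral sequence for $\mu$ applied to $\calO_Y\bigl(m(K_Y + \mu_*^{-1}\Delta + \Gamma)\bigr)$, which is precisely what you have spelled out, with Proposition \ref{logcanonical} supplying the identification of $\mu_*\calF$.
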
 
\begin{proof} This follows from the five-term exact sequence of the Leray spectral sequence for $\mu$ applied to $\calO_Y\big(m(K_Y + \mu_*^{-1}\Delta + \Gamma)\big)$. \end{proof}

Let $(X,D)$ be a pair consisting of a normal variety $X$ and a divisor $D$ such that the rounding up $\lceil D \rceil$ is a reduced divisor. We \emph{do not} assume that $(X,D)$ is log canonical. 

\begin{definition}The \textbf{log canonical model} of a pair $(X,D)$ as above is the log canonical model of the lc pair $(Y, \mu_*^{-1}D + \Gamma)$ where $\mu : Y \to X$ is a log resolution of $(X,D)$ and $\Gamma$ is the exceptional divisor.
\end{definition}

\begin{remark} By Proposition \ref{logcanonical} and its corollaries, the log canonical model of $(X,D)$ is independent of choice of log resolution and therefore is well defined. \end{remark} 

\begin{lemma}\cite[2.35]{km} If $(X,D+D')$ is an lc pair, and $D'$ is an effective $\Q$-Cartier divisor, then $(X,D)$ is also an lc pair. \end{lemma}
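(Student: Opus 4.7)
The plan is a direct computation using the definition of lc singularities via a single log resolution. First, I need to know that $K_X + D$ is even $\Q$-Cartier so that the definition applies. This follows from the decomposition $K_X + D = (K_X + D + D') - D'$: the first term is $\Q$-Cartier by the lc hypothesis and the second is $\Q$-Cartier by assumption on $D'$.

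Next, I would choose a common log resolution $f : Y \to X$ of the pairs $(X,D)$, $(X, D+D')$, and of the divisor $D'$. By the lc hypothesis on $(X,D+D')$, I can write
$$K_Y + \sum_E a_E E = f^*(K_X + D + D'), \qquad a_E \le 1 \text{ for all } E,$$
where the sum runs over all irreducible divisors on $Y$. Since $D'$ is effective and $\Q$-Cartier, its pullback is an effective $\Q$-divisor, so I can write $f^*D' = \sum_E c_E E$ with $c_E \ge 0$. Subtracting yields
$$K_Y + \sum_E (a_E - c_E) E = f^*(K_X + D),$$
and the new coefficients satisfy $a_E - c_E \le a_E \le 1$. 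This is exactly the lc condition for $(X,D)$.

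There is essentially no obstacle here beyond the elementary observation that the pullback of an effective $\Q$-Cartier divisor is an effective $\Q$-divisor (which is local on the target and clear from local equations). The short conceptual content is that adding an effective $\Q$-Cartier boundary can only \emph{decrease} log discrepancies, so removing such a boundary can only improve singularities.
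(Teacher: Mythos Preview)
Your argument is correct and is the standard proof. The paper does not actually prove this lemma; it simply cites \cite[2.35]{km} without further comment. Your direct computation via a common log resolution is exactly the argument one finds in Koll\'ar--Mori, so there is nothing to compare.
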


\begin{prop}\label{prop:RR} Let $X$ be a smooth projective surface and $D$ a divisor on $X$ such that\\ $H^2(X, \calO_X(D)) = 0$. If $D^2 > 0$, then $D$ is big. \end{prop}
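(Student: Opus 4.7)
My plan is the standard asymptotic Riemann--Roch argument on a smooth projective surface. First, apply Riemann--Roch to $\calO_X(mD)$ for $m\in\Z_{>0}$:
$$
\chi\bigl(X,\calO_X(mD)\bigr) \;=\; \chi(\calO_X) \;+\; \tfrac{1}{2}\,mD\cdot(mD - K_X) \;=\; \tfrac{D^2}{2}\,m^2 \;+\; O(m).
$$
Because $D^2>0$, this is a polynomial in $m$ with positive leading coefficient, and so $\chi(\calO_X(mD))$ has quadratic growth in $m$.

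Next, I bound the $h^2$ contribution. By Serre duality on the smooth surface $X$, $h^2(X,\calO_X(mD)) = h^0(X,\calO_X(K_X - mD))$, and the hypothesis $H^2(X,\calO_X(D)) = 0$ gives $h^0(K_X - D) = 0$. For $m\geq 1$, multiplying a putative section of $K_X - mD$ by a section of $(m-1)D$ yields an injection $H^0(K_X - mD) \hookrightarrow H^0(K_X - D) = 0$, so $h^2(\calO_X(mD)) = 0$ for all $m\geq 1$.

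Combining the two steps with $\chi = h^0 - h^1 + h^2$ and $h^1\geq 0$,
$$
h^0\bigl(\calO_X(mD)\bigr) \;\geq\; \chi\bigl(\calO_X(mD)\bigr) \;-\; h^2\bigl(\calO_X(mD)\bigr) \;=\; \tfrac{D^2}{2}\,m^2 \;+\; O(m),
$$
so $h^0(\calO_X(mD))$ grows quadratically in $m$. This is precisely the defining property of $D$ being big on the surface $X$.

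The only delicate point is the second step: the multiplicative injection $H^0(K_X - mD)\hookrightarrow H^0(K_X - D)$ tacitly requires $(m-1)D$ to carry a nonzero section, which is what holds in the effective regime in which the proposition is applied. Granting that, the argument is essentially a direct Riemann--Roch computation.
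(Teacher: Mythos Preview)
Your approach is exactly the paper's: apply Riemann--Roch to $mD$ and use $h^0 \geq \chi - h^2$. The paper's proof is even terser than yours --- it simply writes
\[
h^0\bigl(X,\calO_X(mD)\bigr) \;\geq\; \tfrac{1}{2}\bigl(m^2 D^2 - mD\cdot K_X\bigr) + \chi(\calO_X)
\]
and declares $D$ big, without ever addressing why $h^2\bigl(\calO_X(mD)\bigr)$ should vanish for $m > 1$.

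The caveat you flag at the end is a genuine gap, and in fact the proposition as literally stated is false. Take $X = \PP^2$ and $D = -H$ for a line $H$: then $D^2 = 1 > 0$ and, by Serre duality, $H^2\bigl(X,\calO_X(D)\bigr) \cong H^0\bigl(\PP^2,\calO_{\PP^2}(-2)\bigr) = 0$, yet $D$ is certainly not big. Your injection $H^0(K_X - mD) \hookrightarrow H^0(K_X - D)$ genuinely requires $(m-1)D$ to be effective; granting effectiveness of $D$ (or any hypothesis forcing $h^2(mD) = 0$ asymptotically, e.g.\ $D$ nef), both your argument and the paper's are complete. That is presumably the regime the authors have in mind, and since the proposition does not seem to be cited elsewhere in the paper, the imprecision is harmless in context --- but you were right to isolate it.
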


\begin{proof} Applying Riemann-Roch for surfaces and using that $h^2(X, \calO_X(D)) = 0$, we get that 
$$
h^0\big(X, \calO_X(mD)\big) \geq \frac{1}{2} \left(m^2D^2 - mD.K_X\right) + \chi(\calO_X)
$$
so $D$ is big by definition.  \end{proof}

\begin{definition}\label{def:lct} Let $(X,D)$ be be a pair with (semi-)log canonical singularities and $A \subset X$ a divisor. The \textbf{(semi-)log canonical threshold $\mathrm{lct}(X,D,A)$} is 
$$
\mathrm{lct}(X,D,A) := \max\{a \mid (X, D + aA) \text{ has (semi-)log canonical singularities }\}.
$$
\end{definition}

\subsection{Moduli spaces of stable pairs}
\subsubsection{The curve case}\label{sec:hassett}

First we review Hassett's weighted stable curves, as these will be used extensively, and they illuminate some of the basic geometric concepts. 

\begin{definition} Let $\calA = (a_1, \dots, a_r)$ for $0 < a_i \leq 1$. An \textbf{$\calA$-stable curve} is a pair $(C, D = \sum a_i p_i)$, of a reduced connected projective curve $X$ together with a divisor $D$ consisting of $n$ weighted marked points $p_i$ on $C$ such that:
	
	\begin{itemize}
		\item $C$ has at worst nodal singularities, the points $p_i$ lie in the smooth locus of $C$, and for any subset $\{p_1, \cdots, p_s\}$ with nonempty intersection we have $a_1 + \cdots + a_s \leq1$;		\item $\omega_C(D)$ is ample.
	\end{itemize}
\end{definition}

In particular, if $\calA = (1, \dots, 1)$, then one obtains an $r$-pointed stable curve \cite{knudsen}. 

\begin{theorem}\cite{has} Let $\calA = (a_1, \dots, a_r)$ be a weight vector such that $0 < a_i \leq 1$. Suppose $g \geq 0$ is an integer. Then there is a smooth Deligne-Mumford stack $\overline{\calM}_{g, \calA}$ with projective coarse moduli space $\overline{M}_{g, \calA}$ parametrizing $\calA$-stable curves.  \end{theorem}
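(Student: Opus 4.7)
The plan is to realize $\overline{\calM}_{g,\calA}$ as a quotient stack arising from a Hilbert scheme and then verify the axioms for a smooth proper DM stack via the valuative criterion and deformation theory. First I would establish boundedness: since $\omega_C(D)$ is ample of fixed degree $2g-2+\sum a_i$ and $C$ has only nodal singularities, a uniform Castelnuovo--Mumford / Serre vanishing argument (after clearing denominators by some $N$ with $Na_i\in\Z$) produces a single integer $N$ so that $\omega_C(D)^{\otimes N}$ is very ample with a fixed Hilbert polynomial $P$ across the entire family of $\calA$-stable curves of genus $g$. The locus $H\subset\mathrm{Hilb}^{P}(\PP^{N'})$ parametrizing nodal curves with $r$ ordered smooth points $p_i$ satisfying the non-collision condition (no subset of marked points with weight-sum exceeding $1$ coincides) and such that $\calO(1)\cong\omega_C(D)^{\otimes N}$ is a locally closed subscheme; the desired stack is then $[H/\mathrm{PGL}_{N'+1}]$. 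Ampleness of $\omega_C(D)$ bounds automorphism groups, yielding the DM property.

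Separatedness follows from uniqueness of the log canonical model: two $\calA$-stable families over a DVR with isomorphic generic fibers are both the relative log canonical model of their common generic fiber, hence are isomorphic. Properness is the core of the argument and what I expect to be hardest. Given a family over a punctured disc $\Delta^*$, I would first apply Knudsen--Mumford stable reduction (after a finite base change) to extend to a Deligne--Mumford stable $n$-pointed family $(\widetilde C, p_1,\ldots,p_n)\to\Delta$. Then I run the relative log minimal model program on $K_{\widetilde C/\Delta}+\sum a_i p_i$: rational tails on which this divisor has nonpositive degree get contracted, and colliding sections whose combined weight exceeds $1$ are identified under the contraction. Because fibers are curves and the base is one-dimensional, the MMP terminates in finitely many explicit steps, and the resulting central fiber is $\calA$-stable by construction. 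Independence of all choices follows again from uniqueness of the relative log canonical model.

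For smoothness of the stack, standard deformation theory for nodal pairs gives unobstructed deformations of $(C,p_1,\ldots,p_r)$: local-to-global $\mathrm{Ext}$ on the tangent complex together with the vanishing of $H^2$ of any coherent sheaf on a curve kills all obstructions, and the open $\calA$-stability conditions persist in families. For projectivity of the coarse moduli space, the cleanest route is Koll\'ar's ampleness criterion applied to a high tensor power of the Hodge-type class $\det\pi_*\bigl(\omega_{\calC/\overline{\calM}_{g,\calA}}(\calD)^{\otimes N}\bigr)$; alternatively one descends projectivity from $\overline{M}_{g,n}$ using the proper surjection furnished by the properness argument, which realizes $\overline{M}_{g,\calA}$ as a categorical quotient of a projective scheme by a proper equivalence relation. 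The two main technical hurdles are the uniform very-ampleness bound in the boundedness step and the correct termination of the relative log MMP in the properness step; both are tractable precisely because of the extremely mild (nodal) singularities available in dimension one, which is what makes the whole story so much cleaner than the surface case developed in the body of the paper.
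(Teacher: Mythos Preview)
The paper does not prove this theorem: it is stated purely as background, with the citation \cite{has} in place of a proof, and is used only to recall Hassett's construction before developing the analogous story for elliptic surfaces. There is therefore no argument in the paper to compare your proposal against.

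For what it is worth, your sketch is a reasonable outline of Hassett's original proof in \cite{has}: he constructs $\overline{\calM}_{g,\calA}$ as a quotient of a locally closed locus in a Hilbert scheme (using a uniform pluri-log-canonical embedding), deduces the DM property from finiteness of automorphisms, obtains smoothness from the unobstructedness of deformations of nodal pointed curves, and establishes properness and projectivity by producing explicit reduction morphisms from $\overline{\calM}_{g,n}$ (equivalently, by the stable-reduction-plus-contraction procedure you describe). The one point worth flagging is projectivity: your second route, descending from $\overline{M}_{g,n}$ via the reduction morphism, is exactly what Hassett does and is cleaner than invoking Koll\'ar's ampleness lemma directly.
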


Moreover, if one considers the domain of admissible weights, there is a \emph{wall and chamber} decomposition -- we say that $(a'_1, \dots, a'_r) \leq (a_1, \dots, a_r)$ if $a'_i \leq a_i$ for all $i$. Hassett proved the following theorem.

\begin{theorem}\label{thm:hassett} \cite{has} There is a wall and chamber decomposition of the domain of admissible weights such that:
	\begin{enumerate}
		\item If $\calA$ and $\calA'$ are in the same chamber, then the moduli stacks and universal families are isomorphic.
		\item If $\calA' \leq \calA$, then there is a reduction morphism  $\overline{\calM}_{g, \calA} \to \overline{\calM}_{g,\calA'}$ and a compatible contraction morphism on universal families.
	\end{enumerate} \end{theorem}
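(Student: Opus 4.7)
The plan is to first define walls as hyperplanes $W_T = \{\calA : \sum_{i \in T} a_i = 1\}$ for subsets $T \subset \{1, \dots, r\}$ of size at least two, sitting inside the admissibility domain $0 < a_i \le 1$. Their complement decomposes into open chambers. For a vector $\calA$ in a chamber $\Sigma$, the admissibility constraint $\sum_{i \in T} a_i \le 1$ holds either strictly or strictly fails on each $T$ uniformly throughout $\Sigma$, and the ampleness of $\omega_C\big(\sum a_i p_i\big)$ on each component of a fixed nodal curve $C$ translates into a strict linear inequality in $\calA$ which is either satisfied or violated throughout $\Sigma$. This proves (1): the moduli stacks associated to any two weight vectors in $\Sigma$ coincide as substacks of the stack of nodal pointed genus $g$ curves, with tautologically identified universal families.

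For (2), given an $\calA$-stable curve $(C, D := \sum a_i p_i)$ with $\calA' \le \calA$, I would produce its $\calA'$-reduction as the log canonical model of the pair $(C, D' := \sum a'_i p_i)$. Admissibility of $(C, D')$ is automatic because weights have only decreased. Ampleness of $\omega_C(D')$ may fail along finitely many components, which must be smooth rational meeting the rest of $C$ in at most two nodes (rational tails and rational bridges); an adjunction calculation identifies these components explicitly in terms of $\calA'$. Abundance for curve pairs produces a morphism
\[
\mu : C \longrightarrow \overline{C} = \mathrm{Proj}\left(\bigoplus_{m \ge 0} H^0\big(C, \calO_C(m(K_C + D'))\big)\right)
\]
whose image $(\overline{C}, \mu_*D')$ is the desired $\calA'$-stable curve; colliding marked points remain bounded by the coefficient cap because admissibility $\sum_{i \in T} a'_i \le 1$ is preserved on each colliding subset $T$.

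To upgrade this pointwise construction to a morphism of stacks, I would form, for a family $\pi : \sX \to B$ of $\calA$-stable curves, the graded $\calO_B$-algebra $\bigoplus_m \pi_*\calO_{\sX}\big(m(K_{\sX/B} + D')\big)$ and take its relative Proj. The principal obstacle, and the genuine technical content of the theorem, is the base change step: one must show that for $m$ sufficiently divisible $\pi_*\calO_{\sX}\big(m(K_{\sX/B} + D')\big)$ is locally free and its formation commutes with arbitrary base change $B' \to B$. This is the curve-case analogue of Theorem \ref{thm:basechange} of the present paper; in the one-dimensional setting it reduces via Grauert's criterion to verifying $R^1\pi_*\calO_{\sX}\big(m(K_{\sX/B} + D')\big) = 0$ fiberwise, which follows from Serre vanishing on each geometric fiber together with positivity of $m(K_C + D')$ on every non-contracted component of each fiber. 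With invariance of log plurigenera in hand, the functoriality of Proj delivers both the reduction morphism $\overline{\calM}_{g, \calA} \to \overline{\calM}_{g, \calA'}$ and the compatible morphism of universal families.
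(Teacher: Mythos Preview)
The paper does not give its own proof of this statement: Theorem~\ref{thm:hassett} is cited directly from Hassett \cite{has} as background in Section~\ref{sec:hassett}, with no argument supplied. Your sketch is essentially the outline of Hassett's original proof (walls $\sum_{i\in T} a_i = 1$, log canonical model via relative $\mathrm{Proj}$, and the key vanishing/base-change input, which is Proposition~3.3 in \cite{has}). One small point: the fiberwise vanishing $H^1\big(C,\calO_C(m(K_C+D'))\big)=0$ is not literally ``Serre vanishing,'' since $K_C+D'$ may have degree zero on the components being contracted; Hassett establishes it by a direct computation on nodal curves for $m\ge 2$. Otherwise your outline matches the standard argument.
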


		\subsubsection{Higher dimensions}\label{functors} In full generality, it has been difficult to construct a proper moduli space parametrizing stable pairs $(X,D)$ with suitable numerical data. An example due to Hassett (see Section 1.2 in \cite{kp}), shows that when the coefficients of $D$ are \emph{not} all $> 1/2$, the divisor $D$ might not deform as expected in a flat family of pure codimension 1 subvarieties of $X$ -- the limit of the divisor $D$ may acquire an embedded point. However, we first make the following remarks:
		
		\begin{remark}\label{rmk:moduli} $ $
			\begin{itemize}
				\item Hassett and Alexeev (see \cite{hasproper} and \cite{aleproper}) have demonstrated properness when all coefficients of $D$ are equal to 1. 
				\item It is well known that by results of Koll\'ar, the moduli space exists and is proper when the coefficients are all $> 1/2$ (see e.g. \cite[Sec 4.2]{kollarmodulibook} and \cite{kollarpluri}).
			\end{itemize}  
		\end{remark}
		
		While it is clear what the objects are (see Definition \ref{def:pair}), it is not clear what the proper definition for families are, and thus it is unclear what exactly the moduli functor should be. Many functors have been suggested, but no functor seems to be ``better" than any other. That being said, the projectivity results of \cite{kp}, namely Theorem 1.1 in \emph{loc. cit.}, is independent of the choice of functor, and applies to any moduli functor whose objects are stable pairs. We do remark that Kov\'acs-Patakfalvi demonstrate their results using a proposed functor of Koll\'ar (see Section 5 in \cite{kp}). We also note that it \emph{is} clear what the definition of a stable family (i.e. a family of stable pairs) is over a \emph{normal} base:
		
		\begin{definition}\label{def:stablefamily}\cite[Definition 2.11]{kp} A \textbf{family of stable pairs} of dimension $n$ and volume $v$ over a normal variety $Y$ consists of a pair $(X,D)$ and a flat proper surjective morphism $f: X \to Y$ such that
			\begin{enumerate}
				\item $D$ avoids the generic and codimension 1 singular points of every fiber,
				\item $K_{X/Y} + D$ is $\Q$-Cartier, 
				\item $(X_y, D_y)$ is a connected $n$-dimensional stable pair for all $y \in Y$, and
				\item $(K_{X_y} + D_y)^n = v$ for $y \in Y$.
			\end{enumerate}
			We denote a family of stable pairs by $f : (X,D) \to Y$. 
		\end{definition}
				
If we are satisfied working only over normal bases, then this definition of a family of stable pairs suffices. In fact, any moduli functor $\calM$ with
		$$
		\calM(Y) = \left\{ \parbox{20em}{families of stable pairs $f : (X,D) \to Y$ of dimension $n$ and volume $v$ as in Definition \ref{def:stablefamily}}\ \right\}
		$$
		for $Y$ normal has the same normalization by Proposition \ref{uniquenorm} (see also Definition 5.2 and Remark 5.15 of \cite{kp}). Therefore for many questions about moduli of stable pairs, one needs only consider families over a normal base.

\subsection{Vanishing theorems} 

The existence of reduction morphisms between the moduli spaces will rely on the proof of a vanishing theorem for higher cohomologies which implies invariance of log plurigenera for a family of $\calA$-weighted broken elliptic surfaces (see Section \ref{sec:vanishing}). There are various preliminary vanishing results we will use along the way that we record here for convenience.

The first is a version of Grauert-Riemenschneider vanishing theorem for surfaces. The proof is analagous to the proof of \cite[Theorem 10.4]{singmmp}.  

\begin{prop}(Grauert-Riemenschneider vanishing)\label{prop:gr} Let $X$ be an slc surface and $f : X \to Y$ a proper, generically finite morphism with exceptional curves $C_i$ such that $E = \bigcup_i C_i$ is a connected curve with arithmetic genus $0$. Let $L$ be a line bundle on $X$. Suppose
\begin{enumerate} 
\item  $C_i$ is a $\Q$-Cartier divisor for all $i$;
\item $C_i.E \le 0$ for all $i$; and
\item $\deg(L|_{C_i}) = 0$ for all $i$. 
\end{enumerate}
Then $R^1f_*L = 0$. \end{prop}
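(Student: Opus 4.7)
The argument will follow the standard cohomological proof of Grauert-Riemenschneider vanishing via the theorem on formal functions, adapted to the slc setting. Since $f$ is proper and generically finite, $R^1 f_* L$ is a coherent sheaf supported on the finite set $f(E) \subset Y$, so by formal functions it suffices to prove $H^1(X_n, L|_{X_n}) = 0$ for every $n \ge 1$, where $X_n$ is the infinitesimal neighborhood of $E$ defined by $\calI_E^n$. The short exact sequences
\[
0 \to (\calI_E^n / \calI_E^{n+1}) \otimes L \to L|_{X_{n+1}} \to L|_{X_n} \to 0
\]
reduce the problem inductively to showing $H^1\bigl(E, (\calI_E^n / \calI_E^{n+1}) \otimes L\bigr) = 0$ for all $n \ge 0$, with the convention $\calI_E^0 = \calO_X$.

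For the base case $n=0$, the hypothesis $p_a(E) = 0$ forces $E$ to be a connected tree of rational curves (its dual graph has no cycles and the normalizations of the components have geometric genus zero). Combined with $\deg L|_{C_i} = 0$, the bundle $L|_E$ has degree zero on every irreducible component. Then $H^1(E, L|_E) = 0$ follows from a leaf-peeling induction on the dual graph: order the components so that $C_i$ meets $E_{i-1} := C_1 \cup \cdots \cup C_{i-1}$ in a single point $p_i$, and use the sequences $0 \to L|_{C_i}(-p_i) \to L|_{E_i} \to L|_{E_{i-1}} \to 0$ together with the vanishing $H^1(\mathbb{P}^1, \calO(-1)) = 0$ on each leaf.

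For the inductive step $n \ge 1$, I would filter $(\calI_E^n / \calI_E^{n+1}) \otimes L$ by subsheaves supported on individual components, whose graded pieces are controlled by $\mathrm{Sym}^n$ of the conormal sheaf $\calI_E / \calI_E^2$ restricted to each $C_i$. The hypothesis $C_i \cdot E \le 0$ translates into non-negativity of the degree of these pieces on $C_i$, after clearing denominators using the $\Q$-Cartier hypothesis (for instance by passing to an index-one cyclic cover where the $C_i$ become Cartier, computing intersection numbers there, and descending). Tensoring with the degree-zero bundle $L|_{C_i}$ preserves non-negativity, and the leaf-peeling argument again gives $H^1 = 0$ on each graded piece. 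The main obstacle is exactly this inductive step: one must carefully analyze $\calI_E^n / \calI_E^{n+1}$ as a sheaf on the possibly non-smooth tree $E$, tracking both the slc singularities of $X$ along $E$ (which can produce torsion or non-locally-free behavior in the conormal sheaf) and the failure of the $C_i$ to be Cartier, the latter of which forces the index-one-cover detour above.
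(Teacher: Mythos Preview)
Your overall strategy via the theorem on formal functions is the same as the paper's, but the inductive setup differs in an important way. You filter by powers of $\calI_E$ and then must analyze $\calI_E^n/\calI_E^{n+1}$ as a sheaf on $E$; as you correctly flag, this is the main obstacle, and you do not actually carry it out. In the slc, only $\Q$-Cartier setting the identification of $\calI_E^n/\calI_E^{n+1}$ with $\mathrm{Sym}^n$ of a conormal bundle is not available, torsion can appear, and your proposed index-one-cover workaround would need a careful justification that descent preserves the vanishing you want. So as written the inductive step is a genuine gap.

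The paper sidesteps this entirely by inducting not on $n$ but on the total multiplicity $\sum r_i$ of an \emph{arbitrary} effective cycle $Z = \sum r_i C_i$ supported on $E$. At each step one peels off a single reduced copy of some $C_i$ via the sequence
\[
0 \to L|_{C_i} \otimes \calO_X(-Z_i) \to L|_Z \to L|_{Z_i} \to 0, \qquad Z_i = Z - C_i,
\]
and only needs $H^1$ of the left-hand term to vanish for \emph{some} choice of $i$. By Serre duality on $C_i \cong \mathbb{P}^1$ this amounts to $Z_i \cdot C_i < 2$, and the existence of such a component is exactly what Artin's combinatorics of fundamental cycles for rational singularities provides (applied on the normalization). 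This induction never touches conormal sheaves or symmetric powers, never requires $E$ itself to be Cartier, and uses the hypotheses $p_a(E)=0$ and $C_i \cdot E \le 0$ only through Artin's result. Reorganizing your induction in this way closes the gap cleanly.
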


\begin{proof} Let $Z = \sum_{i =1}^s r_i C_i$ be an effective integral cycle. Then we prove using induction  that the stalk $(R^1 f_*L)_{Y, p} = \varprojlim_Z H^1(Z, L|_{Z}) = 0$. As $f$ is finite away from $p = f(E)$, this gives $R^1f_*L = 0$. 

Let $C_i$ be an irreducible curve contained in $\mathrm{Supp}(Z)$, and let $Z_i = Z - C_i$. Consider the short exact sequence: $$0 \to \calO_{C_i} \otimes \calO_X(-Z_i) \to \calO_Z \to \calO_{Z_i} \to 0. $$ Tensoring with $L$ we obtain: $$ 0 \to \calO_{C_i} \otimes L(-Z_i) \to L \otimes \calO_Z \to L \otimes \calO_{Z_i} \to 0.$$ By induction on $\sum r_i$, we know that $H^1(Z_i, L|_{Z_i}) = 0$. Therefore, it suffices to show that $H^1(C_i, \calO_{C_i} \otimes L(-Z_i)) = 0$ for some $i$. Moreover, by Serre duality it suffices to show that $$L \cdot C_i - Z_i \cdot C_i > \deg \omega_{C_i} = -2.$$ 

By assumption, $L \cdot C_i = 0$, so it suffices to show that $-Z_i \cdot C_i > -2$, i.e. that $Z_i \cdot C_i < 2$. This follows from Artin's results on intersection theory of exceptional curves for rational surface singularities \cite{artin} applied to the normalization of $X$, as $C_i$ and $E$ are rational exceptional curves.  \end{proof}

Next we will use Fujino's generalization of the Kawamata-Viehweg vanishing theorem for slc pairs. Before stating the result, we will need to make a preliminary definition. 

\begin{definition}
Let $(X, \Delta)$ be a semi-log canonical pair and let $\nu: X^{\nu} \to X$ be the normalization. Let $\Theta$ be a divisor on $X^{\nu}$, so that $(K_{X^{\nu}} + \Theta) = \nu^*(K_X + \Delta)$. A subvariety $W \subset X$ is called an \textbf{slc center} of $(X,\Delta)$ if there exists a resolution of singularities $f: Y \to X^{\nu}$ and a prime divisor $E$ on $Y$ such that the discrepancies $a(E, X^{\nu}, \Theta) = -1$ and $\nu \circ f(E) = W$. A subvariety $W \subset X$ is called an \textbf{slc stratum} if $W$ is an slc center, or an irreducible component.\end{definition}

Now we state Fujino's theorem.

\begin{theorem}\label{fujinovanishing} \cite[Theorem 1.10]{fundamental} Let $(X,\Delta)$ be a projective semi-log canonical pair, $L$ a $\Q$-Cartier divisor whose support does not contain any irreducible components of the conductor, and  $f : X \to S$ a projective morphism. Suppose $L - (K_X + \Delta)$ is $f$-nef and additionally is $f$-big over each slc stratum of $(X,\Delta)$. Then $R^if_*\calO_X(L) = 0$ for $i > 0$. 
\end{theorem}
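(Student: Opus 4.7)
The plan is to establish the vanishing by reducing to a normal log canonical setting where a Hodge-theoretic Kawamata--Viehweg-type vanishing applies, and then handling the non-normal locus inductively using the stratification by semi-log canonical centers.

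First I would pass to the normalization $\nu : X^\nu \to X$, writing $K_{X^\nu} + \Theta = \nu^*(K_X + \Delta)$ so that the conductor $C^\nu$ appears in $\lfloor\Theta\rfloor$ with coefficient one and $(X^\nu, \Theta)$ is log canonical. The conductor short exact sequence
$$
0 \to \calO_X(L) \to \nu_*\calO_{X^\nu}(\nu^*L) \to \mathcal{Q} \to 0
$$
identifies the cokernel $\mathcal{Q}$ with the pushforward of an analogous sheaf on the normalization of the non-normal locus, which itself carries the structure of an slc pair of strictly smaller dimension (the conductor with its induced different). Inducting on $\dim X$, the long exact sequence for $R^\bullet f_*$ reduces matters to vanishing of $R^i(f \circ \nu)_* \calO_{X^\nu}(\nu^* L)$ on the normal lc pair, provided one verifies that the nef and big-over-strata hypotheses pass to $\nu^*L$ and to the induced pair on the conductor.

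Second, on $(X^\nu,\Theta)$ I would take a log resolution $\pi : Y \to X^\nu$ with simple normal crossings, write $K_Y + \Theta_Y = \pi^*(K_{X^\nu} + \Theta) + E$ where $\Theta_Y$ is the strict transform plus reduced exceptional divisor and $E$ is effective and $\pi$-exceptional, and study $R^j(f\circ\nu\circ\pi)_* \calO_Y(\pi^*\nu^*L + E)$. Here the Hodge-theoretic Kawamata--Viehweg vanishing for simple normal crossings pairs, together with Koll\'ar's injectivity and torsion-freeness theorems, supplies the required $R^j\pi_*$-vanishing; composing with the earlier reductions then yields the statement.

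The main obstacle is that classical Kawamata--Viehweg demands global bigness of $L - (K_X + \Delta)$, whereas here we only assume bigness over each slc stratum. To deal with this I would stratify by slc centers ordered by dimension: restricting to each center, adjunction produces an lc pair of strictly smaller dimension on which the hypotheses persist and on which vanishing holds by induction, and these individual vanishings can be spliced together using Koll\'ar's injectivity theorem together with a Mayer--Vietoris type argument on the stratification — this is essentially the framework of Ambro's quasi-log varieties as systematized by Fujino. The delicate technical heart of the argument is setting up the stratum induction correctly and tracking that the big-over-strata hypothesis is preserved under normalization and resolution; once that bookkeeping is in place the per-stratum vanishing is a standard application of Koll\'ar-type results.
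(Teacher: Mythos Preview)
This theorem is not proved in the paper; it is simply quoted from Fujino's paper \cite[Theorem 1.10]{fundamental} and used as a black box throughout Section~\ref{sec:vanishing}. There is therefore no ``paper's own proof'' to compare against.

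That said, your sketch is a reasonable outline of the strategy Fujino actually uses: pass to the normalization to reduce to the log canonical case, then work via the quasi-log variety framework of Ambro--Fujino, with Koll\'ar-type injectivity and torsion-freeness theorems doing the heavy lifting on a suitable resolution. A couple of points where your sketch is imprecise: the conductor exact sequence you wrote is not quite the right object --- one needs a more careful comparison involving the different on the conductor and its induced slc structure, and this is where Fujino's theory of quasi-log schemes is genuinely needed rather than a naive induction on dimension. Also, the step ``splicing together individual vanishings using Koll\'ar's injectivity theorem together with a Mayer--Vietoris type argument'' hides most of the real work; Fujino's actual argument requires the full apparatus of his mixed-Hodge-theoretic injectivity theorem for simple normal crossings pairs, and the stratification bookkeeping is considerably more involved than a single inductive step. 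Your outline captures the architecture but not the technical core.
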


We begin with general definitions, properties, and results on elliptic surfaces and their log canonical models.

\subsection{Standard elliptic surfaces} We point the reader to \cite{mir3} for a detailed exposition on the theory of elliptic surfaces. 

\begin{definition}\label{def:standardes} An irreducible \textbf{elliptic surface with section} $(f : X \to C, S)$ is an irreducible surface $X$ together with a surjective proper flat morphism $f: X \to C$ to a proper smooth curve and a section $S$ such that:
\begin{enumerate}
\item the generic fiber of $f$ is a stable elliptic curve, and
\item the generic point of the section is contained in the smooth locus of $f$. 
\end{enumerate}
We say $(f: X \to C, S)$ is \textbf{standard} if all of $S$ is contained in the smooth locus of $f$. 

\end{definition} 

This definition differs from the usual definition of an elliptic surface in that we only require the generic fiber to be a \emph{stable} elliptic curve. 

\begin{definition} A \textbf{Weierstrass fibration} $(f : X \to C, S)$ is an elliptic surface with section as above, such that the fibers are reduced and irreducible. \end{definition}

\begin{definition}
	A surface is \textbf{semi-smooth} if it only has the following singularities:\begin{enumerate}
		\item 2-fold normal crossings (locally $x^2 = y^2$), or
		\item pinch points (locally $x^2 = zy^2$).
	\end{enumerate}
\end{definition}

\begin{definition}
	A \textbf{semi-resolution} of a surface $X$ is a proper map $g: Y \to X$ such that $Y$ is semi-smooth and $g$ is an isomorphism over the semi-smooth locus of $X$.
\end{definition}

\begin{definition}
	An elliptic surface is called \textbf{relatively minimal} if it is semi-smooth and there is no $(-1)$-curve in any fiber. 
\end{definition}

Note that a relatively minimal elliptic surface with section is standard. \\

If $(f:X \to C, S)$ is a standard elliptic surface then there are finitely many fiber components not intersecting the section. We can contract these to obtain an elliptic surface with all fibers reduced and irreducible: 

\begin{definition} \label{weierstrassfibration} If $(f:X \to C, S)$ is a standard elliptic surface then the Weierstrass fibration $f' : X' \to C$ with section $S'$ obtained by contracting any fiber components not intersecting $S$ is the \textbf{Weierstrass model} of $(f:X\to C, S)$. If $(f : X \to C, S)$ is relatively minimal, then we refer to $f' : X' \to C$ as the \textbf{minimal Weierstrass model}. \end{definition}

\begin{definition}\label{linebundle} The \textbf{fundamental line bundle} of a standard elliptic surface $(f : X \to C, S)$ is $\LL := (f_*N_{S/X})^{-1}$ where $N_{S/X}$ denotes the normal bundle of $S$ in $X$. For $(f : X \to C, S)$ an arbitrary elliptic surface, we define $\LL := (f'_*N_{S'/X'})^{-1}$ where $(f' : X' \to C, S')$ is a minimal semi-resolution. \end{definition}

Since $N_{S/X}$ only depends on a neighborhood of $S$ in $X$, the line bundle $\LL$ is invariant under taking a semi-resolution or the Weierstrass model of a standard elliptic surface. Therefore $\LL$ is well defined and equal to $(f'_*N_{S'/X'})^{-1}$ for $(f' : X' \to C, S')$ a minimal semi-resolution of $(f : X \to C, S)$. 

The fundamental line bundle greatly influences the geometry of a minimal Weierstrass fibration. The line bundle $\LL$ has non-negative degree on $C$ and is independent of choice of section $S$ \cite{mir3}. Furthermore, $\LL$ determines the canonical bundle of $X$:

\begin{prop}\cite[Proposition III.1.1]{mir3}\label{classcan} Let $(f : X \to C, S)$ be either \begin{enumerate*} \item a Weierstrass fibration, or \item a relatively minimal smooth elliptic surface \end{enumerate*}. Then
$
\omega_X = f^*(\omega_C \otimes \LL).
$
\end{prop}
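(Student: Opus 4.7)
The plan is to prove the equivalent relative statement $\omega_{X/C} \cong f^*\LL$; tensoring with $f^*\omega_C$ then yields the proposition. The argument has two steps: show that $\omega_{X/C}$ is pulled back from $C$, then identify that pullback by restricting to the section.

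For the first step, I would verify that $\omega_{X/C}|_F \cong \calO_F$ for every fiber $F$ of $f$. In the Weierstrass case, each fiber is a reduced irreducible Gorenstein curve of arithmetic genus one (smooth elliptic, nodal rational, or cuspidal rational), so its dualizing sheaf is trivial by the genus formula. In the smooth relatively minimal case, the existence of a section rules out multiple fibers, so every fiber is a Kodaira fiber, and each Kodaira fiber is again a Gorenstein curve of arithmetic genus one with trivial dualizing sheaf. Cohomology and base change then imply that $f_*\omega_{X/C}$ is a line bundle $M$ on $C$ and that the natural adjunction map $f^*M \to \omega_{X/C}$ is an isomorphism (as it is so on each fiber).

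To pin down $M$, I would restrict to $S$. Since $S$ lies in the smooth locus of $f$, it is Cartier in a neighborhood of itself, and adjunction gives
\[
\omega_S \;\cong\; \omega_X|_S \otimes N_{S/X}.
\]
Transporting along the isomorphism $f|_S : S \xrightarrow{\sim} C$, the definition $\LL = (f_*N_{S/X})^{-1}$ identifies $N_{S/X}$ with $\LL^{-1}$, hence $\omega_X|_S \cong \omega_C \otimes \LL$. Subtracting the contribution of $f^*\omega_C|_S \cong \omega_C$ yields $\omega_{X/C}|_S \cong \LL$. On the other hand $\omega_{X/C}|_S \cong (f|_S)^*M \cong M$, so $M \cong \LL$ as desired.

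The main obstacle is Step 1: one needs triviality of the dualizing sheaf on every fiber, not merely numerical triviality of $\omega_X$ on each fiber component. This is immediate when fibers are irreducible (the Weierstrass case), but in the smooth relatively minimal case it requires Kodaira's classification of singular fibers together with the crucial fact that a section forces all fiber multiplicities to equal one.
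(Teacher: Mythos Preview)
The paper does not supply its own proof of this proposition; it is quoted directly from Miranda's lecture notes \cite{mir3} and used as background. Your argument is the standard one and is correct. The only point worth flagging is the one you already isolate in your final paragraph: verifying $\omega_F \cong \calO_F$ for every Kodaira fiber (including the non-reduced ones) is the nontrivial input, and it is indeed a consequence of Kodaira's classification together with the absence of multiple fibers. With that granted, the cohomology-and-base-change step and the identification via adjunction on the section go through exactly as you describe.
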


We prove a more general canonical bundle formula in \cite{calculations} (see Proposition \ref{canonical}).

\begin{definition} We say that $f : X \to C$ is \textbf{properly elliptic} if $\deg(\omega_C \otimes \LL) > 0$. \end{definition}

We note that $X$ is properly elliptic if and only if the Kodaira dimension $\kappa(X) = 1$. 

\subsection{Singular fibers}

When $(f : X \to C, S)$ is a smooth relatively minimal elliptic surface, then $f$ has finitely many singular fibers. These are unions of rational curves with possibly non-reduced components whose dual graphs are $ADE$ Dynkin diagrams. The possible singular fibers were classified independently by Kodaira and Ner\'on.

Table \ref{table} gives the full classification in Kodaira's notation for the fiber.  Fiber types $\mathrm{I}_n$ for $n \geq 1$ are reduced and normal crossings, fibers of type $\mathrm{I}^*_n, \mathrm{II}^*, \mathrm{III}^*$, and $\mathrm{IV}^*$ are normal crossings but nonreduced, and fibers  of type $\mathrm{II}, \mathrm{III}$ and $\mathrm{IV}$ are reduced but not normal crossings. 

For $f : X \to C$ isotrivial with $j = \infty$, La Nave classified the Weierstrass models with log canonical singularities in \cite[Lemma 3.2.2]{ln} (see also \cite[Section 5]{calculations}). They have equation $y^2 = x^2(x - t^k)$ for $k = 0,1$ and $2$ and we call these $\mathrm{N}_0$, $\mathrm{N}_1$ and $\mathrm{N}_2$ fibers respectively. 



\begin{table}
\centering
\caption{Kodaira's classification of singular fibers of a smooth minimal elliptic surface}\label{table:kodaira}
\label{table}
\begin{tabular}{|l|l|l|}
\hline
Kodaira Type        & \# of components           & Fiber                                                                                                 \\ \hline
$\mathrm{I}_0$               & 1                          & \begin{minipage}{.3\textwidth}\includegraphics[scale=.45]{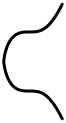}

\end{minipage}    \\ \hline

$\mathrm{I}_1$               & 1 (double pt)              & \begin{minipage}{.3\textwidth}\includegraphics[scale=.5]{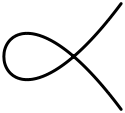}\end{minipage}   \\ \hline

$\mathrm{I}_2$               & 2 (2 intersection pts)     & \begin{minipage}{.3\textwidth}\includegraphics[scale=.5]{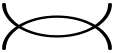}\end{minipage}   \\ \hline

$\mathrm{I}_n$, $n \geq 2$   & $n$ ($n$ intersection pts) & \begin{minipage}{.3\textwidth}\includegraphics[scale=.5]{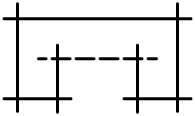}\end{minipage}   \\ \hline
$\mathrm{II}$                & 1 (cusp)                   & \begin{minipage}{.3\textwidth}\includegraphics[scale=.5]{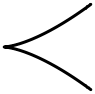}\end{minipage}     \\ \hline
$\mathrm{III}$               & 2 (meet at double pt)      & \begin{minipage}{.3\textwidth}\includegraphics[scale=.5]{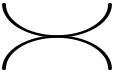}\end{minipage} \\ \hline
$\mathrm{IV}$                & 3 (meet at 1 pt)           & \begin{minipage}{.3\textwidth}\includegraphics[scale=.5]{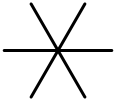}\end{minipage}    \\ \hline
$\mathrm{I}_0^*$             & 5                          & \begin{minipage}{.3\textwidth}\includegraphics[scale=.5]{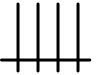}\end{minipage}     \\ \hline
$\mathrm{I}_n^*$, $n \geq 1$ & $5 + n$                    & \begin{minipage}{.3\textwidth}\includegraphics[scale=.5]{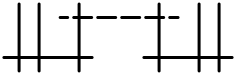}\end{minipage}   \\ \hline
$\mathrm{II}^*$              & 9                          & \begin{minipage}{.3\textwidth}\includegraphics[scale=.5]{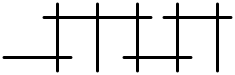}\end{minipage}   \\ \hline
$\mathrm{III}^*$             & 8                          & \begin{minipage}{.3\textwidth}\includegraphics[scale=.5]{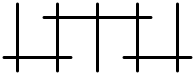}\end{minipage}    \\ \hline
$\mathrm{IV}^*$              & 7                          & \begin{minipage}{.3\textwidth}\includegraphics[scale=.5]{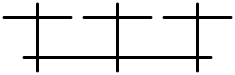}\end{minipage}    \\ \hline
$\mathrm{N}_{\mathrm{I}}$ & 1 (node) & \begin{minipage}{.3\textwidth}\includegraphics[scale=.5]{ii}\end{minipage}     \\ \hline

\end{tabular}
\end{table}

\section{Log canonical models of elliptic surfaces}\label{sec:ellipticbackground}

We begin with a discussion of results from \cite{calculations}.

\subsection{Log canonical models of $\calA$-weighted elliptic surfaces}\label{sec:local}

Let $\calA = (a_1, \ldots, a_r) \in (\Q \cap [0,1])^r$ be a rational \emph{weight vector} with $0 \leq a_i \leq 1$. We will consider \emph{Weierstrass} elliptic surfaces marked by an $\calA$-weighted sum $F_\calA = \sum_{i = 1}^r a_i F_i$.
Note that the weights come with a natural partial ordering. We say that $\cA = (a'_1, \dots, a'_r) < \calA$ if $a_i' \leq a_i$ for all $i$, and if the inequality is strict for at least one $i$. If $s \in \mathbb{Q}$ is a rational number, we write $\calA \le s$ ($\calA \geq s$) if $a_i \le s$ ($a_i \geq s$) for all $i$. Our goal is to compare stable pair compactifications of the moduli space of $\calA$-weighted elliptic surface pairs for various weight vectors $\calA$.

As a first step, we need to understand the log canonical models of Weierstrass elliptic surface pairs and how they depend on the weights $\calA$. That is, given a Weierstrass elliptic surface pair $(g:Y\to C, S)$ and an $\calA = (a_1, \ldots, a_n)$-weighted sum of marked fibers $$F_{\calA} = \sum_i a_i F_i,$$ we need to compute the log canonical model for all weights $\calA$. This is based on the computations in \cite{calculations}. 

Our study of log canonical models of an elliptic surface pair $(f : X \to C, S + F_\calA)$ proceeds in two steps: first we compute the relative canonical model of $(X, S + F_\calA)$ over the curve $C$ and then contract the section or whole components if necessary according to the log minimal model program.

Let $(g: Y \to C, S + F_\calA)$ be an $\calA$-weighted Weierstrass elliptic fibration over a smooth curve. We want to compute the relative log canonical model of the pair $(Y, S + F_\calA)$ relative to the fibration $g$. That is, we wish to take a suitable log resolution $\mu : Y' \to Y$ and compute the log canonical model of $(Y', \mu_*^{-1}S + \mu_*^{-1}(F_\calA) + \mathrm{Exc}(\mu))$ relative to $g \circ \mu : Y' \to C$.  In what follows, unless otherwise specified, by relative log canonical model we mean relative to the base curve $C$.

This computation is local on the base so for the rest of this subsection, we assume that $C = \Spec(R)$ is a spectrum of a DVR with closed point $s$ and generic point $\eta$. We then consider the log pair $(Y,S + aF)$ where $F = g^*(s)$ and $0 \le a \le 1$.

\begin{definition}\label{canonicalweir}\cite[Definition 3.2.3]{ln} A normal Weierstrass elliptic fibration $(g : Y \to C, S)$ over the spectrum of a DVR with Weierstrass equation $y^2 = x^3 + ax + b$ is called a \textbf{standard Weierstrass model} if $\mathrm{min}(\mathrm{val}(3n), \mathrm{val}(2m)) \le 12$. A non-normal Weierstrass fibration with equation $y^2 = x^2(x - at^k)$ is called a \textbf{standard Weierstrass model} if $k \le 2$. \end{definition} 

\begin{prop}\cite[Corollary 3.2.4]{ln} A Weierstrass elliptic fibration $g : Y \to C$ over the spectrum of a DVR is (semi-)log canonical if and only if it is a standard Weierstrass model. \end{prop}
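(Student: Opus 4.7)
The plan is to prove both directions by reducing to an explicit classification, leveraging Kodaira--N\'eron theory of singular fibers of elliptic surfaces. I would handle the normal case (equation $y^2 = x^3 + ax + b$) and the non-normal case (equation $y^2 = x^2(x - at^k)$) separately, since they have quite different local structure.

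For the normal case: first I would recall that $(Y,S)$ is lc iff $Y$ has du Val (ADE) or elliptic (simple elliptic / cusp) singularities along the closed fiber, by adjunction to $S$ and standard inversion. The strategy is to show that ``standard Weierstrass'' is exactly the condition that the equation $y^2 = x^3 + ax + b$ is not reducible by the change of variables $(x, y, t) \mapsto (t^2 x, t^3 y, t)$, which divides both sides by $t^{12}$. Concretely, $\min(3\,\mathrm{val}(a), 2\,\mathrm{val}(b)) \leq 12$ is equivalent to saying this rescaling does not yield a well-defined new Weierstrass equation with strictly smaller coefficient valuations. I would then invoke Tate's algorithm (or Kodaira's classification, Table~\ref{table}) to observe that the equations satisfying the bound are in one-to-one correspondence with the Kodaira fiber types $\mathrm{I}_n, \mathrm{I}_n^*, \mathrm{II}, \mathrm{III}, \mathrm{IV}, \mathrm{II}^*, \mathrm{III}^*, \mathrm{IV}^*$, and for each type one computes the minimal resolution and checks by hand (or cites standard facts) that the singularity on the Weierstrass model is either smooth, a rational double point, a simple elliptic singularity, or a cusp---all of which are log canonical.

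For the converse, if $\min(3\,\mathrm{val}(a), 2\,\mathrm{val}(b)) > 12$, I would apply the substitution $(x,y) \mapsto (t^2 x', t^3 y')$ to obtain an equation $y'^2 = x'^3 + a' x' + b'$ with $a' = a/t^4$, $b' = b/t^6$ (still regular), and observe that the new Weierstrass model $Y'$ is related to $Y$ by a weighted blow up whose exceptional divisor receives discrepancy coefficient $-a$ with $a > 1$. More directly, one can compute the log discrepancy of the weighted blow up of $Y$ at the singular point with weights $(2,3,1)$ on $(x,y,t)$: the exceptional divisor $E$ has discrepancy exactly $5 - \min(3\,\mathrm{val}(a), 2\,\mathrm{val}(b))/2$, which is $< -1$ precisely when the standard bound fails. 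This exhibits a divisor with log discrepancy $< 0$, proving the Weierstrass model is not lc.

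For the non-normal case, I would pass to the normalization, which is a $\PP^1$-bundle over $C$, and observe that the double curve of $Y$ is computed explicitly from the equation $y^2 = x^2(x - at^k)$. The conductor on the normalization pulls back to a divisor whose log discrepancies along the central fiber can be computed directly. One shows that $(X^\nu, \Delta^\nu + D^\nu)$ is lc iff $k \leq 2$, by explicitly resolving the singularities of the equation in the three cases $k = 0, 1, 2$ (giving the $\mathrm{N}_0, \mathrm{N}_1, \mathrm{N}_2$ fibers) and exhibiting a discrepancy $< -1$ for $k \geq 3$. The main obstacle will be handling the transition at the boundary of the standard range uniformly across all fiber types---in practice this is done by the case-by-case Tate/Kodaira analysis, but care must be taken at types $\mathrm{II}^*, \mathrm{III}^*, \mathrm{IV}^*, \mathrm{I}_n^*$ where the fiber is non-reduced and one must check that the induced boundary on a semi-resolution still has coefficients $\leq 1$ in the slc pair $(Y, S)$.
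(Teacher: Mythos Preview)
The paper does not give its own proof of this proposition; it is cited directly from La Nave \cite[Corollary 3.2.4]{ln}. Your overall strategy---classifying singularities in the standard case via Kodaira--Tate, and exhibiting a divisor of discrepancy $<-1$ in the non-standard case---is the natural one and is essentially how La Nave argues.

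That said, there is a genuine computational error in your reverse direction. The exceptional divisor of the $(2,3,1)$-weighted blowup of $\mathbb{A}^3$ restricted to the strict transform of $Y$ has discrepancy $5-\operatorname{mult}_w(f)$, where $\operatorname{mult}_w(f)=\min(6,\,2+\operatorname{val}(a),\,\operatorname{val}(b))$; for a non-standard model ($\operatorname{val}(a)\ge 5$, $\operatorname{val}(b)\ge 7$) this is $5-6=-1$, \emph{not} $<-1$. Your formula $5-\min(3\operatorname{val}(a),2\operatorname{val}(b))/2$ happens to give the right threshold, but it is not the discrepancy of the blowup you describe, nor of any single toric valuation I can identify. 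The point is that this weighted blowup is a resolution but not a \emph{log} resolution: its exceptional curve is the cuspidal cubic $y^2=x^3$, so one must blow up further. Carrying out the three point-blowups that resolve the cusp on the smooth surface $Y'$, one finds (for the minimal non-standard case $\operatorname{val}(a)=5$, $\operatorname{val}(b)=7$) an exceptional divisor with coefficient $a_E=2>1$ in the paper's convention, confirming non-lc. Equivalently, one can iterate the substitution $(x,y)\mapsto(t^2x,t^3y)$: the $k$-th iterate produces an exceptional divisor of discrepancy $-k$, and non-standard forces $k\ge 2$ once you account for the further resolution needed to make everything snc. Either way, the single $(2,3,1)$ blowup is not enough; you must push one step further.
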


\begin{definition}\label{def:twisted} Let $(g: Y \to C, S' + aF')$ be a Weierstrass elliptic surface pair over the spectrum of a DVR and let $(f: X \to C, S + F_a)$ be its relative log canonical model. We say that $X$ has a:  
\begin{enumerate} 
\item \textbf{twisted fiber} if the special fiber $f^*(s)$ is irreducible and $(X,S + E)$ has (semi-)log canonical singularities where $E = f^*(s)^{red}$;   
\item \textbf{intermediate fiber} if $f^*(s)$ is a nodal union of an arithmetic genus zero component $A$, and a possibly non-reduced arithmetic genus one component supported on a curve $E$ such that the section meets $A$ along the smooth locus of $f^*(s)$ and the pair $(X, S + A + E)$ has (semi-)log canonical singularities. 
\item  \textbf{standard (resp. minimal) intermediate fiber} if $(g : Y \to C, S)$ is a standard (resp. minimal) Weierstrass model.
\end{enumerate} 
\end{definition}

Let $X$ be the relative log canonical model of $(Y, S' + aF') \to C$, and let $\mu : X \dashrightarrow Y$ be the birational map to $Y$. Then the divisor $E$ in both the twisted and intermediate cases is an exceptional divisor for $\mu$. Therefore $\mathrm{lct}(X,0,E) = 1$ and $E$ appears with coefficient one in the log canonical pair $(X, \mu^{-1}_*(S' + aF') + \mathrm{Exc}(\mu))$. In particular, $F_a = \mu^{-1}_*(aF') + \mathrm{Exc}(\mu)$ contains $E$ with coefficient one.

\begin{lemma}\label{lemma:twistedboundary} Let $(f : X \to C, S + F_a)$ be the relative log canonical model of a Weierstrass model and suppose that $f : X \to C$ has twisted central fiber. Then $F_a = E$. \end{lemma}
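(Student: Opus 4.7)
The plan is to use the formula $F_a = \mu^{-1}_*(aF') + \mathrm{Exc}(\mu)$ established just before the lemma, and show that in the twisted case $\mu^{-1}_*(F') = 0$ while $\mathrm{Exc}(\mu) = E$.

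First I would pass to a common log resolution. Let $p : Y' \to Y$ be a log resolution of $(Y, S' + F')$ with exceptional divisors $\{E_i\}$, and let $q : Y' \to X$ be the induced morphism to the log canonical model, so that $\mu = p \circ q^{-1}$ as a birational map. The boundary on $X$ is then
\[
F_a = q_* \bigl( a\, p^{-1}_*(F') + \textstyle\sum_i E_i \bigr).
\]

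In the twisted case the central fiber of $X$ is supported on the single irreducible curve $E$, so $q$ must contract every component of the central fiber of $Y'$ apart from the one that maps birationally onto $E$. I claim that the surviving component must be one of the $E_i$, not the strict transform $p^{-1}_*(F')$: if $q$ did not contract $p^{-1}_*(F')$, then since $p$ restricts to a birational map $p^{-1}_*(F') \to F'$, the composition $\mu$ would restrict to a birational map from $E$ onto $F'$, contradicting the $\mu$-exceptionality of $E$ recorded in the preceding paragraph. Thus $q$ contracts $p^{-1}_*(F')$ along with all but one exceptional divisor $E_0$, and pushing forward gives $q_*(p^{-1}_*(F')) = 0$, $q_*(E_i) = 0$ for $i \neq 0$, and $q_*(E_0) = E$. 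Substituting into the displayed formula yields $F_a = E$.

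The only non-trivial step is the claim that $p^{-1}_*(F')$ lies among the contracted components. This is precisely where the twisted hypothesis enters---specifically through the fact that the central fiber $E$ of $X$ is $\mu$-exceptional---while the rest of the argument is routine bookkeeping of pushforwards on the log resolution.
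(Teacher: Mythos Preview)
Your proof is correct and uses essentially the same ingredients as the paper: the formula $F_a = \mu^{-1}_*(aF') + \mathrm{Exc}(\mu)$, the fact that $E$ appears with coefficient one, and the irreducibility of the twisted fiber. The paper's own argument is a one-liner---since $F_a$ is supported on the central fiber, which in the twisted case is the single irreducible curve $E$, and since $E$ already occurs in $F_a$ with coefficient one, necessarily $F_a = E$---so your passage through a common log resolution and the analysis of which component survives under $q$ is unnecessary extra work, though not incorrect.
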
 

\begin{proof} The boundary divisor of the log canonical model is given by $\mu^{-1}_*(S' + aF') + \mathrm{Exc}(\mu)$ where $\mu : X \dashrightarrow Y$ is the natural birational map. Then $F_a = \mu^{-1}_*(aF') + \mathrm{Exc}(\mu)$ is a divisor supported on the fiber of $f$ and contains $E$ with coefficient one. Since the fiber is twisted then $F_a = E$. 
\end{proof} 

The terminology for a twisted fiber comes from the fact that these fibers are exactly those that appear in the coarse space of a flat family $\calX \to \calC$ of \emph{stable} elliptic curves over a \emph{orbifold} base curve $\calC$. Equivalently, a twisted fiber is obtained by taking the quotient of a family of stable curves over the spectrum of a DVR by a subgroup of the automorphism group of the central fiber. This notion is introduced in \cite{av} for the purpose of obtaining fibered surfaces from twisted stable maps. In \cite[Proposition 4.12]{tsm} it is proved that any twisted fiber pair $(f: X \to C, S + E)$ as in the conclusion of Lemma \ref{lemma:twistedboundary} is obtained as the coarse space of family of stable curves over an orbifold curve. Moreover, twisted models exist and are unique.

\begin{lemma}\label{lemma:twisted} Let $(g : Y \to C, S')$ be a Weierstrass elliptic surface over the spectrum of a DVR and suppose that there exists a birational model of $Y$ with an intermediate fiber. Then the following birational models are isomorphic:
\begin{enumerate}
    \item the twisted model $(f_1 : X_1 \to C, S_1 + E_1)$,
    \item the log canonical model of the intermediate model $(f : X \to C, S + A + E)$ with coefficient one,
    \item the log canonical model of the Weierstrass fiber $(g : Y \to C, S' + F')$ with coefficient one. 
\end{enumerate}
Moreover, in this case there is a morphism $X \to X_1$ contracting the component $A$ to a point. 
\end{lemma}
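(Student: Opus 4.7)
My plan is to identify all three models by showing that (2) is obtained from the intermediate pair by a single log canonical contraction of $A$, that this contraction lands on the twisted model (1), and that the log canonical models of the intermediate and Weierstrass pairs agree by invariance under further log resolution.

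First I would carry out the intersection-theoretic computation on the intermediate model $(f\colon X \to C, S + A + E)$. Since $A \cong \PP^1$, adjunction gives $(K_X + A)\cdot A = -2$; by the definition of an intermediate fiber in Definition \ref{def:twisted}, $S$ meets $A$ transversally at one smooth point of the fiber, so $S \cdot A = 1$, and $A$ meets $E$ at the single node of the fiber, so $E\cdot A = 1$. This yields
\[
(K_X + S + A + E)\cdot A \;=\; -2 + 1 + 1 \;=\; 0.
\]
Writing the special fiber as $F = A + mE$ with $m \geq 1$, and using that $K_X$ and $F$ itself are pulled back from $C$, we get $(K_X + S + A + E)\cdot F = S\cdot F = 1$, which forces $(K_X + S + A + E)\cdot E = 1/m > 0$. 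Thus $A$ is the unique $(K_X + S + A + E)$-trivial curve in the fiber while $E$ and the generic fiber are strictly positive.

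Next, since $(X, S + A + E)$ is slc by hypothesis, abundance for slc surfaces (Proposition \ref{prop:abundance}) applied relatively over $C$ shows that the $f$-nef divisor $K_X + S + A + E$ is $f$-semiample, inducing a projective morphism $X \to X_1$ over $C$ that contracts exactly $A$ to a point. The resulting pair $(X_1, S_1 + E_1)$ is slc, has irreducible special fiber $E_1$ equal to the image of $E$, and has section $S_1$ meeting $E_1$ at the image of $A$; this is precisely a twisted fiber in the sense of Definition \ref{def:twisted}. Uniqueness of the twisted model established in \cite[Proposition 4.12]{tsm} then identifies $(X_1, S_1 + E_1)$ with the model in (1), giving both the isomorphism $(1) \cong (2)$ and the asserted contraction $X \to X_1$.

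Finally, to match (3) with (2), I would appeal to the invariance of the log canonical model under further log resolution. The natural birational morphism $\mu\colon X \to Y$ contracting $A$ and resolving the Weierstrass singularity has $\mathrm{Exc}(\mu) = A$ and strict transform $\mu_*^{-1}F' = E$, so the pair $(X, \mu_*^{-1}(S' + F') + \mathrm{Exc}(\mu))$ is exactly the lc pair $(X, S + A + E)$. By Proposition \ref{logcanonical} and the subsequent remark on independence of log resolution, the log canonical model of $(Y, S' + F')$ coincides with that of $(X, S + A + E)$, completing the chain of isomorphisms. The main obstacle I anticipate is verifying the intersection numbers $S\cdot A = E\cdot A = 1$ uniformly across all Kodaira types of intermediate fibers and confirming that the cyclic quotient singularity produced by contracting $A$ is exactly the one appearing in the twisted model; both reduce to a case-by-case check against the classifications in \cite{calculations} and \cite{tsm}.
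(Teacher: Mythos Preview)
Your overall strategy matches the paper's: show $(2)=(3)$ by recognizing the intermediate model as a partial log resolution of the Weierstrass pair, and show $(1)=(2)$ by running abundance on $(X,S+A+E)$ and invoking uniqueness of the twisted model from \cite[Proposition~4.12]{tsm}. Two points deserve correction.

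First, in your final paragraph you have the roles of $A$ and $E$ reversed. The contraction $\mu\colon X\to Y$ from the intermediate to the Weierstrass model contracts $E$, not $A$: the Weierstrass fiber $F'$ is the image of $A$, so $\mu_*^{-1}F'=A$ and $\mathrm{Exc}(\mu)=E$. Your conclusion is unaffected since $\mu_*^{-1}(S'+F')+\mathrm{Exc}(\mu)=S+A+E$ either way, but the geometric picture you describe (``contracting $A$ and resolving the Weierstrass singularity'') is inconsistent.

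Second, your anticipated obstacle is real: the numbers $E\cdot A=1$ and $(K_X+A)\cdot A=-2$ fail for types $\mathrm{II}^*,\mathrm{III}^*,\mathrm{IV}^*$, where $X$ has a cyclic quotient singularity at $A\cap E$ (see Table~\ref{tab:table1}, where $A\cdot E=\tfrac{1}{5},\tfrac{1}{3},\tfrac{1}{2}$). Adjunction for $A$ then carries a nonzero different, and your two errors cancel to still give $(K_X+S+A+E)\cdot A=0$. The paper sidesteps this by not computing intersections at all: it simply runs the relative MMP and abundance on the slc pair $(X,S+A+E)$ to produce \emph{some} log canonical model with coefficient-one fiber, then identifies it with the twisted model by uniqueness. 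If you want to keep your explicit computation, you must either do the case check against Table~\ref{tab:table1} and the canonical bundle formula (Theorem~\ref{canonical}), or use adjunction with the different.
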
 

\begin{proof} The contraction of $\mu: X \to Y$ to its Weierstrass model provides a log canonical partial resolution of $(Y, S' + F')$ with boundary divisor $\mu^{-1}_*(S' + F') = S + A + E$ and so $(2)$ and $(3)$ agree by definition of log canonical model. Now the pair $(X, S + A + E)$ has log canonical singularities and so we may run an mmp and use abundance to compute its relative log canonical model $\mu_0 : X \to X_0$. Note $\mu_0$ is a morphism since $X$ is a surface. Then $(X_0, \mu_{0_*}(S + A + E))$ is a relative log canonical model with fiber marked with coefficient one and so it must be the twisted model by \cite[Proposition 4.12]{tsm}. 
\end{proof}

\begin{lemma} Let $(f : X \to C, S)$ be a standard intermediate model. Then the relative log canonical model of $f: (X, S + aA + E) \to C$ is the contraction $\mu : X \to X'$ of $E$ to a point with Weierstrass fiber $A' = \mu_*A$ for any $0 \le a \le \mathrm{lct}(Y,S',F')$ where $(Y \to C, S')$ is the corresponding standard Weierstrass model. \end{lemma}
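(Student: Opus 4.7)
The plan is to show directly that the contraction $\mu: X \to X'$ realizes $X'$ as the relative log canonical model by identifying the relative log canonical rings on the two sides and then checking that $K_{X'} + S' + aA'$ is $f'$-ample so that its relative Proj recovers $X'$. This avoids running an explicit relative MMP on $X$.

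The first step is to verify both pairs have log canonical singularities in the required range. Since $(X,S+A+E)$ is lc by the definition of a standard intermediate fiber (Definition \ref{def:twisted}), and $A$ is $\Q$-Cartier (it is a difference of the $\Q$-Cartier fiber $A+E$ and the $\Q$-Cartier divisor $E$), the pair $(X, S+aA+E)$ is lc for all $0 \le a \le 1$ by the cited Lemma 2.35 of \cite{km}. On the other hand, $(X', S' + aA') = (Y, S' + aF')$ is lc precisely when $a \le \mathrm{lct}(Y,S',F')$ by definition.

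Next, viewing $\mu: X \to X'$ as a partial log resolution contracting the exceptional curve $E$, we compute the discrepancy:
$$
K_X + S + aA + E = \mu^*(K_{X'} + S' + aA') + (a(E,X',S'+aA') + 1)\,E.
$$
For $a \le \mathrm{lct}(Y,S',F')$ the discrepancy $a(E,X',S'+aA')$ is $\ge -1$, so the exceptional coefficient is $\ge 0$ and Proposition \ref{logcanonical} applies: pushing forward gives $\mu_*\calO_X(m(K_X+S+aA+E)) \cong \calO_{X'}(m(K_{X'}+S'+aA'))$. Composing with $f'_*$ and using $f = f' \circ \mu$ yields an isomorphism of relative log canonical algebras, so
$$
\mathrm{Proj}_C \bigoplus_m f_*\calO_X\bigl(m(K_X+S+aA+E)\bigr) \;\cong\; \mathrm{Proj}_C \bigoplus_m f'_*\calO_{X'}\bigl(m(K_{X'}+S'+aA')\bigr).
$$
To conclude that this Proj equals $X'$ (and in particular that $\mu$ is the contraction to the log canonical model), it suffices to check $K_{X'} + S' + aA'$ is $f'$-ample. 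Since $A'$ is the unique irreducible component of the Weierstrass fiber, we need only compute $(K_{X'}+S'+aA')\cdot A'$. By adjunction $K_{X'}\cdot A' = 2p_a(A')-2 - (A')^2 = 0$, since $A'$ is an arithmetic genus one fiber; combined with $S'\cdot A' = 1$ and $(A')^2 = 0$ this gives intersection number $1 > 0$, establishing $f'$-ampleness.

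The main technical point to be careful about is the discrepancy computation, which requires knowing $A$ is $\Q$-Cartier at the (cyclic quotient) singularity of $X$ lying on $A \cap E$; this follows from the $\Q$-Cartier property of the full fiber $A+E = f^*(s)$ together with that of $E$ coming from the lc structure on $(X,S+A+E)$. Everything else reduces to bookkeeping via Proposition \ref{logcanonical} and the single intersection-number calculation on the Weierstrass fiber.
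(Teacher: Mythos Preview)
Your argument is correct and follows essentially the same route as the paper's: both recognize $\mu:X\to Y$ as a partial log resolution of the Weierstrass pair $(Y,S'+aF')$, use that this pair is lc for $a\le\mathrm{lct}(Y,S',F')$, and invoke Proposition~\ref{logcanonical} (or equivalently the definition of the log canonical model through a resolution) to identify the relative log canonical model of $(X,S+aA+E)$ with that of $(Y,S'+aF')$. The paper's proof is terser and leaves the $f'$-ampleness of $K_Y+S'+aF'$ implicit, whereas you spell it out via the intersection computation $(K_{X'}+S'+aA')\cdot A'=1$; this extra step is a welcome clarification rather than a different idea.
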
 

\begin{proof} By construction a standard intermediate model maps onto the corresponding standard Weierstrass model -- call this map $\mu : X \to Y$. Then $\mu_*A = F'$ and $\mu_*E = 0$. In particular, $\mu: (X, S + aA + E) \to (Y, S' + aF')$ is a log resolution of $(Y, S' + aF')$ for any $a$. If $a \le \mathrm{lct}(Y,S',F')$ then $(Y, S' + aF')$ is log canonical and $\mu$ is the relative log canonical model of $(X, S + aA + E) \to C$. As $(Y, S')$ is log canonical, then $\mathrm{lct}(Y,S', F') \geq 0$.  \end{proof}

\begin{prop}\label{prop:lct} Let $(g : Y \to C, S')$ be a standard Weierstrass model with central fiber $F'$. There exists a number $b_0$ such that $\mathrm{lct}(Y,S', F') < b_0 \le 1$ and the relative log canonical model of $(Y,S' + aF') \to C$ is 
\begin{enumerate}
\item a standard intermediate fiber for $\mathrm{lct}(Y, S', F') < a < b_0$;
\item a twisted fiber for $b_0 \le a \le 1$. 
\end{enumerate}
\end{prop}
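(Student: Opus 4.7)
The plan is to exploit the fact that the relative log canonical model of an slc surface pair can be computed by running a relative log MMP together with abundance (Proposition \ref{prop:abundance}). The starting point is the standard intermediate model $(f : X \to C, S + aA + E)$: the pair $(X, S + A + E)$ is log canonical by Definition \ref{def:twisted}, and lowering the coefficient of $A$ to any $a \in [0,1]$ preserves the log canonical property. Since $X$ is a surface, the MMP over $C$ either terminates with $(X, S + aA + E)$ itself (when $K_X + S + aA + E$ is already $f$-nef), or contracts one of the two fiber components $A$ or $E$.

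First, I would invoke the previous lemma: for $a \le \mathrm{lct}(Y,S',F')$, the MMP contracts $E$ and recovers the standard Weierstrass model. The observation is that this contraction is no longer permissible for $a > \mathrm{lct}(Y, S', F')$, since the resulting pair on the Weierstrass side would fail to be log canonical. Hence for such $a$ the only remaining option for the MMP is either to do nothing or to contract $A$, and by Lemma \ref{lemma:twisted} the contraction of $A$ produces the twisted fiber.

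The heart of the proof is then a single intersection number computation: $A \cdot (K_X + S + aA + E)$ viewed as a function of $a$. Using adjunction on $A$ (a rational curve meeting $S$ transversely in one smooth point and meeting $E$ at a single point which is typically a cyclic quotient singularity of $X$), together with the standard formula for the different at that singularity, this number is affine linear in $a$ with leading coefficient $A^2$. Because $A$ is a proper fiber component with $A \cdot (A + kE) = 0$ for some $k \ge 1$, one has $A^2 < 0$, so the function is strictly decreasing. Let $b_0$ be the unique value at which it vanishes. For $a < b_0$ the divisor $K_X + S + aA + E$ is relatively ample, so the intermediate model is already the relative log canonical model; for $a \ge b_0$ the MMP contracts $A$ to a point, producing the twisted fiber by Lemma \ref{lemma:twisted}.

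Finally I would verify the claimed bounds $\mathrm{lct}(Y, S', F') < b_0 \le 1$. The strict inequality on the left follows because at $a = \mathrm{lct}(Y, S', F')$ the MMP still contracts $E$, so $A$ is in particular $f$-ample there, forcing the intersection number to be strictly positive and $b_0$ to lie strictly to the right. The inequality $b_0 \le 1$ is equivalent to saying that the twisted model is the relative log canonical model at $a = 1$, which is immediate from Lemma \ref{lemma:twisted}. The main obstacle is the explicit computation of the different at the cyclic quotient $A \cap E$, which depends on the Kodaira type of the Weierstrass fiber and is the content of the case-by-case analysis carried out in \cite{calculations}; using those calculations makes the closed form for $b_0$ (and in particular the cases $b_0 = 1$) completely explicit.
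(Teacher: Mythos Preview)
Your proof is correct and follows essentially the same route as the paper: both work on the standard intermediate model, use that $A \cdot (K_X + S + aA + E)$ is affine-linear in $a$ with negative slope $A^2$, and pin down the endpoints via the Weierstrass model at $a_0 = \mathrm{lct}(Y,S',F')$ and Lemma \ref{lemma:twisted} at $a = 1$. The one difference is that the paper sidesteps your proposed adjunction/different computation entirely by observing directly that $A$ is exceptional for the contraction to the twisted model (so $A \cdot (K_X + S + A + E) \le 0$) and $E$ is crepant for $\mu : X \to Y$ at $a_0$ (so $E \cdot (K_X + S + a_0 A + E) = 0$); your explicit computation is unnecessary for the mere existence of $b_0$, though it is exactly what is needed later for the numerical values in Theorem \ref{thm:thm1}.
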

\begin{proof} Standard intermediate models for a standard Weierstrass model are computed to exist (by taking a log resolution and blowing down extra components) in \cite{calculations}. Furthermore, there it is shown that a standard intermediate model $(X \to C, S)$ has a contraction $p : X \to X'$ contracting the $A$ component onto a twisted model with central fiber $E' = p_*E$. Now $(X',S + E')$ has log canonical singularities and $p$ is a partial log resolution so $(X',E')$ is the relative log canonical model of $(X, S + \mathrm{Exc}(p) + E) = (X, S + A + E)$. 

On the other hand, $\mu : X \to Y$ is a log resolution of the pair $(Y, S' + aF')$ with boundary $\mu^{-1}_*(S' + aF') + \mathrm{Exc}(\mu) = S + aA + E$. Thus when $a = 1$, the relative log canonical model of $(Y, S' + F')$, which is equal to the relative log canonical model of the log resolution $(X, S + A + E)$, is the twisted fiber. 

Furthermore, $A$ is an exceptional divisor of the log resolution $p : X \to X'$ and so the intersection number $A.(K_X + S + A + E) \le 0$ and similarly $E$ is exceptional for $\mu : X \to Y$ and $E.(K_X + S + a_0A + E) = 0$ for $a_0 = \mathrm{lct}(Y, S', F')$. Thus by linearity of intersection numbers, there is a $b_0$ such that $a_0 \le b_0 \le 1$ and 
\begin{align*}
A.(K_X + S + aA + E) & > 0\\
E.(K_X + S + aA + E) & > 0
\end{align*}
for any $a_0 < a < b_0$.  \end{proof} 

\begin{prop} Let $(g: Y \to C, S')$ be a non-standard Weierstrass model for which there exists an intermediate model. Then there is a number $0 \le b_0 \le 1$ such that the relative log canonical model of $(Y, S' + aF')$ is 
\begin{enumerate}
\item a standard intermediate fiber for $0 \le a < b_0$;
\item a twisted fiber for $b_0 \le a \le 1$. 
\end{enumerate}
\end{prop}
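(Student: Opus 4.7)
The plan is to mirror the argument of the preceding proposition, with the essential modification that $(Y, S' + aF')$ is not log canonical for any $a \ge 0$ (since $(Y, S')$ is non-standard), and in particular the $\mu$-exceptional divisor $E$ cannot be contracted during the MMP on $(X, S + aA + E) \to C$. Let $(f : X \to C, S + A + E)$ denote the hypothesized intermediate model, which is semi-log canonical by Definition \ref{def:twisted}, and let $\mu : X \to Y$ be the birational morphism to the Weierstrass model with exceptional divisor $E$. Then $\mu$ is a log resolution of $(Y, S' + aF')$ for every $a \in [0,1]$, with boundary $\mu_*^{-1}(S' + aF') + \mathrm{Exc}(\mu) = S + aA + E$.

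At the endpoint $a = 1$, Lemma \ref{lemma:twisted} identifies the relative log canonical model of $(Y, S' + F')$ with the twisted fiber $(X_1, S_1 + E_1)$, obtained from $X$ by the log canonical contraction $\mu_1 : X \to X_1$ of the $A$-component. Since this is a log canonical contraction, $\mu_1^*(K_{X_1} + S_1 + E_1) = K_X + S + A + E$, and in particular $A \cdot (K_X + S + A + E) = 0$. Combining this with $A^2 < 0$ (as $A$ is $\mu_1$-exceptional) yields
\[
A \cdot (K_X + S + aA + E) \;=\; (a-1) A^2 \;\ge\; 0,
\]
with equality only at $a = 1$. Hence $A$ is never $(K+D)$-negative on the intermediate model for $a \in [0,1]$.

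The remaining step is to exhibit the transition value $b_0$. Since contracting $E$ would produce the non-lc pair $(Y, S' + aF')$, the only candidate relative log canonical models are the intermediate model $(X, S + aA + E)$ (when $K_X + S + aA + E$ is $f$-nef) and the twisted model $(X_1, S_1 + E_1)$ (obtained from $X$ by contracting $A$ when $A \cdot (K+D) \le 0$). The critical parameter is governed by the intersection $E \cdot (K_X + S + aA + E)$, a linear function of $a$, together with the endpoint equality $A \cdot (K_X + S + A + E) = 0$. By linearity there is a unique transition value $b_0 \in [0,1]$ such that the intermediate model realizes the relative log canonical model for $a \in [0, b_0)$ and the twisted model realizes it for $a \in [b_0, 1]$.

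The main obstacle is verifying that for $a \in [0, b_0)$ no further modification of the intermediate model is required, given that the usual ``escape valve'' of contracting $E$ back to the Weierstrass model is unavailable. This amounts to a sign analysis of $E \cdot (K_X + S + aA + E)$ on $[0, b_0)$ in terms of the explicit intersection numbers $E \cdot A$ and $E \cdot (K + S + A + E)$ on $X$ (determined by the multiplicity of $E_1$ in the twisted fiber). The hypothesis that a (necessarily non-standard) intermediate model exists imposes the requisite numerical constraints via the classification of intermediate fibers in \cite{calculations}, securing the dichotomy.
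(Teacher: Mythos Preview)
Your argument has a genuine error at its core. From Lemma~\ref{lemma:twisted} you conclude that the morphism $\mu_1 : X \to X_1$ to the twisted model is a \emph{log canonical contraction}, and hence crepant: $\mu_1^*(K_{X_1}+S_1+E_1)=K_X+S+A+E$. But Lemma~\ref{lemma:twisted} only asserts that $X_1$ is the log canonical model of $(X,S+A+E)$ and that the induced rational map is a morphism; it does not say $K_X+S+A+E$ is already nef. In general one has
\[
K_X+S+A+E \;=\; \mu_1^*(K_{X_1}+S_1+E_1) + cA, \qquad c = a(A,X_1,S_1+E_1)+1 \ge 0,
\]
so that $A\cdot(K_X+S+A+E)=cA^2 \le 0$, with strict inequality whenever the twisted pair is klt (not merely lc) at $\mu_1(A)$. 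For non-standard Weierstrass models $c>0$ does occur, and indeed the paper's proof explicitly allows $b_0=0$. Your formula $A\cdot(K_X+S+aA+E)=(a-1)A^2$ is therefore wrong, and your conclusion that $A$ is $(K+D)$-positive for all $a<1$ is false.

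This error propagates: you then look for the transition in $E\cdot(K_X+S+aA+E)$, but in fact $E$ is never the critical curve. The paper argues directly from the non-standard hypothesis: writing $\mu^*K_Y = K_X+\alpha E$ with $\alpha>1$ (because $(Y,0)$ is \emph{not} log canonical) gives
\[
(K_X+S+aA+E)\cdot E \;=\; (1-\alpha)E^2 + a(A\cdot E) \;>\; 0
\]
for every $a\ge 0$, so $E$ is never contracted. The transition is instead governed by $A\cdot(K_X+S+aA+E)$, which is linear and decreasing in $a$; one then separates the cases $(K_X+S+E)\cdot A \le 0$ (giving $b_0=0$) and $(K_X+S+E)\cdot A>0$ (giving $b_0\in(0,1]$ at the unique zero). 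Your final paragraph, which defers the ``sign analysis of $E\cdot(K_X+S+aA+E)$'' to the classification in \cite{calculations}, is thus aimed at the wrong target and does not constitute an argument.
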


\begin{proof} By Lemma \ref{lemma:twisted}, the log canonical model is the twisted model for $a = 1$. Now consider the contraction $\mu : X \to Y$ from the intermediate to the Weierstrass model. The pair $(X, S + aA + E)$ is a log canonical resolution of $(Y, S')$ so we may compute the relative log canonical model $(Y,S')$ by computing that of $(X, S + aA + E)$.

$Y$ is Gorenstein since it is cut out by a single Weierstrass equation and so $K_Y$ is Cartier. Furthermore, since $g$ is a genus one fibration, $K_Y$ must be supported on fiber components. We may write 
$$
\mu^*(K_Y) = K_X + \alpha E
$$
where $\alpha > 1$ since the singularities of $(Y,0)$ are \emph{not} log canonical. It follows that $$(K_X + S + E).E = (1-\alpha) E^2 > 0$$ so that the first step of the log MMP does not contract $E$. As $a$ increases, this intersection number also increases and so $E$ is never contracted in the first step of the log MMP.

If $(K_X + S + E).A = 1 + (1 - \alpha)A.E \le 0$, then $A$ is contracted by either the log MMP or the log canonical linear series for all $a \geq 0$ as increasing $a$ decreases this intersection number. Thus the log canonical model is the twisted model for all $a$ and $b_0 = 0$. 

Otherwise if $(K_X + S + E).A > 0$, then the $K_X + S + E$ is already ample and so the intermediate fiber is the log canonical model for $a = 0$. By linearity of intersection numbers there is a unique $b_0$ such that $(K_X + S + b_0A + E).A = 0$ and this $b_0$ has the required property.  \end{proof}

To summarize, given a standard Weierstrass model $(g : Y \to C, S' + aF')$ over the spectrum of a DVR, there is a standard intermediate model $g : X \to C$ which maps to the Weierstrass model by contracting the component $E$, and maps to the twisted model by contracting the component $A$. Thus the intermediate fiber can be seen as interpolating between the relative log canonical model being Weierstrass and twisted as the coefficient $a$ varies (see Figure \ref{figure:2}). For a non-standard Weierstrass model, there is a similar picture except the Weierstrass model is never log canonical and so there is only a single transition from intermediate to twisted. 

\begin{figure}[h!]
	\includegraphics[scale=1]{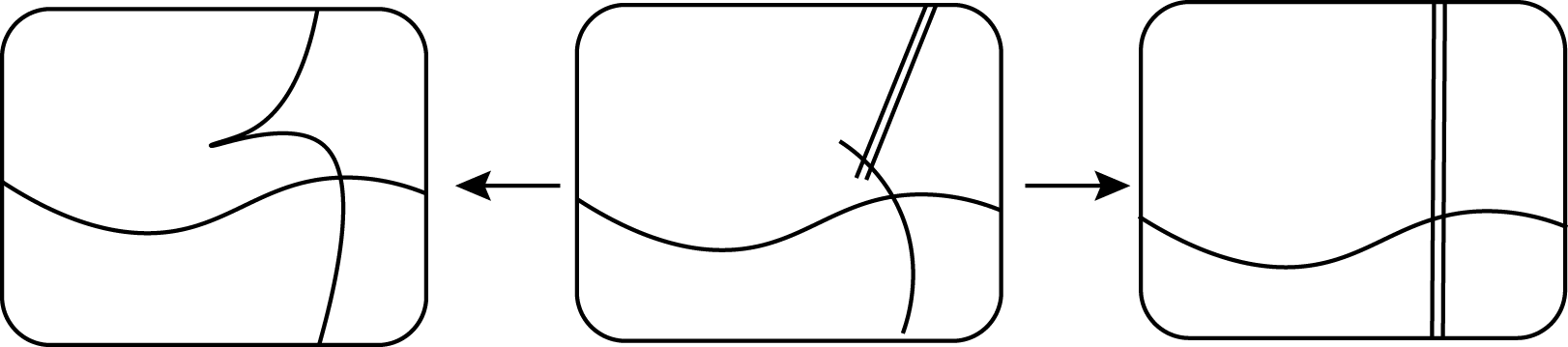}
	\caption{Here we illustrate the relative log canonical models and morphisms between them. From left to right: \textbf{standard Weierstrass} model ($ 0 \leq a \leq a_0$) -- a single reduced and irreducible component meeting the section, \textbf{standard intermediate} model ($a_0 < a < b_0 \le 1$) -- a nodal union of a reduced component meeting the section and a nonreduced component, and \textbf{twisted} model ($b_0 \le a \le 1$) -- a single possibly nonreduced component meeting the section in a singular point of the surface.}\label{figure:2}
\end{figure} 

\begin{remark} Note since $E$ is a log canonical center of the intermediate fiber pair $(X, S + aA + E)$, then $(E, (S + aA + E)|_E)$ is itself a log canonical pair. In particular, $E$ must be at worst nodal. Since $E$ is irreducible then either the intermediate fiber is reduced and $E$ is a stable elliptic curve, or $E$ supports a nonreduced arithmetic genus one component so $E$ is a smooth rational curve. \end{remark} 

The calculations in \cite{calculations} allow us to make precise the coefficients $a_0 = \mathrm{lct}(Y, S', F')$ and $b_0$ where the transitions from the various fiber models occur for the minimal Weierstrass models (see Table \ref{table:kodaira}). We summarize the calculations here and direct the reader to \cite{calculations} for more details. 

\begin{table}[h!]
  \begin{center}
    \caption{Intersection pairings in a standard intermediate fiber}
    \label{tab:table1}
    \begin{tabular}{|l|c|c|c|c|} 
      \hline
      \textbf{Singular fiber} & $A^2$ & $E^2$ &$A.E$ & Mult. of $E$ in $f^{-1}(p)$ \\
      \hline
      $\mathrm{I}_n^*$ & $-2$ & $-1/2$ &1 &2 \\
      \hline
      $\mathrm{II}$ & $-6$ &$-1/6$ & 1 &6\\
      \hline
      $\mathrm{III}$ & $-4$ & $-1/4$ & 1 &4\\
      \hline
      $\mathrm{IV}$ & $-3$ & $-1/3$ & 1 &3\\
      \hline
      $\mathrm{II}^*$& $-6/5$ & $-1/30$ & $1/5$ &6\\
       \hline
      $\mathrm{III}^*$& $-4/3$ & $-1/12$ & $1/3$ &4\\
       \hline
      $\mathrm{IV}^*$& $-3/2$ & $-1/6$ & $1/2$ &3\\
      \hline
    \end{tabular}
  \end{center}
\end{table}

\begin{theorem}\label{thm:thm1} Let $(g: Y \to C, S' + aF)$ be a standard Weierstrass model over the spectrum of a DVR, and let $(f: X \to C, S + F_a)$ be the relative log canonical model. Suppose the special fiber $F'$ of $g$ is either either 
\begin{enumerate*}[label = (\alph*)] \item one of the Kodaira singular fiber types, or \item $g$ is isotrivial with constant $j$-invariant $\infty$ and $F'$ is an $N_0$ or $N_1$ fiber. \end{enumerate*}

\begin{enumerate} 
\item If $F$ is a type $\mathrm{I}_n$ or $\mathrm{N}_0$ fiber, then the relative log canonical model is the Weierstrass model for all $0 \le a \le 1$.
\item For any other fiber type, there is an $a_0$ such that the relative log canonical model is
\begin{enumerate}[label = (\roman*)]
\item the \emph{Weierstrass} model for any $0 \le a \le a_0$,
\item a \emph{twisted fiber} consisting of a single non-reduced component supported on a smooth rational curve when $a = 1$, and
\item a \emph{standard intermediate fiber} with $E$ a smooth rational curve for any $a_0 < a < 1$.
\end{enumerate}
\end{enumerate}

The constant $a_0$ is as follows for the other fiber types:
$$
a_0 = \left\{ \begin{array}{lr} 5/6 & \mathrm{II} \\ 3/4 & \mathrm{III} \\ 2/3 & \mathrm{IV} \\  1/2 & \mathrm{N}_1 \end{array}\right. \\
\  a_0 = \left\{ \begin{array}{lr} 1/6 & \mathrm{II}^* \\ 
1/4  & \mathrm{III}^* \\
1/3 & \mathrm{IV}^* \\
1/2 & \mathrm{I}_n^* \end{array}\right. 
$$
\end{theorem}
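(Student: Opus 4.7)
The approach is to proceed case by case on the singular fiber type, applying the structural results Proposition \ref{prop:lct} and Lemma \ref{lemma:twisted} established earlier together with the intersection-theoretic data of Table \ref{tab:table1}, itself an extraction of the detailed computations from \cite{calculations}.

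First I would handle the $\mathrm{I}_n$ and $\mathrm{N}_0$ cases directly. In these cases $F'$ is an integral curve with at worst nodal singularities and the section $S'$ meets the smooth locus of $F'$, so the pair $(Y, S' + aF')$ has log canonical singularities for every $0 \le a \le 1$. Since $F'^2 = 0$ and $(K_Y + S') \cdot F' = 0$ by adjunction on a smooth generic fiber, one computes $(K_Y + S' + aF') \cdot F' = S' \cdot F' = 1 > 0$, which together with irreducibility of $F'$ shows $K_Y + S' + aF'$ is $g$-ample. Hence the Weierstrass model is its own relative log canonical model throughout $0 \le a \le 1$.

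For the remaining Kodaira types and for $\mathrm{N}_1$, a standard intermediate model $\mu: X \to Y$ was constructed in \cite{calculations}, with $A$ the strict transform of $F'$ and $E$ the exceptional component meeting $A$ at a single point. I extract $a_0 = \mathrm{lct}(Y, S', F')$ from the relation
$$\mu^*(K_Y + S' + aF') = K_X + S + aA + (ma - \alpha) E,$$
where $m$ is the multiplicity of $E$ in the scheme-theoretic fiber $f^{-1}(p)$ (tabulated in Table \ref{tab:table1}) and $\alpha$ is the log discrepancy of $E$ over $(Y,0)$ determined by the Weierstrass equation. The threshold is the largest $a$ with $ma - \alpha \le 1$, and a per-fiber-type computation of $\alpha$ recovers the tabulated values of $a_0$.

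It remains to confirm that the intermediate model is the relative log canonical model throughout $a_0 < a < 1$ and that the twisted model takes over only at $a = 1$; in the notation of Proposition \ref{prop:lct} this is the assertion $b_0 = 1$. Using the entries $A^2$, $E^2$, $A \cdot E$ from Table \ref{tab:table1}, $S \cdot A = 1$, $S \cdot E = 0$, and orbifold adjunction at the cyclic quotient singularity of $X$ where $A$ meets $E$, direct computation gives $(K_X + S + aA + E) \cdot A > 0$ and $(K_X + S + aA + E) \cdot E > 0$ throughout $a_0 < a < 1$, with $(K_X + S + A + E) \cdot A = 0$ at $a = 1$. Combined with Lemma \ref{lemma:twisted}, which identifies the $a = 1$ log canonical model as the twisted fiber via contraction of $A$, the theorem follows. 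The main obstacle is the uniform intersection-theoretic bookkeeping across all seven Kodaira fiber types plus $\mathrm{N}_1$, particularly the careful application of orbifold adjunction at the cyclic quotient singularities on the intermediate models; no conceptual input beyond the earlier propositions is needed.
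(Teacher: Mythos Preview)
Your proposal is correct and follows essentially the same route as the paper, which does not give a self-contained proof but instead defers to the case-by-case computations in \cite{calculations}, organized through Proposition~\ref{prop:lct}, Lemma~\ref{lemma:twisted}, and the intersection data of Table~\ref{tab:table1}. One minor terminological slip: in your pullback formula the quantity $\alpha$ with $\mu^*(K_Y + S' + aF') = K_X + S + aA + (ma - \alpha)E$ is the \emph{ordinary} discrepancy of $E$ over $(Y,0)$, not the log discrepancy (which would be $\alpha + 1$); with that correction the threshold $a_0 = (1+\alpha)/m$ indeed recovers the tabulated values (e.g.\ $\alpha = 4$, $m = 6$ for type~$\mathrm{II}$, matching the $\alpha$ in Theorem~\ref{canonical}).
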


\begin{remark} The difference between Theorem \ref{thm:thm1} and the corresponding theorem in \cite{calculations}, is that here we are marking the $E$ component of the intermediate fiber with coefficient one rather than marking it by $a$. By the above discussion, this is equivalent to taking the log canonical model of the Weierstrass pair rather than taking the log canonical models of the minimal resolution as in \cite{calculations}. With this in mind, the result above for types $\mathrm{II}, \mathrm{III}, \mathrm{IV}$ and $\mathrm{N}_1$ fibers are unchanged as for these types of fibers, the coefficient of $E$ was already one in \cite{calculations} and the results for $\mathrm{I}_n$ and $\mathrm{N}_0$ are unchanged as these fibers are already log canonical models regardless of coefficient. The reason for this change in convention is to avoid an unwanted flip (see Appendix \ref{sec:appendix}). \end{remark}

\begin{conv}\label{conv:lcmodel} Let $(f : X \to C, S + F_a)$ be an elliptic fibration over the spectrum of a DVR with section $S$, central fiber $F$, and boundary divisor $F_a$ supported on $F$. From now on we will say this pair is a relative log canonical if it is the relative log canonical model of a Weierstrass model. That is, either 
\begin{enumerate} 
\item $F$ is a Weierstrass fiber and $F_a = aF$ for $a \le \mathrm{lct}(X,S,F)$, 
\item $F$ is a standard intermediate fiber with $F_a = aA + E$ and $\mathrm{lct}(Y,S', F') < a < 1$, or 
\item $F$ is a twisted fiber with $F_a = E$ and $a = 1$. 
\end{enumerate}

More generally, we will say $(f : X \to C, S + F)$ over a smooth curve is a \textbf{relative log canonical model} or \textbf{relatively stable} if it is the relative log canonical model of its Weierstrass model so that the restriction of $(X, S + F)$ to the local ring of each point in $C$ is a relative log canonical model of Weierstrass, intermediate or twisted type. We will call it \textbf{standard (resp. minimal)} if each of the fibers are log canonical models of standard (resp. minimal) Weierstrass models. 
\end{conv}

\subsection{Canonical bundle formula}\label{sec:canonicalbundle} In \cite{calculations}, we computed a formula for the canonical bundle of relative log canonical model. 

\begin{theorem}\cite[Theorem 1.2]{calculations}\label{canonical}  Let $(f: X \to C, S + F_\calA)$ be a relative log canonical model where $f: X \to C$ is a minimal irreducible elliptic surface with section $S$, and let $F_{\calA} = F_{a_i}$ is a sum of marked fibers as in \ref{conv:lcmodel} with $0 \le a_i \le 1$. Then
$$
\omega_X = f^*(\omega_C \otimes \LL) \otimes \calO_X(\Delta).
$$
where $\Delta$ is effective and supported on fibers of type $\mathrm{II}, \mathrm{III}$, and $\mathrm{IV}$ contained in $\Supp(F)$. The contribution of a type $\mathrm{II}$, $\mathrm{III}$ or $\mathrm{IV}$ fiber to $\Delta$ is given by $\alpha E$ where $E$ supports the unique nonreduced component of the fiber and
$$
\alpha = \left\{ \begin{array}{lr} 4 & \mathrm{II} \\ 2 & \mathrm{III} \\ 1 & \mathrm{IV} \end{array} \right.
$$
\end{theorem}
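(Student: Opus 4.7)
The plan is to verify the formula locally on $C$, fiber by fiber, since both sides agree outside the marked support by Proposition \ref{classcan}. Concretely, any unmarked fiber, together with any marked fiber whose coefficient lies at or below the relevant log canonical threshold from Theorem \ref{thm:thm1}, appears in Weierstrass form in the log canonical model $X$. On such a neighborhood $X$ coincides with the minimal Weierstrass model $(g:Y\to C, S')$, and the equation $\omega_Y = g^*(\omega_C \otimes \LL)$ supplied by Proposition \ref{classcan} matches the claimed formula with no contribution to $\Delta$. Thus the task reduces to analyzing neighborhoods of marked fibers whose log canonical model is intermediate or twisted.

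For an intermediate fiber, let $\mu: X \to Y$ be the contraction of the exceptional component $E$ onto the corresponding Weierstrass model, so the fiber on $X$ is $A + mE$ with $A = \mu^{-1}_*(F')$ and $m$ the multiplicity of $E$ recorded in Table \ref{tab:table1}. The key input is that at $a_0 = \mathrm{lct}(Y, S', F')$ the pair $(X, S + a_0 A + E)$ is the relative log canonical model of $(Y, S' + a_0 F')$ by Proposition \ref{prop:lct}, so it satisfies the crepant relation
\[
\mu^*(K_Y + S' + a_0 F') = K_X + S + a_0 A + E.
\]
Substituting $\mu^*F' = A + mE$ and cancelling $S = \mu^{-1}_*S'$ together with the $a_0 A$ terms collapses this to $K_X = \mu^*K_Y + (a_0 m - 1)E$. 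Combined with $K_Y = g^*(\omega_C \otimes \LL)$, this produces the desired formula with $\alpha = a_0 m - 1$. Feeding in the values of $a_0$ from Theorem \ref{thm:thm1} and $m$ from Table \ref{tab:table1} gives $\alpha = 4, 2, 1$ for types $\mathrm{II}, \mathrm{III}, \mathrm{IV}$, and uniformly $\alpha = 0$ for the starred types $\mathrm{I}^*_n, \mathrm{II}^*, \mathrm{III}^*, \mathrm{IV}^*$ and for $\mathrm{N}_1$. This balance is precisely why only the three listed fiber types appear in $\Delta$.

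For a twisted marked fiber, I pass through the intermediate model via the contraction $p: X_{\mathrm{int}} \to X_t$ of the $A$ component. By the projection formula and $p_*\calO_{X_{\mathrm{int}}} = \calO_{X_t}$, pushing forward $K_{X_{\mathrm{int}}} = f_{\mathrm{int}}^*(\omega_C \otimes \LL) + \alpha E$ along $p$ produces $K_{X_t} = f_t^*(\omega_C \otimes \LL) + \alpha E_t$ with the same coefficient $\alpha$, where $E_t = p_*E$. The main technical obstacle I expect is justifying the pullback identity $\mu^*F' = A + mE$: the coefficient of $A$ is forced to be one because $A$ is a strict transform, but pinning down the exceptional multiplicity $m$ requires the explicit blow-up construction of the intermediate model from \cite{calculations}, with answers collected in Table \ref{tab:table1}. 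A secondary subtlety is that for starred Kodaira types the Weierstrass surface $Y$ has non-canonical singularities, so the crepant relation must be read as an equality of log pairs rather than of bare canonical divisors; this is automatic in our framework since we work throughout with the pair $(Y, S' + a_0 F')$.
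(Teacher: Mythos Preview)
The paper does not give a proof of this statement; it is simply cited as Theorem 1.2 of the companion paper \cite{calculations}, where the local computations on intermediate fibers are actually carried out. So there is no in-paper argument to compare yours against.

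That said, your approach is correct and is essentially the natural one. The crepant identity at the threshold coefficient $a_0$ is exactly right: by definition of $a_0 = \mathrm{lct}(Y,S',F')$, the exceptional divisor $E$ has discrepancy $-1$ for the pair $(Y, S' + a_0 F')$, which yields $\mu^*(K_Y + S' + a_0 F') = K_X + S + a_0 A + E$. Your formula $\alpha = a_0 m - 1$ is a clean repackaging and checks out against Table \ref{tab:table1} and Theorem \ref{thm:thm1} in every case, including the vanishing for the starred types and $\mathrm{N}_1$.

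One small point of presentation: in the twisted case, it is cleaner to argue via Weil divisor classes rather than the projection formula for sheaves. For the birational morphism $p: X_{\mathrm{int}} \to X_t$ of normal surfaces one has $K_{X_t} = p_*K_{X_{\mathrm{int}}}$ as Weil divisors automatically, and pushing forward $f_{\mathrm{int}}^*(\omega_C \otimes \LL) + \alpha E$ gives $f_t^*(\omega_C \otimes \LL) + \alpha E_t$ directly. The sheaf $\omega_{X_t}$ is only $\Q$-Cartier at the image of $A$, so the line-bundle projection formula as you stated it needs this extra word of justification.
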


It is important to emphasize here that only type $\mathrm{II}$, $\mathrm{III}$ or $\mathrm{IV}$ fibers that are \emph{not} in Weierstrass form affect the canonical bundle. If all of the type $\mathrm{II}$, $\mathrm{III}$ and $\mathrm{IV}$ fibers of $f : X \to C$ are Weierstrass, then the usual canonical bundle formula $\omega_X = f^*(\omega_C \otimes \LL)$ holds.

\subsection{Pseudoelliptic contractions}\label{sec:pseudocontraction}

In \cite{ln}, La Nave studied compactifications of the moduli space of Weierstrass fibrations by stable elliptic surface pairs $(f: Y \to C, S)$ -- i.e. where $\calA= 0$. There it was shown that the section of some irreducible components of a reducible elliptic surface may be contracted by the log MMP, inspiring the following.
		
\begin{definition}\label{def:pseudo}
A \textbf{pseudoelliptic surface} is a surface $X$ obtained by contracting the section of an irreducible elliptic surface pair $(f:Y \to C,S)$. For any fiber of $f$, we call its pushforward via $\mu : Y \to X$ a \textbf{pseudofiber} of $X$. We call $(f : Y \to C, S)$ the \textbf{associated elliptic surface} to $X$. If $(f : Y \to C, S + F'_\calA)$ is an $\calA$-weighted relative log canonical model then we call $(X, F_\calA)$ a \textbf{pseudoelliptic pair} where $F_\calA = \mu_*F'_\calA$. 
\end{definition}

In the next section we will discuss when a pseudoelliptic surface forms. That is, for which $\calA$ does the minimal model program necessitate that the section of a relative log canonical model $(f : Y \to C, S + F_\calA)$ contracts to form a pseudoelliptic surface? 

In this section we are tasked with understanding when the log canonical contraction of pseudoelliptic pair $(X, F_\calA)$ corresponding to a relatively stable elliptic surface contracts $X$ to a lower dimensional variety.

\begin{prop}\label{properlyelliptic}\cite[Proposition 7.1]{calculations} Let $f: Y \to C$ be an irreducible properly elliptic surface with section $S$. Then $K_Y + S$ is big.
\end{prop}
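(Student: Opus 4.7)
The plan is to reduce to a smooth relatively minimal model, apply the canonical bundle formula, and then compute the pushforward $f_*\calO_Y(m(K_Y+S))$ explicitly as a split vector bundle on $C$ in order to extract quadratic growth of plurigenera.

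First, let $\mu : Y' \to Y$ be a minimal semi-resolution (so $Y'$ is smooth relatively minimal). By Corollary \ref{bigresolution}, it suffices to show that $K_{Y'} + \mu_*^{-1} S + \mathrm{Exc}(\mu)$ is big on $Y'$. Since the exceptional divisor is effective and bigness is preserved under adding effective divisors, it is enough to prove that $K_{Y'} + \mu_*^{-1}S$ is big. Thus I may assume $f : Y \to C$ is smooth and relatively minimal. Under this assumption Proposition \ref{classcan} yields $K_Y = f^*(\omega_C \otimes \LL)$ with no correction term, so
$$K_Y + S = f^*L + S, \qquad L := \omega_C \otimes \LL,$$
with $d := \deg L > 0$ by the properly elliptic hypothesis.

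Next, by the projection formula together with the classical description of the Weierstrass model (cf.\ \cite{mir3}),
$$f_*\calO_Y\bigl(m(K_Y+S)\bigr) \cong L^{\otimes m} \otimes f_*\calO_Y(mS) \cong \bigoplus_{k \in \{0,2,3,\ldots,m\}} L^{\otimes m} \otimes \LL^{-k}$$
for $m \geq 2$. To get the second isomorphism I would use that the exceptional curves of the contraction $\pi : Y \to W$ to the Weierstrass model are disjoint from $S$, so $\pi_*\calO_Y(mS) = \calO_W(mS)$, and the latter has the standard splitting $\calO_C \oplus \LL^{-2} \oplus \cdots \oplus \LL^{-m}$.

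Finally, I would bound $h^0(Y, m(K_Y+S)) = h^0(C, f_*\calO_Y(m(K_Y+S)))$ from below by summing Riemann--Roch contributions from summands of sufficiently positive degree. Each summand $L^{\otimes m} \otimes \LL^{-k}$ has degree $md - k \deg \LL$ on $C$. When $\deg \LL > 0$, roughly $\lfloor md/\deg \LL\rfloor$ of the allowed $k$-values produce non-negative degree, and each such summand contributes $\Theta(m)$ to $h^0$, giving a total of $\Theta(m^2)$. When $\deg \LL = 0$ (only possible if $g(C) \geq 2$), every summand has degree $md > 0$, and each of the $m$ summands contributes $\Theta(m)$, again totalling $\Theta(m^2)$. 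In either case $h^0(Y, m(K_Y+S))$ grows quadratically in $m$, so $K_Y + S$ is big.

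The main obstacle is that $(K_Y+S)^2$ need not be positive: for instance, on a properly elliptic surface over $\PP^1$ with $\deg \LL = 3$, the direct computation gives $(K_Y+S)^2 = -1$, so Proposition \ref{prop:RR} does not apply directly. The pushforward/splitting analysis above circumvents this by extracting quadratic growth from a mix of positive- and negative-degree line bundle summands, bypassing the need for a positive self-intersection.
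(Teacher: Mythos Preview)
Your argument is correct. The paper does not give its own proof of this proposition --- it is imported verbatim from \cite{calculations} --- so there is no in-paper proof to compare against. That said, your route via the explicit splitting $f_*\calO_Y(mS)\cong \bigoplus_{k\in\{0,2,\ldots,m\}}\LL^{-k}$ on the Weierstrass model and the resulting quadratic growth of $h^0$ is a clean and self-contained argument.

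A couple of minor points worth tightening. First, the step $\pi_*\calO_Y(mS)=\calO_W(mS)$ uses that $\pi^*S_W=S_Y$ (the exceptional curves miss $S$) together with $\pi_*\calO_Y=\calO_W$, which holds because the minimal Weierstrass model has only rational (ADE) singularities; you might make this explicit. Second, your observation that $(K_Y+S)^2$ can be negative is exactly right --- over $\mathbb{P}^1$ with $\deg\LL=3$ one gets $(K_Y+S)^2=2d-\deg\LL=2-3=-1$ --- so the crude Riemann--Roch criterion of Proposition~\ref{prop:RR} genuinely fails here and your pushforward analysis is doing real work, not just repackaging an intersection-number check. The likely approach in \cite{calculations} is along similar lines or via a Zariski decomposition on the relative log canonical model; either way your proof stands on its own.
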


\begin{cor} If $X$ is a log canonical pseudoelliptic surface such that the associated elliptic surface $\mu : Y \to X$ is properly elliptic, then $K_X$ is big. \end{cor}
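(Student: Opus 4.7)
The plan is to combine the bigness criterion of Proposition \ref{properlyelliptic} for properly elliptic surfaces with the invariance of bigness under partial log resolutions from Corollary \ref{bigresolution}. The entire argument is essentially bookkeeping on top of these two quoted results.

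First, by Definition \ref{def:pseudo}, the map $\mu : Y \to X$ is a birational contraction whose exceptional divisor is exactly the section $S$ (it is the contraction of $S$ to a point, and $Y$ is irreducible). Thus, in the notation of Proposition \ref{logcanonical} applied to the lc pair $(X,\Delta = 0)$, we have $\Gamma = S$ and $\mu_*^{-1}\Delta = 0$, so the formula reads
\[
K_Y + S \;=\; \mu^* K_X + B
\]
with $B$ effective and exceptional. Since $X$ is log canonical (hence slc, as $X$ is normal), and $\mu$ is a partial semi-resolution, the hypotheses of Corollary \ref{bigresolution} are satisfied with $\Delta = 0$.

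Next, Corollary \ref{bigresolution} then yields the isomorphism
\[
H^0\!\bigl(X,\calO_X(mK_X)\bigr) \;\cong\; H^0\!\bigl(Y,\calO_Y(m(K_Y + S))\bigr)
\]
for all sufficiently divisible $m$, and in particular $K_X$ is big if and only if $K_Y + S$ is big. But the associated elliptic surface $(f : Y \to C, S)$ is irreducible and properly elliptic by hypothesis, so Proposition \ref{properlyelliptic} gives that $K_Y + S$ is big. Therefore $K_X$ is big, as claimed.

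There is no real obstacle here; the only subtlety is verifying that the exceptional divisor of $\mu$ is exactly $S$ (so that $\Gamma = S$ in the formula), which is immediate from the definition of a pseudoelliptic contraction, and that $(X,0)$ is slc so that Proposition \ref{logcanonical} and its corollary apply, which follows from the log canonical assumption on $X$.
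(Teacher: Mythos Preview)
Your proof is correct and follows essentially the same approach as the paper: identify $\mu$ as a partial (log) resolution with exceptional divisor $S$, invoke Corollary \ref{bigresolution} to reduce bigness of $K_X$ to bigness of $K_Y + S$, and conclude via Proposition \ref{properlyelliptic}. You simply spell out a few more details than the paper's one-line proof.
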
 

\begin{proof} $\mu : Y \to X$ is a partial log resolution so $K_Y + \mathrm{Exc}(\mu) = K_Y + S$ is big if and only if $K_X$ is big by Corollary \ref{bigresolution}.
\end{proof}

\begin{definition} 
The \textbf{fundamental line bundle $\LL$} of a pseudoelliptic surface $X$ is the fundamental line bundle (see Definition \ref{linebundle}) for $(f : Y \to C,S)$ the corresponding elliptic surface.
\end{definition} 

\begin{prop}\label{rational}\cite[Proposition 7.4]{calculations} Let $(X, F_\calA)$ be an $\calA$-weighted slc pseudoelliptic surface pair corresponding to an elliptic surface $(f:Y \to C,S)$ over a rational curve $C \cong \mathbb{P}^1$.  Denote by $\mu : Y \to X$ the contraction of the section. Suppose $\deg \LL = 1$ and $0 \le \calA \le 1$ such that $K_X + F_\calA$ is a nef and $\Q$-Cartier. Then either 

\begin{enumerate}[label = \roman*)]
\item $K_X + F_\calA$ is big and the log canonical model is an elliptic or pseudoelliptic surface;
\item $K_X + F_\calA \sim_\Q \mu_*\Sigma$ where $\Sigma$ is a multisection of $Y$ and the log canonical contraction maps $X$ onto a rational curve; or
\item $K_X + F_\calA \sim_{\Q} 0$ and the log canonical map contracts $X$ to a point. 
\end{enumerate} 

The cases above correspond to $K_X + F_\calA$ having Iitaka dimension $2,1$ and $0$ respectively. 

\end{prop}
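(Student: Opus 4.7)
The strategy is to pull $K_X + F_\calA$ back to the elliptic surface $Y$ via the section contraction $\mu$ and apply the canonical bundle formula. Since $C = \PP^1$ and $\deg \LL = 1$, Theorem \ref{canonical} gives $K_Y \sim -f^*p + \Delta$ where $p \in \PP^1$ and $\Delta$ is effective and supported on non-Weierstrass type $\mathrm{II}, \mathrm{III}, \mathrm{IV}$ fibers. Standard intersection identities on elliptic surfaces give $S^2 = -\deg \LL = -1$, $K_Y \cdot S = \deg(K_C + \LL) = -1$, and $S \cong \PP^1$ sits in the smooth locus of $Y$, so $S$ is a $(-1)$-curve and $\mu$ is an honest Castelnuovo blowdown. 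Imposing $\mu^*(K_X + F_\calA) \cdot S = 0$ and using the projection formula, I obtain
\[
\mu^*(K_X + F_\calA) = K_Y + (t-1)S + F'_\calA, \qquad t := F'_\calA \cdot S,
\]
which lets me compute the self-intersection $(K_X + F_\calA)^2$ and the fiber degree $(K_X + F_\calA) \cdot F = t - 1$ explicitly in terms of the weighted data.

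By abundance for slc surfaces (Proposition \ref{prop:abundance}), the nef $\Q$-Cartier divisor $K_X + F_\calA$ is semiample, so the log canonical map $\phi : X \to Z$ exists. The Iitaka dimension trichotomy for nef divisors on a surface, determined by the self-intersection, yields the three cases. In case (iii), numerical triviality together with semiampleness upgrades to $K_X + F_\calA \sim_\Q 0$, and $\phi$ contracts $X$ to a point. In case (i), $(K_X + F_\calA)^2 > 0$ combined with nefness gives bigness, so $\phi$ is birational and its image inherits the elliptic or pseudoelliptic structure from $X$ via $\mu$.

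Case (ii) is the main substance. Since the minimal Weierstrass resolution of $Y$ is a rational elliptic surface (because $\deg \LL = 1$ over $\PP^1$), $Y$ is birationally rational, so the dominant map $\phi \circ \mu : Y \to Z$ forces $Z$ to be a rational curve. Let $\tilde K = \mu^*(K_X + F_\calA)$. Semiampleness means $\tilde K$ defines the map $\phi \circ \mu$, so a general fiber $\Sigma$ of $\phi \circ \mu$ on $Y$ satisfies $\tilde K \sim_\Q k \Sigma$ for some $k \in \Q_{>0}$. When $t > 1$, $\tilde K \cdot F > 0$ on a general elliptic fiber of $f$, so $\Sigma$ is transverse to $f$ and is thus a multisection. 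The $(-1)$-curve identity $\mu^*\mu_*\Sigma = \Sigma + (\Sigma \cdot S) S$, together with matching the coefficients of $S$ by using $\tilde K \cdot S = 0$, identifies $\tilde K \sim_\Q \mu^*\mu_*\Sigma$, giving $K_X + F_\calA \sim_\Q \mu_*\Sigma$ after pushforward.

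The main obstacle is the case-by-case bookkeeping of the self-intersection $(K_X + F_\calA)^2$ across the Weierstrass, intermediate, and twisted fiber types classified in Section \ref{sec:local} using Table \ref{tab:table1}, and in particular handling the degenerate $t = 1$ subcase where $\tilde K$ would become $f$-trivial; here one must either show this subcase falls under (iii) given the slc/nef hypotheses, or identify $\Sigma$ with an appropriate multisection arising from a different fibration compatible with $\mu$.
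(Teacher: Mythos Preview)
The paper does not prove this proposition here; it is quoted from \cite{calculations}, and the only information about the original argument is Remark~\ref{rem:contract}, which says the trichotomy is read off from $(K_X+F_\calA)^2$ for case (i) and from whether the coefficient $t$ in $\mu^*(K_X+F_\calA)=K_Y+tS+\tilde F_\calA$ is positive or zero for case (ii) versus (iii). Your outline---pull back along $\mu$, apply abundance, split by Iitaka dimension---matches this.

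There are, however, two genuine gaps. First, your claim that $S$ lies in the smooth locus of $Y$ with $S^2=-1$ and that $\mu$ is a Castelnuovo blowdown is not justified: the associated elliptic surface here is a relative log canonical model which may carry twisted or intermediate fibers, so $S$ can pass through quotient singularities and one has only $S^2\le -\deg\LL$ in general (compare the proof of Lemma~\ref{lemma:Ssquare:negative}). The fix is to argue with the abstract coefficient $t$ as in Remark~\ref{rem:contract} rather than your explicit formula $t=F'_\calA\cdot S-1$; nothing downstream needs the numerical value. Second, your open ``$t=1$'' subcase (equivalently the paper's $t=0$) is not actually an obstacle: if $\mu^*(K_X+F_\calA)$ is trivial on a general fiber of $f$, then the semiample map factors through $f:Y\to\PP^1$, and since it also contracts $S$ while $f|_S:S\to\PP^1$ is an isomorphism, the map is constant and $K_X+F_\calA\sim_\Q 0$, landing you in case (iii). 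No fiber-type casework from Table~\ref{tab:table1} is required anywhere, since the proposition only asserts that the trichotomy holds, not where the thresholds lie. Finally, in case (ii) the identification $K_X+F_\calA\sim_\Q\mu_*\Sigma$ follows immediately from pushing forward $\tilde K\sim_\Q\Sigma$ via $\mu_*\mu^*=\mathrm{id}$; the $(-1)$-curve identity you invoke is both unavailable in this generality and unnecessary.
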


\begin{remark}\label{rem:contract} The proof of \cite[Proposition 7.4]{calculations} actually gives a method for determining which situation of $(i), (ii),$ and $(iii)$ we are in. Indeed since $K_X + F_\calA$ is nef, it is big if and only if $(K_X + F_\calA)^2 > 0$. Furthermore, $K_X + F_\calA \sim_\Q 0$ if and only if $t = 0$ where
$$
K_Y + tS + \tilde{F}_\calA = \mu^*(K_X + F_\calA).
$$
So if $K_X + F_\calA$ is not big,  it suffices to compute whether $t > 0$ or $t = 0$ to decide if the log canonical map contracts the pseudoelliptic to a curve or to a point.  \end{remark}

\begin{prop}\label{stableattaching} Let $(f: X \to C, S + F_\calB)$ be an irreducible elliptic surface over a rational curve $C \cong \mb{P}^1$ such that $(X, S + F_\calB)$ is a stable pair and $\deg \LL = 1$. Suppose $\calB = (1, b_2, \ldots, b_s)$, $0 < \calA \le \calB$ such that $a_1 = b_1 = 1$, and $F_1$ is a type $\mathrm{I}_n$ fiber. Then 
$K_X + S + F_\calA$
is big. 
\end{prop}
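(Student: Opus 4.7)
The plan is to simplify $K_X + S + F_\calA$ up to $\Q$-linear equivalence via the canonical bundle formula (Theorem \ref{canonical}), and then apply a Zariski-type decomposition on the rational elliptic surface $X$. Since $C \cong \mathbb{P}^1$ has $\deg(K_C + \LL) = -1$, Theorem \ref{canonical} gives $K_X \sim_{\Q} -F + \Delta$, where $F$ denotes any fiber class and $\Delta$ is an effective $\Q$-divisor supported on the nonreduced components of intermediate $\mathrm{II}$, $\mathrm{III}$, $\mathrm{IV}$ fibers among $F_2, \ldots, F_s$. Because $F_1$ is a fiber, $F_1 \sim F$, so that $K_X + F_1 \sim_{\Q} \Delta$. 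Writing $F_\calA^\circ := F_\calA - F_1$ for the effective boundary contribution from $F_2, \ldots, F_s$, this yields
\[ K_X + S + F_\calA \sim_{\Q} S + F_\calA^\circ + \Delta. \]

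It now suffices to show $S + F_\calA^\circ$ is big, since adding the effective $\Delta$ preserves bigness. Note that $S$ is a $(-1)$-curve (as $S^2 = -\deg \LL = -1$), and the divisor $S + F$ is nef and big: one has $(S+F)^2 = 1 > 0$, while $(S+F) \cdot S = 0$, $(S+F) \cdot F = 1$, and any other irreducible curve $V$ satisfies $V \cdot S \geq 0$ and $V \cdot F \geq 0$. Setting $t := \sum_{i \geq 2} a_i$, which is positive since $\calA > 0$ (and Nakai applied to $S$ in the stable pair $(X, S+F_\calB)$ forces at least $s \geq 3$ marked fibers), in the case that every $F_i$ for $i \geq 2$ is in Weierstrass form one has $F_\calA^\circ \sim_{\Q} tF$, so
\[ S + F_\calA^\circ \sim_{\Q} S + tF = \begin{cases} t(S+F) + (1-t)S, & 0 < t \leq 1, \\ (S+F) + (t-1)F, & t \geq 1. \end{cases} \]
In both cases the positive part is a positive multiple of the nef and big divisor $S + F$, and the remainder is effective, so $S + F_\calA^\circ$ is big.

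The main obstacle is handling $F_\calA^\circ$ when some $F_i$ ($i \geq 2$) is twisted or intermediate, so that its boundary contribution includes an $E_i$ component with coefficient one (per Convention \ref{conv:lcmodel}) whose class is not a fiber-class multiple. In these cases one verifies directly, using the intersection data in Table \ref{tab:table1}, that each such $E_i$ contributes non-negatively against the reference nef divisor $S + F$, so that a slight rearrangement of the decomposition above still exhibits $S + F_\calA^\circ$ as a nef big divisor plus an effective remainder. When $X$ has $A_{n-1}$ rational double points coming from the $\mathrm{I}_n$ structure of $F_1$, the intersection-theoretic computation can be carried out on a crepant minimal resolution $\widetilde{X} \to X$, and bigness then descends to $X$ via Corollary \ref{bigresolution}.
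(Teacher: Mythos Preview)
Your setup via the canonical bundle formula is exactly what the paper does: from $K_X \sim_\Q -G + \Delta$ and $F_1 \sim_\Q G$ (since $F_1$ is a reduced $\mathrm{I}_n$ fiber) one gets $K_X + S + F_\calA \sim_\Q \Delta + S + \sum_{i \geq 2} F_{a_i}$, and similarly for $\calB$. Where you diverge is in the conclusion. The paper does not attempt any Zariski-type decomposition of $S + F_\calA^\circ$; instead it uses the hypothesis that $K_X + S + F_\calB$ is \emph{ample} together with Kodaira's lemma. Since every $a_i > 0$, for $m \gg 0$ each $mF_{a_i} - F_{b_i}$ is effective (this is $(ma_i - b_i)F_i$ in the Weierstrass case and $(ma_i - b_i)A_i + (m-1)E_i$ in the intermediate case), hence
\[
m(K_X + S + F_\calA) - (K_X + S + F_\calB) \;=\; (m-1)(\Delta + S) + \sum_{i \geq 2}\bigl(mF_{a_i} - F_{b_i}\bigr)
\]
is effective, and bigness follows in one line. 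This is shorter and completely avoids the case analysis on fiber types.

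Your alternative route is correct in the all-Weierstrass case, but the intermediate/twisted case is not a proof as written. Saying that ``each such $E_i$ contributes non-negatively against $S+F$'' only confirms $(S+F)\cdot E_i \geq 0$, which you already knew from nefness of $S+F$; it does not by itself exhibit $S + F_\calA^\circ$ as big. What you would actually need is something like: for an intermediate fiber $A_i + d_iE_i \sim F$, rewrite $F_{a_i} = a_iA_i + E_i \sim \tfrac{1}{d_i}F + (a_i - \tfrac{1}{d_i})A_i$ and check from the thresholds in Theorem~\ref{thm:thm1} that $a_i - \tfrac{1}{d_i} > 0$, so that $F_\calA^\circ \sim t'F + (\text{effective})$ with $t' > 0$ and the $S + t'F$ argument goes through. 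That works, but it is exactly the kind of bookkeeping the paper's Kodaira-lemma argument sidesteps. The final remark about passing to a crepant resolution for the $\mathrm{I}_n$ singularity is unnecessary: all the divisors involved are $\Q$-Cartier on $X$ and the intersection computations are valid there.
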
 

\begin{proof} By the canonical bundle formula, $K_X = -G + \Delta$ for $G$ a general fiber and $\Delta$ effective. All fibers are linearly equivalent as $C$ is rational, and type $\mathrm{I}_n$ fibers are reduced so that $F_1 \sim_\Q G$. Thus $K_X + F_1 = \Delta$ is effective and 
$$
K_X + S+ F_\calB = \Delta + S + \sum_{i = 2}^s F_{b_i},$$
$$K_X + S+ F_\calA = \Delta + S + \sum_{i = 2}^s F_{a_i}
$$
with $0 < a_i \le b_i$ with $$F_a = \left\{\begin{array}{l} aF \\ aA + E \\ E\end{array}\right.$$ depending on whether $F$ is a Weierstrass, intermediate or twisted fiber. Furthermore $K_X + S + F_\calB$ ample. Since $a_i > 0$, for $m$ large enough we can write $m(K_X + S + F_\calA) - (K_X + S + F_\calB) = D$ where $D$ is effective. Therefore $K_X + S + F_\calA$ is big by Kodaira's lemma.  \end{proof}

\begin{prop}\label{K3}\cite[Proposition 7.3]{calculations} Let $(f: X \to C, S + F_\calA)$ be an irreducible elliptic surface over a rational curve $C \cong \mb{P}^1$ such that $(X, S+ F_\calA)$ is a relative log canonical model and suppose that $\deg \LL = 2$. 
\begin{enumerate}[label = (\alph*)]
\item If $\calA > 0$, then $K_X + S + F_\calA$ is big and the log canonical model is either the relative log canonical model, or the pseudoelliptic obtained by contracting the section of the relative log canonical model. 
\item If $\calA = 0$, then the minimal model program results in a pseudoelliptic surface with a log canonical contraction that contracts this surface to a point. 
\end{enumerate} 
\end{prop}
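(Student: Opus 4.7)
My plan is to use the canonical bundle formula of Theorem \ref{canonical} to simplify $K_X$, and then analyze the two cases via intersection theory and the minimal model program. Since $C \cong \mathbb{P}^1$ and $\deg \LL = 2$, the line bundle $\omega_C \otimes \LL$ has degree $-2+2 = 0$ on $\mathbb{P}^1$ and is hence trivial. Therefore $K_X \sim_\mathbb{Q} \Delta$, with $\Delta$ effective and supported on non-reduced components of marked type II, III, IV fibers not already in Weierstrass form. In particular, $S^2 = -\deg \LL = -2$ and $K_X \cdot S = \Delta \cdot S = 0$, since the components supporting $\Delta$ are disjoint from the section.

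For part (b), the hypothesis $\calA = 0$ forces $F_\calA = 0$, and since $\mathrm{Supp}(\Delta) \subseteq \mathrm{Supp}(F_\calA)$ we also obtain $\Delta = 0$, so $K_X \sim_\mathbb{Q} 0$. The pair reduces to $(X, S)$ with $K_X + S \sim_\mathbb{Q} S$. Because $S \cong \mathbb{P}^1$ and $S^2 = -2 < 0$, the section spans a $(K_X+S)$-negative extremal ray. Contracting it via the MMP produces a pseudoelliptic $\mu: X \to X'$. Writing $\mu^* K_{X'} = K_X + aS$ and intersecting with $S$ yields $a(-2) = 0$, so $a=0$ and $K_{X'} \sim_\mathbb{Q} \mu_* K_X = 0$. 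The log canonical contraction of $X'$ therefore maps it to a point, as claimed.

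For part (a) with $\calA > 0$, the main task is to establish bigness of $K_X + S + F_\calA = \Delta + S + F_\calA$ and then analyze the log MMP. Bigness is verified by a direct intersection-theoretic computation of $(K_X + S + F_\calA)^2$: the negative contribution $S^2 = -2$ is compensated by $2\, S \cdot F_\calA > 0$ together with the nonnegative contributions from $\Delta^2$, $F_\calA^2$, and the cross-terms computed via Table \ref{tab:table1} for intermediate and twisted fibers. Once bigness is established, the relative log canonical hypothesis ensures that every irreducible curve other than $S$ meets $K_X + S + F_\calA$ non-negatively, so the only extremal contraction available in the log MMP is that of the section $S$, which occurs precisely when $(K_X + S + F_\calA) \cdot S = -2 + F_\calA \cdot S < 0$. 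Thus the log canonical model is either $(X, S + F_\calA)$ itself, or its section contraction to a pseudoelliptic surface. The main obstacle is the bigness verification: since $K_X + S$ alone has non-positive self-intersection, the argument must carefully exploit the positivity from all marked fiber types at once, and the bookkeeping across Weierstrass, intermediate, and twisted fibers using Table \ref{tab:table1} is the delicate technical step.
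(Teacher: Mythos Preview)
Your argument for part (b) is correct.

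For part (a), however, there is a genuine gap: the self-intersection $(K_X + S + F_\calA)^2$ need \emph{not} be positive on $X$, so your direct intersection computation cannot establish bigness. Take the simplest case where $F_\calA = aF$ is a single Weierstrass fiber with small coefficient $0 < a < 1$. Then $\Delta = 0$, so $K_X + S + F_\calA \sim_\Q S + aF$, and
\[
(S + aF)^2 = S^2 + 2a\,S\cdot F + a^2 F^2 = -2 + 2a + 0 = 2a - 2 < 0.
\]
Thus the contribution $2\,S\cdot F_\calA$ does \emph{not} compensate for $S^2 = -2$ when the weights are small, and Proposition~\ref{prop:RR} is inapplicable. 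Nonetheless the divisor \emph{is} big, so a different route is required.

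The fix is to run the MMP \emph{first} and check bigness on the resulting minimal model. Since $(X, S+F_\calA)$ is a relative log canonical model, $K_X + S + F_\calA$ is $f$-ample, hence nonnegative on every curve except possibly $S$. If $(K_X+S+F_\calA)\cdot S \geq 0$ the divisor is already nef; otherwise contract $S$ via $\mu: X \to X'$. On the pseudoelliptic, write $\mu^*(K_{X'} + \mu_*F_\calA) = K_X + tS + F_\calA$ with $t = 1 + \tfrac{1}{2}(K_X+S+F_\calA)\cdot S < 1$, so the pullback is nef. Now compute $(K_{X'} + \mu_*F_\calA)^2 = (K_X + tS + F_\calA)^2$: replacing the coefficient $1$ of $S$ by $t<1$ strictly increases the self-intersection (since $S^2 < 0$), and in fact one checks it becomes positive. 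For instance, in the example above $\mu^*(\mu_*(aF)) = aF + \tfrac{a}{2}S$ and $(aF + \tfrac{a}{2}S)^2 = a^2/2 > 0$. Nef plus positive self-intersection gives bigness on $X'$, and hence on $X$ by Corollary~\ref{bigresolution}.
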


\begin{prop}\label{prop:middlepseudo} Let $(X, G_1 + G_2)$ be an slc pseudoelliptic surface pair with pseudofibers $G_1$ and $G_2$ marked with coefficient one. Then $K_X + G_1 + G_2$ is big. \end{prop}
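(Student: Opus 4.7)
Plan: Let $\mu : Y \to X$ be the contraction of the section $S$ of the associated elliptic surface. Since $(X, G_1 + G_2)$ is slc and $S$ is the exceptional divisor of $\mu$ with discrepancy $-1$, pulling back gives $\mu^*(K_X + G_1 + G_2) = K_Y + S + F_1 + F_2$, where $F_i := \mu_*^{-1}G_i$. Because each $G_i$ has coefficient one, Convention \ref{conv:lcmodel} forces $F_i$ to be either a full Weierstrass $\mathrm{I}_n$ or $\mathrm{N}_0$ fiber, or the reduced $E$-component of a twisted fiber. By Corollary \ref{bigresolution}, it suffices to show that $D := K_Y + S + F_1 + F_2$ is big on $Y$.

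If $(f: Y \to C, S)$ is properly elliptic, then Proposition \ref{properlyelliptic} gives that $K_Y + S$ is big, and adding the effective divisor $F_1 + F_2$ preserves bigness. Otherwise $Y$ is a rational elliptic surface, a K3 elliptic surface, or an elliptic-over-elliptic surface ($g(C)=1$, $\deg\LL=0$). In the K3 case ($C \cong \PP^1$, $\deg \LL = 2$), the weight data $\calA = (1,1)$ satisfies $\calA > 0$, so Proposition \ref{K3}(a) yields that $D$ is big directly.

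In the remaining rational and elliptic-over-elliptic cases, the plan is to compute $D^2 > 0$ by intersection theory on $Y$. The canonical bundle formula $K_Y = f^*(K_C + \LL) + \Delta$ from Theorem \ref{canonical}, combined with the intersection data of Table \ref{tab:table1} applied to the twisted fibers of $Y$, yields
$$
S^2 = -\deg \LL + \sum_{i=1}^2 (1 - a_{0, T_i}), \qquad D^2 = 4g(C) - 2 + \deg \LL + \sum_{i=1}^2 a_{0, T_i},
$$
where $T_i$ denotes the fiber type of $F_i$ and $a_{0, T_i}$ is the log canonical threshold from Theorem \ref{thm:thm1} (with the convention $a_0 = 1$ for reduced $\mathrm{I}_n$ or $\mathrm{N}_0$ fibers). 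The hypothesis that $X$ is a pseudoelliptic, so that $S$ genuinely contracts to a point, forces $S^2 < 0$, equivalently $\sum_i a_{0, T_i} > 2 - \deg \LL$. Substituting this into the expression for $D^2$ gives $D^2 > 4g(C) \geq 0$. Combined with $D \cdot G = 1$ for a general fiber $G$ (so $D$ is not numerically $\Q$-trivial) and the fact that the numerical class of $D$ is represented by the effective divisor $\tilde c \cdot G + S$ with $\tilde c > 0$, bigness of $D$ follows by the standard Zariski decomposition argument on a smooth log resolution.

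The main obstacle is the intersection calculation in the last two cases: careful tracking of the $\Delta$-contribution to the canonical class and of the correction to $S^2$ arising from the quotient singularities on the twisted model. The clean formula for $D^2$ rests on the numerical identity $c_i(1 + \alpha_i) = a_{0, T_i}$ (where $c_i = 1/m_i$ is the reciprocal multiplicity of the $E$-component and $\alpha_i$ is the $\Delta$-coefficient from Theorem \ref{canonical}), which is directly verified case-by-case from Theorem \ref{thm:thm1} and Table \ref{tab:table1}.
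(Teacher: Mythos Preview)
Your proof is correct, but it takes a much more computational route than the paper's. The paper observes that on the relative log canonical model $(f:Y\to\PP^1, S+G_1'+G_2')$, the divisor $K_Y+S+G_1'+G_2'$ is $f$-ample by construction and satisfies $(K_Y+S+G_1'+G_2').S=0$ by Proposition~\ref{prop:adjunction}; hence it is nef, hence semiample by abundance, and the induced morphism contracts only $S$. The image is therefore a surface, namely the pseudoelliptic itself, so $K_X+G_1+G_2$ is big. No case analysis and no intersection numerology are needed.

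Your argument instead splits into cases and, in the rational case, computes $D^2$ directly. This works, and the identity $c_i(1+\alpha_i)=a_{0,T_i}$ is a nice observation linking fiber multiplicity, the $\Delta$-correction, and the log canonical threshold. Two points are worth noting. First, your elliptic-over-elliptic case $g(C)=1$ is vacuous: the slc hypothesis combined with $(K_Y+S+F_1+F_2).S=2g(C)$ from Proposition~\ref{prop:adjunction} forces $g(C)=0$ (otherwise the coefficient of $S$ in the pullback exceeds $1$), so the base is always $\PP^1$ and your case split collapses. Second, your passage from $D^2>0$ to bigness is stated vaguely. The clean way is to note $D\sim_\Q \tilde c\,G+S$ with $\tilde c=-S^2>0$, so $D.G=1>0$ with $G$ nef kills $h^2(mD)$ via Serre duality, whence Riemann--Roch on a resolution gives bigness; or, more in the spirit of the paper, one checks $D.M=\tilde c\,(G.M)+S.M>0$ for any multisection $M\neq S$, so $D$ is already nef and $D^2>0$ suffices directly.
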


\begin{proof} Consider the blowup $\mu: Y \to X$, where $(f:Y' \to \PP^1, S' + G_1'' + G_2'')$ is the corresponding elliptic surface.  Taking the relative log canonical model, we obtain a pair $(f:Y \to \PP^1, S + G_1' + G_2')$, where by construction $K_{Y} + S + G_1' + G_2'$ is relatively ample. Note that $(K_{Y} + S + G_1' + G_2').S = 0$ by Proposition \ref{prop:adjunction} and $K_{Y} + S + G_1' + G_2'$ has positive degree on all other curve classes as it is $f'$-ample. Therefore $K_Y + S + G_1' + G_2'$ is actually nef, and thus semiample by Proposition \ref{prop:abundance}. Therefore the only curve contracted by $|m(K_Y + S + G_1' + G_2')|$ is the section $S$ and the log canonical model of $(X, G_1 + G_2)$ is the corresponding pseudoelliptic surface of $(f: Y \to \PP^1, S + G_1'  + G_2')$. Therefore, $(X, G_1 + G_2)$ is log general type and $K_X + G_1 + G_2$ must be big. \end{proof}

		\section{Weighted stable elliptic surfaces}\label{sec:weightedstable}
		
		In this section we will construct a compactification of the moduli space of log canonical models $(f : X \to C, S + F_\calA)$ of $\calA$-weighted Weierstrass elliptic surface pairs by allowing our surface pairs to degenerate to \emph{semi-log canonical (slc)} pairs (see Definition \ref{def:slc}). As such our surfaces can acquire non-normal singularities and break up into multiple components. 
		
		The first definition we give, inspired by the minimal model program, yields a finite type and separated algebraic stack (see Theorem \ref{thm:stack}) with possibly too many components.  In Definition \ref{def:broken}, we will give a more refined definition of the objects that appear on the boundary of the compactified moduli stack when one runs stable reduction (see Theorem \ref{thm:boundary}).

	\begin{definition}\label{def:es} An \textbf{$\calA$-weighted slc elliptic surface with section} $(f: X \to C, S + F_\calA)$,  (see Figure \ref{figure:example1}) is an slc surface pair $(X, S + F_{\calA})$ and a proper surjective morphism with connected fibers $f: X \to C$ to a projective nodal curve such that:
			
			\begin{enumerate}[label = \alph*)]
				
				\item[(a)] $S$ is a section with generic points contained in the smooth locus of $f$, and $F_\calA$ is an $(\mc{A} \sqcup 1)$-weighted sum of reduced divisors contracted by $f$;
				
				\item[(b)] every component of $Z \subset X$ is either an elliptic surface with fibration $f|_Z$ and section $S|_Z$, or a surface contracted to a point by $f$;
				
				\item[(c)] for each elliptic component $Z$, the restriction $(F_\calA)|_Z$ makes the pair $(f|_Z : Z \to C, S|_Z + (F_\calA)|_Z)$ into a $\calA$-weighted relative log canonical model such that all the marked fibers lie over smooth points of $C$. 

			\end{enumerate}
				
\noindent We say that $(f: X \to C, S + F_\calA)$ is an \textbf{$\calA$-stable elliptic surface} if the $\Q$-Cartier divisor $K_X + S + F_\calA$ is ample, that is, if $(X, S + F_\calA)$ is a stable pair.  

\end{definition} 
	
		\begin{figure}[h!]
			\includegraphics[scale=1]{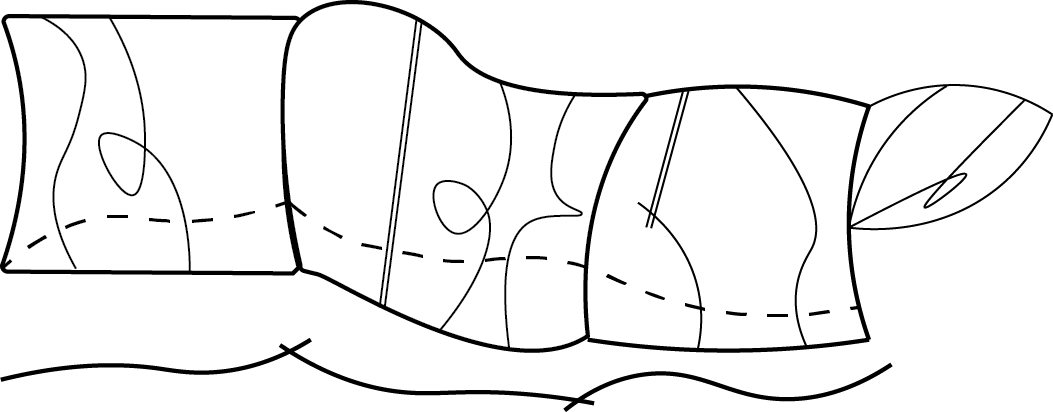}
		\caption{An $\calA$-weighted slc elliptic surface.}\label{figure:example1}
		\end{figure}

We will elaborate on parts $(b)$ and $(c)$ of the above Definition \ref{def:es}. The components in condition $(b)$ contracted to a point by $f$ were the \emph{pseudoelliptic components} (see Definition \ref{def:pseudo}). We will study them further in Section \ref{sec:pseudos}. The condition $(c)$ ensures that the restriction of $F_\calA$ to any elliptic component consists of $a$-weighted twisted, intermediate or Weierstrass fibers $F_a$ marked as in Definition \ref{conv:lcmodel}.

\subsection{Moduli functor for elliptic surfaces}\label{sec:modulifunctor} Following \cite{kp}, we introduce the following notion of a pseudofunctor (following Definition 5.2 of \cite{kp}) of stable elliptic surfaces:

		\begin{definition}\label{def:ellfunctor} Fix $v \in \Q_{>0}$. A pseudofunctor $\calE : \mathfrak{Sch}_k \to \mathfrak{Grp}$ from the category of $k$-schemes to groupoids is a \textbf{moduli pseudofunctor for $\calA$-stable elliptic surfaces of volume $v$} if for any normal variety  $T$, 
			\begin{equation*}
			\calE(T) = \left\{\left(\xymatrix{X \ar[rr]^f \ar[rd]_g & &  C \ar[ld]^h \\ & T & }, S + F_\calA \right) \left|
			\parbox{20em}{
				\begin{enumerate}
				\item $g : (X, S + F_\calA) \to T$ is a flat family of stable pairs of dimension $2$ and volume $v$ as in Definition \ref{def:stablefamily};
				\item $h$ is a flat family of connected nodal curves; 
				\item $f$ is a morphism over $T$; and
				\item for each $t \in T$, the fiber $f_t : (X_t, S_t + (F_\calA)_t) \to C_t$ is an $\calA$-weighted slc elliptic surface with section and marked fibers.
				\end{enumerate}}		
			\right. \right\}.
			\end{equation*}
Let $\calE^\circ$ be the subfunctor consisting of families with $(f_t : X_t \to C_t,S_t)$ a \emph{minimal relative log canonical model} with section over a smooth curve as in Definition \ref{def:standardes}. The \textbf{main component} $\calE^m$ will denote the closure $\overline{\calE^\circ}$ in $\calE$. \end{definition}

\begin{remark} Despite the terminology, it is not true in general that $\calE^m$ is irreducible. Rather, it has components labeled by the configurations of singular fibers on the irreducible elliptic surfaces. \end{remark} 

		\begin{theorem}\label{thm:stack} There exists a moduli pseudofunctor of $\calA$-stable elliptic surfaces of volume $v$ such that the main component $\calE^m$ is a  separated Deligne-Mumford stack of finite type.  \end{theorem}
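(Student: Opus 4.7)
The plan is to realize the main component $\calE^m$ as an algebraic stack by layering the fibration data on top of the Kov\'acs--Patakfalvi moduli stack of stable pairs. By Proposition \ref{uniquenorm} of Appendix \ref{appendix}, the normalization of a pseudofunctor is determined by its restriction to normal bases, and on normal bases the notion of family from Definition \ref{def:stablefamily} applies. Thus the forgetful operation $(f : X \to C, S + F_\calA) \mapsto (X, S + F_\calA)$ defines a morphism of pseudofunctors $\calE \to \calM_v^{KP}$, where $\calM_v^{KP}$ is the Kov\'acs--Patakfalvi moduli stack of stable pairs of dimension $2$ and volume $v$ with coefficient vector assigning $1$ to $S$ and the entries of $\calA$ (along with coefficient $1$ on any intermediate or twisted $E$-components) to the marked fibers. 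By the main representability theorem of \cite{kp}, $\calM_v^{KP}$ is a finite type separated Deligne--Mumford stack.

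The remaining data is the morphism $f : X \to C$ together with its section structure. First I would observe that the arithmetic genus $g$ of $C$ is bounded uniformly in terms of $v$ and $\calA$ via the adjunction formula applied to $S$ and the fact that $K_X + S + F_\calA$ has fixed volume. So one may pass to the finite type algebraic stack $\mathfrak{N}_g$ of connected nodal curves of arithmetic genus $g$, and over $\calM_v^{KP} \times \mathfrak{N}_g$ form the relative Hom stack $\underline{\mathrm{Hom}}(\mathcal{X}, \mathcal{C})$ parametrizing morphisms from the universal stable pair to the universal nodal curve. Standard results on Hom stacks between proper flat families with projective target make this a finite type algebraic stack. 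The conditions imposed in Definition \ref{def:es} are then locally closed: surjectivity of $f$ with connected fibers and $f|_S$ being an isomorphism onto an irreducible component of $C$ is an open condition; the dichotomy that every irreducible component of $X$ either dominates a component of $C$ as an elliptic surface with section $S|_Z$ or is contracted to a point is locally closed; and by the classification of relative log canonical models in Theorem \ref{thm:thm1}, the condition that the restriction to each elliptic component is a relative log canonical model with marked fibers over smooth points of $C$ is closed. This carves out a finite type algebraic substack $\calE'$, inside which $\calE^m$ is defined as the scheme-theoretic closure of the open locus $\calE^\circ$ of families of minimal relative log canonical models over smooth base curves.

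Separatedness is verified via the valuative criterion: given a family over the generic point of a DVR with two limits in $\calE^m$, the KP stack provides a unique limiting stable pair, and the morphism $f$ is then pinned down by its generic fiber together with the constraint of factoring through a limiting nodal curve, both of which are separated data. The Deligne--Mumford property follows from finiteness of automorphism groups of stable pairs with ample log canonical divisor, to which the fibration structure only adds further rigidity. The principal obstacle is the one the authors flag explicitly: since the deformation theory of stable pairs is not yet settled, one cannot simply invoke a globally constructed moduli stack of pairs over arbitrary bases, and the restriction to normal bases afforded by Appendix \ref{appendix} is essential. The other delicate point is verifying uniform boundedness of $g$ and of the degree of $f$ on each component, needed to ensure the relative Hom stack is of finite type; both are controlled by the fixed volume $v$ and the ampleness of $K_X + S + F_\calA$.
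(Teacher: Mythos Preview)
Your approach is essentially the same as the paper's: both realize $\calE^m$ as a locally closed substack of the Hom-stack $\mathscr{H}om_{\calM \times \mathfrak{M}_g}(\calX \times \mathfrak{M}_g, \calM \times \mathfrak{C}_g)$ built from the Kov\'acs--Patakfalvi moduli of stable pairs and the stack of prestable curves, then take the closure of the locus $\calE^\circ$ of irreducible minimal log canonical models, verify separatedness via the valuative criterion, and deduce the Deligne--Mumford property from finiteness of automorphisms of stable pairs.

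A few points of comparison. Your invocation of Appendix~\ref{appendix} at the outset is slightly premature: Theorem~\ref{thm:stack} constructs $\calE^m_{v,\calA,n}$ itself, before any normalization; the normalization and the restriction to normal bases enter only afterward in Definition~\ref{def:eva}. The paper's separatedness argument is more concrete than yours: rather than saying the fibration is ``pinned down by its generic fiber,'' it observes that since $f|_S : S \to C$ is an isomorphism (the section identifies $C$), two limits $C$ and $C'$ are canonically identified, and then $f = f'$ follows by density of $X^0$ in $X$ and separatedness of $C$. Finally, you add a boundedness argument for $g$ that the paper omits (it simply fixes $g$ in writing $\mathfrak{M}_g$); your observation that $g$ is controlled by the volume via adjunction along $S$ is a reasonable way to fill this in.
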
 
		
		\begin{proof} In \cite{kp}, a suitable pseudofunctor $\calM_{v, I,n}$ for stable pairs $(X,D)$ with volume $v$, coefficient set $I$ and index $n$ is defined. Here $n$ is a fixed integer such that $n(K_X + D)$ is required to be Cartier. Furthermore, $\calM_{v,I,n}$ is a finite type Deligne-Mumford stack with projective coarse space (see Proposition 5.11 and Corollary 6.3 in \cite{kp}). Take $I$ to be the additively closed set generated by the weight vector $\calA$. By boundedness for surface pairs (see Theorem 9.2. in \cite{boundedness}), there exists an index $n$ such that $n(K_X + S+ F_\calA)$ is a very ample Cartier divisor for all $\calA$-stable elliptic surfaces of volume $v$. 
		
		Consider the stack of stable pairs $\calM_{v,I,n}$ and denote $\calM := \calM_{v,I,n}$ for convenience. Let $\calX \to \calM$ be the the universal family. Furthermore, let $\mathfrak{M}_g$ be the algebraic stack of prestable curves with universal family $\mathfrak{C}_g \to \mathfrak{M}_g$. Consider the Hom-stack 
		$$
		\mathscr{H}om_{\calM \times \mathfrak{M}_g}(\calX \times \mathfrak{M}_g, \calM \times \mathfrak{C}_g).
		$$
This is a quasi-separated algebraic stack locally of finite presentation with affine stabilizers by Theorem 1.2 in \cite{hr}. Now we consider the pseudofunctor given by 
$$
\calE_{v,\calA,n} : B \mapsto \left\{ \xymatrix{(X, S + F_\calA) \ar[rr]^f \ar[rd] & & C \ar[ld] \\ & B &} \right\}
$$
where $(X, S + F_\calA) \to B$ is a flat family of stable pairs in the sense of \cite{kp}, $C \to B$ is a flat family of pre-stable curves, and $(f_b : X_b \to C_b, S_b + (F_\calA)_b)$ is an $\calA$-stable elliptic surface with volume $v$ for each $b \in B$. 

It is clear that $\calE_{v,\calA,n}$ is a substack of the Hom-stack $\mathscr{H}om_{\calM \times \mathfrak{M}_g}(\calX \times \mathfrak{M}_g, \calM \times \mathfrak{C}_g)$. The substack $\calE_{v,\calA,n}^\circ$ parametrizing irreducible minimal log canonical  models of elliptic surfaces over base curves is an algebraic substack of the Hom-stack, as flatness, irreducibility and smoothness are algebraic conditions. Thus the closure $\calE^m_{v,\calA,n}$ in the Hom-stack is a quasi-separated algebraic stack locally of finite presentation with affine stabilizers, and is a pseudofunctor for $\calA$-stable elliptic surfaces of volume $v$. 

To prove that $\calE^m_{v,\calA,n}$ is separated, let $B$ be a smooth curve and let
$$
\xymatrix{(X^0, S^0 + F_\calA^0) \ar[r]^(.75){f^0} & C^{0} \ar[r] & B^0} = B \setminus p
$$ be a flat family of $\calA$-stable elliptic surfaces over the complement of a point $p \in B$. Suppose 
\begin{align*}
&\xymatrix{(X, S + F_\calA) \ar[r]^(.75){f} & C \ar[r] & B}\\
&\xymatrix{(X', S' + F'_\calA) \ar[r]^(.75){f'} & C' \ar[r] & B}
\end{align*} are two extensions to $B$. 

Then $(X, S + F_\calA) \to B$ and $(X', S' + F'_\calA) \to B$ are two families of stable pairs over $B$ with isomorphic restrictions to $B^0$. Since log canonical models are unique, $(X', S' + F'_\calA) = (X, S + F_\calA)$ over $B$. Furthermore, the compositions $S \to C$ and $S' \to C'$ are isomorphisms so $C \cong C'$ over $B$. Therefore, we have $f, f' : X \to C \to B$ with $f|_{X^0} = f'|_{X^0}$. Since $X \to B$ is flat, $X^0$ is dense in $X$, therefore $f = f'$ since $C$ is separated. Thus an extension to $B$ is unique and so $\calE^m_{v,\calA,n}$ is separated.

Finally, we show that the stack is Deligne-Mumford, by showing that the objects have finitely many automorphisms. An automorphism of $(X, S + F_\calA) \to C$ is an automorphism $\sigma$ of the elliptic surface pair $(X, S + F_\calA)$, as well as an automorphism $\tau$ of $C$ such that the autormophisms commute. Since the autormophism $\sigma$ fixes the fibers $F_\calA$, the compatibility of the automorphisms implies that $\tau$ actually fixes the marked points $D_\calA$ on the base curve $C$ (see Definition \ref{def:DA}). We will show in Corollary \ref{cor:stablecurve} that the base curve is actually a weighted stable pointed curve in the sense of Hassett, and thus has finitely automorphisms. Moreover, there are finitely automorphisms of the stable surface pair (see e.g. \cite[11.12]{iitaka}).  \end{proof}

As it is not clear how to define families of stable pairs over a general base (see Remark \ref{rmk:moduli}), from now on we restrict to only considering families over a normal base.

\begin{definition}\label{def:eva} Define
$$
\calE_{v,\calA} := \left(\calE^m_{v,\calA,n}\right)^{\nu}
$$
to be the normalization of the stack constructed in Theorem \ref{thm:stack} (see Appendix \ref{appendix} for a discussion on normalizations) and $\calU_{v,\calA} \to \calE_{v,\calA}$ the pullback of the universal family.
\end{definition}

 $\calE_{v,\calA}$ is a separated algebraic stack locally of finite type with affine stabilizers. By Proposition \ref{uniquenorm}, the stack $\calE_{v,\calA}$ is independent of $n$ for $n$ large enough, and more generally independent of the choice of pseudofunctor $\calE$ as in Definition \ref{def:ellfunctor}. 

\subsection{Broken elliptic surfaces}\label{sec:broken} 

In this section we refine the definition of an $\calA$-weighted stable elliptic surface pair to more accurately reflect the type of surfaces that will appear as a result of stable reduction. Our strategy for this, inspired by \cite{ln}, is to compute a limit in the twisted stable maps moduli space \cite{av, av2, tsm}, replace this family with its $\calA$-weighted relative log canonical model, and then run the minimal model program to produce a limit of stable pairs. 

To this end, let $(f : X \to C, S + F_\calA)$ be an $\calA$-weighted slc elliptic surface. We want to perform a sequence of extremal and log canonical contractions over $C$ to make $K_X + S + F_\calA$ an $f$-ample divisor. 

Let $\nu : C' \to C$ be the normalization and let $X'$ be the pullback:
$$
\xymatrix{X' \ar[r]^{\varphi} \ar[d]_{f'} & X \ar[d]^f \\ C' \ar[r]^{\nu} & C}.
$$
Then $\varphi^*(K_X + S + F) = K_{X'} + G + S' + F'$ is $f'$-ample if and only if $K_X + S + F$ is $f$-ample. Here $\varphi^*S = S'$ is a section of $f'$ and $F' = \varphi^*F$. The divisor $G$ is the reduced divisor above the points of $C'$ lying over the nodes of $C$. In particular, to compute the relative canonical model over $C$ starting with a log smooth model, it suffices to assume $C$ is smooth and $f : X \to C$ is an irreducible elliptic surface and so the computation of relative log canonical models reduces to that in Section \ref{sec:ellipticbackground}. \\

We now move on to the question of what sorts of pseudoelliptic components appear and how are they attached? There are two \emph{types} of pseudoelliptic components that will appear as irreducible components of a stable limit of elliptic surfaces. 
	
\begin{definition}\label{def:rootedtree} Let $(T,0)$ be a rooted tree with root vertex $0 \in V(T)$. We make $V(T)$ into a poset by declaring that $\alpha \le \beta$ if vertex $\alpha$ lies on the unique minimal length path from vertex $\beta$ to the root $0$. We denote by $T[i]$ the set of vertices of distance $i$ from the root so that $T[0] = \{0\}$. Finally, if $\alpha \in T[i]$, we denote by $\alpha[1]$ the set of vertices $\beta \in T[i + 1]$ with $\alpha \le \beta$. \end{definition}

\begin{definition}\label{def:pseudoI} Let $(T, 0)$ be a rooted tree. A \textbf{pseudoelliptic tree $(Y, F_\calA)$} with dual graph $(T,0)$ is an slc pair consisting of the union of pseudoelliptic components $Y_\alpha$ with dual graph $T$ constructed inductively: every component $Y_\beta$ for $\beta \in \alpha[1]$ is attached to $Y_\alpha$ by gluing a twisted pseudofiber $G_\beta$ of $Y_\beta$ to the arithmetic genus one component $E_\alpha$ of an intermediate pseudofiber with reduced component $A_\alpha$ of $Y_\alpha$. The $\calA$-weighted marked fibers $F_\calA$ satisfy 
\begin{equation}\label{eq:A}
\mathrm{Coeff}(A_\alpha, F_\calA) = \sum_{\beta \in \alpha[1]} \sum_{D \in \Supp(F_\calA|_{Y_\beta})} \mathrm{Coeff}(D, F_\calA).
\end{equation}
A component $(Y_\alpha, F_\calA|_{Y_\alpha})$ is a \textbf{Type I} pseudoelliptic (See Figure \ref{figure:pseudo1}).
\end{definition}

	\begin{figure}[h!]
			\includegraphics[scale=.75]{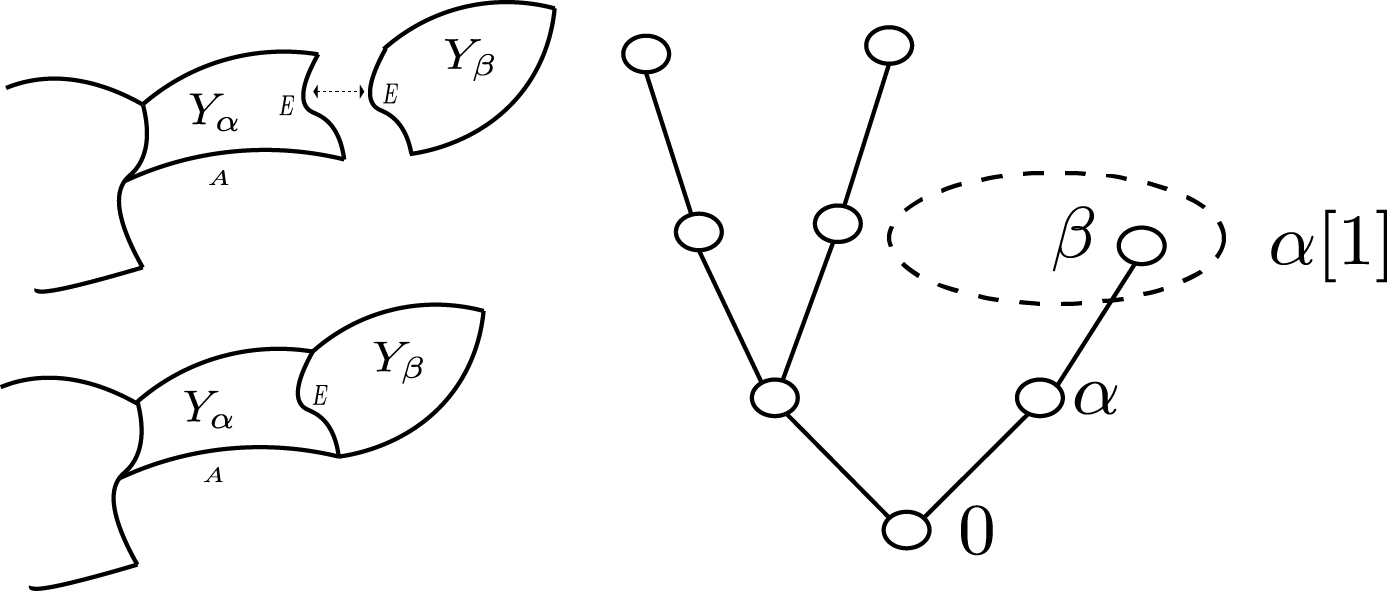}
		\caption{A pseudoelliptic tree of is constructed inductively by attaching a \emph{Type I} pseudoelliptic surface $Y_\beta$ to $Y_\alpha$ for each $\beta \in \alpha[1]$ as pictured. The component $A$ on $Y_\alpha$ is marked by the sum of the weights of the markings on $Y_\beta$. }\label{figure:pseudo1}
		\end{figure}

	\begin{definition}\label{def:pseudoII} A pseudoelliptic surface of \textbf{Type II} (see Figure \ref{figure:pseudo2}) is formed by the log canonical contraction of a section of an elliptic component attached along twisted or stable fibers. \end{definition}

		\begin{figure}[h!]
			\includegraphics[scale=1]{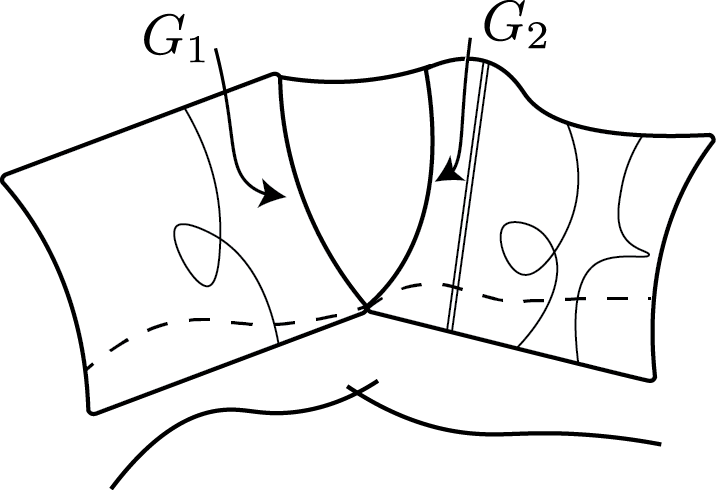}
		\caption{A pseudoelliptic surface of \emph{Type II} attached along twisted fibers $G_1$ and $G_2$.}\label{figure:pseudo2}
		\end{figure}

		One important fact is that the section $S$ is often contracted even for $\calA$-weighted elliptic surfaces with small but nonzero weights. In fact, we will see (Section \ref{sec:pseudos}) that contracting the section of a component to form a pseudoelliptic corresponds to stabilizing the base curve as an $\calA$-stable curve in the sense of Hassett (see Section \ref{sec:hassett}).

We can now define the particular $\calA$-weighted stable elliptic surfaces that will appear on the boundary of the main components of the moduli space (see Figure \ref{fig:brokensurfacedef}).

\begin{definition}\label{def:broken} An $\calA$-\textbf{broken elliptic surface} is an $\calA$-weighted slc elliptic surface pair\\ $(f : X \to C, S + F_\calA)$ such that (see Figure \ref{fig:brokensurfacedef})

\begin{enumerate}[label = (\alph*)]
\item each component of $X$ contracted by $f$ is a type I or type II pseudoelliptic surface with marked pseudofibers;
\item the elliptic components and type II pseudoelliptics are attached along twisted fibers; 
\item the type I pseudoelliptics appear in pseudoelliptic trees attached by gluing a twisted pseudofiber $G_0$ on the root to an arithmetic genus one component $E$ of an intermediate (pseudo)fiber of an elliptic (type II pseudoelliptic) component;
\item all marked intermediate (pseudo)fibers are minimal. 
\end{enumerate}

\noindent We say $(f : X \to C, S + F_\calA)$ is an \textbf{$\calA$-broken stable elliptic surface} if $(X, S + F_\calA)$ is a stable pair. 

\end{definition} 

	\begin{figure}[h!]
			\includegraphics[scale=1]{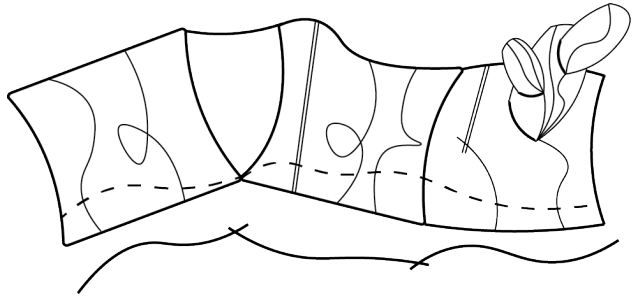}
		\caption{An $\calA$-weighted broken elliptic surface.}\label{fig:brokensurfacedef}
		\end{figure}
		
\begin{remark} Note this definition allows for non-minimal Weierstrass cusps and also non-minimal intermediate fibers contained in the double locus. \end{remark}

\begin{remark} \label{rem:section} For each pseudoelliptic component $X_0 \subset X$ with associated elliptic surface $f_0 : Y_0 \to C_0$ and morphism $\mu_0 : Y_0 \to X_0$ contracting the section, there is a unique slc elliptic surface $f' : X' \to C'$ with $Y_0 \subset X'$ and $f'|_{Y_0} = f_0$. There is a morphism $\mu : X' \to X$ contracting the section of $Y_0$. Thus one can think of a broken elliptic surface as one obtained by contracting the sections and corresponding components in the base curve of some irreducible components of an $\calA$-weighted slc elliptic pair $(f : X' \to C', S' + F'_\calA)$ where \emph{every component} is a relative log canonical model of an elliptic surface and in particular only has twisted, intermediate or Weierstrass fibers marked appropriately.   \end{remark}

\subsection{Formation of pseudoelliptic components}\label{sec:pseudos}

In this subsection, we record various statements about the formation of pseudoelliptic components. 

The following describes how the log canonical divisor class intersects the section (see also Proposition 6.5 in \cite{calculations} and Proposition 4.3.2 in \cite{ln}). This determines when the section of a component contracts to form a pseudoelliptic surface. \\

Given an $\calA$-broken elliptic surface $(f : X \to C, S + F_\calA)$, we obtain an $\calA$-weighted pointed curve $(C, D_\calA)$ as follows using the fact that $S$ is a log canonical center (see \cite[Def 2.34]{singmmp}).

\begin{definition}\label{def:DA} Let $(f : X \to C, S + F_\calA)$ be a $\calA$-broken elliptic surface. $D_\calA$ is the unique divisor on $C$ such that
$$
\sigma^*(K_X + S + F_\calA) = K_C + D_\calA
$$
and $(C,D_\calA)$ is an slc pair where $\sigma : C \to X$ is the map identifying $C$ with the section $S$. 

\end{definition} 

\begin{remark}\label{rem:DA} One can compute that $D_\calA = \sum a_ip_i$ where $p_i = f(F_{a_i})$ is the image of the $i^{th}$ marked fiber and $a_i$ its coefficient. \end{remark}

We form the dual graph of $C$ by assigning a vertex to each irreducible component $C_\alpha \subset C$ and an edge for each node. Let $v_\alpha$ be the valence of $C_\alpha$ in the dual graph and $g(C_\alpha)$ the geometric genus of $C_\alpha$.

\begin{prop}\cite[Proposition 5.3]{tsm}\label{prop:adjunction} Let $(f : X \to C, S + F_\calA)$ be an $\calA$-broken elliptic surface with section $S$. Let $(C, D_\calA)$ be the $\calA$-weighted pointed curve and $C \subset C_\alpha$ an irreducible component. Then for the component $S_\alpha$ of the section lying above $C_\alpha$, we have
\begin{align*}
(K_X + S + F).S_\alpha &= 2g(C_\alpha) - 2 + v_\alpha + \deg\left(D_\calA|_{C_\alpha}\right) \\
&= \deg\left(\omega_C(D_\calA)|_{C_\alpha}\right).
\end{align*}
\end{prop}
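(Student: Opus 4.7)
My plan is to reduce the computation to two nested applications of adjunction: first from $X$ to $S$, and then from $C$ to its component $C_\alpha$. The key input is Definition \ref{def:DA} together with the observation from Remark \ref{rem:DA} that $D_\calA=\sum a_i p_i$ is exactly the divisor on $C$ which makes the adjunction formula along the section $S$ hold. Since the section $\sigma\colon C\to X$ identifies $C$ with $S$, the identity
$$\sigma^*(K_X+S+F_\calA)=K_C+D_\calA$$
is by definition an equality of $\Q$-Cartier divisor classes on $C$, and the component $S_\alpha\subset S$ corresponds under $\sigma$ to $C_\alpha\subset C$.

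The first step is to compute
$$(K_X+S+F_\calA).S_\alpha=\deg\bigl((K_X+S+F_\calA)|_{S_\alpha}\bigr)=\deg\bigl(\sigma^*(K_X+S+F_\calA)|_{C_\alpha}\bigr)=\deg\bigl((K_C+D_\calA)|_{C_\alpha}\bigr),$$
the last equality being Definition \ref{def:DA}. The second step is to expand $\deg((K_C+D_\calA)|_{C_\alpha})$. For this I apply the standard adjunction formula for a nodal curve: if $n_1,\dots,n_{v_\alpha}$ are the nodes of $C$ lying on $C_\alpha$, then $\omega_C|_{C_\alpha}=\omega_{C_\alpha}(n_1+\cdots+n_{v_\alpha})$, so
$$\deg(K_C|_{C_\alpha})=2g(C_\alpha)-2+v_\alpha.$$
Since the marked points $p_i$ supporting $D_\calA$ lie on the smooth locus of $C$ by part (c) of Definition \ref{def:es}, the divisor $D_\calA|_{C_\alpha}$ makes sense as a $\Q$-divisor on the normalization of $C_\alpha$, and adding its degree yields
$$\deg\bigl(\omega_C(D_\calA)|_{C_\alpha}\bigr)=2g(C_\alpha)-2+v_\alpha+\deg\bigl(D_\calA|_{C_\alpha}\bigr),$$
which is the second equality claimed in the proposition.

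The only nontrivial point is to justify the defining identity $\sigma^*(K_X+S+F_\calA)=K_C+D_\calA$, i.e.\ to verify that $D_\calA$ exists and is given by the formula in Remark \ref{rem:DA}. This follows from adjunction along the log canonical center $S$: on each irreducible elliptic component $Z$ of $X$, the section $S|_Z$ passes through the smooth locus of the fibration away from nodes of the base and meets each marked fiber $F_{a_i}$ transversely in a single smooth point of $X$ (this uses condition (c) of Definition \ref{def:es} and the explicit fiber classification from Theorem \ref{thm:thm1}; the support of $F_\calA$ only contributes through its reduced component meeting $S$, with coefficient equal to $a_i$). At a node of $C$ over which two components of $X$ meet along twisted fibers, the different contributes the node of $C_\alpha$, accounting for the $v_\alpha$ term via the usual adjunction for slc pairs. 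With these local contributions identified, the formula for $D_\calA$ agrees with $\sum a_i p_i$, and the two displayed expressions in the proposition follow.

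The main obstacle is bookkeeping: verifying that every codimension-one slc center along $S$ (marked fiber points, nodes of $C$, pseudoelliptic attaching loci) contributes to $D_\calA$ with exactly the coefficient recorded in Remark \ref{rem:DA}. Once the local computation at each such point is established, the global formula follows by additivity of degrees.
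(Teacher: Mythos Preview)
Your proposal is correct and takes essentially the same approach as the paper. Both arguments are adjunction along $S$: the paper cites the $\calA=1$ case from \cite{tsm} and then notes that for $a_i<1$ the section lies in the smooth locus of $X$ near $F_i$ so classical adjunction gives the contribution $a_i$, while you unpack the same computation directly via Definition~\ref{def:DA} and the nodal-curve formula $\omega_C|_{C_\alpha}=\omega_{C_\alpha}(\sum n_j)$.
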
 

\begin{proof} The case where $\calA = 1$ is precisely Proposition 5.3 of \cite{tsm} (see also Proposition 6.5 in \cite{calculations} and Proposition 4.3.2 in \cite{ln}). This more general case follows from the adjunction formula, as the section passes through the smooth locus of the surface in a neighborhood of any fiber that is not marked with coefficient $a_i = 1$. \end{proof} 

\begin{cor}\label{cor:stablecurve}\cite[Corollary 6.7 \& 6.8]{calculations} Let $(f : X \to C, S + F_\calA)$ be an $\calA$-broken elliptic surface such that each component is elliptically fibered. Then $(K_X + S + F_\calA).S_\alpha > 0$ for every component $S_\alpha$ of $S$ if and only if $(C, D_\calA)$ is an $\calA$-pointed stable curve in the sense of Hassett. In this case, the relative log canonical model over $C$ is stable. \end{cor}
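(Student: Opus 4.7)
The proof splits into two parts: the equivalence of the two positivity conditions, and the subsequent ampleness claim.

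For the equivalence, the central input is Proposition \ref{prop:adjunction}, which on each section component $S_\alpha$ lying above $C_\alpha \subset C$ gives
$$ (K_X + S + F_\calA) \cdot S_\alpha = \deg\bigl(\omega_C(D_\calA)|_{C_\alpha}\bigr). $$
Since $C$ is a nodal curve, ampleness of $\omega_C(D_\calA)$ is equivalent to positive degree on every irreducible component. Hence the hypothesis that $(K_X + S + F_\calA) \cdot S_\alpha > 0$ holds for every $\alpha$ translates precisely into ampleness of $\omega_C(D_\calA)$ on $C$.

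Next I verify the remaining axioms for $(C, D_\calA)$ to be Hassett $\calA$-stable. The base $C$ is projective and nodal by Definition \ref{def:broken}. By Remark \ref{rem:DA}, $D_\calA = \sum a_i p_i$ with $p_i = f(F_{a_i})$; these points are distinct because distinct marked fibers on $X$ have distinct images in $C$, and they lie in the smooth locus of $C$ by Definition \ref{def:es}(c). The bound $a_i \leq 1$ is part of the standing assumption on the weight vector, so in particular the sum of weights at any single point of $C$ is at most one. Combined with the ampleness of $\omega_C(D_\calA)$ established above, this is exactly the data of an $\calA$-stable pointed curve, giving the equivalence.

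For the second assertion, I must upgrade the $f$-ampleness of $K_X + S + F_\calA$ (built into Definition \ref{def:broken} since each component is a relative log canonical model over $C$) to global ampleness on $X$. I apply Nakai--Moishezon component by component. Curves contained in fibers of $f$ are handled by $f$-ampleness; each section $S_\alpha$ is handled by the standing hypothesis, now equivalent to Hassett stability; and any other horizontal irreducible curve $T$ on an elliptic component $X_\alpha$ dominates $C_\alpha$, so modulo vertical classes it is numerically a positive multiple of $S_\alpha$ plus a multiple of a general fiber, each of which pairs positively with $K_X + S + F_\calA$ by the previous cases. Positive self-intersection on each component can be read off from the canonical bundle formula of Theorem \ref{canonical} together with the positive degree of $\omega_C(D_\calA)$ on $C_\alpha$, and positivity on the double locus (twisted fibers along which components are glued) follows from Definition \ref{def:broken}(b) and $f$-ampleness.

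The main obstacle is the last step, and specifically the numerical decomposition of a horizontal curve that is neither a section nor contained in a fiber; once one recognizes that the hypothesis on sections is exactly what is needed to feed into that decomposition, both the equivalence and the final ampleness assertion drop out of Proposition \ref{prop:adjunction}.
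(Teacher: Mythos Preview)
Your treatment of the equivalence is correct and is exactly the intended argument: Proposition \ref{prop:adjunction} identifies $(K_X + S + F_\calA).S_\alpha$ with $\deg(\omega_C(D_\calA)|_{C_\alpha})$, and on a nodal curve ampleness of $\omega_C(D_\calA)$ is detected componentwise.

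The gap is in the ampleness step, specifically your handling of horizontal curves $T \neq S_\alpha$. The asserted numerical decomposition ``$T$ is a positive multiple of $S_\alpha$ plus a multiple of a general fiber modulo vertical classes'' is false in general. On an elliptic surface with section, the Shioda--Tate description of $\mathrm{NS}(X_\alpha)_\Q$ includes the Mordell--Weil lattice: differences of sections have zero intersection with a general fiber but are not $\Q$-linear combinations of fiber components. So writing $T - dS_\alpha$ as a combination of vertical curves is not available, and even when it is, there is no reason the resulting vertical combination is effective or pairs non-negatively with $K_X + S + F_\calA$. Your argument therefore does not establish $(K_X + S + F_\calA).T > 0$.

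What is actually needed here is the content of Lemma \ref{lemma:first:contract:the:section}: one passes to the twisted model by contracting the $A$-components, where $K_Y + E'_\calB$ becomes numerically a pullback from the base, so that $(K_Y + E'_\calB).M' = m(K_Y + E'_\calB).S'$ for any multisection $M'$ of degree $m$. Tracking the inequalities through the contraction (this is the two-step computation in that lemma) then gives $(K_X + S + F_\calA).M \geq m(K_X + S + F_\calA).S_\alpha > 0$. Once positivity on every curve is established, abundance (Proposition \ref{prop:abundance}) gives semiampleness, the induced morphism is finite, and ampleness follows; you do not need a separate check of $(K_X + S + F_\calA)^2 > 0$.
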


\begin{cor}\label{cor:p1}\cite[Corollary 6.9]{calculations} The log minimal model program contracts the section of an elliptic component $X_\alpha \to C_\alpha$ of $(f : X \to C, S + F_\calA)$  to produce a pseudoelliptic if and only if either:
\begin{enumerate}[label = (\alph*)]
\item $C \cong \mb{P}^1$ and $\sum a_i \le 2$, or
\item $C$ is a genus one curve and $a_i = 0$ for all $i$. 
\end{enumerate} 
\end{cor}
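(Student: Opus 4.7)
The plan is to apply Proposition \ref{prop:adjunction} and analyze the sign of the resulting intersection number. The section component $S_\alpha \cong C_\alpha$ is irreducible with negative self-intersection $S_\alpha^2 = -\deg \LL \le 0$, so the log minimal model program will contract $S_\alpha$ to a point (producing a pseudoelliptic, per Definition \ref{def:pseudo}) precisely when $(K_X + S + F_\calA).S_\alpha \le 0$. When this intersection is strictly negative, $S_\alpha$ spans a $(K+S+F_\calA)$-negative extremal ray contracted by the MMP; when it is zero, $S_\alpha$ lies in the null locus of the nef divisor $K+S+F_\calA$ and is contracted by the log canonical morphism guaranteed by abundance (Proposition \ref{prop:abundance}).

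By Proposition \ref{prop:adjunction} and Remark \ref{rem:DA}, this inequality reads
$$
2g(C_\alpha) - 2 + v_\alpha + \sum_{F_{a_i} \subset X_\alpha} a_i \le 0.
$$
Under the hypotheses (a) $C \cong \mb{P}^1$ or (b) $C$ of genus one, the base $C = C_\alpha$ is irreducible and hence $v_\alpha = 0$. A case analysis on $g(C_\alpha)$ then yields the conclusion: $g(C_\alpha) \ge 2$ is excluded since the left-hand side is $\ge 2 > 0$; $g(C_\alpha) = 1$ forces all $a_i = 0$, which is case (b); and $g(C_\alpha) = 0$ gives $\sum a_i \le 2$, which is case (a). Conversely, in each of (a) and (b) the intersection number is automatically $\le 0$, so $S_\alpha$ is contracted.

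The main subtlety is justifying the boundary case $(K_X + S + F_\calA).S_\alpha = 0$: here one must check that $K_X + S + F_\calA$ is nef and big on $X_\alpha$ so that the log canonical morphism genuinely contracts $S_\alpha$ to a point rather than, for instance, producing a fibration structure that merely has $S_\alpha$ in a fiber class. Bigness is supplied by Proposition \ref{properlyelliptic} in the properly elliptic case and by Propositions \ref{rational} and \ref{K3} in the remaining rational and K3-like regimes, while nefness is a direct consequence of the relative log canonical structure established in Section \ref{sec:ellipticbackground} combined with the sign computations above on all other curve classes.
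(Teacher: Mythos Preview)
Your core argument in the first two paragraphs is correct and is the natural approach: apply Proposition \ref{prop:adjunction} to reduce to the sign of $2g(C_\alpha) - 2 + v_\alpha + \sum a_i$, then do the case analysis on $g(C_\alpha)$. This is exactly how the result follows from Proposition \ref{prop:adjunction} and Corollary \ref{cor:stablecurve}.

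However, your third paragraph contains an error and is in any case unnecessary. You claim that ``bigness is supplied by \ldots\ Propositions \ref{rational} and \ref{K3},'' but those propositions explicitly include cases where $K_X + S + F_\calA$ is \emph{not} big (Proposition \ref{rational} cases (ii) and (iii), Proposition \ref{K3} case (b)). More to the point, bigness is irrelevant here: the corollary asserts only that the section is contracted as a step of the log MMP, not that the final log canonical model is two-dimensional. In the boundary case $(K_X + S + F_\calA).S_\alpha = 0$, the divisor is already nef (it is $f$-ample by the relative log canonical model hypothesis, and you have just checked it is non-negative on $S_\alpha$), so abundance (Proposition \ref{prop:abundance}) contracts $S_\alpha$. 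The image of $X_\alpha$ under contraction of $S_\alpha$ is a pseudoelliptic by Definition \ref{def:pseudo}, regardless of whether further contractions subsequently collapse it to a curve or a point. You should simply delete the final paragraph, or replace it with a one-line remark that nefness follows from $f$-ampleness plus the sign computation on $S_\alpha$.

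A minor point: your handling of $v_\alpha$ assumes the statement is about an irreducible base curve $C = C_\alpha$ (so $v_\alpha = 0$). This is the intended reading, but strictly speaking the forward direction of the ``if and only if'' should also rule out the possibility $v_\alpha > 0$; as written, you only invoke $v_\alpha = 0$ after assuming (a) or (b).
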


\section{Invariance of log plurigenera for broken elliptic surfaces}\label{sec:vanishing} 

In \cite{tsm}, we investigated the stable pairs compactification of the space of \emph{twisted elliptic surfaces} using the theory of twisted stable maps. A twisted elliptic surface is an irreducible $\calA$-stable elliptic surface $(f : X \to C, S + F_\calA)$ for the constant weight vector $\calA = (1,\ldots, 1)$ satisfying the property that the support of every non-stable fiber is contained in $\Supp(F_\calA)$. In particular, the compactification of the space of twisted elliptic surfaces is a component of the space $\calE_{v,\calA}$ we denote $\calF_v(1,1)$. 

The main result of \cite{tsm} regarding the space $\calF_v(1,1)$ is the characterization of the boundary components as consisting of broken elliptic surfaces (see Theorem 1.4 \cite{tsm}). Our goal is to generalize this result to $\calA$-stable elliptic surfaces for arbitrary weights and use it to construct various morphisms between the moduli spaces for different weights analagous to the reduction morphisms of Hassett spaces (see Theorem \ref{thm:hassett}). 

To show the existence of such morphisms on the level of moduli spaces and universal families, we demonstrate that the pushforwards of the pluri-log canonical sheaves of a family are compatible with base change so that the construction of log canonical models is functorial in families. The main technical step is the following vanishing theorem for the pluri-log canonical divisor which we prove in this section: 

\begin{theorem}\label{thm:vanishing} Let $(f : X \to C, S + F_\calB)$ be a $\calB$-broken stable elliptic surface with section $S$ and marked fibers $F_\calB$. Let $0 \le \calA \le \calB$ such that $K_X + S + F_\calA$ is nef and $\Q$-Cartier. 

\begin{enumerate}[label = (\alph*)] 
\item If $\calA$ is not identically zero.  Then
$$
H^i\Big(X, \calO_X\big(m(K_X + S + F_\calA)\big)\Big) = 0
$$
for $i > 0$ and $m \geq 2$ divisible enough. 
\item If $\calA$ is identically zero, suppose further that \begin{enumerate*}[label = \roman*)] \item $p_g(C) \neq 1$, and \item if $p_g(C) = 0$ then $\deg \LL \neq 2$. \end{enumerate*}
Then 
$$
H^i\Big(X, \calO_X\big(m(K_X + S + F_\calA)\big)\Big) = 0
$$
for $i > 0$ and $m \geq 2$ divisible enough. 
\end{enumerate}
\end{theorem}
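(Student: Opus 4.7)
The plan is to apply Fujino's vanishing (Theorem \ref{fujinovanishing}) to the structure map $X \to \mathrm{Spec}(k)$ with $L = m(K_X + S + F_\calA)$ and boundary $\Delta = S + F_\calA$. Since
$$L - (K_X + \Delta) = (m-1)(K_X + S + F_\calA)$$
is nef by hypothesis and bigness scales linearly, the problem reduces to showing that $K_X + S + F_\calA$ restricts to a big divisor on every slc stratum of $(X, S + F_\calA)$.

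The slc strata form a predictable list: the irreducible elliptic and pseudoelliptic components; the section components $S_\alpha$; the $E$-components of twisted and intermediate fibers (which enter with coefficient one); the curves in the double locus along which components are glued; and lower-dimensional intersection strata. For the one-dimensional strata bigness is just positive degree. On a section component $S_\alpha$, Proposition \ref{prop:adjunction} gives $(K_X + S + F_\calA).S_\alpha = \deg(\omega_C(D_\calA)|_{C_\alpha})$, which is positive because a section component persists in a broken stable elliptic surface only when the corresponding base component is $\calA$-stable as a weighted pointed curve (Corollaries \ref{cor:stablecurve} and \ref{cor:p1}). For twisted and $E$-components in the double locus, positive degree follows from the intersection data in Table \ref{tab:table1} together with the nefness assumption.

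For two-dimensional strata the analysis splits by component type. An elliptic component whose base has positive genus is properly elliptic, so Proposition \ref{properlyelliptic} delivers bigness of $K+S$, and the nonnegative boundary $F_\calA$ only helps. For an elliptic component over $\PP^1$ with $\deg \LL = 1$ and $F_\calA$ nonzero on the component, Proposition \ref{stableattaching} together with Kodaira's lemma comparing $K_X + S + F_\calA$ to the ample divisor $K_X + S + F_\calB$ gives bigness. Type II pseudoelliptics are handled by Propositions \ref{rational} ($\deg \LL = 1$) and \ref{K3} ($\deg \LL = 2$), and interior Type I pseudoelliptics in a tree by Proposition \ref{prop:middlepseudo}, while leaves of such trees inherit bigness from the coefficient concentrated on the $A$-component via the balance equation \eqref{eq:A}.

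The main obstacle is showing that the case hypotheses precisely exclude the scenarios in which bigness could fail. In case (a), one must trace the nonzero boundary contributions through the pseudoelliptic tree structure via \eqref{eq:A} to confirm that every component carries a positive boundary weight, thereby ruling out the Iitaka-dimension $\leq 1$ alternatives of Propositions \ref{rational} and \ref{K3}. In case (b), where $\calA \equiv 0$, the dangerous components are exactly the $\kappa \leq 1$ cases: pseudoelliptics over $\PP^1$ with $\deg \LL = 1$ and no positive boundary, K3-like components with $\deg \LL = 2$, and any component whose base is an elliptic curve (which forces $p_g(C) = 1$). The stated hypotheses $p_g(C) \neq 1$, and $\deg \LL \neq 2$ when $p_g(C) = 0$, are tailored to exclude these, leaving only the properly elliptic regime where Proposition \ref{properlyelliptic} applies. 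Assembling these inputs, Fujino's theorem yields $H^i(X, \calO_X(m(K_X + S + F_\calA))) = 0$ for $i > 0$ and $m \geq 2$ divisible enough.
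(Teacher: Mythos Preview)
Your plan has a genuine gap: you assume that the hypotheses force $K_X + S + F_\calA$ to be big on every slc stratum, but this is false. The surface $(X, S + F_\calB)$ is $\calB$-stable, not $\calA$-stable; the divisor $K_X + S + F_\calA$ is only assumed nef. As $\calA$ drops from $\calB$, bigness can and does fail on several strata, and Fujino's theorem is simply not applicable to $X$ as it stands.

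Concretely: your claim that $(K_X + S + F_\calA).S_\alpha > 0$ because ``a section component persists \ldots only when the corresponding base component is $\calA$-stable'' is wrong. Section components persist because $(C,D_\calB)$ is $\calB$-stable; when you lower to $\calA$, Proposition \ref{prop:adjunction} can give $(K_X + S + F_\calA).S_\alpha = 0$ (this is exactly a type $\wii$ wall). Similarly, on a Type I pseudoelliptic leaf where coefficients have been reduced, Propositions \ref{rational} and \ref{K3} tell you $K_X + F_\calA$ can have Iitaka dimension $1$ or $0$; the balance equation \eqref{eq:A} does not save you, since it only relates the $A$-coefficient on the parent to the weights on the child, not the other way around. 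A trivial component $E \times \mathbb{P}^1$ with $\deg\LL = 0$ is never log general type regardless of $\calA$. Finally, $(K_X + S + F_\calA).E$ can vanish on an $E$-component of an intermediate fiber. In all of these cases the big-on-every-stratum hypothesis of Theorem \ref{fujinovanishing} fails.

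The paper's proof confronts exactly these obstructions. It does not apply Fujino's theorem to $X$ directly; instead it (i) peels off pseudoelliptic trees via attaching exact sequences, carefully checking that the twisted restriction $L^{[m]}|_{X'}(-M)$ stays nef (Lemma \ref{lemma:separatingtypeI}); (ii) contracts trivial $E \times \mathbb{P}^1$ components and section components $S_\alpha$ with $(K_X+S+F_\calA).S_\alpha = 0$, using Grauert--Riemenschneider (Proposition \ref{prop:gr}) to kill $R^1$ of the contraction; (iii) only then applies Fujino's theorem to the residual piece where bigness genuinely holds (Proposition \ref{vanishing>0}); and (iv) handles the Iitaka dimension $1$ and $0$ pseudoelliptic leaves by separate arguments (semipositivity of $f_*\calO(m\Sigma)$ in Lemma \ref{semipositivity}, and direct computation of $H^i(\calO)$ in Lemma \ref{lemma:Hi(O)}). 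The exclusions in case (b) are needed precisely at this last step, not to guarantee global bigness.
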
 

\begin{remark} $ $ 
\begin{enumerate} 

\item \label{rem:A(t)} Let $\calA_t = t\calB + (1-t)\calA$. Since the nef cone is the closure of the ample cone, the divisor $K_X + S + F_{\calA_t}$ is also ample for $t > 0$. That is, $K_X + S + F_\calA$ is the first time we drop from ample to nef along the segment connecting $\calB$ to $\calA$.

\item We consider the case $p_g(C) = 1$ and $\calA = 0$ and the case $p_g(C) = 0, \calA = 0$ and $\deg \LL = 2$ in Theorem \ref{thm:basechange}.\end{enumerate} \end{remark} 

\begin{proof} 

We will prove both cases at once, pointing out where in the argument the hypothesis in case $(b)$ are necessary if $\calA = 0$. For convenience we sometimes denote the $\Q$-line bundle $L^{[m]} := \calO_X(m(K_X + S + F_\calA))$. The proof will proceed through several steps. \\

\begin{steps} 

\proofstep First we carefully break $X$ up into several components. \\

Let $Y \subset X$ be a union of irreducible components and let $X'$ be a union of the complementary irreducible components. Then there is an exact sequence
$$
0 \to L^{[m]}|_{X'}(-M) \to L^{[m]} \to L^{[m]}|_Y \to 0
$$
where $M = \sum_{j = 1}^s M_j$ is the sum of fiber components along which $X'$ and $Y$ are attached to obtain $X$ (see the proof of \cite[Corollary 10.34]{kk}). Since $\calO_{X'}(K_X|_{X'}) = \calO_{X'}(K_{X'} + M)$ and $\calO_Y(K_X|_Y) = \calO_Y(K_Y + M)$, we see that
\begin{align*}
L^{[m]}|_{Y} &= \calO_Y\Big(m\big(K_Y + S|_Y + F_{\calA}|_Y + M\big)\Big)\\
L^{[m]}|_{X'}(-M) &= \calO_{X'}\Big(m\big(K_{X'} + S|_{X'} + F_{\calA}|_{X'} + \frac{m-1}{m}M\big)\Big).
\end{align*}

By the long exact sequence of cohomology, it suffices to prove vanishing for the divisor $L^{[m]}|_{X'}(-M)$ on $X'$ and $L^{[m]}|_Y$ on $Y$. To do this, we need to guarantee some positivity for $L^{[m]}|_{X'}(-M)$, namely that it is nef. This is not immediate due to the twisting by $-M$, and therefore we need to pick $X'$ and $Y$ judiciously to ensure that twisting by $-M$ still yields a nef divisor. Note on the other hand that $L^{[m]}|_Y$ is automatically nef.  \\

Let $Y$ be a pseudoelliptic tree (see Definition \ref{def:pseudoI}) indexed by the rooted tree $(T,0)$ with root component $Y_0$. Suppose that $Y$ is attached to $X'$ by gluing a twisted pseudofiber of $Y_0$ to the arithmetic genus $1$ component of an intermediate fiber on $X'$. In this case $M$ is an irreducible curve. Let $A$ denote the rational component of the intermediate fiber of $X'$. Suppose finally that $\mathrm{Coeff}(A, F_\calA) < \mathrm{Coeff}(A, F_\calB)$. 

\begin{lemma}\label{lemma:separatingtypeI} In the situation above, $L^{[m]}|_{X'}(-M)$ and $L^{[m]}|_Y$ are nef and $\Q$-Cartier. \end{lemma}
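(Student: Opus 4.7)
The plan is to reduce the nefness check for $L^{[m]}|_{X'}(-M)$ to a finite number of intersection computations on $X'$, where the hypothesis $\mathrm{Coeff}(A, F_\calA) < \mathrm{Coeff}(A, F_\calB)$ enters precisely in the critical case $C = A$. The $\Q$-Cartier claim is immediate since $L^{[m]}$ and $M$ are both $\Q$-Cartier on $X$ (the latter because $M$ is the locally principal ideal cutting out $Y \subset X$), and restriction preserves this. For $L^{[m]}|_Y$, nefness is automatic: the line bundle $L^{[m]} = \calO_X(m(K_X+S+F_\calA))$ is nef on $X$ by hypothesis, and restrictions of nef line bundles to subvarieties remain nef.

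For $L^{[m]}|_{X'}(-M)$, by adjunction we rewrite
\[
L^{[m]}|_{X'}(-M) \cong \calO_{X'}\left(m\bigl((K_X+S+F_\calA)|_{X'} - \tfrac{1}{m}M\bigr)\right)
\]
and check nonnegative pairing against every irreducible curve $C \subset X'$. The key geometric observation is that $M = E$ is the nonreduced component support of an intermediate fiber on the unique elliptic component $X'_0 \subset X'$ containing $M$, so the only irreducible curves on $X'$ meeting $M$ are $M$ itself and the reduced rational component $A$ of that same intermediate fiber: other marked fibers on $X'_0$ lie over different points of the base curve and are disjoint from this one, other elliptic or pseudoelliptic components of $X'$ attach to $X'_0$ along different fibers, and the section $S$ meets each intermediate fiber in its smooth locus, which lies on $A$ and not on $E$. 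For any $C \neq M, A$ we get $M.C = 0$ and the pairing reduces to $(K_X+S+F_\calA).C \geq 0$ by nefness. For $C = M$, we have $M^2 = E^2 < 0$ on $X'$ by Table~\ref{tab:table1}, so $(K_X+S+F_\calA).M - \tfrac{1}{m}M^2 > 0$.

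The remaining case $C = A$ uses the strict coefficient inequality. Write
\[
(K_X+S+F_\calA).A = (K_X+S+F_\calB).A - (F_\calB - F_\calA).A;
\]
the first term is strictly positive by ampleness of $K_X + S + F_\calB$. For the second term, only the marked fiber containing $A$ contributes, since all other marked fibers on $X'$ are disjoint from $A$ and the component $E$ appears with coefficient $1$ in both $F_\calA$ and $F_\calB$ (so is eliminated from the difference). Hence $(F_\calB - F_\calA).A = (\mathrm{Coeff}(A,F_\calB) - \mathrm{Coeff}(A,F_\calA)) A^2$, which is strictly negative since the first factor is positive by hypothesis and $A^2 < 0$ by Table~\ref{tab:table1}. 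Therefore $(K_X+S+F_\calA).A > 0$, and for $m$ sufficiently divisible this strictly exceeds $\tfrac{1}{m}M.A$, completing the nefness check.

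The main subtlety is rigorously justifying that the only irreducible curves on $X'$ meeting $M$ are $M$ and $A$. This relies on the precise attachment pattern of components in a broken elliptic surface (Definition~\ref{def:broken}): distinct components attach along twisted or intermediate fibers lying over distinct points of the base nodal curve, and within a single elliptic component the section passes through the smooth locus of each fiber. Once this local structure is verified, the intersection-theoretic check sketched above yields the result.
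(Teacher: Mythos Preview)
Your proof has a genuine gap in the nefness argument for $L^{[m]}|_{X'}(-M)$. The claim that ``the only irreducible curves on $X'$ meeting $M$ are $M$ itself and the reduced rational component $A$'' is false. The component $X'_0 \subset X'$ containing $M$ is a surface, and any horizontal curve on $X'_0$ (a multisection of the elliptic fibration) passing through the fiber $A \cup E$ will meet $M = E$ positively. Your justification only addresses other marked fibers, other irreducible components of $X'$, and the section $S$; it does not account for these multisections. So the case ``$C \neq M, A$ with $M.C = 0$'' does not cover all remaining curves, and your direct intersection-theoretic reduction breaks down.

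The paper's argument is structured differently precisely to handle this. After checking positivity on $M$ and $A$ (your computations here are fine, and together with nefness of $(K_X + S + F_\calA)|_{X'}$ this shows the divisor is \emph{$f_0$-nef} on a log resolution $\mu : X_0 \to X'$ with elliptic fibration $f_0 : X_0 \to C_0$), the paper does not attempt to check horizontal curves directly. Instead it observes that $(X_0, S_0 + F_\calA|_{X_0} + \tfrac{m-1}{m}M)$ is an slc pair whose log canonical divisor is $f_0$-nef, and then runs the log MMP: after possibly contracting the section, one lands on $X'$ and obtains a log minimal model, so $K_{X'} + (S + F_\calA)|_{X'} + \tfrac{m-1}{m}M$ is nef. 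It is this MMP step, not a curve-by-curve check, that certifies nonnegativity on multisections. A minor secondary point: your $\Q$-Cartier argument via the ideal of $Y$ in $X$ is shakier than the paper's, which simply notes that $X'$ has quotient singularities in a neighborhood of $M$, so every Weil divisor there is $\Q$-Cartier.
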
 

\begin{proof} $L^{[m]}|_Y$ is nef and $\Q$-Cartier as it is the restriction of a nef and $\Q$-Cartier divisor. On the other hand, we need to check that 
$$
L^{[m]}|_{X'}(-M) = \calO_{X'}\left(m(K_{X'} + S|_{X'} + F_{\calA}|_{X'} + \frac{m-1}{m}M)\right)
$$
is nef and $\Q$-Cartier on $X'$. For $\Q$-Cartier, it suffices to note that $X'$ has quotient singularities in a neighborhood of $M$ (see Section 6.2 of \cite{tsm}). To see that it is nef, note that we only need to check 
\begin{align*}
\left(K_{X'} + S|_{X'} + F_{\calA}|_{X'} + \frac{m-1}{m}M\right).M &\geq 0 \\
\left(K_{X'} + S|_{X'} + F_{\calA}|_{X'} + \frac{m-1}{m}M\right).A &\geq 0
\end{align*}
since $K_{X'} + S|_{X'} + F_{\calA}|_{X'} + M$ is nef and reducing the coefficient of $M$ does not affect the degree on the other components of the marked divisor. Furthermore, the intersections we are computing are all on the single component of $X'$ containing $A$, so we may suppose $X'$ is irreducible.

The first inequality is clear -- recall that $M^2 < 0$, so reducing its coefficient \emph{increases} the intersection with $M$. For the second inequality, we take a log resolution $\mu : X_0 \to X'$ if necessary, so that we can assume that $A$ lies on an elliptic component $f_0 : X_0 \to C_0$ with section $S_0$. Using the fact that the $\calB$-weighted divisor $K_X + S + F_\calB$ is ample, we see that $K_{X_0} + S_0 + F_{\calB}|_{X_0} + M$ is $f_0$-ample. Furthermore $A$ is disjoint from the other marked fibers and $A^2 < 0$, so that decreasing the coefficient of $A$ \emph{increases} the degree on $A$. That is, 
$$
(K_{X_0} + S_0 + F_{\calA}|_{X_0} + M).A > 0
$$
so for large enough $m$, 
$$
\left(K_{X_0} + S_0 + F_{\calA}|_{X_0} + \frac{m - 1}{m}M\right).A > 0.
$$
In particular, $K_{X_0} + S_0 + F_{\calA}|_{X_0} + \frac{m - 1}{m}M$ is $f_0$-nef. Thus, after possibly contracting the section if necessary, we obtain a log minimal model $\Big(X', \mu_*\big(S_0 + F_{\calA}|_{X_0} + \frac{m-1}{m}M\big)\Big)$. In particular, $K_{X'} + (S + F_\calA)|_{X'} + \frac{m-1}{m} M$ is nef. \end{proof}

Now we check that the condition $\mathrm{Coeff}(A, F_\calA)<\mathrm{Coeff}(A, F_\calB)$ is satisfied whenever $Y$ is a pseudoelliptic tree which contains at least one marked divisor whose coefficient is lowered. Indeed, if $Y_\alpha$ is a component and $A_\alpha$ is the reduced component of an intermediate fiber where another pseudoelliptic $Y_\beta$ with $\beta \geq \alpha$ is attached, then 
$$
\mathrm{Coeff}(A_\alpha, F_{\calA}) = \sum_{D \subset \mathrm{Supp}(F_{\calA}|_{Y_\beta})} \mathrm{Coeff}(D, F_\calA)
$$
is a sum of the coefficients of marked fibers on $Y_\beta$. In particular, if $A$ as above is the reduced component of an intermediate fiber on $X'$ where the root component $Y_0$ of $Y$ is attached, then $\mathrm{Coeff}(A, F_\calA) < \mathrm{Coeff}(A, F_{\calB})$ since there is some $D$ on some $Y_\beta$ with $\mathrm{Coeff}(D, F_\calA) < \mathrm{Coeff}(D, F_\calB)$.

Now by induction on the number of pseudoelliptic trees where we have reduced coefficients, we use the long exact sequence on cohomology associated to 
$$
0 \to L^{[m]}|_{X'}(-M) \to L^{[m]} \to L^{[m]}|_Y \to 0
$$
and reduce to proving vanishing for the following two cases: 

\begin{enumerate}
\item \label{case:1} $(X, S + F_{\calA})$ is an slc $\calA$-broken elliptic surface such that $F_{\calA}|_Y = F_{\calB}|_Y$ for any pseudoelliptic tree, or
\item \label{case:2} $(X, F_{\calA})$ is an slc pseudoelliptic tree.
\end{enumerate}
We will denote this pair $(X,\Delta)$ and take care to note which case we are in if necessary.  \\

\proofstep We consider Case $\ref{case:1}$. Here we show that we may assume that $K_X + S + F_\calA$ is big on every component of $X$. Indeed $K_X + S + F_\calA$ is \emph{ample} on every pseudoelliptic of Type I by assumption. By Proposition \ref{prop:middlepseudo}, it is big on every pseudoelliptic of Type II (see Definition \ref{def:pseudoII}) and every elliptic component with $\deg \LL > 0$. 

We are left to consider a component $X_1 \cong E \times C_1$ isomorphic to a product with section $S_1$.  By Proposition \ref{prop:adjunction}, if $(K_X + S + F_\calA)|_{X_1}$ is nef but not big, then $C_1$ is rational and $(K_X + S + F_\calA).S_1 = 0$. In this case, the log canonical morphism factors through a morphism $\mu : X \to Z$ which contracts the component $X_1$ onto $E$ and is an isomorphism away from $X_1$. 

Now $(Z, \mu_*(S + F_\calA))$ is an $\calA$-broken elliptic surface and 
$$
\mu_*\calO_X\big(m(K_X + S + F_\calA)\big) = \calO_Z\Big(m\big(K_Z +\mu_*(S + F_\calA)\big)\Big).
$$
Therefore we want to show $R^i\mu_*L^{[m]} = 0$ for $i > 0$ so that 
$$
H^j\Big(X, \calO_X\big(m(K_X + S + F_\calA)\big)\Big) = H^j\Bigg(Z, \calO_{Z}\Big(m\big(K_{Z} + \mu_*(S + F_\calA)\big)\Big)\Bigg). 
$$
By the Theorem on Formal Functions, it suffices to show that
$$
H^i(X_n, L^{[m]}|_{X_n}) = 0
$$
for all $i > 0$ and $n$, where $X_n$ is the $n^{th}$ formal neighborhood of $X_1$ in $X$. The fibration $X_1 \to C_1$ extends to a fibration $X_n \to C_n$ with all fibers isomorphic to $E$, where $C_n$ is isomorphic to the $n^{th}$ formal neighborhood of the component $C_1$ in $C$. That is, $C_n$ is a rational curve with two embedded points of length $n$, and is locally isomorphic to $k[x,y]/(xy,y^n)$ around these points. Furthermore, $L|_{X_n} \cong \calO_{X_n}(S_n)$, where $S_n$ is a formal neighborhood of the section. 

\begin{lemma} Let $f_n : X_n \to C_n$ be an elliptic fibration with all fibers isomorphic to $E$ over a rational curve $C_n$ with finitely many embedded points locally isomorphic to $k[x,y]/(xy, y^n)$. Let $S_n$ be a section. Then $H^i\big(X_n, \calO_{X_n}(mS_n)\big) = 0$ for any $m,n \geq 1$ and $i > 0$. \end{lemma}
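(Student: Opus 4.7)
The plan is to apply the Leray spectral sequence for $f_n$ to reduce to a cohomology computation on the non-reduced curve $C_n$, and then to filter by the nilpotent ideal of $C_1 \subset C_n$. The main subtlety will be identifying the restriction $\mathcal{F}|_{C_1}$ as a \emph{trivial} rank-$m$ bundle on $\mathbb{P}^1$, not merely as a degree-zero bundle; the rest of the argument is essentially formal.

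First, since every fiber of $f_n$ is isomorphic to $E$ and $S_n$ meets each fiber in a single reduced point, $f_n$ is flat and on each fiber $h^0(E,\calO_E(m\cdot p)) = m$ while $h^1(E,\calO_E(m\cdot p)) = 0$ for $m\geq 1$. Cohomology and base change will then give that $\mathcal{F} := (f_n)_*\calO_{X_n}(mS_n)$ is locally free of rank $m$ on $C_n$, commutes with arbitrary base change, and that $R^1(f_n)_*\calO_{X_n}(mS_n) = 0$; the Leray spectral sequence then identifies $H^i(X_n, \calO_{X_n}(mS_n))$ with $H^i(C_n, \mathcal{F})$, which vanishes for $i \geq 2$ for dimension reasons. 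This reduces the lemma to showing $H^1(C_n, \mathcal{F}) = 0$. Applying base change again, $\mathcal{F}|_{C_1} = (f_1)_*\calO_{X_1}(mS_1)$, where $f_1 : X_1 \to C_1 \cong \mathbb{P}^1$ is a smooth elliptic fibration with constant fiber $E$ and a section. Since any such fibration over the simply connected base $\mathbb{P}^1$ is trivial (the classifying $\mathrm{Aut}(E,0)$-torsor is trivial, as $H^1_{\mathrm{et}}(\mathbb{P}^1, G) = 0$ for every finite group $G$), we have $X_1 \cong E \times \mathbb{P}^1$ with $S_1 = \{0\} \times \mathbb{P}^1$; the projection formula then yields $\mathcal{F}|_{C_1} \cong H^0(E, \calO_E(m\cdot 0)) \otimes_k \calO_{\mathbb{P}^1} \cong \calO_{\mathbb{P}^1}^{\oplus m}$, which has vanishing $H^1$.

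To propagate the vanishing from $C_1$ to $C_n$, I would consider the nilpotent ideal $\mathcal{J} := \ker(\calO_{C_n} \twoheadrightarrow \calO_{C_1})$. Locally at an embedded point $\calO_{C_n} \cong k[x,y]/(xy,y^n)$ and $\mathcal{J} = (y)$, so $\mathcal{J}^n = 0$, and each successive quotient $\mathcal{J}^k/\mathcal{J}^{k+1}$ for $1 \leq k \leq n-1$ is generated by $y^k$ and annihilated by $x$, hence is a skyscraper sheaf at that embedded point. Since $\mathcal{F}$ is locally free on $C_n$, the graded pieces $\mathcal{J}^k \mathcal{F}/\mathcal{J}^{k+1}\mathcal{F} \cong \mathcal{F}|_{C_1} \otimes_{\calO_{C_1}} (\mathcal{J}^k/\mathcal{J}^{k+1})$ are finite direct sums of skyscraper sheaves on $\mathbb{P}^1$, which have trivial $H^1$. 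Combined with $H^1(C_1, \mathcal{F}/\mathcal{J}\mathcal{F}) = 0$ from the previous paragraph, induction on $k$ along the long exact cohomology sequences of
$$
0 \to \mathcal{J}^k\mathcal{F}/\mathcal{J}^{k+1}\mathcal{F} \to \mathcal{F}/\mathcal{J}^{k+1}\mathcal{F} \to \mathcal{F}/\mathcal{J}^k\mathcal{F} \to 0
$$
yields $H^1(C_n, \mathcal{F}) = 0$, completing the reduction.
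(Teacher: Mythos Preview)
Your proof is correct and follows essentially the same architecture as the paper: vanishing of $R^1(f_n)_*$ via the fiberwise computation on $E$, Leray to reduce to $H^1(C_n,\mathcal F)$, and then a filtration by the nilpotent ideal of $C_1\subset C_n$ whose graded pieces are skyscrapers. The paper's induction from $C_{n-1}$ to $C_n$ via the ideal sequence $0\to I_n\to\calO_{C_n}\to\calO_{C_{n-1}}\to 0$ is the same $\mathcal J$-adic filtration you use, just indexed from the top rather than the bottom.

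The only genuine difference is how you handle the base case $n=1$. The paper does not identify $\mathcal F|_{C_1}$ as $\calO_{\PP^1}^{\oplus m}$ directly; instead it inducts on $m$ using the pushforward of
\[
0\to\calO_{X_1}((m-1)S_1)\to\calO_{X_1}(mS_1)\to\calO_{S_1}(mS_1|_{S_1})\to 0,
\]
together with $S_1|_{S_1}=0$, to get $0\to R_{m-1,1}\to R_{m,1}\to\calO_{C_1}\to 0$. Your route, invoking $\pi_1^{\mathrm{et}}(\PP^1)=1$ to trivialise the $\mathrm{Aut}(E,0)$-torsor and then applying the projection formula, is a cleaner one-shot identification; the paper's route is more elementary in that it only needs $S_1^2=0$ (which in the surrounding context is already given, since $X_1$ was assumed to be a product). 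Either way works.
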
 

\begin{proof} A direct computation on  $E$ shows that
$$
H^i(E, mP) = 0
$$
for $i > 0$ and $m \geq 1$ where $P = (S_n)|_E$ is a point. Therefore $R^i(f_n)_*\big(\calO_{X_n}(mS_n)\big) = 0$ for $i > 0$. Similarly, 
$$
h^0(E, mP) = m
$$
so $R_{m,n} := (f_n)_*\big(\calO_{X_n}(mS_n)\big)$ is a rank $m$ vector bundle. 

For $m,n = 1$, the pushforward $({f_1})_*\big(\calO_X(S))$ is a line bundle on $C_1 \cong \mathbb{P}^1$ with a section coming from pushing forward the section $\calO_{X_1} \to \calO_{X_1}(S_1)$. Therefore $H^i(C_1, R_{1,1}) = 0$ for $i > 0$. Pushing forward the exact sequence 
$$
0 \to \calO_{X_1}\big((m - 1)S_1\big) \to \calO_{X_1}(mS_1) \to \calO_{S_1}(m{S_1}|_{S_1}) \to 0
$$
and noticing that ${S_1}|_{S_1} = 0$, we get
$$
0 \to R_{m-1,1} \to R_{m,1} \to \calO_{C_1} \to 0. 
$$
Since $H^i(\mb{P}^1,\calO_{\mb{P}^1}) = 0$ for $i > 0$, then $H^i(C_1, R_{m,1}) = 0$ for $i > 0$ by induction on $m$.

Now consider the ideal sequence
$$
0 \to I_n \to \calO_{C_n} \to \calO_{C_{n-1}} \to 0
$$
where $I_n$ is torsion supported on finitely many points. Applying $(-)\otimes_{C_n} R_{m,n}$ and using base change for the Cartesian square
$$
\xymatrix{E \times C_{n - 1} \ar[r]^j \ar[d]_{f_{n-1}} & E \times C_n \ar[d]^{f_n} \\ C_{n - 1} \ar[r]_i & C_n}
$$
gives an exact sequence 
$$
0 \to K_{m,n} \to R_{m,n} \to R_{m,n-1} \to 0
$$
where $K_{m,n}$ is torsion supported on finitely many points. Now by induction on $n$ and the previous vanishing for $R_{m,1}$, we obtain $H^i(C_n, R_{m,n}) = 0$ for all $i > 0$. The required vanishing then follows from the Leray spectral sequence.
\end{proof}

This shows it suffices to prove vanishing in Case \ref{case:1} for the $\calA$-broken elliptic surface pair $(Z, \mu_*(S + F_\calA))$ after contracting the component $X_1$. Applying this inductively, we can assume that in Case \ref{case:1}, the divisor $K_X + S + F_\calA$ is big on every component. \\

\proofstep Next we reduce to the case when $K_X + S + F_\calA$ has positive degree on every component of the section. Let $(X_0 \to C_0, S_0)$ be an elliptically fibered component such that $(K_X + S+ F_\calA).S_0 = 0$. 

Let $\mu : X \to Z$ be the morphism contracting $S_0$. Then $\big(Z, \mu_*(S + F_\calA)\big)$ is an $\calA$-broken elliptic surface pair and 
$$
\mu_*\calO_X\big(m(K_X + S + F_\calA)\big) = \calO_Z\Big(m\big(K_Z + \mu_*(S + F_\calA)\big)\Big)
$$
by Proposition \ref{logcanonical}. We want to show that
$$
R^i\mu_*\calO_X\big(m(K_X + S + F_\calA)\big) = 0
$$
for $i > 0$. This follows by Proposition \ref{prop:gr}, since the exceptional locus of $\mu$ is a rational curve $S_0 \cong \mb{P}^1$ with $S_0^2 < 0$ and $(K_X + S + F_\calA).S_0 = 0$.  Here we have used the hypothesis that if $\calA = 0$, then the genus of the base curve is not $1$ so that $S_0$ is necessarily a rational curve. \\

\proofstep We complete the proof in Case \ref{case:1}, under the assumption that $K_X + S + F_\calA$ is big on every irreducible component of $X$, and has positive degree on every component of the section. 

\begin{prop}\label{vanishing>0} Let $(f:X \to C, S + F_\calB)$ be a $\calB$-broken stable elliptic surface.  Let $L^{[m]}$ denote the divisor $m(K_X + S + F_\calA)$ for $m \geq 2$, where $0 \le \calA \le \calB$.  Suppose that $K_X + S + F_\calA$ is nef, $\Q$-Cartier and big on every irreducible component of $X$ and that $(K_X + S + F_\calA).S_0 > 0$ for every component $S_0$ of $S$. Suppose that $F_\calA|_Y = F_\calB|_Y$ for every pseudoelliptic tree $Y \subset X$. Finally suppose either \begin{enumerate*}[label = (\alph*)] \item $p_g(C) \neq 1$, or \item $\calA$ is not identically zero. \end{enumerate*} Then $H^i(X, L^{[m]}) = 0$ for all $i > 0$. \end{prop}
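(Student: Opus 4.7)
The plan is to invoke Fujino's generalization of Kawamata-Viehweg vanishing (Theorem~\ref{fujinovanishing}) directly for the $\Q$-Cartier divisor $L = m(K_X + S + F_\calA)$ on the slc pair $(X, S + F_\calA)$. The difference $L - (K_X + S + F_\calA) = (m-1)(K_X + S + F_\calA)$ is nef by assumption, so the entire proof reduces to verifying that it is big on every slc stratum. Bigness on each top-dimensional stratum, i.e.\ on every irreducible component of $X$, holds by hypothesis, and zero-dimensional strata are automatic.

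The remaining positive-dimensional slc strata of $(X, S + F_\calA)$ are curves, and on a curve bigness amounts to strictly positive degree. These strata fall into three types: (a) the components $S_\alpha$ of the section $S$; (b) the components of the double locus of $X$; and (c) the weight-one components of $F_\calA$ lying in the smooth locus of $X$, namely marked twisted (pseudo)fibers and $E$-components of marked intermediate (pseudo)fibers interior to an elliptic or pseudoelliptic component. For type (a), positivity is exactly the hypothesis $(K_X + S + F_\calA) \cdot S_\alpha > 0$.

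For type (c), such a component $E$ is disjoint from all other marked components of $F_\calA$. Writing $m_E E \sim F$ for a general fiber $F$ of the ambient elliptic fibration and applying the canonical bundle formula (Theorem~\ref{canonical}), I get $K \cdot E = 0$ and the section (or its image) intersects $E$ with degree $1/m_E > 0$; combined with the intersection data in Table~\ref{tab:table1}, this forces $(K_X + S + F_\calA) \cdot E > 0$. For type (b), I rewrite
$$
(K_X + S + F_\calA) \cdot D = (K_X + S + F_\calB) \cdot D - (F_\calB - F_\calA) \cdot D.
$$
The first term is strictly positive by ampleness of $K_X + S + F_\calB$, so it suffices to show the second is non-positive. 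This is precisely where the hypothesis $F_\calA|_Y = F_\calB|_Y$ on every pseudoelliptic tree $Y$ enters, together with the coefficient identity~\eqref{eq:A} from Definition~\ref{def:pseudoI}: if $D$ is a twisted fiber attaching two elliptic (or type~II pseudoelliptic) components, no coefficient change can occur in its neighborhood, so $(F_\calB - F_\calA) \cdot D = 0$; if $D = E$ is the arithmetic genus one component through which a pseudoelliptic tree $Y$ is attached to the rest, the coefficient of its companion $A$ on the opposite side is determined by a sum of coefficients of marked fibers inside $Y$ via~\eqref{eq:A}, which is unchanged by hypothesis, so again $(F_\calB - F_\calA) \cdot D = 0$.

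The main obstacle will be this case-by-case analysis of the double locus in type (b), especially tracking how the relation~\eqref{eq:A} controls coefficients on companion intermediate-fiber components across attaching loci, and verifying that no residual coefficient change leaks onto a double-locus curve. Once all strata are verified to be big, Theorem~\ref{fujinovanishing} applies directly and yields $H^i(X, L) = 0$ for all $i > 0$, completing the argument. The hypotheses distinguishing cases (a) and (b) of Theorem~\ref{thm:vanishing} were absorbed in earlier reduction steps and play no role at this final stage.
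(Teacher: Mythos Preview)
Your overall strategy coincides with the paper's: apply Theorem~\ref{fujinovanishing} and verify that $(m-1)(K_X+S+F_\calA)$ is big on every slc stratum. However, two parts of your case analysis do not go through.

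First, for the $E$-component of a marked intermediate fiber in your type~(c), the section meets $A$, not $E$, so $S\cdot E=0$; your asserted contribution $1/m_E$ is incorrect (the relation $m_E E\sim F$ holds for twisted fibers, but for an intermediate fiber one has $A+m_E E\sim F$). Using Table~\ref{tab:table1} and Theorem~\ref{canonical} one computes that $(K_X+S+aA+E)\cdot E$ vanishes exactly when $a$ equals the log canonical threshold $a_0$ of the corresponding Weierstrass model. Since only nefness of $K_X+S+F_\calA$ is assumed, this intersection can be zero (precisely at a type~$\mathrm{W_I}$ wall), and then Fujino's theorem does not apply directly. The paper resolves this by first contracting any such $E$ to a minimal Weierstrass cusp via a morphism $\mu:X\to Z$, invoking Proposition~\ref{prop:gr} to get $R^i\mu_*L^{[m]}=0$, and then proving the vanishing on $(Z,\mu_*(S+F_\calA))$; this reduction step is missing from your argument.

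Second, your enumeration of double-locus components in type~(b) omits the self-intersection curve $B$ of any isotrivial component with $j=\infty$. Such a $B$ is a section-type curve disjoint from $S$ but meeting every fiber of that component, including marked fibers whose coefficients may have been lowered in passing from $\calB$ to $\calA$. Hence $(F_\calB-F_\calA)\cdot B$ can be strictly positive, and your comparison with the ample $\calB$-divisor fails for this stratum. The paper treats $B$ separately and checks $(K_X+S+F_\calA)\cdot B>0$ directly.
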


\begin{proof} We will apply Fujino's Theorem \ref{fujinovanishing} to $L^{[m]}$. We have that
$$
L^{[m]}\big(-(K_X + S + F_\calA)\big) = \calO_X\big((m - 1)(K_X + S + F_\calA)\big)
$$
is big and nef on every irreducible component of $X$ by assumption. Therefore, to apply the theorem, it suffices to prove that $K_X + S + F_\calA$ is big on every slc center of $(X, S + F_\calA)$. This is clear for zero dimensional slc centers.

The one dimensional slc centers of $(X, S + F_\calA)$ are \begin{enumerate*}[label = (\alph*)] \item the components of the section $S$, \item the twisted fibers $F_j$, \item $E$ components of marked intermediate fibers, \item and the components of the double locus $D$, \end{enumerate*} Now $K_X + S + F_\calA$ is big on every component of the section by assumption, and
\begin{align*}
(K_X + S + F_\calA).F_j &= 1/d
\end{align*}
where $F_i$ supports a possibly nonreduced fiber of multiplicity $d$. Here we have used the fact that a twisted fiber is irreducible so that $F_\calA.F_i = 0$. 

Next we need to consider the $E$ components of marked intermediate fibers. Since $K_X + S + F_\calA$ is nef, we have 
$$
(K_X + S + F_\calA).E \geq 0.
$$
If this is positive then the restriction $(K_X + S+ F_\calA)|_E$ is big. If this intersection is $0$ then the log canonical linear series factors through the contraction of $E$ to a minimal Weierstrass cusp. Let $\mu : X \to Z$ be this contraction. Then $(Z, \mu_*(S + F_\calA))$ is log canonical and
$$
\mu_*\calO_X(m(K_X + S + F_\calA)) = \calO_Z(m(K_Z + \mu_*(S + F_\calA)))
$$
by Proposition \ref{logcanonical}. We want to show that
$$
R^i\mu_*\calO_X(m(K_X + S+ F_\calA)) = 0
$$
for $i > 0$. This follows by Proposition \ref{prop:gr} since $E$ is a rational curve as the marked intermediates are all minimal, $E^2 = 0$ and $(K_X + S+ F_\calA).E = 0$. Therefore it suffices to compute cohomology vanishing for the pair $(Z, \mu_*(S + F_\calA))$ and by induction we can assume that $K_X + S + F_\calA$ is big on all $E$ components of marked intermediate fibers. 

This leaves case $(d)$, the double locus $D$, which consists of three types of irreducible components:

\begin{enumerate}[label = (\roman*)] 

\item For a stable or twisted (pseudo)fiber $F$ along which an elliptic or type II pseudoelliptic is glued to the rest of $X$, we have
$$
(K_X + S + F_\calA).F = 1/d > 0;
$$ 

\item For every isotrivial component $Z$ with $j = \infty$, there is the self intersection locus $B$. If $Z$ is a pseudoelliptic component, then the morphism $Z' \to Z$ contracting the section of the associated elliptic component is an isomorphism in a neighborhood of $B$ so we may suppose that $Z$ is elliptic. In this case $B$ is a section of $Z$ disjoint from $S$ and
$$
(K_X + S + F_\calA).B > 0.
$$

\item For every pseudoelliptic tree $Y$, there is the component $M$ along which the root component $Y_0$ is attached to the rest of $X$. By the assumption $$(K_X + S + F_\calA)|_Y = (K_X + S + F_\calB)|_Y$$ is ample on $Y$. In particular, $(K_X + S + F_\calA)|_Y.M > 0$.

\end{enumerate} 

Therefore $K_X + S + F_\calA$ is big and nef on each slc stratum of $(X, F_\calA)$. Applying Theorem \ref{fujinovanishing} we have the required vanishing 
$$
\begin{array}{lr}
H^i\Big(X, \calO_X\big(m(K_X + F_\calA)\big)\Big) = 0, & i > 0.
\end{array}$$ \end{proof}

\proofstep Now we consider Case \ref{case:2} of a pseudoelliptic tree $Y$ indexed by a rooted tree $(T,0)$. If $(Y, F_\calA)$ is already a stable pair, then we are done. Otherwise, there is some $Y_\alpha$ where the coefficients have been reduced. This implies the coefficients have been reduced on $Y_\beta$ for any $\beta \le \alpha$ as well. 

Suppose $Y_\alpha$ is a leaf of the tree and that $Y'$ is the union of $Y_\beta$ for $\beta \neq \alpha$, i.e. the pseudoelliptic tree with dual graph $(T \setminus \alpha, 0)$. Suppose $Y_\alpha$ is attached to $Y'$ along $M$ a component of an intermediate fiber on $Y'$ with genus 0 component $A$. Since the coefficients of $Y_\alpha$ have been reduced, then $L^{[m]}|_{Y'}(-M)$ and $L^{[m]}|_{Y_\alpha}$ are nef and $\Q$-Cartier by Lemma \ref{lemma:separatingtypeI}. By the attaching sequence, it suffices to show that 
$$
H^i(Y_\alpha, L^{[m]}|_{Y_\alpha}) = H^i(Y', L^{[m]}|_{Y'}(-M)) = 0. 
$$

By induction on the number of leaves of $T$, it suffices to prove that
$$
H^i(Y, L^{[m]}|_Y(-M)) = 0
$$
where $(Y, S + F_\calA)$ is a pseudoelliptic tree, $M$ is a sum of the supports of finitely many arithmetic genus $1$ components of intermediate pseudofibers of $Y$, and either
\begin{enumerate}
\item $Y$ is irreducible, or
\item for each leaf $\alpha \in T$, $F_\calA|_{Y_\alpha} = F_\calB|_{Y_\alpha}$.
\end{enumerate}

That is, we have separated of all of the leaves on which coefficients have been decreased. Therefore, we have reduced to proving vanishing on the leaves themselves, as well as on a pseudoelliptic tree for which the coefficients of all emanating leaves have \emph{not} been decreased. \\

\proofstep Let $(Y, F_\calA)$ be a pseudoelliptic tree with dual graph $(T,0)$ and suppose that $F_\calA|_{Y_\beta} = F_\calB|_{Y_\beta}$ for each leaf $\beta$, that is, we are in case $(2)$ of Step 5 above. If $F_\beta|_Y = F_\alpha|_Y$ then $(K_X + S + F_\alpha)|_Y$ is ample so were done. Thus suppose that there exists a component $Y_\alpha$ with $F_\calA|_{Y_\alpha} < F_\calB|_{Y_\alpha}$. We may take $\alpha$ to be maximal so that $F_\calA|_{Y_\beta} = F_\calB|_{Y_\beta}$ for all $\beta > \alpha$. 

Let $\beta \in \alpha[1]$ (Definition \ref{def:rootedtree}) and $T_{\geq \beta} = \{\gamma \in V(T) : \gamma \geq \beta\}$ the subtree of $T$ with root $\beta$ (see Figure \ref{figure:pseudotree}). Then $Y_{\geq \beta} = \bigcup_{\gamma \in T_{\geq \beta}} Y_\gamma$ is a pseudoelliptic subtree of $Y$ with root component $Y_\beta$. Denoting by $Y'$ the union of components of $Y$ not in $Y_{\geq \beta}$, then $Y_{\geq \beta}$ is attached to $Y'$ by gluing a twisted pseudofiber $M$ on $Y_{\geq \beta}$ to the arithmetic genus $1$ component of an intermediate pseudofiber $M \cup A$ on $Y_\alpha \subset Y'$. 

\begin{figure}[h!]
\includegraphics[scale=.2]{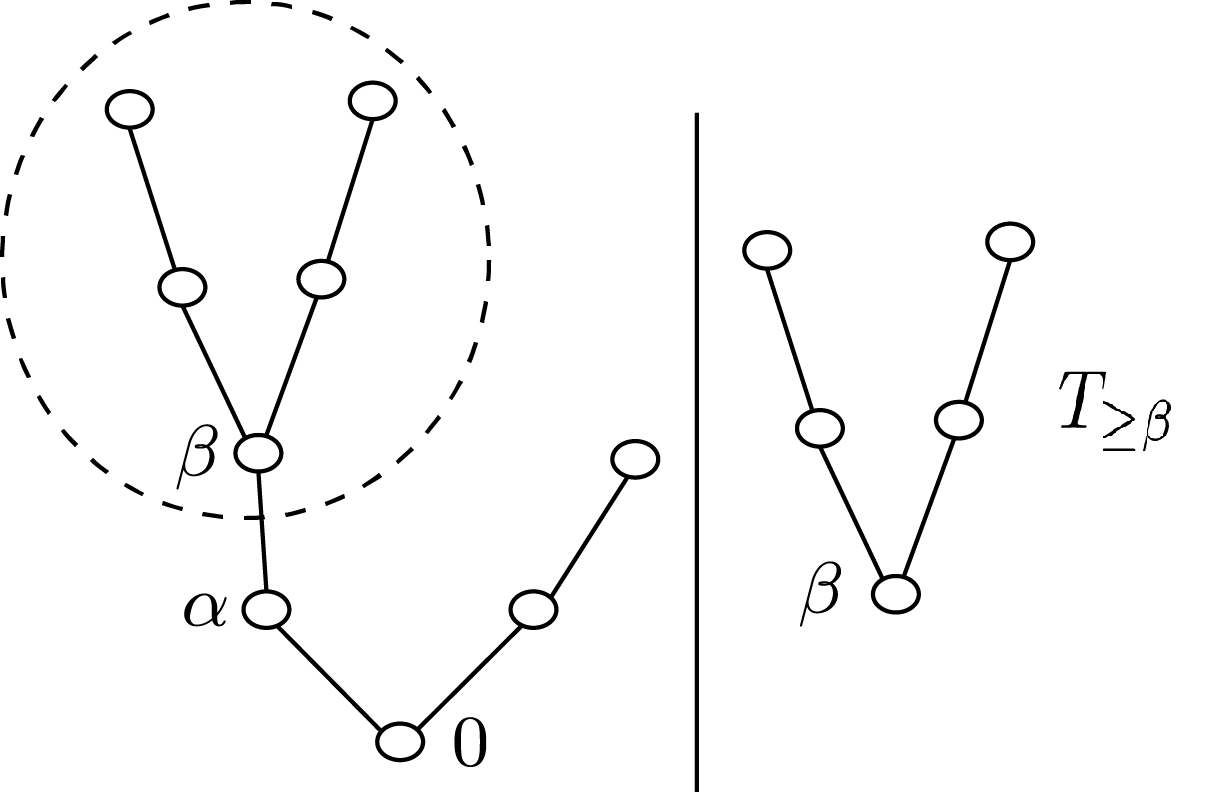}
		\caption{The rooted subtree $(T_{\geq \beta}, \beta)$ corresponds to the pseudoelliptic tree obtained by separating $Y_\beta$ from $Y_\alpha$.}\label{figure:pseudotree}
\end{figure}

We consider the gluing sequence
$$
0 \to L^{[m]}|_{Y_{\geq \beta}}(-M) \to L^{[m]} \to L^{[m]}|_{Y'} \to 0. 
$$

\begin{lemma}\label{L.M>0} $L^{[m]}|_{Y_{\geq \beta}}(-M)$ is ample on $Y_{\geq \beta}$ and $L^{[m]}|_{Y'}$ is nef with positive degree on $M$. \end{lemma}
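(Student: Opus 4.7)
The plan is to handle the two claims of the lemma separately, invoking the maximality of $\alpha$ to compare against the known ample $\calB$-weighted divisor. Throughout I will use adjunction/gluing to write the restriction of $L^{[m]}$ to each piece as $m(K+\text{boundary}+M)$ (with $M$ acquiring coefficient $1$ from the double locus), and exploit the fact that $F_\calA=F_\calB$ on $Y_{\geq \beta}$ by the maximality choice of $\alpha$.

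For the ampleness of $L^{[m]}|_{Y_{\geq \beta}}(-M)$, first I observe that every component $Y_\gamma \subset Y_{\geq \beta}$ satisfies $\gamma > \alpha$, so $F_\calA|_{Y_\gamma}=F_\calB|_{Y_\gamma}$. Consequently
$$
L^{[m]}|_{Y_{\geq \beta}}(-M) \;=\; \calO_{Y_{\geq \beta}}\!\left(m\bigl(K_{Y_{\geq \beta}} + (S+F_\calB)|_{Y_{\geq \beta}} + \tfrac{m-1}{m}M\bigr)\right).
$$
Since $(X,S+F_\calB)$ is a stable pair, $K_X+S+F_\calB$ is ample, and restricting an ample $\Q$-Cartier divisor to the closed subscheme $Y_{\geq \beta}$ gives an ample divisor (one checks $\Q$-Cartierness along $M$ using the quotient singularities, as in Lemma~\ref{lemma:separatingtypeI}). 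The divisor inside the parentheses equals this ample divisor minus $\tfrac{1}{m}M$. Because the ample cone is open, for $m$ sufficiently divisible this small perturbation remains ample, proving~(1).

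For claim (2), nefness of $L^{[m]}|_{Y'}$ is immediate from nefness of $L^{[m]}$ on all of $X$, since restriction preserves nefness. The content is positivity of $L^{[m]}|_{Y'}\cdot M$. Both $M$ and its whole fiber sit on $Y_\alpha\subset Y'$, so the intersection is computed on $Y_\alpha$ via adjunction:
$$
L^{[m]}|_{Y'}\cdot M \;=\; m\bigl(K_{Y_\alpha} + S|_{Y_\alpha} + F_\calA|_{Y_\alpha} + D_\alpha\bigr)\cdot M,
$$
where $D_\alpha$ is the full double locus on $Y_\alpha$. Replacing $F_\calA$ by $F_\calB$ in this formula yields the ample restriction $(K_X+S+F_\calB)|_{Y_\alpha}\cdot M>0$. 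The difference $(F_\calB-F_\calA)|_{Y_\alpha}$ is supported on marked fibers of $Y_\alpha$ whose coefficients were lowered. By the defining relation \eqref{eq:A} and maximality of $\alpha$, the coefficient of each reduced component $A_\gamma$ of an intermediate fiber attaching a subtree $Y_{\geq\gamma}$ (for $\gamma \in \alpha[1]$) is unchanged, because the weights on $Y_{\geq\gamma}$ agree with $\calB$. Hence the only components with altered coefficients on $Y_\alpha$ are ordinary marked fibers sitting at fiber positions different from the one supporting $M$, and they are disjoint from $M$. Therefore $(F_\calB-F_\calA)|_{Y_\alpha}\cdot M=0$ and positivity follows.

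The main obstacle is really the verification in the second half that $(F_\calB-F_\calA)|_{Y_\alpha}\cdot M=0$: one must rule out, using \eqref{eq:A} and the maximality of $\alpha$, any contribution to the difference of coefficients coming from the $A_\alpha$-component of the intermediate fiber containing $M$ itself (since $A_\alpha$ and $M$ meet). This is exactly where the inductive choice of $\alpha$ as the topmost decreased component is essential, and the rest of the argument is a formal consequence of ampleness of $K_X+S+F_\calB$ together with the openness of the ample cone.
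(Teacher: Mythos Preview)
Your argument is correct and follows essentially the same approach as the paper. For part (1) you both use that $F_\calA|_{Y_{\geq\beta}}=F_\calB|_{Y_{\geq\beta}}$ to get ampleness of $L|_{Y_{\geq\beta}}$ and then perturb by $-\tfrac{1}{m}M$; for part (2) you both reduce $L\cdot M>0$ to $(K_X+S+F_\calB)\cdot M>0$ by observing via \eqref{eq:A} and maximality of $\alpha$ that $\mathrm{Coeff}(A,F_\calA)=\mathrm{Coeff}(A,F_\calB)$, so the only divisor meeting $M$ whose coefficient could have changed in fact did not --- your write-up simply spells this step out more explicitly than the paper's terse sentence.
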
 

\begin{proof} 
Since no coefficients have been reduced on $Y_{\geq \beta}$, then $L|_{Y_{\geq \beta}}$ is ample so $L^{[m]}|_{Y_{\geq \beta}}(-M)$ is ample for $m$ large enough. By assumption $\mathrm{Coeff}(A,F_\calA) = \mathrm{Coeff}(A,F_\calB)$ so that in particular $L.M > 0$ since $(K_X + S + F_\calA)|_{Y'}$ is ample.
\end{proof} 

Thus, we have that $H^i(Y_{\geq \beta},L^{[m]}|_{Y_{\geq \beta}}(-M)) = 0$ for $i > 0$ so $H^i(Y, L^{[m]}) = H^i(Y', L^{[m]}|_{Y'})$. Therefore it suffices to prove vanishing for $L^{[m]}|_{Y'} = \calO_Y(m(K_Y + M + F_\calA|_{Y'}))$, where $(Y', M + F_\calA|_{Y'})$ is a pseudoelliptic tree with a leaf $Y_\alpha$ such that coefficients on $Y_\alpha$ have been reduced. By induction on the number of leaves, we may suppose $(Y', F_\calA + M)$ is a pseudoelliptic tree that $F_\calA|_{Y_\alpha} < F_{\calB}|_{Y_\alpha}$ for every leaf $\alpha$ and $M$ is a sum of reduced arithmetic genus $1$ components of intermediate pseudofibers on the leaf components. Furthermore, by the above Lemma, $(K_Y + F_\calA + M).M_0 > 0$ for each component $M_0$ of $M$. 

Since $F_\calA|_{Y_\alpha} < F_{\calB}|_{Y_\alpha}$ for every leaf $\alpha$, we can apply Step 5 to the pseudoelliptic tree $(Y, F_\calA + M)$. That is, we can separate the irreducible components of $Y$. This reduces to proving
$$H^i(Y, L^{[m]}|_{Y}(-M')) = 0,$$
for $i > 0$ where $Y$ is an irreducible pseudoelliptic surface, $M'$ is a union of the supports of arithmetic genus $1$ components of intermediate fibers, and we can write
$$
L^{[m]}|_{Y}(-M') = \calO_Y\left(m\left(K_Y + F_\calA|_{Y} + G + M + \frac{m - 1}{m}M'\right)\right),
$$
where $M$ is a union of components of intermediate fibers with $L.M > 0$ and $G$ is a twisted fiber. Denoting $\Delta = F_\calA|_{Y} + G + M + \frac{m - 1}{m}M'$, we are then left to consider an irreducible pseudoelliptic pair $(Y,\Delta)$. \\

\proofstep  Let $(Y,\Delta)$ be an irreducible pseudoelliptic pair as in Step 5 case (1) or the conclusion of Step 6 above and suppose $K_Y + \Delta$ is big. Now we may take the partial log semi-resolution $\mu : X \to Y$ by the associated elliptic surface $(X \to C, S)$ over a necessarily rational curve.

We may write
$$
K_X + S + F = \mu^*(K_Y + \Delta) + tS 
$$
for $0 \le t \le 1$ where $F = \mu_*^{-1}\Delta$ is a union of (not necessarily reduced) fiber components. By Proposition \ref{logcanonical} we have 
$$
\mu_*\calO_X\big(m(K_X + S+ F)\big) = \calO_Y\big(m(K_Y + \Delta)\big). 
$$ 

Proceeding as in Proposition \ref{vanishing>0}, we aim to apply Fujino's Theorem \ref{fujinovanishing}. That is, we need to check that $K_Y + \Delta$ is big on each of the slc strata of $(Y, \Delta)$. The divisor $K_Y + \Delta$ is big on $Y$ by assumption, and it is trivially big on the zero dimensional log canonical centers. This leaves the one dimensional log canonical centers of $(Y, \Delta)$. These are exactly the images of the log canonical centers of $(X, S + F_\calA)$, noting that the image of $S$ is a point so we need not consider it. Now $(K_Y + \Delta).M > 0$ for any log canonical center supported on an intermediate pseudofiber where a pseudoelliptic tree was attached by Lemma \ref{L.M>0}. Using the projection formula we may proceed to check the other log canonical centers as in the proof of Proposition \ref{vanishing>0} (where as in \emph{loc. cit.} we might need to first contract necessarily rational $E$ components of marked intermediate pseudofibers). Therefore $$H^i\Big(Y,\calO_Y\big(m(K_Y + \Delta)\big)\Big) = 0,$$ for all $i > 0$ by Theorem \ref{fujinovanishing}. 

In particular, this finishes the proof for the following cases, where we know that $K_Y + \Delta$ is big: \begin{itemize} \item $Y$ is an irreducible pseudoelliptic with $\deg \LL \geq 3$ (Proposition \ref{properlyelliptic}), \item $Y$ is an irreducible pseudoelliptic with with $\deg \LL = 2$ with $\Delta \not \sim_{\Q} 0$ (Proposition \ref{K3}). \end{itemize} 

We are left to deal with irreducible pseudoelliptics $(Y, \Delta)$ with $K_Y + \Delta$ \emph{not} big. \\

\proofstep Let $(Y,\Delta)$ be a pseudoelliptic with $\deg \LL = 1$ and Iitaka dimension $\kappa(K_Y + \Delta) = 1$. By Proposition \ref{rational}, $K_Y + \Delta \sim_\Q \mu_*\Sigma$, where $\mu : Z \to Y$ is the contraction of the section of the associated elliptic surface $f : Z \to C$ and $\Sigma$ is a rational multisection of $f$ disjoint from $S$. Since $\Sigma$ is in the locus where $\mu$ is an isomorphism, it suffices to prove that $H^i(Z, \calO_Z(m\Sigma)) = 0. 
$

By Lemma \ref{semipositivity} below, $f_*\calO_Z(m\Sigma)$ is a semipositive vcector bundle on $\PP^1$. In particular,
$$
H^i(\PP^1, f_*\calO_Z(m\Sigma)) = 0.
$$
Furthermore, $G.\Sigma > 0$ for any irreducible fiber $G$, since $\Sigma$ is an effective multisection. In particular, $m\Sigma - (K_Z + S + F_\calA)$ is $f$-nef and $f$-big over each slc stratum of $Y$ for $m \gg 1$. Therefore 
$
R^if_*\calO_Z(m\Sigma) = 0
$
by Fujino's  Theorem \ref{fujinovanishing} for $i > 0$ and $m \gg 0$, and so $H^i(Z, \calO_Z(m\Sigma)) = 0$ by the Leray spectral sequence.

\begin{lemma}\label{semipositivity} Let $f : Y \to C$ be a fibration from an irreducible slc surface to a reduced curve and let $\Sigma$ be a multisection with $|\Sigma|$ basepoint free. Then $f_*\calO_Y(m\Sigma)$ is a semipositive vector bundle on $C$ for $m \gg 0$. 
\end{lemma}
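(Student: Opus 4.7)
My plan is to identify $E_m := f_*\calO_Y(m\Sigma)$, for $m$ sufficiently large, as a quotient of a trivial vector bundle on $C$. This would immediately yield semipositivity, since quotients of nef bundles are nef, and on a reduced curve nefness coincides with semipositivity. First I would verify that $E_m$ is locally free of constant rank with formation compatible with base change for $m \gg 0$: since $\Sigma$ is a multisection, $\Sigma \cdot F > 0$ for every fiber $F$ of $f$, so $\calO_Y(m\Sigma)|_F$ is a line bundle of positive degree on the one-dimensional scheme $F$. For $m$ large enough---uniformly over $C$, since only finitely many fibers are non-generic and their arithmetic genera are bounded---this degree exceeds $2 p_a(F) - 2$ on every fiber, hence $H^1(F, \calO_Y(m\Sigma)|_F) = 0$. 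Cohomology-and-base-change then gives $R^1 f_*\calO_Y(m\Sigma) = 0$ and $E_m$ locally free of constant rank with formation compatible with base change.

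The main step is to prove that the evaluation map $\mathrm{ev}_m \colon H^0(Y, \calO_Y(m\Sigma)) \otimes \calO_C \to E_m$ is surjective for $m \gg 0$. By Nakayama and base change, this amounts to showing each restriction $H^0(Y, \calO_Y(m\Sigma)) \to H^0(Y_c, \calO_Y(m\Sigma)|_{Y_c})$ is surjective for every $c \in C$. The basepoint-freeness of $|\Sigma|$, and hence of $|m\Sigma|$, gives a morphism $\phi_m \colon Y \to \PP^{N_m}$ with $\phi_m^*\calO(1) = \calO_Y(m\Sigma)$; combining with $f$ yields $\Phi_m = (f, \phi_m) \colon Y \to C \times \PP^{N_m}$. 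After Stein factorization $\Phi_m = \iota \circ \psi$ with $\psi\colon Y \to Z_m$ satisfying $\psi_*\calO_Y = \calO_{Z_m}$ and $\iota\colon Z_m \hookrightarrow C \times \PP^{N_m}$ a closed immersion, one obtains $E_m \cong p_*\calO_{Z_m}(1)$, where $p\colon Z_m \to C$ is the projection and $\calO_{Z_m}(1)$ is the restriction of $\calO_{\PP^{N_m}}(1)$. Uniform Castelnuovo--Mumford regularity applied to the flat proper family $p\colon Z_m \to C$, which holds for $m$ sufficiently large, then yields the stronger statement that $H^0(\PP^{N_m}, \calO(1)) \to H^0((Z_m)_c, \calO(1)|_{(Z_m)_c})$ is surjective for every $c$, giving the required fiberwise surjectivity and exhibiting $E_m$ as a quotient of the trivial bundle $H^0(\PP^{N_m}, \calO(1)) \otimes \calO_C$.

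The main obstacle I anticipate is justifying this uniform Castelnuovo--Mumford regularity across the family $p \colon Z_m \to C$, particularly handling the Stein factorization when $Y$ is non-normal or fibers of $\Phi_m$ are disconnected, and ensuring that regularity bounds can be chosen independently of $c \in C$ using the Noetherian nature of the base together with properness of $p$. A cleaner alternative, bypassing the regularity argument, would be to invoke a Viehweg--Kollár-type semipositivity theorem for pushforwards of semi-ample line bundles under a fibration and deduce the conclusion directly.
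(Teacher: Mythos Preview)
Your plan is to show that $E_m := f_*\calO_Y(m\Sigma)$ is \emph{globally generated} for $m \gg 0$, which is strictly stronger than semipositivity (on a curve of positive genus a nontrivial degree-zero line bundle is nef but has no sections). You correctly flag the regularity step as the main obstacle, but it is a genuine gap rather than a technicality. After Stein-factorizing $\Phi_m$ and identifying $E_m \cong p_*\calO_{Z_m}(1)$, the fiberwise surjectivity you need is exactly linear normality of $(Z_m)_c$ in $\PP^{N_m}$. But $(Z_m)_c$ is embedded by the \emph{restriction} to $Y_c$ of the linear system $|m\Sigma|$ on $Y$, which is a priori only a sublinear system of the complete series on $Y_c$; linear normality of the image is then equivalent to the surjectivity of $H^0(Y,m\Sigma) \to H^0(Y_c, m\Sigma|_{Y_c})$ you set out to prove, so that route is circular. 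Appealing instead to a uniform regularity bound does not help either: any version of $1$- or $2$-regularity for $(Z_m)_c$ or its ideal sheaf forces $H^1((Z_m)_c,\calO)=0$, which fails whenever the fibers have positive arithmetic genus, and increasing $m$ does not improve this since the ambient $\PP^{N_m}$ grows with $m$. Your fallback to a Viehweg--Koll\'ar type theorem is also not immediate, as those results concern pushforwards of relative pluricanonical sheaves rather than an arbitrary semiample line bundle.

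The paper's argument bypasses global generation entirely. After checking (as you do) that $E_m$ is locally free and commutes with base change, and reducing by normalization to $C$ smooth, one uses that semipositivity can be tested after a finite pullback. A general member $D \in |m\Sigma|$ is reduced by basepoint-freeness, hence a sum $\sum C_i$ of distinct multisections; after a finite base change each $C_i$ splits into sections, so one may assume $m\Sigma \sim_\Q \sum_{i=1}^s C_i$ with the $C_i$ distinct sections of $f$. Semipositivity of $f_*\calO_Y(D_k)$ for $D_k = \sum_{i\le k} C_i$ is then proved by induction on $k$: pushing forward
\[
0 \longrightarrow \calO_Y(D_{k-1}) \longrightarrow \calO_Y(D_k) \longrightarrow \calO_{C_k}(D_{k-1}|_{C_k}) \longrightarrow 0
\]
and using $R^1f_*\calO_Y(D_{k-1})=0$ exhibits $f_*\calO_Y(D_k)$ as an extension of a line bundle of degree $D_{k-1}\cdot C_k \ge 0$ by a semipositive bundle. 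No surjectivity of restriction of global sections to fibers is ever needed.
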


\begin{proof} Note that for a finite morphism $\varphi: B \to C$ and a vector bundle $\mathcal{E}$ on $C$, the vector bundle $\mathcal{E}$ is semipositive on $C$ if and only if $\varphi^*\mathcal{E}$ is semipositive on $B$. 

Since $R^if_*\calO_Y(m\Sigma) = 0$ for $i > 0$ and $m \gg 0$, we may apply cohomology and base change to conclude that $f_*\calO_Y(m\Sigma)$ is a vector bundle and its formation commutes with basechange. Let $\nu : \widetilde{C} \to C$ be the normalization. Consider the base change
$$
\xymatrix{\widetilde{Y} \ar[r]^\mu \ar[d]_{g} & Y \ar[d]^f \\ \widetilde{C} \ar[r]^\nu & C} 
$$
Since $\nu$ is finite, it suffices to prove that $\nu^*f_*\calO_Y(m\Sigma) \cong g_*\calO_Y(m \mu^*\Sigma)$ is semipositive. Since $\mu^*\Sigma$ is a multisection of $g$, we may assume without loss of generality that $C$ is smooth. 

Since $|m\Sigma|$ is basepoint free, there exists a general member $D$ that is a reduced divisor, i.e. $D = \sum_{i = 1}^s C_i$
where each $C_i$ is a distinct multisection. There exists a finite base change
$$
\xymatrix{Y' \ar[r]^\phi \ar[d]_g & Y \ar[d]^f \\ C' \ar[r]^\varphi & C}
$$
such that $\phi^*C_i \sim_\Q \sum_j S_{ij}$ is a sum of distinct sections. Therefore we may assume that $$m\Sigma \sim_\Q D = \sum_{i = 1}^s C_i,$$  is a finite sum of distinct sections. 

Now we use induction on the number of sections. Let $D_k = \sum_{i = 1}^k C_i$. Then for $k = 1$, $D_1$ is a single section and $f_*\calO_Y(D_1)$ is a line bundle with a section induced by pushing forward the section $\calO_Y \to \calO_Y(D_1)$. Therefore $f_*\calO_Y(D_1)$ is semipositive. 

Now let $k \geq 2$. We consider the exact sequence
$$
0 \to \calO_Y(D_{k - 1}) \to \calO_Y(D_k) \to \calO_{C_k}(D_{k - 1}|_{C_k}) \to 0
$$
induced by adding $C_k$. Since the sections are all distinct, then $D_{k - 1}.C_k \geq 0$ so that $\calO_{C_k}(D_{k - 1}|_{C_k})$ is a semipositive line bundle. Pushing forward, and noting that \\ $R^1f_*\calO_Y(D_{k - 1}) = 0$, then the sequence
$$
0 \to f_*\calO_Y(D_{k - 1}) \to f_*\calO_Y(D_k) \to f_*\calO_{C_k}(D_{k - 1}|_{C_k}) \to 0
$$
is exact. The first term is semipositive by the inductive hypothesis, and the last term is semipositive since $f$ is an isomorphism on $C_k$. Therefore the middle term is semipositive.  \end{proof} 

\proofstep Finally, we are left with the case of an irreducible pseudoelliptic $(X, \Delta)$ with Iitaka dimension $\kappa(K_X + \Delta) = 0$, which occurs for $\deg \LL = 1$ and $\deg \LL = 2$ as in Propositions \ref{rational} and \ref{K3}. We have $K_X + \Delta \sim_{\Q} 0$ so for $m$ large and divisible enough, the sheaf $\calO_X\big(m(K_X + \Delta)\big) = \calO_X$. Thus we need to compute cohomology of the structure sheaf on an irreducible pseudoelliptic surface with associated elliptic surface $(f: Y \to \mb{P}^1, S)$ and contraction $p : Y \to X$. 

By Proposition \ref{prop:gr}, $R^ip_*\calO_Y = 0$ for $i > 0$ and $p_*\calO_Y = \calO_X$ so $H^i(X, \calO_X) = H^i(Y, \calO_Y)$. Similarly, if $Y$ has any nonreduced twisted fiber (in fact it can have at most one such fiber otherwise the section would not contract to form a pseudoelliptic by Proposition \ref{prop:adjunction}), let $\mu : Y' \to Y$ be the partial resolution blowing up the twisted fibers to their intermediate models. Then $\mu : Y' \to Y$ contracts the genus $0$ component $A$ of each such intermediate fiber. Again by Proposition \ref{prop:gr}, $R^i\mu_*\calO_{Y'} = 0$ for $i > 0$ so we may suppose without loss of generality that $Y$ has no nonreduced twisted fibers. In particular, we may assume the section $S$ of $Y$ passes through the smooth locus of $f : Y \to \mb{P}^1$, that is, $(f : Y \to \mb{P}^1, S)$ is standard. 

\begin{lemma}\label{lemma:Hi(O)} Let $(f : Y \to \mb{P}^1, S)$ be a standard elliptic surface. Then $H^1(Y, \calO_Y) = 0$ and $H^2(Y,\calO_Y) = \deg \LL - 1$.
\end{lemma}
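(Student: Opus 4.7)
The plan is to apply the Leray spectral sequence for $f : Y \to \mb{P}^1$ to compute $H^i(Y, \calO_Y)$. The two essential inputs are $f_*\calO_Y = \calO_{\mb{P}^1}$ (connected fibers) and $R^1f_*\calO_Y \cong \LL^{-1}$, together with the vanishing of $H^p(\mb{P}^1, -)$ for $p \geq 2$, which forces the spectral sequence to degenerate at $E_2$.

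First I would reduce to the case of a Weierstrass fibration. Since $(f : Y \to \mb{P}^1, S)$ is standard, there is a birational contraction $p : Y \to Y'$ to the Weierstrass model $(f' : Y' \to \mb{P}^1, S')$ of Definition \ref{weierstrassfibration}. The exceptional curves of $p$ are the fiber components of $f$ not meeting the section; these are rational curves whose configurations have arithmetic genus zero and negative definite intersection form, so Proposition \ref{prop:gr} (applied to the trivial line bundle) yields $R^ip_*\calO_Y = 0$ for $i>0$, while $p_*\calO_Y = \calO_{Y'}$ by normality of $Y'$. Consequently $H^i(Y, \calO_Y) \cong H^i(Y', \calO_{Y'})$, reducing the statement to the Weierstrass case.

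On the Weierstrass model, the fibers of $f'$ are reduced and irreducible so $f'_*\calO_{Y'} = \calO_{\mb{P}^1}$. By Proposition \ref{classcan} we have $\omega_{Y'} = f'^*(\omega_{\mb{P}^1} \otimes \LL)$, so $f'_*\omega_{Y'/\mb{P}^1} \cong \LL$, and Grothendieck duality for the flat proper morphism $f'$ of relative dimension one yields $R^1f'_*\calO_{Y'} \cong (f'_*\omega_{Y'/\mb{P}^1})^{\vee} = \LL^{-1}$. The Leray spectral sequence then collapses since $\mb{P}^1$ is a curve, giving
\begin{align*}
H^1(Y', \calO_{Y'}) &\cong H^1(\mb{P}^1, \calO_{\mb{P}^1}) \oplus H^0(\mb{P}^1, \LL^{-1}),\\
H^2(Y', \calO_{Y'}) &\cong H^1(\mb{P}^1, \LL^{-1}).
\end{align*}
Since $\deg \LL \geq 1$ in the situation of interest (Step 8 concerns $\deg \LL = 1$ or $2$), both summands on the first line vanish, and $H^1(\mb{P}^1, \LL^{-1}) = \deg \LL - 1$ by Serre duality (equivalently by Riemann--Roch on $\mb{P}^1$), producing the claimed values.

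The main obstacle is verifying that the Weierstrass contraction $p$ satisfies $R^ip_*\calO_Y = 0$, but this is precisely the setup of Proposition \ref{prop:gr}: the exceptional locus is a union of rational curves forming arithmetic genus zero configurations orthogonal to the trivial line bundle. Beyond this, the only nontrivial ingredient is the identification $R^1f'_*\calO_{Y'} \cong \LL^{-1}$, which is built into the canonical bundle formula already recorded in the paper.
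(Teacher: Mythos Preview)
Your proof is correct and follows essentially the same approach as the paper: both use the Leray spectral sequence for $f$ together with the identification $R^1f_*\calO_Y \cong \LL^{-1}$, and then read off the cohomology from that of $\calO_{\mb{P}^1}$ and $\LL^{-1}$. The only difference is that the paper invokes the formula $R^1f_*\calO_Y = \LL^{-1}$ directly for any standard elliptic surface (citing \cite[II.3.6]{mir3}), so your preliminary reduction to the Weierstrass model via Proposition~\ref{prop:gr} is unnecessary; your derivation of $R^1f'_*\calO_{Y'} \cong \LL^{-1}$ via Grothendieck duality and the canonical bundle formula is a valid alternative justification of the same input.
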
 

\begin{proof} For a standard elliptic surface, we have $R^1f_*\calO_Y = \LL^{-1}$ (see \cite[II.3.6]{mir3}). Since $\LL$ is effective, then $H^0(\mb{P}^1, \LL^{-1}) = 0$ and $H^1(\mb{P}^1, \calO_{\mb{P}^1}) = 0$ so the Leray spectral sequence for $f$ implies that $H^1(Y, \calO_Y) = 0$ and $H^2(Y, \calO_Y) = H^1(\mb{P}^1, \LL^{-1}) = H^0(\mb{P}^1, \calO_{\mb{P}^1}(\deg \LL - 2))$ by Serre duality. 
\end{proof}

Now we apply the lemma to the case at hand. If $\calA$ is not zero, then by Propositions \ref{rational} and \ref{K3}, we must have $\deg \LL = 1$ and so $H^i(Y, \calO_Y) = 0$. If $\calA = 0$, then we are assuming that if the \emph{original} broken elliptic surface from Step 1 is fibered over a rational curve, then the total degree of $\LL \neq 2$ so any (pseudo)elliptic component with $\deg \LL = 2$ must be attached. In particular the log canonical must be big on it by Proposition \ref{K3}. \\\end{steps}

This concludes the proof of Theorem \ref{thm:vanishing}.  \end{proof}

\begin{remark}\label{rem:k3} Note that the only place we required that if $\calA = 0$ and $p_g(C) = 0$ then $\deg \LL \neq 2$ is in Lemma \ref{lemma:Hi(O)}. However, this is a completely trivial edge case as $L^{[m]} = \calO_X$, so while $H^2(X, L^{[m]}) \neq 0$, the formation of $L^{[m]}$ still commutes with base change in families. 
\end{remark}

\begin{theorem}\label{thm:basechange}(Invariance of log plurigenera)  Let $\pi: (X \to C, S + F_\calB) \to B$ be a family of $\calB$-stable broken elliptic surfaces over a reduced base $B$. Let $0 \le \calA \le \calB$ such that $K_{X/B} + S + F_\calA$ is a $\pi$-nef and $\Q$-Cartier divisor. Then $\pi_*\calO_X\big(m(K_{X/B} + S + F_\calA)\big)$ is a vector bundle on $B$ whose formation is compatible with base change $B' \to B$ for $m \geq2$ divisible enough.  \end{theorem}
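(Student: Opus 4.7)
The plan is to deduce Theorem~\ref{thm:basechange} from the fiberwise vanishing in Theorem~\ref{thm:vanishing} by a standard cohomology-and-base-change argument, with some care taken for the edge cases excluded from that theorem. First I would fix $m$ large and divisible enough so that $m(K_{X/B}+S+F_\calA)$ is $\Q$-Cartier and Cartier on every fiber, and denote $\calL^{[m]}:=\calO_X\bigl(m(K_{X/B}+S+F_\calA)\bigr)$. Since $\pi$ is flat and proper, it suffices to prove that for every $b\in B$ one has $H^i\bigl(X_b,\calL^{[m]}|_{X_b}\bigr)=0$ for $i>0$; standard cohomology and base change (Grauert) then gives that $\pi_*\calL^{[m]}$ is a vector bundle whose formation commutes with arbitrary base change $B'\to B$.

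Next I would verify the fiberwise vanishing. The fiber $(f_b:X_b\to C_b,\,S_b+(F_\calB)_b)$ is a $\calB$-broken stable elliptic surface by assumption, and because $K_{X/B}+S+F_\calA$ is $\pi$-nef and $\Q$-Cartier, its restriction $K_{X_b}+S_b+(F_\calA)_b$ is nef and $\Q$-Cartier on $X_b$. Therefore Theorem~\ref{thm:vanishing} applies directly, provided we are not in one of the two excluded edge cases: $(\calA=0$ and $p_g(C_b)=1)$, or $(\calA=0$, $p_g(C_b)=0$ and $\deg\LL=2)$.

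The main (and essentially only) obstacle is handling these two edge cases, and I expect them to be resolvable by direct inspection. In the $\deg\LL=2$, $p_g(C_b)=0$, $\calA=0$ case, Remark~\ref{rem:k3} already observes that $m(K_{X_b}+S_b)\sim_\Q 0$, so $\calL^{[m]}|_{X_b}\cong\calO_{X_b}$; the non-vanishing of $H^2$ is harmless because the whole sheaf $\pi_*\calL^{[m]}$ becomes, up to a line bundle twist pulled back from $B$, a direct factor of $\pi_*\calO_X$, whose formation commutes with base change since $\pi$ has geometrically connected reduced fibers (and the normalization/partial resolution arguments of Step~9 of the proof of Theorem~\ref{thm:vanishing} identify the relevant higher pushforward with one on the associated standard elliptic family). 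For the $p_g(C_b)=1$, $\calA=0$ case, by Corollary~\ref{cor:p1} the section cannot contract on any component, so the fiber is an honest elliptic surface over a genus one base; here a direct application of Fujino's Theorem~\ref{fujinovanishing} as in Proposition~\ref{vanishing>0} goes through once one notes that bigness on the section is replaced by the observation that $K_X+S+F_\calB|_{S_\alpha}$ has positive degree on each component of $S$ (by Proposition~\ref{prop:adjunction} and the ampleness of $K_X+S+F_\calB$), which is preserved under passing to $\calA$ since lowering fiber coefficients does not change degrees on $S$ when $F_\calA=0$; I would carry out this small modification of Step~3--4 explicitly.

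Finally, once fiberwise vanishing of $H^i$ is established for all $b\in B$, Grauert's theorem produces the vector bundle $\pi_*\calL^{[m]}$ and the base change isomorphism $(\pi_*\calL^{[m]})\otimes k(b)\cong H^0(X_b,\calL^{[m]}|_{X_b})$, and compatibility with arbitrary base change $B'\to B$ follows because the hypothesis ($\calB$-stable broken family with $K_{X/B}+S+F_\calA$ $\pi$-nef and $\Q$-Cartier) is preserved under pullback, so the same fiberwise vanishing holds for the pulled-back family.
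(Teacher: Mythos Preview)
Your overall strategy---reduce to fiberwise vanishing and invoke cohomology and base change---is exactly the paper's approach in the generic case, and your handling of the $p_g(C_b)=0$, $\deg\LL=2$ edge case is essentially correct (the paper simply notes $h^0(X_b,\calO_{X_b})=1$ is constant).

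However, there is a genuine gap in your treatment of the $p_g(C_b)=1$, $\calA=0$ case. You have misread Corollary~\ref{cor:p1}: when the base curve has genus one and all $a_i=0$, that corollary says the section \emph{does} contract, not that it cannot. Concretely, Proposition~\ref{prop:adjunction} gives $(K_{X_b}+S_b).S_b = 2g(C_b)-2+\deg D_\calA = 0$ in this situation, so $K_{X_b}+S_b$ is \emph{not} big on the section and Fujino's vanishing (Theorem~\ref{fujinovanishing}) does not apply. Your attempted fix---using positivity of $K_X+S+F_\calB$ on $S$---is for the wrong divisor; the sheaf whose cohomology you must control is built from $F_\calA$, not $F_\calB$.

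In fact the paper shows that $H^1$ does \emph{not} vanish here: after contracting the section $\mu:X_b\to Z_b$ one finds $R^1\mu_*L_b^{[m]}$ is a length-one skyscraper, giving $h^1(X_b,L_b^{[m]})=1$ when $\deg\LL>0$, and a separate computation gives $h^1=m$ when $X_b\cong C_b\times E_b$ is a product. The point is that $h^1$ is \emph{constant} across fibers (equal to $1$ or to $m$ depending on the discrete type), so cohomology and base change over a \emph{reduced} base still applies (e.g.\ via Grauert's criterion). This is precisely why the hypothesis that $B$ be reduced appears in the statement---your argument, if it worked, would not need it. You should replace your claimed vanishing in this case with the explicit computation of $h^1$ and the appeal to constancy.
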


\begin{proof} If either \begin{enumerate*}[label = (\alph*)] \item $\calA$ is not identically zero, or \item $\calA = 0$ but $p_g(C) \neq 1$ and if $p_g(C) = 0$ then $\deg \LL \neq 2$, \end{enumerate*} then we may apply Theorem \ref{vanishing>0} to see that $H^i(X_b, m(K_{X/B} + S + F_\calA)|_{X_b}) = 0$ for $i > 0$ and for all closed points $b \in B$ so the result follows by the proper base change theorem. \\

Suppose $p_g(C_b) = 1$ and $\calA = (0, \ldots, 0)$ is identically zero. We may suppose that $\calB = (a, 0, \ldots, 0)$ has exactly one nonzero entry by applying the above result and first decreasing all but one coefficient to $0$. Then $(C_b, ap)$ is a one pointed stable genus $1$ curve with $a < 1$. In particular, it is irreducible. Therefore $X_b$ contains a single elliptically fibered component $X_0 \to C_b$ with a marked divisor $(F_a)_b$ lying over $p$. There are three cases to consider: 
\begin{enumerate}[label = (\roman*)] \item $X_0$ is properly elliptic and $F_a = aF$ is a Weierstrass fiber, \item $X_0$ is properly elliptic and there is a pseudoelliptic tree $(Y_b, (F_a)_b|_{Y_b})$ attached to an intermediate fiber $E \cup A$ above $p$ and $(F_a)_b|_{X_0} = aA$, or \item $\deg \LL = 0$ and $X_b = X_0 = C_b \times E_b$ is a product. \end{enumerate} 
In either of case $(i)$ and $(ii)$ there may be unmarked type I or II pseudoelliptics attached elsewhere to $X_0$. \\

Let us denote $L^{[m]} := \calO_X(m(K_{X/B} + S))$. The linear series $|L^{[m]}_b|$ is semi-ample by Proposition \ref{prop:abundance} and $L^{[m]}_b.S_b = 0$. Thus the linear series factors through the contraction of $S_b$ which gives a morphism $\mu : X_b \to Z_b$. In case $(i)$ and $(ii)$ this maps onto an slc broken pseudoelliptic surface with an elliptic singularity at $\mu(S_b)$ and $\mu_*L^{[m]}_b = \calO_{Z_b}(m(K_{Z_b}))$. 

In case $(i)$, the pair $(X_0, S_b)$ is log general type by Proposition \ref{properlyelliptic} and for every other component $W \subset X_b$, the line bundle $L^{[m]}_b|_W = \calO_{X_b}(m(K_{X_b} + S_b + (F_a)_b))|_W$ is still ample on $W$. It follows that $K_{Z_b}$ is big and nef on each slc stratum of $(Z_b, 0)$ so $H^1(Z_b, \mu_*L_b^{[m]}) = 0$ by Theorem \ref{fujinovanishing}. 

In case $(ii)$ we consider the attaching sequence
$$
0 \to \mu_*L^{[m]}_b|_{Z_b'}(-M) \to \mu_*L^{[m]}_b \to L^{[m]}_b|_{Y_b} \to 0
$$
where $M = \mu_*E$ is the curve along which $Y_b$ is attached to $Z_0 = \mu(X_0)$ and $Z_b'$ is the union of components of $Z_b$ not contained in $Y_b$. Now $L_b|_{Y_b} = K_{Y_b} + E$ and $(Y_b, E)$ is a broken pseudoelliptic tree and we can apply Theorem \ref{thm:vanishing} to conclude $H^1(Y_b, L^{[m]}_b|_{Y_b}) = 0$. On the other hand, $K_{Z_b}|_{Z_b'} = K_{Z_b'} + M$ so 
$$
\mu_*L_b^{[m]}|_{Z_b'}(-M) = \calO_{Z_b'}\left(m(K_{Z_b'} + \frac{m - 1}{m} M)\right).
$$
As in case $(i)$, the divisor $K_{Z_b'} + \frac{m-1}{m} M$ is big and nef on every slc stratum of $(Z_b', \frac{m-1}{m}M)$ so $$H^1(Z_b', \mu_*L_b^{[m]}|_{Z_b'}(-M)) = 0$$ by Theorem \ref{fujinovanishing} and we conclude that $H^1(Z_b, \mu_*L_b^{[m]}) = 0$. 

In either case $(i)$ or $(ii)$, it follows that $H^1(X_b, L^{[m]}_b) = H^0(Z_b, R^1\mu_*L^{[m]}_b)$. Now $$(K_{X_b} + S_b)|_{S_b} \sim_\Q 0$$ so $L^{[m]}_b|_{S_b} = \calO_{S_b}$ for $m$ divisible enough. On the other hand $S_b$ is an irreducible nodal arithmetic genus $1$ curve so by the theorem on formal functions, $R^1\mu_*L^{[m]}_b$ is a skyscraper sheaf supported on $\mu(S_b)$ with $1$ dimensional fiber and $h^1(X_b, L^{[m]}_b) = 1$. \\

In case $(iii)$, consider the trivial fibration $f : X_b \to C_b$ with section $S_b$ and $g(C_b) = 1$. Then $K_{X_b} = 0$ and $L^{[m]}_b = \calO_{X_b}(mS_b)$. Furthermore, $R^1f_*\calO_{X_b}(mS_b) = 0$ and $f_*\calO_{X_b}(mS_b) = \calO_{C_b}^{\oplus m}$ for $m \geq 1$ by \cite[II.3.5 and II.4.3]{mir3}. It follows that $h^1(X_b, L^{[m]}_b) = h^1(C_b, \calO_{C_b}^{\oplus m}) = m$. \\

In each case $h^1\big(X_b, \calO_X(m(K_{X/B} + S))|_{X_b}\big)$ is constant and since the base is reduced, it follows from cohomology and base change over a reduced base scheme (see e.g. \cite[Theorem 1.2]{osserman}) that formation of $$\pi_*\calO_X\big(m(K_{X/B} + S))$$ is compatible with base change.  \\

Finally, when $\calA = 0$, $p_g(C) = 0$ and $\deg \LL = 2$, we have that $\calO_{X_b}(m(K_{X_b} + S_b)) = \calO_{X_b}$ (Remark \ref{rem:k3}) and $h^0(X_b,\calO_{X_b}) = 1$ is constant for $b \in B$ since $X_b$ is connected and reduced so formation of $\pi_*L^{[m]}$ commutes with base change.\end{proof}

\begin{remark} Note that for the first part of Theorem \ref{thm:basechange}, we do \emph{not} need to assume that $B$ is reduced. Indeed, whenever we can apply the vanishing theorem \ref{vanishing>0}, a strong form of proper base change ensures that the formation of $\pi_*\calO_X(m(K_{X/B} + S + F_\calA))$ commutes with arbitrary base change for any base $B$. It is only in the second case when the higher cohomology \emph{does not} vanish that we need to assume $B$ is reduced to apply cohomology and base change. This will not matter in the sequel as we restrict to normal base schemes.  \end{remark} 

The above Theorem \ref{thm:basechange} allows us to compute the $\calA$-stable model of a $\calB$-stable family by working fiber by fiber. This is used in the next section to explicitly describe the steps of the log MMP to compute the stable limits of a $1$-parameter family. Then in Section \ref{sec:reduction}, we use Theorem \ref{thm:basechange} to show that performing the steps of the log MMP on a family of elliptic surfaces is functorial. This leads to the existence of reduction morphisms between moduli spaces of elliptic surfaces for weights $0 \leq \calA \le \calB$ as above.

\section{Stable reduction}\label{sec:stablereduction}

The goal of this section is to prove a stable reduction theorem 
for $\calA$-broken elliptic surfaces in the spirit of La Nave \cite{ln}. As a result we obtain properness of the moduli spaces $\calE_{v,\calA}$ and give a description of the surfaces that appear in the boundary. 

Our strategy for stable reduction is to first compute stable limits of a family of irreducible elliptic surfaces with large coefficients. To this end, in \cite{tsm} we use the theory of \emph{twisted stable maps} to compute stable limits in the case when all singular fibers are marked with coefficient $b_i = 1$. We then run the minimal model program while reducing the coefficients to compute the stable limit for weights $\calA$ using the classification of log canonical models of elliptic surfaces as well Theorem \ref{thm:basechange} and the results of Appendix \ref{sec:appendix}.

\subsection{Wall and chamber structure}\label{sec:walls} 

Let $\calD \subset (\Q \cap [0,1])^n$ be the set of \emph{admissible weights}: weight vectors $\calA$ such that $K_X + S + F_\calA$ is pseudoeffective. A \emph{wall and chamber} decomposition of $\calD$ is a finite collection $\calW$ of hypersurfaces (the \emph{walls}), and the \emph{chambers} are the connected components of the complement of $\calW$ in $\calD$.

First we describe a wall and chamber decomposition of $\calD$ defined by where the log canonical model of an $\calA$-slc elliptic surface changes as $\calA$ varies. The collection of walls $\calW$ corresponds to the steps in the MMP required to produce a stable limit of a family of elliptic surfaces over a smooth curve. 

\begin{definition} \label{def:walls} The collection $\calW$ consists of the following types of walls:
\begin{itemize}
\item[I] A wall of \emph{Type $\mathrm{W_{\mathrm{I}}}$} is a wall arising from the log canonical transformations seen in Section \ref{sec:local} -- that is, the walls where the fibers of the relative log canonical model transition from twisted, to intermediate, to Weierstrass fibers.
\item[II] A wall of \emph{Type $\mathrm{W_{\mathrm{II}}}$} is a wall at which the log canonical morphism induced by the log canonical contracts the section of some components. By Corollary \ref{cor:stablecurve} these are the same as the walls for Hassett space $\overline{\calM}_{g,\calA}$.
\item[III] A wall of \emph{Type $\mathrm{W_{\mathrm{III}}}$} is a wall where the morphism induced by the log canonical contracts a rational pseudoelliptic component. These are determined by Proposition \ref{rational} and Remark \ref{rem:contract}
\end{itemize}

There are also \emph{boundary walls} given by $a_i = 0,1$ at the boundary of $\calD$. These can be of any of the types above.   \end{definition}

\begin{remark} By the results of Appendix \ref{sec:appendix} by G. Inchiostro, namely Theorem \ref{Teo:appendix:giovanni}, there are no other birational transformations that occur when computing the stable limit of a family of $\calA$-weighted stable elliptic surfaces.
\end{remark}

\begin{theorem} \label{thm:finitewalls} The non-boundary walls of each type are described as follows: 
\begin{enumerate}[label = (\alph*)]
\item Type $\mathrm{W_{\mathrm{I}}}$ walls are defined by the equations
$$
a_i =\frac{1}{6}, \frac{1}{4}, \frac{1}{3}, \frac{1}{2}, \frac{2}{3}, \frac{3}{4}, \frac{5}{6}.
$$ 

\item Type $\mathrm{W_{\mathrm{II}}}$ walls are defined by equations
$$
\sum_{j = 1}^k a_{i_j} = 1.
$$
where $\{i_1, \ldots, i_k\} \subset \{1, \ldots, n\}$. When the base curve is rational there is another $\mathrm{W_{\mathrm{II}}}$ wall at
$$
\sum_{i = 1}^r a_i = 2.
$$
\item Type $\mathrm{W_{\mathrm{III}}}$ walls where a rational pseudoelliptic component contracts to a point are given by
$$
\sum_{j = 1}^k a_i = c
$$
where $\{i_1, \ldots, i_k\} \subset \{1, \ldots, n\}$ and $c = \frac{1}{6}, \frac{1}{4}, \frac{1}{3}, \frac{1}{2}, \frac{2}{3}, \frac{3}{4}, \frac{5}{6}$ are the log canonical thresholds of minimal Weierstrass fibers. \\

\item Finitely many Type $\mathrm{W_{\mathrm{III}}}$ walls where an isotrivial rational pseudoelliptic component contracts onto the $E$ component of a pseudoelliptic surface it is attached to. 

\end{enumerate}

\noindent In particular, there are only finitely many walls and chambers. 
\end{theorem}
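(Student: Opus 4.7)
The plan is to dispatch each wall type via the structural result governing that particular birational transformation, and to observe that each wall is cut out by a linear equation in the coefficients $a_i$ with only finitely many combinatorial inputs. For Type~I walls I would invoke Theorem~\ref{thm:thm1} directly: for each Kodaira fiber type and each $\mathrm{N}_1$ fiber the relative log canonical model transitions at a single critical coefficient $a_0$, and collecting the values listed there ($5/6, 3/4, 2/3, 1/2$ for types $\mathrm{II}, \mathrm{III}, \mathrm{IV}, \mathrm{N}_1$ and $1/6, 1/4, 1/3, 1/2$ for types $\mathrm{II}^*, \mathrm{III}^*, \mathrm{IV}^*, \mathrm{I}^*_n$) produces exactly the stated seven values. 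Since these transitions are local on the base and depend only on the weight $a_i$ of the marked fiber $F_i$, there are at most $7r$ walls, each of the form $a_i = c$.

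For Type~II walls I would combine Proposition~\ref{prop:adjunction} with Corollary~\ref{cor:stablecurve}. A component $S_\alpha$ of the section is contracted by the log canonical morphism precisely when
\[
(K_X + S + F_\calA) \cdot S_\alpha \;=\; 2g(C_\alpha) - 2 + v_\alpha + \sum_{i \,:\, p_i \in C_\alpha} a_i \;=\; 0.
\]
Enumerating dual-graph possibilities for $C$: a rational leaf component with $v_\alpha = 1$ gives $\sum_{j=1}^k a_{i_j} = 1$ for the subset of points landing there, and an irreducible rational base with $v_\alpha = 0$ gives $\sum_{i=1}^r a_i = 2$; higher-genus components, or nodal components with $v_\alpha \geq 2$, force the equation to have solutions only on the boundary of $\calD$. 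This recovers the Hassett walls for $\overline{\calM}_{g,\calA}$, matching the expectation from Corollary~\ref{cor:stablecurve}, and yields at most $2^r$ hyperplanes.

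For Type~III walls I would apply Proposition~\ref{rational} and Remark~\ref{rem:contract}. A rational pseudoelliptic $(X_0, F_\calA|_{X_0})$ contracts to a point under its log canonical map precisely when $K_{X_0} + F_\calA|_{X_0} \sim_\Q 0$, equivalently when the coefficient $t$ of $S_0$ in
\[
K_{Y_0} + t S_0 + \tilde{F}_\calA \;=\; \mu_0^*(K_{X_0} + F_\calA|_{X_0})
\]
vanishes, where $\mu_0 : Y_0 \to X_0$ is the contraction of the section on the associated elliptic surface and $\tilde{F}_\calA$ is the strict transform of the marked fibers. Using the canonical bundle formula (Theorem~\ref{canonical}) together with the intersection numbers in Table~\ref{tab:table1}, one sees that $t$ is a linear function of the $a_{i_j}$, so $t = 0$ defines a single hyperplane of the form $\sum_{j=1}^k a_{i_j} = c$; carrying out this computation case-by-case at each minimal Weierstrass fiber type produces the seven threshold values. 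The isotrivial situation of case (d) is handled by applying the analogous numerical criterion to the $E$ component receiving the pseudoelliptic, and since the attaching configurations of intermediate pseudofibers are combinatorially bounded, only finitely many further hyperplanes arise.

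Finiteness of $\calW$ then follows by union: at most $7r$ walls of Type~I, at most $2^r$ of Type~II, finitely many of both flavors of Type~III, plus the $2r$ boundary walls $a_i \in \{0,1\}$. The step I expect to be most delicate is the Type~III computation, particularly case~(d): there one must simultaneously track the discrepancy coefficient $t$ and the constraint imposed by attachment to an $E$ component on a neighboring surface, and verify via the canonical bundle formula that no nonlinear conditions in $\calA$ arise. The numerics need to be cross-checked against the intersection pairings for minimal intermediate fibers (Table~\ref{tab:table1}) to confirm that the list of seven values in (c) exactly reproduces the log canonical thresholds appearing in Theorem~\ref{thm:thm1}.
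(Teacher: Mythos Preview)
Your treatment of parts (a) and (b) matches the paper's exactly: Theorem~\ref{thm:thm1} for the Type~$\wi$ thresholds and Proposition~\ref{prop:adjunction}/Corollary~\ref{cor:stablecurve} for the Hassett walls.

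For part (c) your route diverges from the paper's. You propose to compute the discrepancy coefficient $t$ of the section on the pseudoelliptic component $Y$ itself, via the canonical bundle formula and the intersection table, and extract the seven constants from a case-by-case computation there. The paper instead makes the following observation: $Y$ contracts to a point if and only if the curve $E$ along which $Y$ is attached to the neighboring component $X'$ contracts to a point. This shifts the computation \emph{off of} $Y$ and onto $X'$, where $E$ is the genus-one component of an intermediate fiber $F_a = aA + E$ with $a = \sum_{i : F_{a_i} \subset Y} a_i$ by the coefficient rule~(\ref{eq:A}). On $X'$ the question ``does $E$ contract?'' is exactly the Type~$\wi$ question ``does the intermediate fiber pass to Weierstrass?'', and Proposition~\ref{prop:lct} gives the answer $a \le c$ with $c$ the log canonical threshold. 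The seven values then come for free from Theorem~\ref{thm:thm1} without any new intersection computation. Your approach should give the same numbers, but you have to redo the arithmetic on each pseudoelliptic; the paper's reduction explains \emph{why} the same seven constants reappear.

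For part (d) your proposal is too thin. Saying the attaching configurations are ``combinatorially bounded'' does not establish the structural claim in the statement, namely that the pseudoelliptic contracting onto a curve must be \emph{isotrivial}. The paper argues this explicitly: by Proposition~\ref{stableattaching} the attaching curve $E$ cannot be type $\mathrm{I}_n$, so $E \cong \PP^1$; passing to the associated elliptic surface and its twisted model, a chain of inequalities (using Lemmas~\ref{lemma:first:contract:the:section} and~\ref{lemma:the:coef:of:S:pulling:back:the:lc:on:a:psudo:positive} from the appendix) forces $S^2 = 0$ on that twisted model; and a separate lemma then shows that a twisted elliptic surface with $S^2 = 0$ is a quotient of a trivial fibration $B \times E \to B$. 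This is the content that distinguishes case~(d), and your outline does not contain it.
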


\begin{proof} Part $(a)$ follows from the results of Section \ref{sec:local} since these are exactly the coefficients at which minimal Weierstrass cusps transition from Weierstrass models to intermediate models. 

Part $(b)$ follows from Proposition \ref{prop:adjunction} since $(K_X + S + F_\calA).S > 0$ if and only if the base curve is a weighted stable pointed curve. When $\sum a_{i_j} = 1$, the section of any component fibered over a rational curve, which is attached to the other components of the surface along one attaching fiber, and contains marked fibers $i_1, \ldots, i_k$ gets contracted. When the base curve is $\PP^1$ and $\sum a_i = 2$, the section of \emph{every} elliptic surface gets contracted so that all $\calA$-slc elliptic surfaces have \emph{only} pseudoelliptic components. 

For type $\mathrm{W_{\mathrm{III}}}$ walls [(c) and (d)], note that by the results of Section \ref{sec:pseudocontraction}, if $K_X + S + F_\calA$ is \emph{not big} on a pseudoelliptic component $Y$, then the component is rational. Suppose $Y$ is attached to a component $E$ of an intermediate (pseudo)fiber $A \cup E$ on $X'$. 

By Proposition \ref{rational}, if $(K_X + S+ F_\calA)|_Y$ is not big then either the log canonical linear series contracts $Y$ to a point or contracts $Y$ along a morphism $Y \to E$. 

In particular, $Y$ contracts to a point if and only if $E$ contracts to a point. We can do the computation by restricting $(K_X + S + F_\calA)$ to $X'$ first. In this case we have an intermediate marked (pseudo)fiber $F_a = aA + E$ where 
$$
a = \sum_{i \ : \ F_{a_i} \text{ lies on }Y} a_i
$$
is a sum of markings on the pseudoelliptic $Y$ by Equation \ref{eq:A}. Then by Proposition \ref{prop:lct} the fiber $F_a$ contracts onto its Weierstrass model if and only if $a \le c$ where $c$ is the log canonical threshold of the Weierstrass fiber. By Theorem \ref{thm:thm1} the nonzero log canonical thresholds are exactly the ones written above. 

On the other hand, suppose that $Y$ contracts along a morphism $Y \to E$. By Proposition \ref{stableattaching}, the curve $E$ is not a fiber of type $\mathrm{I}_n$, so it has to support a nonreduced twisted fiber. In particular $E \cong \mb{P}^1$ is a rational curve. Let $\mu : Y' \to Y$ be the associated elliptic surface. Then there is a morphism $Y' \to E$ by composition which is induced by the linear series $\mu^*((K_X + S + F_\calA)|_Y) = K_{Y'} + \alpha S' + \mu_*^{-1}(F_\calA)|_Y$. By Proposition \ref{rational}, the coefficient $\alpha > 0$, and the generic fiber of $Y' \to E$ is a generic multisection $M$ of the elliptic fibration $Y' \to C$ that is disjoint to $S'$. In particular, $M.S' = 0$.  

Let $p : Y' \to Y_0$ be the contraction of the rational components of each intermediate fiber of $Y' \to C$ and let $S_0 = p_*(S'), F_0 = p_*(\mu_*^{-1}(F_\calA)|_Y)$ and $M_0 = p_*(M)$. We claim that $Y' \to E$ factors through $Y' \to Y_0$. That is, $M.A = 0$ for $A$ the genus zero component of each intermediate fiber. By the inequalities in the proof of Lemma \ref{lemma:first:contract:the:section} as well as the fact that $S_0^2 \le 0$ by Lemma \ref{lemma:the:coef:of:S:pulling:back:the:lc:on:a:psudo:positive}, we have
\begin{align*}
0 &= (K_{Y'} + \alpha S' + \mu_*^{-1}(F_\calA)|_Y).M = (K_{Y'} + \mu_*^{-1}(F_\calA)|_Y).M \geq (K_{Y_0} + F_0).M_0 \\
&= m(K_{Y_0} + F_0).S_0 \geq m(K_{Y_0} + \alpha S_0 +  F_0).S_0 \geq m(K_{Y'} + \alpha S' + \mu_*^{-1}(F_\calA)|_Y).S' = 0
\end{align*}
so all the inequalities are equalities. Now $\alpha > 0$ by Proposition \ref{rational} and by the first inequality on the second line, $S_0^2 = 0$ on the twisted model. Now we conclude by the following lemma. \end{proof} 


\begin{lemma} Suppose $(f : X \to C, S)$ is an irreducible relative log canonical model with only twisted fibers. Suppose further that $S^2 = 0$. Then $X$ is the quotient of a trivial fibration $B \times E \to B$. \end{lemma}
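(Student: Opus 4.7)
The plan is to identify $X$ as the coarse moduli space of a smooth family of elliptic curves over an orbifold, deduce isotriviality from the hypothesis $S^2 = 0$ via an orbifold self-intersection computation, and then trivialize the family by a finite Galois cover.

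First, since $(f : X \to C, S)$ has only twisted fibers, the theory of twisted stable maps \cite{av} recalled in \cite[Proposition 4.12]{tsm} realizes $X$ as the coarse moduli space of a smooth family of stable elliptic curves $\widetilde{f} : \widetilde{X} \to \calC$ over an orbifold curve $\calC$ whose orbifold points are exactly the points of $C$ supporting a twisted fiber of $f$. The section $S$ lifts to a section $\widetilde{S}$ of $\widetilde{f}$. On the orbifold, the fundamental line bundle $\LL_\calC := (\widetilde{f}_* N_{\widetilde{S}/\widetilde{X}})^{-1}$ is an orbifold line bundle of rational degree, and the orbifold version of the formula of Definition \ref{linebundle} gives $\widetilde{S}^2 = -\deg_\calC \LL_\calC$ in orbifold intersection theory. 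Comparing with the coarse space via the contributions from the cyclic quotient singularities of $X$ along $S$ at each twisted fiber (which can be read off Table \ref{tab:table1} by a projection formula applied to $p : X_{\mathrm{std}} \to X$, where $X_{\mathrm{std}}$ is the standard intermediate model obtained by extracting the $A$ components), one identifies $S^2 = -\deg_\calC \LL_\calC$. Hence $\deg_\calC \LL_\calC = 0$.

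Next, because $\widetilde{f}$ is a \emph{smooth} family of elliptic curves with section over $\calC$, the $j$-invariant map $j : \calC \to \overline{\calM}_{1,1}$ is defined on all of $\calC$, and the pullback along $j$ of the Hodge bundle on $\overline{\calM}_{1,1}$ is precisely $\LL_\calC$. Hence $\deg j^* \omega = \deg_\calC \LL_\calC = 0$, forcing $j$ to be constant. The fibration is therefore isotrivial with some fixed elliptic curve $E$ as fiber.

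Finally, I would construct a finite Galois cover $B \to \calC$ that both rigidifies the orbifold structure and kills the monodromy of the local system $R^1 \widetilde{f}_* \Z$; such a cover exists because the relevant monodromy factors through the finite group $\mathrm{Aut}(E,0)$. The base change $\widetilde{X} \times_\calC B \to B$ is then a smooth isotrivial elliptic fibration over a smooth curve with section and trivial monodromy, hence a trivial $E$-torsor, so isomorphic to $B \times E \to B$. Setting $G = \mathrm{Gal}(B/\calC)$, the diagonal action of $G$ on $B \times E$ (by deck transformations on $B$ and through a homomorphism $G \to \mathrm{Aut}(E,0)$ on the fiber) recovers $X \cong (B \times E)/G$.

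The main obstacle is the first step: carefully setting up the orbifold self-intersection identity $\widetilde{S}^2 = -\deg_\calC \LL_\calC$ and matching it to $S^2$ on the coarse space by accounting for the discrepancies at the cyclic quotient singularities of $X$ along the section. A secondary technical point is the construction of the Galois cover $B$, but this follows formally from the finiteness of $\mathrm{Aut}(E,0)$ together with properness of $\calC$.
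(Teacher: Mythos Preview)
Your argument is correct in outline and reaches the same conclusion, but the paper's proof is considerably more economical and sidesteps precisely the step you identify as the main obstacle. Both proofs begin by invoking \cite[Proposition 4.12]{tsm} to realize $X$ as the coarse space of a family of stable curves $\calX \to \calC$ over an orbifold curve. Rather than staying on the orbifold and establishing $S^2 = -\deg_\calC \LL_\calC$ via orbifold intersection theory, the paper immediately chooses a finite cover $B \to \calC$ by a projective curve, pulls back to a genuine family $Y \to B$ with section $T$, and uses the projection formula to transport $S^2 = 0$ to $T^2 = 0$. A single citation to \cite[III.1.4]{mir3} (a Weierstrass fibration with $T^2 = -\deg \LL = 0$ is trivial) then finishes the proof. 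Your orbifold self-intersection identity, the $j$-map/Hodge bundle argument for isotriviality, and the separate construction of the trivializing Galois cover are thus all absorbed into this one pullback-and-cite step; what your route buys is a more explicit description of the Galois action on $B \times E$ recovering $X$. One small slip: the family $\calX \to \calC$ is a family of \emph{stable} curves, not necessarily smooth elliptic curves (twisted fibers include nodal type $\mathrm{I}_1$ fibers), though your argument survives this with minor rewording since both the Hodge bundle and the $j$-map extend over $\overline{\calM}_{1,1}$.
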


\begin{proof} By \cite[Proposition 4.12]{tsm}, the pair $(f : X \to C, S)$ is the coarse space of a family of stable curves over a stable curve denoted by $\calX \to \calC$. Pick a projective curve with a finite cover $B \to \calC$ and consider the pullback $Y \to B$ of $\calX \to \calC$. Then $Y \to B$ is a family of stable curves over $B$ with section $T$ pulled back from $S$. On the other hand, by the projection formula, $S^2 = 0$ implies that $T^2 = 0$. However, a Weierstrass elliptic fibration with $T^2 = - \deg \LL = 0$ is trivial by \cite[III.1.4]{mir3}.  \end{proof} 


\subsection{The birational transformations across each wall}

We wish to describe the birational transformations that a family of $\calA$-broken stable elliptic surfaces undergoes as $\calA$ crosses a wall. Let $(f : X \to C, S + F_\calA) \to B$ be a one parameter family of broken elliptic surfaces with normal generic fiber and special fiber $f' : X' \to C'$. 

\subsubsection{Type $\mathrm{W_{\mathrm{I}}}$}\label{sec:WI} If $F$ is a minimal intermediate fiber, then at the wall at coefficients $a_i = 0, \frac{1}{6}, \frac{1}{4}, \frac{1}{3}, \frac{1}{2}, \frac{2}{3}, \\ \frac{3}{4}$ or $\frac{5}{6}$ (depending on the Kodaira fiber type of $F$), $X$ undergoes a divisorial contraction where $F$ transforms from an intermediate to Weierstrass model. Similarly, at the boundary wall $a_i = 1$, the surface $X$ undergoes a divisorial contraction where $F$ transforms from an intermediate into a twisted fiber. 

\subsubsection{Type $\mathrm{W_{\mathrm{II}}}$}\label{sec:WII} Let $\calA_0$ be a weight on the non-boundary wall defined by $\sum a_{i_j} = 1$ for $\{i_1, \ldots, i_k\}$. Let $\calA_\pm$ be in the adjacent chambers with $\sum a_{i_j} = 1 \pm \epsilon$ for $\epsilon$ very small. $\calA_0$ is on a wall for the Hassett space where a leaf component $C_0'$ of the central fiber $C'$ is contracted. 

La Nave studied this situation in \cite[Section 4.3]{ln}. At $A_0$, the section $S_0'$ of an elliptic component $X_0'$ lying over $C'$ in the central fiber $X' \to C'$ of the $\calA_+$ stable family $X \to C$ must contract by Proposition \ref{prop:adjunction}. This is a log canonical contraction of the pair $(X, S + F_{\calA_0})$, but it is an extremal contraction of the pair $(X, S + F_{\calA_-})$. 

Since the total space $X$ is a threefold and $S_0'$ is a curve, this is a small contraction so we must perform a flip to compute the $\calA_-$ stable model. La Nave computes this flip explicitly using a local toric model around $S_0'$ inside the total space $X$ \cite[Theorem 7.1.2]{ln}. This leads to the formation of a type $I$ pseudoelliptic surface $Z$ in the central fiber attached to the component $E$ of an intermediate (pseudo)fiber $E \cup A$ where $A$ is the flipped curve, as depicted in Figure \ref{figure:typeWII}: 

\begin{figure}[h!]
	\includegraphics[scale=.75]{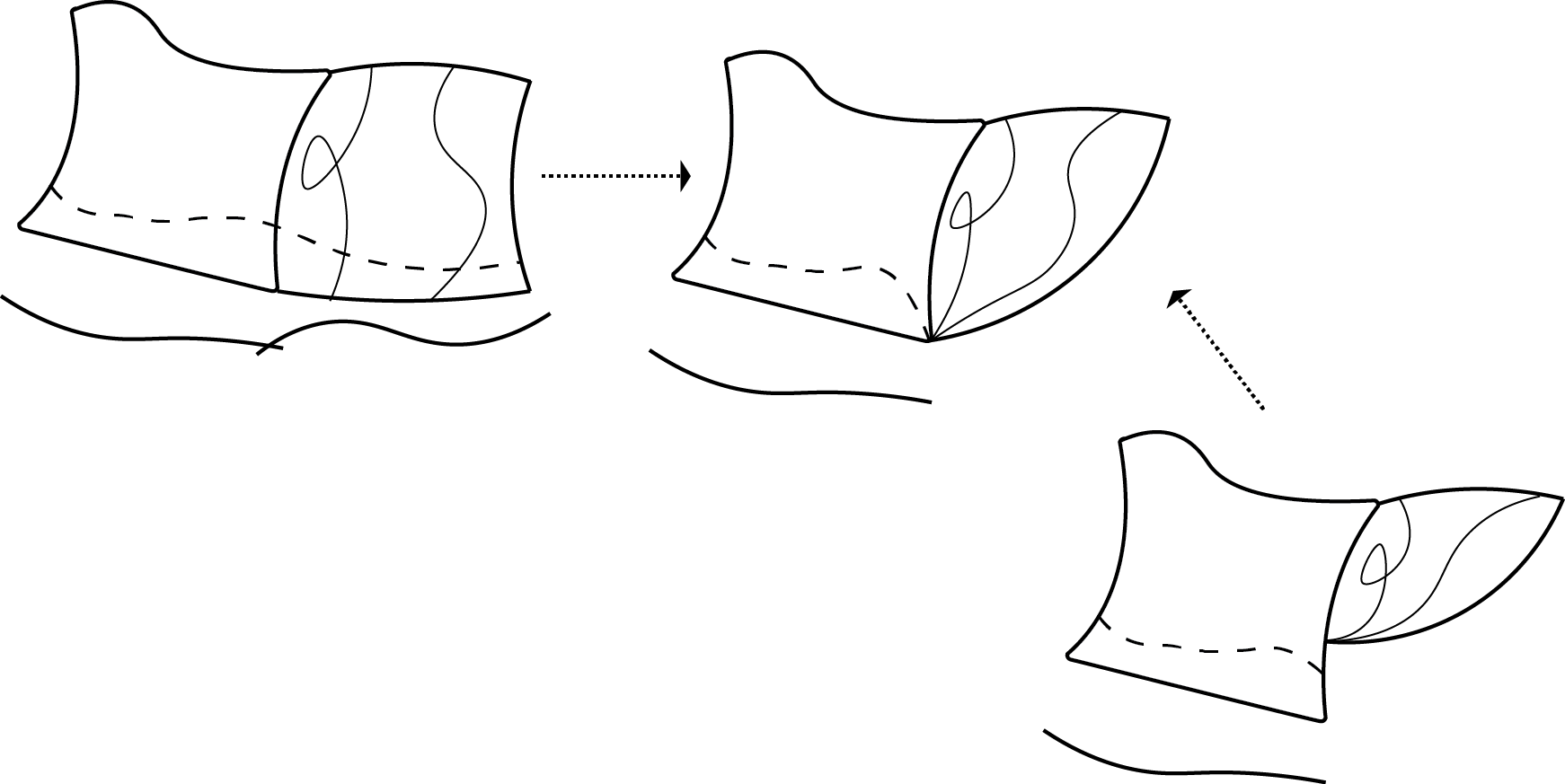}
	\caption{This depicts, from left to right, the central fiber of the $\calA_+$, $\calA_0$ and $\calA_-$ stable families where $\calA_0$ is a type $\mathrm{W_{\mathrm{II}}}$ wall. }\label{figure:typeWII}
\end{figure}

At a boundary type $\mathrm{W_{\mathrm{II}}}$ wall, a rational component $C_0'$ of $C'$ which is not a leaf may contract. The contraction of the corresponding section component $S_0'$ in the central fiber $X'$ of $X \to B$ is a log canonical contraction which forms a type $\mathrm{II}$ pseudoelliptic surface. 

Finally when the genus of the base curve is $0$, we must consider the wall defined by $\sum a_i = 2$. In this case the base curve is contracted to a point and so the section of the total family $X \to C \to B$ is contracted by a divisorial log canonical contraction. This produces a one parameter family of pseudoelliptic surfaces $Z \to B$ with normal generic fiber and special fiber consisting of an $\calA$-broken pseudoelliptic surface. 

\subsubsection{Type $\mathrm{W_{\mathrm{III}}}$}\label{sec:WIII} At $\calA_0$, there is a pseudoelliptic component $Z$ in the central fiber of $X'$ for which $K_X + S + F_{\calA_0}$ is nef but not big. Then the total space $(X, S + F_{\calA_0})$ undergoes a divisorial log canonical contraction $X \to Y$ which contracts $Z$ onto either a point or a curve as determined by Remark \ref{rem:contract}. 

When $Z$ contracts to a point, this results in a cuspidal cubic fiber on the central fiber $Y' \to C'$ of $Y$ at the point that $Z$ contracted to. When $\calA_0$ is not on a boundary wall, then the surface $Y'$ has at worst a rational singularity at this point by Proposition \ref{stableattaching}. At a boundary, the contraction of $Z$ may produce an elliptic singularity at the cusp. 

\subsubsection{Multiple walls} Figures \ref{figure:brokenchain}, \ref{figure:wallcrossing} and \ref{figure:wallcrossing2} illustrate some of the multi-step transformations the central fiber can undergo due to crossing several walls at once.

\begin{figure}[h!]
	\includegraphics[scale=.75]{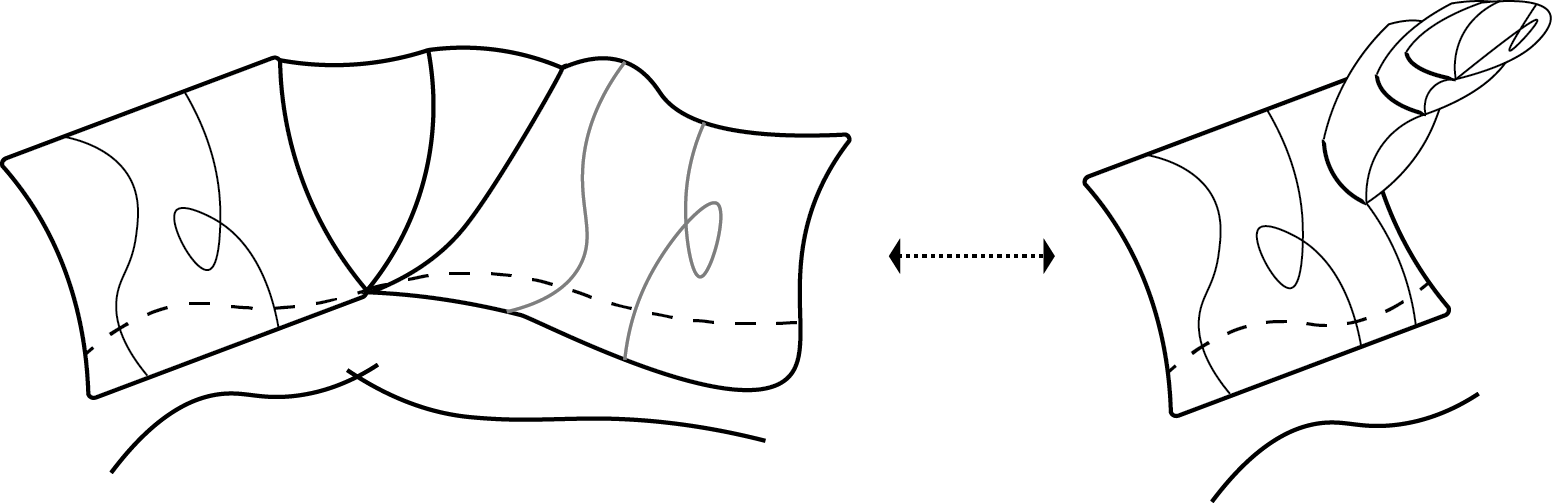}
	\caption{Here a type $\mathrm{W_{\mathrm{II}}}$ wall is crossed which causes the right most component to transform into a type $I$ pseudoelliptic. However, that then causes the type $\mathrm{II}$ pseudoelliptics to also become type $I$ since they have no marked fibers.}\label{figure:brokenchain}
\end{figure} 

\begin{figure}[h!]
	\includegraphics[scale=.75]{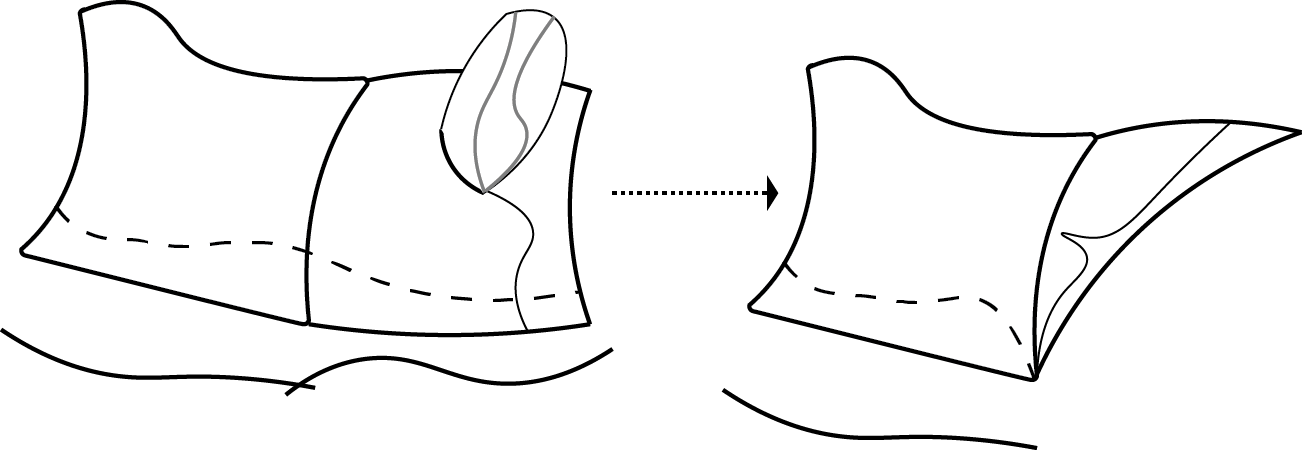}
	\caption{This is a simultaneous $\mathrm{W_{\mathrm{II}}}$ and $\mathrm{W_{\mathrm{III}}}$ wall where the type $I$ pseudoelliptic component contracts onto a point and the right most elliptic component becomes a pseudoelliptic. }\label{figure:wallcrossing}
\end{figure} 

\begin{figure}[h!]
	\includegraphics[scale=.8]{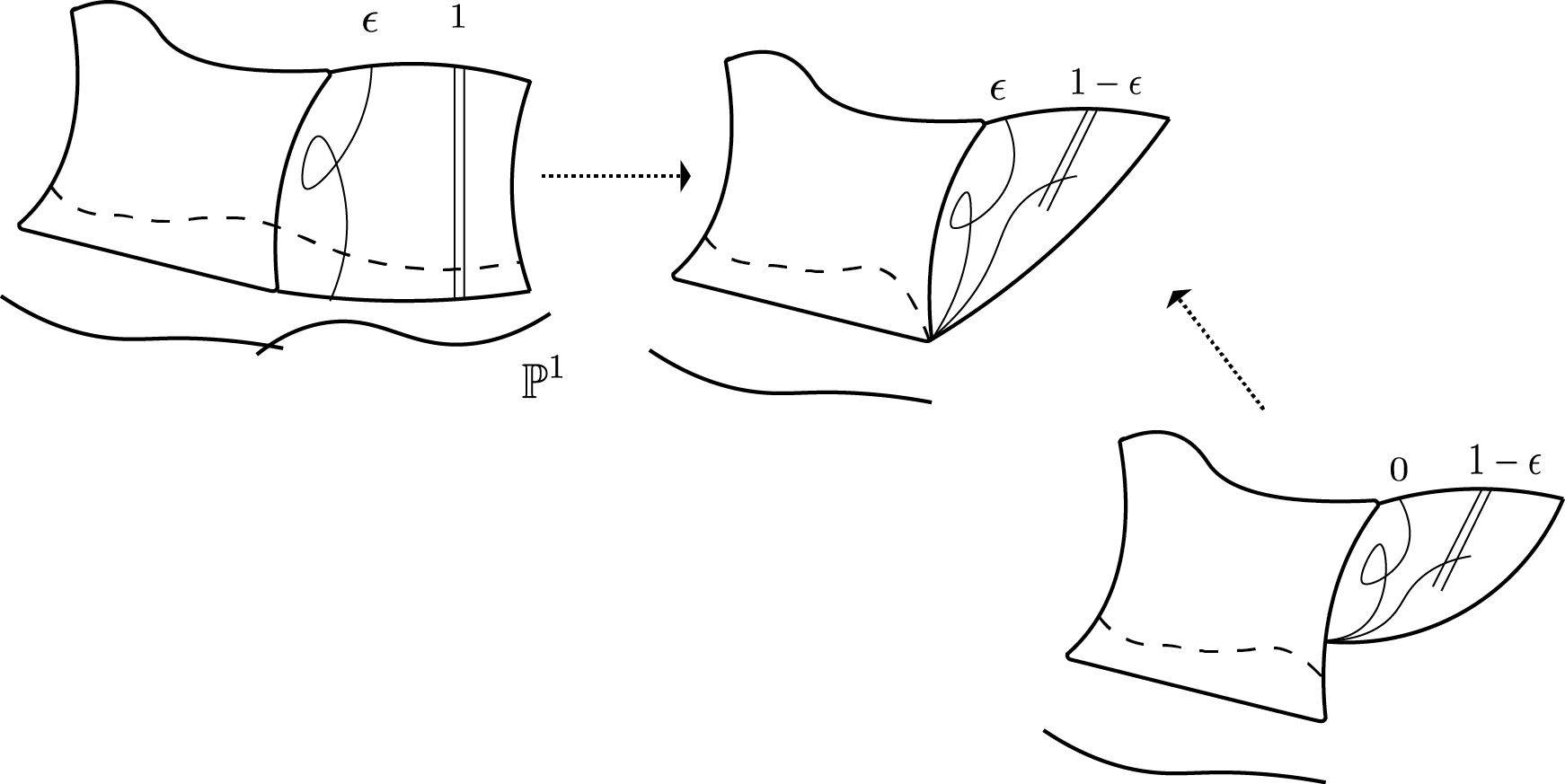}
	\caption{This is a simultaneous $\mathrm{W_{\mathrm{I}}}$ and $\mathrm{W_{\mathrm{II}}}$ where the twisted fiber becomes an intermediate fiber and a type $I$ pseudoelliptic forms. }\label{figure:wallcrossing2}
\end{figure}

\subsection{Explicit stable reduction}

Recall the following definition (see Definition 4.9 \cite{tsm}):

\begin{definition} An $\calA$-broken elliptic surface  $(f: X \to C, S + F_\calA)$ is \textbf{twisted} if $a_i = 1$ for all $i$, there are no pseudoelliptic components, and the support of every non-reduced fiber is contained in $\mathrm{Supp}(F_\calA)$. \end{definition}

In \cite{tsm}, we used the Abramovich-Vistoli moduli space of twisted stable maps \cite{av2} to construct a proper moduli space of twisted elliptic surfaces analogous to the moduli spaces of fibered surfaces considered in \cite{av}. In particular, in \cite[Proposition 4.12]{tsm} we proved that any surface with only twisted fibers is the coarse space of a stacky family of stable curves.  This is the starting point for computing the stable limits in $\calE_{v,\calA}$ for any $\calA$.

Given a family of $\calA$-stable irreducible elliptic surface $(X \to C, S + F_\calA) \to U$ over a punctured curve $U$, the idea is to 
\begin{enumerate} \item increase the coefficients so that $a_i = 1$ for all $i$, and  \item add the supports of any unstable fibers to the boundary divisor. \end{enumerate} 

Then the stable model of this new pair will be a family of twisted elliptic surfaces. By the results of \cite{tsm},  this family extends uniquely after a base change $U' \to U$. Finally, we can run the log MMP to compute the stable model as we decrease coefficients again. This is analogous to the approach used by La Nave \cite{ln} to compute stable limits of stable Weierstrass fibrations, i.e. when $\calA = 0$. 

\begin{theorem} \label{thm:proper} The moduli stack $\calE_{v,\calA}$ is proper. \end{theorem}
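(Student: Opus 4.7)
The plan is to verify the valuative criterion of properness; since $\calE_{v,\calA}$ is already known to be separated and of finite type by Theorem \ref{thm:stack}, it suffices to show existence of limits. That is, given a DVR $R$ with fraction field $K$ and special point $0$, together with a family $(f_K : X_K \to C_K, S_K + (F_\calA)_K) \to \mathrm{Spec}(K)$ of $\calA$-broken stable elliptic surfaces, I need to extend it (after possibly a finite base change) to a family over $\mathrm{Spec}(R)$ whose special fiber lies in $\calE_{v,\calA}$.

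First I would reduce to the twisted case. Shrinking $\mathrm{Spec}(K)$ if necessary, the generic family is a family of irreducible $\calA$-broken stable elliptic surfaces. Replace the weight vector by $\calB = (1, \ldots, 1)$ and add to the boundary the supports of all fibers that become unstable under this change of weights. The resulting family becomes a family of twisted elliptic surfaces in the sense of \cite{tsm}. By the properness of the moduli of twisted elliptic surfaces proved in \emph{loc.~cit.} (via Abramovich--Vistoli twisted stable maps), after a ramified base change $R \to R'$, this family extends uniquely to a family $(\mathscr{X} \to \mathscr{C}, \mathscr{S} + \mathscr{F}_\calB) \to \mathrm{Spec}(R')$ whose special fiber is a $\calB$-broken twisted stable elliptic surface.

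Next I would interpolate from $\calB$ back down to $\calA$ along the line segment $\calA_t = t\calA + (1-t)\calB$, $t \in [0,1]$, running the relative log MMP for the pair over $\mathrm{Spec}(R')$ as $t$ varies. By Theorem \ref{thm:finitewalls}, only finitely many walls of the three types $\mathrm{W_I}, \mathrm{W_{II}}, \mathrm{W_{III}}$ are crossed. At each wall I apply the explicit description of the birational transformation given in Sections \ref{sec:WI}--\ref{sec:WIII}: at a $\mathrm{W_I}$ or $\mathrm{W_{III}}$ wall the total space undergoes an explicit divisorial log canonical contraction, while at a $\mathrm{W_{II}}$ wall a section component undergoes a small contraction that is resolved by the explicit toric flip of La Nave \cite[Theorem 7.1.2]{ln}. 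Inchiostro's Theorem \ref{Teo:appendix:giovanni} (Appendix \ref{sec:appendix}) guarantees these are the only birational modifications that can occur in the MMP, so after finitely many such steps I land at a family whose central fiber is an $\calA$-broken stable elliptic surface with the correct volume $v$.

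The main obstacle, and what makes each step legitimate, is verifying that each intermediate $\calA_t$-stable model is again a family of $\calA_t$-broken elliptic surfaces in the sense of Definition \ref{def:broken}, with the map to the base curve and the structure of pseudoelliptic trees preserved. This is where the invariance of log plurigenera (Theorem \ref{thm:basechange}) is essential: it ensures that forming the relative log canonical model commutes with restriction to the special fiber, so that the fiberwise classifications in Section \ref{sec:local} and Section \ref{sec:pseudocontraction} can be applied and then globalized. The classification of fibers and pseudoelliptic contractions then guarantees the output of each step is again of the required form, and the explicit wall-crossing description confirms that the central fiber is $\calA$-broken stable at the end of the process, completing the valuative criterion.
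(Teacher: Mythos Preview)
Your proposal is correct and follows essentially the same route as the paper: reduce to the twisted case by raising all weights to one and adding the remaining unstable fibers, fill in the limit using properness of the twisted stable maps space, and then descend along the segment of weights by applying the explicit $\mathrm{W_I}$, $\mathrm{W_{II}}$, $\mathrm{W_{III}}$ wall-crossing transformations, with Theorem~\ref{Teo:appendix:giovanni} guaranteeing no other modifications arise and Theorem~\ref{thm:basechange} ensuring each step is compatible with restriction to the special fiber. The one point the paper makes more explicit is the verification that the map to the base curve survives each wall-crossing (Lemma~\ref{factoring}), which you correctly flag as the ``main obstacle'' but do not prove; you should also note that the original weight vector $\calA$ must be padded with zeros for the auxiliary fibers added when passing to the twisted model, and these extra markings are forgotten at the end.
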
 

\begin{proof}

Consider a family of normal $\calA$-stable elliptic surfaces $(X^0, S^0 + F_{\calA}^0) \to C^0 \to U$ over $U = B \setminus p$, a smooth curve minus one point. Let $\calB_1 = (1, \ldots, 1)$ be the constant weight $1$ vector and let $G^0 = G^0_{r+1} + \ldots + G^0_{s}$ be the reduced divisor whose support consists of the singular fibers not contained in $\Supp(F_\calA)$. Define $D^0_{\calB} = F^0_{\calB_1} + G^0$ so that $(X^0, S^0 + D^0_{\calB}) \to C^0 \to U$ is a family of pairs with all non-stable fibers marked and all fibers marked with coefficient one. We index the weight vector $\calB = (b_1, \ldots, b_r, b_{r+1}, \ldots, b_s)$ such that $b_i$ for $i = 1, \ldots, r$ are the coefficients of the original marked fibers $F_i$. 

After performing a log resolution, we can take the log canonical model of this pair to obtain a family of slc elliptic surfaces $(X^1, S^1 + D_{\calB}^1) \to C^0 \to U$, such that all fibers are either stable or twisted, and all fibers that are not of type $\mathrm{I}_n$ are contained in either the double locus of $X$ or in $D_{\calB}^1$. By \cite[Corollary 5.10]{tsm}, there is a map $C^0 \to \overline{M}_{1,1}$ making $(X^1, S^1 + D^1_\calB) \to \overline{M}_{1,1}$ an Alexeev stable map from a twisted elliptic surface (see Section 5 and Proposition 5.2 of \cite{tsm}).

By \cite[Proposition 5.2]{tsm}, the moduli space of Alexeev stable maps from a twisted elliptic surface is proper. Therefore, after a finite base change $B' \to B$, this family extends uniquely to a family $(Z_1, S_1 + D_{\calB}) \to C_1 \to B'$ of twisted elliptic surfaces over $B'$ with a well defined $j$-invariant map $C_1 \to \overline{M}_{1,1}$. Furthermore the central fiber consists of only elliptic components fibered over a possibly reducible nodal curve.  

Now consider the line segment $\calA(t) := t\calB + (1-t)\calA_0$ for $t \in [0,1]$ where \linebreak $\calA_\delta = (a_1, \ldots, a_r, 0, \ldots, 0)$. By Theorem \ref{thm:finitewalls}, there are finitely many $t_0 = 0, t_1, \ldots, t_{n-1}, t_n = 1$ so that $\calA(t_k)$ are on walls.

By invariance of log plurigenera (Theorem \ref{thm:basechange}), we can compute the stable model of $$\pi: (Z_1, S_1 + D_{\calB(t)}) \to C_1 \to B'$$ as we decrease $t$ from $t = 1$ by taking the stable model of each fiber as long as $K_\pi + S_1 + D_{\calB(t)}$ remains $\pi$-nef, and $\Q$-Cartier. First we need that each wall-crossing preserves the structure of a fibered surface:  

\begin{lemma}\label{factoring} Let $(f:X \to C, S + F_\calA)$ be a $\calB$-broken stable elliptic surface. Let $\calA \le \calB$ and denote by $X'$ and $C'$ the stabilizations of $X$ and $C$ with respect to $\calA$ respectively. Then there exists a commutative diagram as follows.

$$
\xymatrix{X \ar[r] \ar@{-->}[d] & C \ar[d] \\ X' \ar[r] & C'}
$$
 
\end{lemma}

\begin{proof} Let $\phi : X \dashrightarrow X'$ be the log canonical birational map induced by $m(K_X + S + F_\calA)$. We can factor $\phi$ into a sequence of type $W_{\mathrm{I}}$, $W_{\mathrm{II}}$ and $W_{\mathrm{III}}$ birational transformations described in Section \ref{sec:stablereduction}. We reduce to checking that for each of these birational transformations, there is a compatible factorization of $X' \to C'$.

\begin{enumerate}[label = \Roman*.]
\item If $\phi$ is a $\wi$ type transformation, that is, a transition between twisted, intermediate and Weierstrass fibers, then $\phi$ is a composition of blowups and blowdowns of fiber components so there is a factorization $X' \to C$. 

\item If $\phi$ is a $\wii$ type transformation, then there is a diagram 
$$
\xymatrix{X_- \ar[rd] & & X_+ \ar[ld] \\ & X_0 & }
$$
where $X_+ \to X_0$ is the contraction of a section component $X_- \to X_0$ is birational on every component and $\phi$ is either $X_+ \to X_0$ or $X_+ \dashrightarrow X_-$ (see Section \ref{sec:flip} for details). By Proposition \ref{prop:adjunction}, the map $X_+ \to X_0$ contracts a section component if and only if that component of the base curve is contracted by the morphism $C \to C'$. Therefore there is a unique factorization $X_0 \to C'$ also inducing a unique map $X_- \to C'$ by composition. 

\item If $\phi$ is a type $\wiii$ transformation, then it contracts components of $X$ which are contracted to a point by $f : X \to C$. Therefore there is a unique factorization $X' \to C$. \end{enumerate}

 By Theorem \ref{Teo:appendix:giovanni}, the above are the only birational transformations that occur. \end{proof}

Now for $0 < \epsilon \ll 1$, there exists a unique family of $\calA(1 - \epsilon)$-weighted stable elliptic surfaces $(Z_{1 - \epsilon}, S_{1 -\epsilon} + D_{\calA(1 - \epsilon)}) \to C_1 \to B'$ obtained by the blowup from twisted to intermediate models of the marked fibers. Then one performs the following whenever $\calA(t)$ crosses a wall:

\begin{itemize}
\item Each time $t$ crosses a type $\mathrm{W_{\mathrm{I}}}$ or $\mathrm{W_{\mathrm{III}}}$ wall $t_k$, the family undergoes a divisorial contraction as described in Sections \ref{sec:WI} and \ref{sec:WIII}. In this case one obtains a $\calA(t_k)$-weighted stable family $(Z_{t_k}, S_{t_k} + D_{\calA(t_k)}) \to C_{t_k} \to B'$;
\item Across a type $\mathrm{W_{\mathrm{II}}}$ wall $t_l$, the map $X_{t} \to X_{t_l}$ is a flipping contraction of a section of a component of the central fiber. As described in Section \ref{sec:WII} there is a unique flip $X_{t'} \to X_{t_l}$ constructed by La Nave in \cite{ln} giving an $\calA(t')$-weighted stable family $$(Z_{t'}, S_{t'} + D_{\calA(t')}) \to C_{t'} \to B'.$$
\end{itemize}

Since there are only finitely many walls crossed, we eventually obtain an $\calA(0) = \calA_0$-weighted stable family $\pi : (Z_{\delta}, S_{0} + D_{\calA_0}) \to C_{0} \to B'$. Forgetting about the auxillary divisors $G$ now marked with $0$, this is in fact a $\calA$-stable family. \end{proof}

\begin{theorem} \label{thm:boundary} The stable limit of a family of irreducible $\calA$-stable elliptic surfaces is an $\calA$-broken stable elliptic surface. In particular, the compact moduli stack $\calE_{v,\calA}$ parametrizes $\calA$-broken stable elliptic surfaces. \end{theorem}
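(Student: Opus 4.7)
The plan is to leverage the explicit stable reduction algorithm already constructed in the proof of Theorem \ref{thm:proper} and verify directly that the central fiber it produces satisfies all four conditions (a)--(d) of Definition \ref{def:broken}. Since $\calE_{v,\calA}$ parametrizes normal (hence irreducible) $\calA$-stable surfaces on an open dense substack, every boundary point is a limit of a one-parameter family of such, so it suffices to show that stable reduction of such a family terminates in an $\calA$-broken stable elliptic surface.

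First I would analyze the starting point. After base-changing and running the log MMP at the auxiliary weight $\calB$ with $b_i = 1$ for all $i$, the twisted stable maps construction of \cite{tsm} produces a family $(Z_1, S_1 + D_\calB) \to C_1 \to B'$ whose central fiber is, by \cite[Proposition 4.12]{tsm}, a union of irreducible twisted elliptic components with section, glued to one another along twisted fibers over a nodal base curve. At this stage conditions (a) and (d) of Definition \ref{def:broken} hold vacuously since there are no pseudoelliptic or intermediate components, and (b) holds by construction.

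Next I would traverse the finitely many walls between $\calB$ and $\calA$ identified in Theorem \ref{thm:finitewalls} and verify inductively that conditions (a)--(d) persist. For a $\wi$ transition (Section \ref{sec:WI}), a marked twisted fiber blows up to its minimal intermediate model (or conversely an intermediate fiber blows down to a Weierstrass fiber), which immediately verifies condition (d); conditions (a)--(c) are unchanged. For a non-boundary $\wii$ transition (Section \ref{sec:WII}), La Nave's explicit local toric model of the flip produces precisely a type I pseudoelliptic glued to the $E$-component of an intermediate (pseudo)fiber $A \cup E$ on the remaining surface through a twisted pseudofiber, which matches Definition \ref{def:pseudoI} and verifies (c); iterating through chains as in Figure \ref{figure:brokenchain} builds up pseudoelliptic trees inductively. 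For a boundary $\wii$ transition, the contraction of a section component is divisorial and produces a type II pseudoelliptic attached to the rest along the twisted fibers that were the attaching fibers of the original elliptic component, verifying (a) and (b). For a $\wiii$ transition (Section \ref{sec:WIII}), by Propositions \ref{rational}--\ref{K3} and the analysis preceding Theorem \ref{thm:finitewalls}(c)--(d), a rational (pseudo)elliptic component either contracts to a point, producing a Weierstrass cuspidal (pseudo)fiber on the remaining surface, or contracts onto the $E$-component of an adjacent pseudoelliptic; in both cases the broken structure is preserved and the tree structure in (c) simplifies by removing a leaf. Crucially, Theorem \ref{Teo:appendix:giovanni} guarantees these are the only birational steps that occur in the MMP, so no exotic transformations can produce components outside the broken classification.

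The main obstacle is condition (d), minimality of all marked intermediate (pseudo)fibers, together with the structural statement (c) that type I pseudoelliptics come in trees rooted at the $E$-component of an intermediate fiber. Both hinge on the precise local description of La Nave's flip: the flipped curve together with the exceptional curve over the contracted section produces the \emph{minimal} intermediate model of the Weierstrass fiber underlying the corresponding node of the base curve, and this minimality is transported along the chain by the explicit transformations. The weight-combination rule \eqref{eq:A} for the marked coefficient on $A$ in the resulting pseudoelliptic tree follows from adjunction applied to the flipped curve together with the pushforward of markings under the section contraction, paralleling the computation in Proposition \ref{prop:adjunction}. Combining these verifications across all finitely many walls in the segment from $\calB$ to $\calA$ shows the resulting central fiber is an $\calA$-broken stable elliptic surface, completing the proof.
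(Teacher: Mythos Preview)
Your proposal is correct and takes essentially the same approach as the paper's own proof: the paper's argument is the single sentence ``Every step of the proof of Theorem \ref{thm:proper} produces a central fiber which is a broken elliptic surface,'' and what you have written is precisely a careful expansion of that sentence, tracing conditions (a)--(d) of Definition \ref{def:broken} through each wall crossing using the descriptions in Sections \ref{sec:WI}--\ref{sec:WIII} and the control on flips from Theorem \ref{Teo:appendix:giovanni}.
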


\begin{proof} Every step of the proof of Theorem \ref{thm:proper} produces a central fiber which is a broken elliptic surface. \end{proof}

\begin{cor}\label{cor:chambers} For any $\calA$ and $\calB$ within the same chamber, $\calE_{v,\calA} \cong \calE_{v,\calB}$. \end{cor}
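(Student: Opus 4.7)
The plan is to show that within a single chamber the underlying broken elliptic surface and its base curve do not change, so that simply reassigning coefficients defines mutually inverse natural transformations between the two moduli functors. By connecting $\calA$ and $\calB$ by a line segment lying entirely in the chamber and subdividing as needed, we reduce without loss of generality to the case $\calA \le \calB$.

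Given a family $\pi : (X \to C, S + F_\calB) \to T$ of $\calB$-broken stable elliptic surfaces over a normal base $T$, I would define the candidate $\calA$-family on the \emph{same} total space $X$ and base curve $C$ by replacing $F_\calB = \sum b_i F_i$ with $F_\calA = \sum a_i F_i$, keeping the same irreducible supports. The key point is to check fiberwise that $(X_t, S_t + (F_\calA)_t)$ is an $\calA$-broken stable elliptic surface: since no $\wi$ wall lies strictly between $\calA$ and $\calB$, the local classification in Section \ref{sec:local} shows that each marked (pseudo)fiber retains its Weierstrass, intermediate, or twisted type and that the slc condition is preserved; since no $\wii$ wall is crossed, Proposition \ref{prop:adjunction} together with the chamber description in Theorem \ref{thm:finitewalls} ensures that $(C, D_\calA)$ remains an $\calA$-pointed stable curve and no section component needs to be contracted; since no $\wiii$ wall is crossed, no pseudoelliptic component gets contracted by the $\calA$-log canonical linear series (using Proposition \ref{rational} and Remark \ref{rem:contract}). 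Consequently $K_{X_t} + S_t + (F_\calA)_t$ remains ample.

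To upgrade the fiberwise statement to a statement about families, I would apply invariance of log plurigenera (Theorem \ref{thm:basechange}): for any intermediate weight $\calA \le \calA' \le \calB$ on the segment, $K_{X/T} + S + F_{\calA'}$ is $\pi$-nef and $\Q$-Cartier (the $\Q$-Cartier property being preserved because the chamber condition prevents any new singular locus from appearing in the pair), so the pushforward of its multiples is a vector bundle whose formation commutes with base change $T' \to T$. This makes the assignment $(X \to C, S + F_\calB) \mapsto (X \to C, S + F_\calA)$ functorial in $T$, producing a natural transformation $\calE_{v,\calB} \to \calE_{v,\calA}$. The reverse assignment, raising coefficients from $\calA$ back to $\calB$, is constructed by the identical argument applied to an $\calA$-family (no walls are crossed in either direction), and the two transformations are manifestly inverse to each other on objects. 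This yields the isomorphism $\calE_{v,\calA} \cong \calE_{v,\calB}$ of moduli pseudofunctors.

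The main technical hurdle is verifying that the divisor $K_{X/T} + S + F_\calA$ is still $\Q$-Cartier on the \emph{total space} of the family (not merely on fibers); this is where invariance of log plurigenera is essential, since it guarantees that the log canonical model of the family fiber by fiber glues to a flat family. All other checks reduce to the wall descriptions already established in Theorem \ref{thm:finitewalls}.
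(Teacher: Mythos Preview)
Your approach coincides with the paper's in spirit: both argue that within a chamber the underlying broken elliptic surface and the map to the base curve do not change, so that the moduli functors agree on normal bases. The paper's execution is more economical. It simply observes that the walls of types $\wi$, $\wii$, $\wiii$ are by construction precisely the loci where $K_X + S + F_\calA$ drops from ample to merely nef; hence inside a chamber $K_X + S + F_\calA$ is ample if and only if $K_X + S + F_\calB$ is, and the log canonical models coincide. This gives $\calE^m_{v,\calA,n}(T) = \calE^m_{v,\calB,n}(T)$ for every normal $T$, and the isomorphism of normalizations then follows directly from Proposition~\ref{uniquenorm}. In particular, there is no need to reduce to the ordered case $\calA \le \calB$.

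Your detour through Theorem~\ref{thm:basechange} is unnecessary and does not resolve the issue you flag. Invariance of log plurigenera is the tool for when the log canonical model \emph{changes} and one must check that forming $\Proj$ of the section ring commutes with base change; here nothing changes, the assignment is literally the identity on $(X,C,f,S)$ with relabelled coefficients, and functoriality is automatic. Moreover, Theorem~\ref{thm:basechange} already \emph{assumes} that $K_{X/B} + S + F_\calA$ is $\Q$-Cartier, so it cannot be invoked to establish that hypothesis. The $\Q$-Cartier concern you raise is legitimate, but it is handled by the local structure rather than by vanishing: marked fibers lie over smooth points of $C$ and the surface has at worst quotient singularities in their neighborhood (this is the same observation used in the proof of Lemma~\ref{lemma:separatingtypeI}), so the individual fiber components are themselves $\Q$-Cartier and one may vary their coefficients freely.
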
 

\begin{proof} The walls of type $\mathrm{W_{\mathrm{I}}}$, $\mathrm{W_{\mathrm{II}}}$ and $\mathrm{W_{\mathrm{III}}}$ describe precisely when the log canonical divisor is nef rather than ample. Within a chamber $K_X + S + F_\calA$ is ample if and only if $K_X + S + F_\calB$ is ample and the log canonical models are the same. It follows that $\calE^m_{v,\calA,n}(T) = \calE^m_{v,\calB,n}(T)$ for any normal base $T$ and so $\calE_{v,\calA} \cong \calE_{v,\calB}$ by Proposition \ref{uniquenorm}. \end{proof}

\section{Reduction morphisms}\label{sec:reduction}

We begin by reviewing the notion of reduction morphisms present in the work of \cite{has}.

\subsection{Hassett's moduli space}\label{sec:hassettreduction}
Recall the moduli spaces $\overline{\calM}_{g, \calA}$, parametrizing genus $g$ curves with $r$ marked points weighted by a weight vector $\calA = (a_1, \dots, a_r)$ were defined in \cite{has}. Hassett studied what happened as one lowers the weight vector $\calA$. Namely, the following theorem guarantees the existence of reduction morphisms on the level of moduli spaces. 

\begin{theorem}\cite[Theorem 4.1]{has}\label{hastheorem} Fix $g$ and $n$ and let $\calA = (a_1, \dots, a_r)$ and $\calB = (b_1, \dots, b_r)$ two collections of admissible weights and suppose that $\calA \leq \calB$. Then there exists a natural birational reduction morphism
$$\overline{\calM}_{g,\calB} \to \overline{\calM}_{g, \calA}.$$ Given an element $(C, p_1, \dots, p_r) \in \overline{\calM}_{g, \calB}$, the image in $\overline{\calM}_{g, \calA}$ is obtained by 
collapsing components of $C$ along which $K_C + a_1p_1 + \dots + a_rp_r$ fails to be ample. \end{theorem}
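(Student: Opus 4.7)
The plan is to construct the reduction morphism first pointwise, then show that it varies in families, following the standard template for log minimal model constructions. The main technical input is an invariance of log plurigenera result for families of weighted nodal curves, analogous to (but much simpler than) Theorem \ref{thm:basechange}.

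First I would handle the pointwise construction. Given $(C, p_1 + \ldots + p_r) \in \overline{\calM}_{g,\calB}$, set $D_\calA := \sum a_i p_i$ and $D_\calB := \sum b_i p_i$. Since $\calA \leq \calB$, lowering coefficients cannot introduce new violations of the condition $\sum_{p_i = q} a_i \leq 1$, so $(C, D_\calA)$ is an slc pair. The divisor $K_C + D_\calA$ is a $\Q$-Cartier divisor on a nodal curve, and $K_C + D_\calB$ is ample by $\calB$-stability. By Kodaira's lemma applied componentwise, $K_C + D_\calA$ has nonnegative degree on every irreducible component, with degree zero exactly on those components where $\calA$-stability fails. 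One then forms the log canonical model
\[
C_\calA := \Proj_k \bigoplus_{m \geq 0} H^0\!\big(C, \calO_C(m(K_C + D_\calA))\big),
\]
which contracts precisely the components on which $K_C + D_\calA$ has degree zero, and the pushforward of the marked divisor produces an $\calA$-stable pointed curve. An elementary check on each component, together with the fact that collisions only improve upon lowering weights, shows $(C_\calA, (D_\calA)_{C_\calA})$ is indeed an $\calA$-stable curve.

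Next I would extend this to families. Let $\pi : (\mathscr{C}, \sigma_1 + \ldots + \sigma_r) \to B$ be a family of $\calB$-stable curves. I would prove the key vanishing
\[
H^1\!\big(C_b, \calO_{C_b}(m(K_{C_b} + (D_\calA)_b))\big) = 0
\]
for all $b \in B$ and $m \geq 2$ divisible enough. On components where $K_{C_b} + (D_\calA)_b$ has positive degree this is essentially degree considerations plus Serre duality; on components contracted by the log canonical map (where the degree is zero), the line bundle restricts to the trivial bundle on a tree of $\PP^1$'s (since such a component is rational, being contracted by $|K+D_\calA|$ forces genus zero once one checks connectivity of the exceptional locus) so the cohomology is that of $\calO_{\PP^1}$ on each component, which vanishes in degree one. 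Cohomology and base change then imply that $\pi_* \calO_{\mathscr{C}}(m(K_{\mathscr{C}/B} + \sum a_i \sigma_i))$ is locally free and commutes with base change. Forming the relative Proj
\[
\mathscr{C}_\calA := \Proj_B \bigoplus_{m \geq 0} \pi_* \calO_\mathscr{C}\!\big(m(K_{\mathscr{C}/B} + \textstyle\sum a_i \sigma_i)\big) \to B
\]
produces the desired family of $\calA$-stable curves, and the universal property of $\overline{\calM}_{g,\calA}$ gives the morphism $\overline{\calM}_{g,\calB} \to \overline{\calM}_{g,\calA}$.

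The map is birational because over the common open locus where the same weight vector makes both $K + D_\calA$ and $K + D_\calB$ ample (e.g., the smooth locus of $\overline{\calM}_{g,\calB}$ with all points distinct), the contraction is trivial and the morphism is the identity. The description of the image as obtained by collapsing components on which $K_C + D_\calA$ fails to be ample is then immediate from the construction via $\Proj$.

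The principal obstacle is the cohomology vanishing that underlies cohomology-and-base-change; in the curve setting this is quite tractable, but it is the same structural point that becomes the centerpiece of this paper (Theorem \ref{thm:vanishing}) in the surface case, where one has to contend with slc singularities, pseudoelliptic contractions, and the delicate interplay between the section and the marked fibers.
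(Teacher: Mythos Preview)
This theorem is cited from Hassett and is not proved in the present paper; it appears in Section~\ref{sec:hassettreduction} purely as background before the paper establishes its own analogue, Theorem~\ref{thm:reduction}. So there is no in-paper proof to compare against directly, but the proof of Theorem~\ref{thm:reduction} is explicitly ``modeled off of the proof of \cite[Theorem~4.1]{has}'' and serves as the natural benchmark.

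Your overall architecture---take the relative $\Proj$ of the pluri-log-canonical section ring and invoke a vanishing theorem to get compatibility with base change---is exactly right and matches both Hassett's argument and the paper's proof of Theorem~\ref{thm:reduction}. But there is a genuine gap in the execution. The assertion that $K_C + D_\calA$ has nonnegative degree on every irreducible component is false, and Kodaira's lemma does not give it: writing $K_C + D_\calA = (K_C + D_\calB) - (D_\calB - D_\calA)$ expresses it as ample \emph{minus} effective, which says nothing about positivity. Concretely, let $C_1 \cong \PP^1$ be a rational tail attached at a single node and carrying marked points with $\sum_{p_i \in C_1} b_i > 1$ (so the tail is $\calB$-stable) but $\sum_{p_i \in C_1} a_i < 1$; then $\deg(K_C + D_\calA)|_{C_1} = -1 + \sum a_i < 0$. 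Your $H^1$ argument only treats the positive and zero degree cases, and on such a tail $h^1\big(\PP^1, \calO(m(-1+\sum a_i))\big)$ grows linearly in $m$, so the vanishing fails and cohomology-and-base-change cannot be invoked as stated.

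The standard repair, and the one the paper uses verbatim in the proof of Theorem~\ref{thm:reduction}, is to interpolate: set $\calA(t) = (1-t)\calA + t\calB$, list the finitely many wall values $0 = t_0 < t_1 < \cdots < t_k = 1$, and factor the reduction as a composition $\rho_{\calA(t_0),\calA(t_1)} \circ \cdots \circ \rho_{\calA(t_{k-1}),\calA(t_k)}$. At each single step $K_C + D_{\calA(t_j)}$ is genuinely nef on the $\calA(t_{j+1})$-stable model (degree zero exactly on the components becoming unstable at that wall), and then your $\Proj$-plus-vanishing argument goes through cleanly. Once you insert this reduction, the rest of your outline is correct.
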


We will construct analagous reduction morphisms on the moduli spaces $\calE_{v, \calA}$ and their universal families which are compatible with the reduction morphisms of Hassett in the following way. The image curve $(C, D_\calA)$ (recall Definition \ref{def:DA}) is naturally an $\calA$-weighted curve in the sense of Hassett. We obtain a natural forgetful morphism from ${\calE}_{v,\calA} \to \overline{\calM}_{g,\calA}$ for all $0 \leq a \leq 1$ (see Corollary \ref{cor:forget}) and the reduction morphisms (Theorem \ref{thm:reduction}) will commute with Hassett's reduction morphisms above.  

\subsection{Preliminary results}

Let $(f : X \to C, S + F_\calA)$ be an $\calA$-broken elliptic surface. Denote by $D_\calA$ the weighted divisor on $C$ corresponding to the weighted marked fibers of $f: X \to C$ (Definition \ref{def:DA}). Then $(C, D_\calA)$ is a weighted pointed nodal curve in the sense of Hassett.

\begin{lemma}\label{basechangeforC} Let $\xymatrix{(X, S + F_{\calB}) \ar[r]^(.75){f} & C \ar[r]^{q} & B}$ be a flat family of $\calB$-stable elliptic surfaces over a base $B$. Denoting the composition $p = q \circ f$, then the formation of $p_*(f^*\omega_ q(D_\calA)^{[m]})$ commutes with base change for any $\calA \le \calB$ and $m \geq 1$.  \end{lemma}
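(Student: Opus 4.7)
The plan is to use the projection formula to reduce the statement to a base change assertion for the family of weighted pointed nodal curves $(C, D_\calA) \to B$, and then to invoke Hassett's base change result that underlies his construction of reduction morphisms.

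First, since $(X, S+F_\calB)$ is slc and $F_\calB - F_\calA$ is an effective $\Q$-Cartier divisor, the pair $(X, S+F_\calA)$ is also slc by \cite[2.35]{km}. In particular, $K_X + S + F_\calA$ is $\Q$-Cartier, so the divisor $D_\calA$ defined by $\sigma^*(K_X + S + F_\calA) = K_C + D_\calA$ makes sense and varies in a flat family over $B$; by Remark \ref{rem:DA} it is just the $\Q$-divisor $\sum a_i p_i$ on the sections $p_i \subset C$ coming from the marked fibers of the family. Moreover, since every fiber $f_b : X_b \to C_b$ is a broken elliptic surface and in particular has geometrically connected fibers, $(f_b)_*\calO_{X_b} = \calO_{C_b}$; the constancy of $h^0$ together with cohomology and base change then gives $f_*\calO_X = \calO_C$ with formation compatible with arbitrary base change.

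The projection formula now yields
\[
p_*\bigl(f^*\omega_q(D_\calA)^{[m]}\bigr) \;=\; q_*\bigl(\omega_q(D_\calA)^{[m]} \otimes f_*\calO_X\bigr) \;=\; q_*\omega_q(D_\calA)^{[m]},
\]
so it suffices to show that the right hand side is a vector bundle on $B$ whose formation commutes with base change. By Corollary \ref{cor:stablecurve} applied fiberwise, $(C, D_\calB) \to B$ is a flat family of $\calB$-weighted Hassett stable pointed curves, and $\calA \le \calB$ by hypothesis. The compatibility of $q_*\omega_q(D_\calA)^{[m]}$ with base change in this situation is precisely the technical statement underlying Hassett's construction of the reduction morphism $\overline{\calM}_{g,\calB} \to \overline{\calM}_{g,\calA}$ in Theorem \ref{hastheorem}, and so the result follows.

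The main obstacle is really just the bookkeeping: one must confirm that $f_*\calO_X = \calO_C$ persists in families (which is immediate from the connectedness of the fibers of our broken elliptic surfaces and proper base change) so that the projection formula cleanly reduces us to the curve-level assertion. Once this reduction is in place, the essential input---base change for $q_*\omega_q(D_\calA)^{[m]}$ along a family of Hassett stable curves with reduced weights---is supplied by Hassett's work, in parallel to how Theorem \ref{thm:basechange} handles the corresponding statement on the total space.
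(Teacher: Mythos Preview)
Your proof is correct and follows essentially the same approach as the paper: reduce via the projection formula (the paper uses Lemma~\ref{pushpull}, which is the same statement $f_*f^*\mathcal{F}=\mathcal{F}$ for $C$ seminormal and $f$ proper with connected fibers) to the identity $p_*(f^*\omega_q(D_\calA)^{[m]}) = q_*\omega_q(D_\calA)^{[m]}$, and then cite Hassett's base change result (Proposition~3.3 of \cite{has}). The preliminary discussion of why $(X,S+F_\calA)$ is slc is not needed here, and you could streamline by citing Lemma~\ref{pushpull} directly rather than rederiving $f_*\calO_X=\calO_C$.
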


\begin{proof} By Lemma \ref{pushpull}, $p_*(f^*\omega_q(D_\calB)^{[m]}) = q_*f_*f^*\omega_q(D_\calB)^{[m]} = q_*\omega_q(D_\calB)^{[m]}$, and the latter commutes with base change by Proposition 3.3 of \cite{has}.  \end{proof}

First, we show the base curve of our weighted elliptic surface pairs are weighted stable curves in the sense of Hassett, so we can use these spaces to gain understanding of $\calE_{v, \calA}$. 

\begin{cor}\label{cor:forget} There is a natural forgetful morphism $\calE_{v,\calA} \to \overline{\calM}_{g,\calA}$ given by sending a family of $\calA$-broken stable elliptic surfaces $p : (f : X \to C,  S + F_\calA) \to B$ to the family of $\calA$-weighted stable curves $q: (C, D_\calA) \to B$. \end{cor}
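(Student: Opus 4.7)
The plan is to apply Proposition \ref{normfunctor} from Appendix \ref{appendix}, which produces a morphism between normalizations of algebraic stacks from a natural transformation defined only on the full subcategory of normal base schemes. Since $\overline{\calM}_{g,\calA}$ is smooth (hence equal to its own normalization) and $\calE_{v,\calA}$ is by definition (Definition \ref{def:eva}) the normalization of the stack built from $\calE^m_{v,\calA,n}$, it will suffice to exhibit a natural transformation on normal bases.

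Given a family $p : (f : X \to C, S + F_\calA) \to B$ over a normal base $B$, I would construct the associated Hassett family as follows. The composition $q : C \to B$ is already a flat family of connected nodal curves by Definition \ref{def:ellfunctor}. Each irreducible marked fiber $F_i$ is contracted by $f$, and since the section $S$ meets $F_i$ transversely at one point in the smooth locus of the fiber, the scheme-theoretic image $D_i := f(F_i)$ is a section of $q$ disjoint from the nodes; equivalently, one may define $D_\calA = \sum a_i D_i$ intrinsically via the adjunction formula $\sigma^*(K_X + S + F_\calA) = K_C + D_\calA$ (Definition \ref{def:DA} and Remark \ref{rem:DA}), where $\sigma : C \cong S \hookrightarrow X$.

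To verify that $(C, D_\calA) \to B$ is a family of $\calA$-stable pointed curves in Hassett's sense, I would argue fiberwise: for each geometric point $b \in B$, the restriction $(X_b, S_b + (F_\calA)_b)$ is an $\calA$-broken stable elliptic surface, so $(K_{X_b} + S_b + (F_\calA)_b) \cdot S_{\alpha,b} > 0$ for every component $S_\alpha$ of the section. By Proposition \ref{prop:adjunction} this equals $\deg\bigl(\omega_{C_b}((D_\calA)_b)|_{C_{\alpha,b}}\bigr)$, so Corollary \ref{cor:stablecurve} gives Hassett stability of $(C_b, (D_\calA)_b)$. Compatibility with base change $B' \to B$ among normal schemes follows because flat pullback commutes with taking sections, and the family of Hassett stable curves so obtained is compatible with the construction of $q_*\omega_q(D_\calA)^{[m]}$ by Lemma \ref{basechangeforC}.

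The main potential obstacle is confirming that the divisor $D_\calA$ is globally well-defined as a Cartier divisor on $C$, not merely fiberwise; this is ensured by the fact that $S + F_\calA$ is $\Q$-Cartier and that $f$ is proper and flat, so the images $D_i = f(F_i)$ are honest sections of $q$ with the correct multiplicities. Once these points are checked, Proposition \ref{normfunctor} promotes the natural transformation on normal bases to a morphism $\calE_{v,\calA} \to \overline{\calM}_{g,\calA}$ of algebraic stacks.
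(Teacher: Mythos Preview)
Your proposal is correct and follows essentially the same approach as the paper, which invokes Lemma \ref{basechangeforC} for base change compatibility and Corollary \ref{cor:stablecurve} for fiberwise Hassett stability. You make explicit the appeal to the normalization machinery of Appendix \ref{appendix} (Proposition \ref{normfunctor}) that the paper leaves implicit; note that since $\overline{\calM}_{g,\calA}$ is smooth and hence already equal to its own normalization, the dominance hypothesis of that proposition is not actually needed to produce the morphism, as your remark anticipates.
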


\begin{proof} By Lemma \ref{basechangeforC}, the formation of $p_*(f^*\omega_q(D_\calA)^{[m]}) = q_*\omega_q(D_\calA)^{[m]}$ commutes with base change. Therefore it suffices to check that $(C_b, (D_\calA)_b)$ is an $\calA$-stable curve for each $b \in B$ and this is Corollary \ref{cor:stablecurve}.  \end{proof}

\subsection{Reduction morphisms} 
We are now ready to state and prove our main theorem on reduction morphisms for moduli of elliptic surfaces analagous to \cite[Theorem 4.1]{has}.

\begin{theorem}\label{thm:reduction} Let $\calA$ and $\calB$ be rational tuples such that $\calA \le \calB$. Suppose that that $\calA(t)$ never lies on a Type $W_{\mathrm{II}}$ wall for $t > 0$ (see Remark \ref{rem:A(t)}). Then there exists a reduction morphisms $\rho_{\calA,\calB} : \calE_{v,\calB} \to \calE_{v,\calA}$. If we further suppose that $\lfloor \calA \rfloor = \lfloor \calB \rfloor$, then there exists a compatible $\widetilde{\rho}_{\calA,\calB} : \calU_{v,\calB} \to \calU_{v,\calA}$ making the following diagram commute:
$$
\xymatrix{\calU_{v,\calB} \ar[r]^{\widetilde{\rho}_{\calA,\calB}} \ar[d] & \calU_{v,\calA} \ar[d] \\ \calE_{v,\calB} \ar[r]^{\rho_{\calA,\calB}} & \calE_{v,\calA}}.
$$
All of the above reduction morphisms commute via the forgetful morphism of Corollary \ref{cor:forget} with the reduction morphisms for Hassett space. 
\end{theorem}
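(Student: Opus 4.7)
\medskip

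\noindent\emph{Proposal.} The plan is to construct the reduction morphism by applying the relative $\mathrm{Proj}$ of the log canonical algebra fiber by fiber, using invariance of log plurigenera (Theorem \ref{thm:basechange}) to guarantee that the construction is functorial in families. By Propositions \ref{uniquenorm} and \ref{normfunctor} it suffices to construct $\rho_{\calA,\calB}$ (and $\widetilde{\rho}_{\calA,\calB}$) on the level of families over normal bases, which is the setting in which we have Theorem \ref{thm:basechange} available.

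First I would construct $\rho_{\calA,\calB}$ on moduli. Let $p : (f : X \to C, S + F_\calB) \to B$ be a family of $\calB$-broken stable elliptic surfaces over a normal base $B$. Set
$$
\calR_\calA \;:=\; \bigoplus_{m \geq 0} p_*\calO_X\!\left(m(K_{X/B} + S + F_\calA)\right).
$$
Since we do not cross a $\wii$ wall for $t > 0$, the divisor $K_{X/B} + S + F_\calA$ is $p$-nef and $\Q$-Cartier, so Theorem \ref{thm:basechange} applies: each graded piece is a vector bundle on $B$ whose formation commutes with base change, and $\calR_\calA$ is a finitely generated sheaf of $\calO_B$-algebras by finite generation of the relative log canonical ring for slc pairs. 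Define $X'_\calA := \mathrm{Proj}_B \calR_\calA$, with section $S'_\calA$ and boundary $(F_\calA)'$ obtained as the pushforwards of $S$ and $F_\calA$ along $X \dashrightarrow X'_\calA$. The base curve is obtained as the Hassett reduction $q : (C'_\calA, D_\calA) \to B$ of the family $(C, D_\calB) \to B$ coming from Corollary \ref{cor:forget}; by Lemma \ref{basechangeforC} this also commutes with base change. A fiberwise application of the stable reduction algorithm of Section \ref{sec:stablereduction}, together with Lemma \ref{factoring}, supplies a morphism $f'_\calA : X'_\calA \to C'_\calA$ over $B$, and every geometric fiber is an $\calA$-broken stable elliptic surface by Theorem \ref{thm:boundary} and compatibility with base change. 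This defines $\rho_{\calA,\calB}$ on families, which by functoriality of $\mathrm{Proj}$ assembles into the desired morphism of stacks.

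Next I would construct $\widetilde{\rho}_{\calA,\calB}$ under the floor hypothesis. The condition $\lfloor \calA\rfloor = \lfloor \calB\rfloor$ ensures that no coefficient crosses the boundary wall $a_i = 1$, so along the segment $\calA(t)$ we never transition from twisted to intermediate (which is the only wall-crossing that would require a \emph{blowup} rather than a contraction on the universal family). Combined with the hypothesis that no $\wii$ walls are crossed (which would require a flip of the total space as in Section \ref{sec:WII}), the wall-crossing analysis of Section \ref{sec:stablereduction} shows that along $\calA(t)$ the total space undergoes only $\wi$ and $\wiii$ divisorial contractions. Hence the natural birational map $X \dashrightarrow X'_\calA$ is in fact a morphism, and indeed equals the log canonical morphism defined by $|m(K_{X/B}+S+F_\calA)|$ for $m$ divisible enough. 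Pulling back to the universal family over $\calE_{v,\calB}$ produces $\widetilde{\rho}_{\calA,\calB} : \calU_{v,\calB} \to \calU_{v,\calA}$, and the square commutes by construction.

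Finally, compatibility with Hassett's reduction morphism $\overline{\calM}_{g,\calB} \to \overline{\calM}_{g,\calA}$ is built in: the forgetful morphism of Corollary \ref{cor:forget} sends a family to its base curve $(C, D_\calA)$, and Hassett's reduction is likewise given by $\mathrm{Proj}_B$ of $\bigoplus_m q_*\omega_q(D_\calA)^{[m]}$, which by Lemma \ref{basechangeforC} is exactly the base curve of our constructed family $X'_\calA \to C'_\calA \to B$. The main obstacle in this program is verifying that the relative $\mathrm{Proj}$ construction actually yields a family of $\calA$-broken stable elliptic surfaces in the sense of Definition \ref{def:broken} (and not merely a family of stable pairs); this is precisely where the explicit description of wall crossings in Section \ref{sec:stablereduction}, the factoring Lemma \ref{factoring}, and Theorem \ref{Teo:appendix:giovanni} are needed to rule out unexpected birational transformations.
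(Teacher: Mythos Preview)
Your proposal follows the same overall strategy as the paper---construct the reduction via relative $\Proj$ and use invariance of log plurigenera for base change compatibility---but there is a genuine gap in the justification of nefness. You assert that ``since we do not cross a $\wii$ wall for $t > 0$, the divisor $K_{X/B} + S + F_\calA$ is $p$-nef and $\Q$-Cartier.'' This is not true in general: if the segment $\calA(t)$ crosses a $\wi$ wall (say an intermediate-to-Weierstrass transition at threshold $a_0$), then on the $\calB$-model $X$ with intermediate fiber $A \cup E$ one has $(K_X + S + a_0A + E).E = 0$, and hence $(K_X + S + aA + E).E = (a - a_0)(A.E) < 0$ once $a < a_0$. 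Thus $K_X + S + F_\calA$ fails to be nef on the $\calB$-model and Theorem \ref{thm:basechange} does not apply directly. The same obstruction arises across $\wiii$ walls.

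The paper's remedy is to factor the reduction one wall at a time: there are finitely many $t_0 = 0 < t_1 < \cdots < t_k = 1$ with $\calA(t_j)$ on walls, and one sets $\rho_{\calA,\calB} = \rho_{\calA(t_0),\calA(t_1)} \circ \cdots \circ \rho_{\calA(t_{k-1}),\calA(t_k)}$. Each adjacent pair $\calA(t_j) \le \calA(t_{j+1})$ again satisfies the hypotheses of the theorem, and now $K_X + S + F_{\calA(t_j)}$ is genuinely nef on the $\calA(t_{j+1})$-model (as a limit of the ample divisors $K_X + S + F_{\calA(t)}$ for $t > t_j$), so your $\Proj$ construction and the appeal to Theorem \ref{thm:basechange} go through. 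With this factorization inserted, your argument matches the paper's. One minor point: to invoke Proposition \ref{normfunctor} you also need to check that the induced map on points is dominant on each irreducible component, which the paper verifies by observing it is dominant on the locus of irreducible elliptic surfaces.
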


\begin{remark} The condition $\lfloor \calA \rfloor = \lfloor \calB \rfloor$ just means that $a_i = 1$ if and only if $b_i = 1$. We consider the case when $a_i < b_i = 1$ in Proposition \ref{prop:1-e}.  
\end{remark}

 \begin{proof} The proof that $\rho_{\calA,\calB}$ is a morphism is modeled off of the proof of \cite[Theorem 4.1]{has}. Let $\calA = (a_1, \dots, a_r)$ and $\calB = (b_1, \dots, b_r)$ be so that $\calA \le \calB$. Denote $\calA(t) := (1 - t)\calA + t \calB$, where $t \in \Q$ and $0 \le t \le 1$.  
 
With notation from the proof of Theorem \ref{thm:stack}, we define a natural transformation of pseudofunctors 
$$
{\calE}^m_{v,\calB,n}(B) \to \calE^m_{v,\calA,n}(B)
$$
for a normal base scheme $B$ that is compatible with base change. This will lead to a morphism of moduli spaces $\rho_{\calA,\calB}: \calE_{v,\calB} \to \calE_{v,\calA}$ by Proposition \ref{normfunctor}. There are necessarily finitely many $t_0 = 0, t_1, \ldots, t_{k - 1}, t_k = 1$ so that $\calA(t_j)$ lie on walls and for any $t \neq t_j$, weights $\calA(t)$  are not on any wall. It is clear that the weight vectors $\calA(t_j) \le \calA(t_{j + 1})$ satisfy the hypothesis of the theorem so it suffices to construct reduction morphisms
$
\rho_{\calA(t_{j}), \calA(t_{j+1})}$
so that
$$
\rho_{\calA, \calB} = \rho_{\calA(t_0), \calA(t_1)} \circ \ldots \circ \rho_{\calA(t_{k-1}), \calA(t_{k})}
$$

Therefore we may assume that $\calA(t)$ does not lie on a wall for any $t \neq 0,1$, and that $\calA$ is either in a chamber or on a wall of type $\mathrm{W_{\mathrm{I}}}$ or $\mathrm{W_{\mathrm{III}}}$. Writing $\calA(t) = (a_1(t), \dots, a_r(t))$ this means explicitly that for all $0 < t < 1$, 
\begin{enumerate}[label = \roman*)]
\item $a_j(t) \neq\frac{1}{6}, \frac{1}{4}, \frac{1}{3}, \frac{1}{2},\frac{2}{3}, \frac{3}{4}, \frac{5}{6}$ (there are no type $\mathrm{W_{\mathrm{I}}}$ walls);
\item there is no subset $\{i_1, .., i_k \} \subset \{1, \dots, r\}$ such that $a_{\mathrm{I}_1}(t) + \dots + a_{i_k}(t) =1$ (there are no type $\mathrm{W_{\mathrm{II}}}$ walls); 
\item $K_X + S + F_{\calA(t)}$ is big on every irreducible component of every $\calB$-broken stable elliptic surface $(X \to C, S + F_\calB)$ (there are no type $\mathrm{W_{\mathrm{III}}}$ walls).
\end{enumerate}

Let $\pi: (f:X \to C, S + F_{\calB}) \to B$ be a family of $\calB$-broken elliptic surfaces over a normal base $B$. By our above assumption, $K_X + S + F_{\calA(t)}$ is ample for $t > 0$ and $K_X + S + F_{\calA(0)} = K_{X/B} + S + F_\calA$ is $\pi$-nef and $\Q$-Cartier. By Proposition \ref{prop:abundance}, $K_{X/B} + S + F_{\calA}$ is $\pi$-semiample.  Then we can write
\begin{align*}
C' &= \Proj_B\left(\bigoplus_m \pi_*f^*\omega_{C/B}(mnD_\calA)\right) \\
X' &= \Proj_B\left(\bigoplus_m \pi_*\calO_X(mn(K_{X/B} + S + F_\calA))\right)
\end{align*}
where $n$ is a large enough integer such that $nD_\calA$ and $n(K_{X/B} + S + F_\calA)$ are Cartier.

There are canonical maps $C \to C'$ and $X \dashrightarrow X'$. It follows from the basechange results Theorem \ref{thm:basechange} and Lemma \ref{basechangeforC} that $X'$ and $C'$ are families of $\calA$-broken stable elliptic surfaces and $\calA$-weighted pointed stable curves respectively. By Lemma \ref{factoring}, there is a map $f' : X' \to C'$ making $\pi' :(f' : X' \to C' , S + F_\calB) \to B$ a a family of $\calA$-broken stable elliptic surfaces over $B$. Since the construction of $\pi_*f^*\omega_{C/B}(mnD_\calA)$ and $\pi_*\calO_X(mn(K_{X/B} + S + F_\calA)$ commute with basechange by Theorem \ref{thm:basechange} and Lemma \ref{basechangeforC}, it follows that the construction of the family of $\calA$-broken stable elliptic surfaces is functorial in $B$. Furthermore, it is clear that the map on closed points, $\calE^m_{v, \calB, n}(k) \to \calE^m_{v,\calA,n}(k)$ is dominant on each component by observing that it is dominant on the locus of irreducible elliptic surfaces. This induces the required morphism
$$
\rho_{\calA,\calB} : \calE_{v,\calB} \to \calE_{v,\calA}.
$$

Next, we show existence of the morphism $\widetilde{\rho}_{\calA,\calB}$ on the level of universal families under the assumption $a_i = 1$ if and only if $b_i = 1$. In this case, there are no type $W_\mathrm{I}$ transformations from twisted to intermediate fibers so the rational map $X \dashrightarrow X'$ is actually a morphism $X \to X'$. The universal family $\calU_{v,\calB} \to \calE_{v,\calB}$ is itself a family of $\mc{B}$-weighted stable elliptic surfaces. Therefore applying the above construction gives a family $\mathcal Y \to \calE_{v,\calB}$ of $\calA$-stable elliptic surfaces with a morphism $\calU_{v,\calB} \to \mathcal Y$ over $\calE_{v,\calB}$. This induces the morphism $\rho_{\calA,\calB}$ so that
$$
\mc{Y} = \rho_{\calA,\calB}^*\ \calU_{v,\calA}.
$$
The composition $\calU_{v,\calB} \to \mc{Y} \to \calU_{v,\calA}$ gives the required $\widetilde{\rho}_{\calA, \calB}$.

The fact that these morphisms commute with the reduction morphisms for Hassett space is immediate since the forgetful map to the base curve is a morphism, and the family of base curves is stabalized by the linear series $\omega_{C/B}(nD_\calA)$ by Proposition \ref{prop:adjunction} and Lemma \ref{basechangeforC}. \end{proof}

\begin{cor}\label{cor:reduction} The reduction morphisms $\rho_{\calA,\calB}$ are surjective. \end{cor}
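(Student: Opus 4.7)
The plan is to exploit that $\rho_{\calA,\calB}$ is a morphism between proper Deligne--Mumford stacks, so its image is closed, and then show the image contains a dense open substack.

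First I would note that since both $\calE_{v,\calA}$ and $\calE_{v,\calB}$ are proper by Theorem \ref{thm:proper}, the image of $\rho_{\calA,\calB}$ is a closed substack of $\calE_{v,\calA}$. Therefore to prove surjectivity it suffices to show this image is dense, and for this it is enough to show every point of the dense open substack $\calE^\circ \subset \calE_{v,\calA}$ lies in the image.

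Fix a point $[Y] = [(f \colon X \to C, S + F_\calA)] \in \calE^\circ$, so that $X$ is an irreducible smooth minimal elliptic surface over a smooth base curve $C$ with Weierstrass singular fibers, and the underlying pair is already a relative log canonical model. I would construct a preimage as follows: consider the pair $(X, S + F_\calB)$, supported on the same marked fibers but with the larger weights $\calB$. Take its relative log canonical model over $C$ using the classification in Section \ref{sec:local} (which may replace Weierstrass fibers by intermediate or twisted fibers when the $b_i$ are large enough), and then take the overall log canonical model using abundance (Proposition \ref{prop:abundance}) together with the MMP computations of Section \ref{sec:pseudocontraction}. This produces a $\calB$-broken stable elliptic surface $(f' \colon X' \to C', S' + F'_\calB)$, defining a point $[Y'] \in \calE_{v,\calB}$; it lies in the main component because $(f \colon X \to C, S)$ deforms as a point of $\calE^\circ$, and its generic deformation in $\calE^\circ$ has a $\calB$-stable model automatically in $\calE^m_{v,\calB}$, so the limit $[Y']$ does as well.

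Finally I would verify that $\rho_{\calA,\calB}([Y']) = [Y]$. By the construction of $\rho_{\calA,\calB}$ in the proof of Theorem \ref{thm:reduction}, its value on $[Y']$ is obtained as the log canonical model of $(X', S' + F'_\calA)$ fibered over the $\calA$-stable model of $C'$. Since $X \dashrightarrow X'$ factors through MMP transformations over $C$ of the types classified in Sections \ref{sec:WI}--\ref{sec:WIII}, both $[Y]$ and $\rho_{\calA,\calB}([Y'])$ are log canonical models of the same birational class of pair $(X, S + F_\calA) \to C$, and the former is already a log canonical model by virtue of lying in $\calE^\circ$. By uniqueness of log canonical models they coincide, which gives the desired preimage.

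The main obstacle I anticipate is verifying that the point $[Y']$ constructed from $(X, S + F_\calB)$ actually lies in the main component $\calE^m_{v,\calB}$ rather than a spurious extra component. I expect this to reduce to a deformation-theoretic check using that any $[Y] \in \calE^\circ$ admits a one-parameter smoothing inside $\calE^\circ$ whose $\calB$-model is, by construction (Theorem \ref{thm:proper} and invariance of log plurigenera, Theorem \ref{thm:basechange}), a flat family in $\calE_{v,\calB}$ whose generic fiber is irreducible and hence lies in $\calE^\circ_{\calB} \subset \calE^m_{v,\calB}$, forcing the special fiber $[Y']$ into the main component as well.
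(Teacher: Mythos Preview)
Your approach is correct and is essentially the same as the paper's: the image of a morphism of proper stacks is closed, so surjectivity follows once the image is dense. The paper compresses this to one line by invoking the dominance already noted in the proof of Theorem~\ref{thm:reduction} (``it is clear that the map on closed points \ldots\ is dominant on each component by observing that it is dominant on the locus of irreducible elliptic surfaces''), whereas you reprove that dominance by explicitly constructing a preimage for each point of $\calE^\circ$; your anticipated obstacle about the main component is in fact a non-issue, since increasing weights on an irreducible relatively minimal elliptic surface keeps it irreducible (only fiber models change, no pseudoelliptics form), so the $\calB$-model lies in $\calE^\circ_\calB \subset \calE^m_{v,\calB}$ directly.
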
 

\begin{proof} This follows since $\rho_{\calA,\calB}$ is a dominant morphism of proper stacks. \end{proof} 

\section{Flipping walls}\label{sec:flip}

Theorem \ref{thm:reduction}, shows that there are reduction morphisms 
$$
\rho_{\calA,\calB}: \calE_{v,\calB} \to \calE_{v,\calA}
$$
whenever $\calA(t):= (1 - t)\calA +  t\calB$ never crosses a type $\mathrm{W_{\mathrm{II}}}$ wall for $t \in (0,1]$. The key point is that if $\calA(t_0)$ is a type $\mathrm{W_{\mathrm{II}}}$ wall for $t_0 \in (0,1]$ and $t_\pm := t_0 \pm \epsilon$ for $0 < \epsilon \ll 1$, then
$$
K_X + S + F_{\calA(t_-)}
$$
is \emph{not necessarily} $\Q$-Cartier where $(f: X \to C, S + F_{\calA(t_0)})$ is an $\calA(t_0)$-stable elliptic surface. Therefore it no longer makes sense to construct the $\calA(t_-)$-stable model as a $\Proj$ of the section ring. 

Rather, to construct the $\calA(t_-)$-stable model from $(X, S + F_\calA(t_0))$, we need to first perform a log resolution to make the log canonical divisor $\Q$-Cartier before running the steps of the minimal model program. Therefore, across a type $\mathrm{W_{\mathrm{II}}}$ wall, we obtain a morphism resembling a \emph{flip} (see Figure \ref{figure:3}). 

We fix some notation. Let $t_0, t_\pm$ be as above where $\calA_0 := \calA(t_0)$ is on a wall of type $\mathrm{W_{\mathrm{II}}}$ and $\calA_- := \calA(t_-) < \calA_0 < \calA(t_+) =: \calA_+$ so that $\calA_\pm$ are in the interiors of chambers. We will use $(X_0, S_0 + F_{\calA_0})$ and $(X_\pm, S_\pm + F_{\calA_\pm})$ to denote $\calA_0$-stable and $\calA_\pm$-stable elliptic surfaces respectively. 

\begin{theorem}\label{thm:increase} There exist morphisms $\tilde{\epsilon}_{\calA_0}^{-}, \epsilon_{\calA_0}^-$ making the following diagram commute:
$$
\xymatrix{\calU_{v,\calA_-} \ar[r]^{\tilde{\epsilon}_{\calA_0}^-} \ar[d] & \calU_{v,\calA_0} \ar[d] \\ \calE_{v,\calA_-} \ar[r]^{\epsilon_{\calA_0}^-} & \calE_{v,\calA_0}}.
$$
\end{theorem}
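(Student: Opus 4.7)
The strategy mirrors that of Theorem \ref{thm:reduction}: construct the morphism on normal base schemes via a $\Proj$ construction, then descend to moduli via Proposition \ref{normfunctor}. The new feature is that the direction $\calA_- \to \calA_0$ is opposite to reduction, so Theorem \ref{thm:basechange} does not apply verbatim; however, the analogous base change statement can be established directly by factoring through the log canonical contraction.

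Let $\pi : (f : X \to C, S + F_{\calA_-}) \to B$ be a family of $\calA_-$-broken stable elliptic surfaces over a normal base $B$, and set $L := K_{X/B} + S + F_{\calA_0}$. The first step is to verify that $L$ is $\pi$-nef and $\Q$-Cartier: $\Q$-Cartierness holds because the total space has quotient singularities along the flipped curves (Section 6.2 of \cite{tsm}), and nefness follows from the explicit description of the $\wii$ wall in Section \ref{sec:WII}. Indeed $K_{X/B} + S + F_{\calA_-}$ is $\pi$-ample and $F_{\calA_0} - F_{\calA_-}$ is effective, and on each fiber the only curves along which the intersection with $L$ drops to zero are the flipped $A$-curves, for which $L.A = 0$ is precisely the wall condition.

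The key technical point is fiberwise vanishing $H^i(X_b, mL|_{X_b}) = 0$ for $i > 0$ and $m \gg 0$. The plan is to deduce this by factoring through the log canonical contraction $\mu_b : X_b \to X_{0,b}$ that contracts each flipped curve $A_b$ to a point (the other small contraction of La Nave's flip, yielding the $\calA_0$-stable model $X_{0,b}$). By Proposition \ref{prop:gr}, $R^i\mu_{b,*}\calO_{X_b}(mL|_{X_b}) = 0$ for $i > 0$, since each $A_b \cong \mb{P}^1$ satisfies $A_b^2 < 0$ and $L|_{A_b}$ has degree zero. Hence $H^i(X_b, mL|_{X_b}) \cong H^i(X_{0,b}, mL_{0,b})$, and on the $\calA_0$-stable model $L_{0,b}$ is ample, so Fujino's Theorem \ref{fujinovanishing} yields the vanishing. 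By cohomology and base change, $\pi_*\calO_X(mL)$ is then a vector bundle whose formation commutes with base change.

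One then forms
$$
X' := \Proj_B\Big(\bigoplus_m \pi_*\calO_X(mL)\Big), \qquad C' := \Proj_B\Big(\bigoplus_m \pi_* f^*\omega_{C/B}(mnD_{\calA_0})\Big),
$$
obtaining a family of $\calA_0$-broken stable elliptic surfaces over $B$ together with a morphism $X \to X'$, and Lemma \ref{factoring} supplies a compatible fibration $f' : X' \to C'$. This defines a natural transformation of pseudofunctors on normal schemes, inducing $\epsilon_{\calA_0}^-$ via Proposition \ref{normfunctor}. Applying the same construction to the universal family $\calU_{v,\calA_-} \to \calE_{v,\calA_-}$ produces $\tilde{\epsilon}_{\calA_0}^-$, and commutativity of the square is immediate. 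The hard part will be the clean identification and geometric control of the exceptional locus of $X \to X'$: we expect it to consist precisely of the flipped $A$-curves from the $\wii$ wall, each rational with negative self-intersection, enabling the Grauert-Riemenschneider-type vanishing to apply. This relies on the wall description of Section \ref{sec:WII} together with Theorem \ref{Teo:appendix:giovanni} ensuring that no other birational transformations occur.
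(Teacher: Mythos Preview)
Your approach is essentially identical to the paper's: both prove fiberwise vanishing by factoring through the contraction $g : X_- \to X_0$ of the flipped $A$-curves, apply Grauert--Riemenschneider (Proposition~\ref{prop:gr}) to kill $R^ig_*$, reduce to vanishing on the $\calA_0$-stable model, and then run the $\Proj$ construction exactly as in Theorem~\ref{thm:reduction}. The one point the paper makes explicit that you leave implicit is that $\mathrm{Coeff}(A, F_{\calA_0}) = 1$ at the wall, so that $g_*^{-1}g_*(S_- + F_{\calA_0}) + \mathrm{Exc}(g) = S_- + F_{\calA_0}$ and Proposition~\ref{logcanonical} identifies $g_*\calO_{X_-}(mL)$ with $\calO_{X_0}(m(K_{X_0} + S_0 + F_{\calA_0}))$; you should state this to justify the isomorphism $H^i(X_b, mL|_{X_b}) \cong H^i(X_{0,b}, mL_{0,b})$.
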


\begin{proof}
This proof is analogous to the proof of Theorem \ref{thm:reduction}. Under these assumptions $K_{X_-} + S_- + F_{\calA_0}$ is a semiample $\Q$-Cartier divisor and the $\calA_0$-stable model is 
$$
\Proj\left(\bigoplus_{k \geq 0} H^0(X_-, \calO_X(km_0(K_{X_-} + S_- + F_{\calA_0})))\right)
$$ 
where $m_0$ is the index. Thus it suffices to prove a vanishing result analogous to Theorem \ref{thm:vanishing}. 

\begin{lemma} In this situation, $H^i(\calO_X(m(K_{X_-} + S_- + F_{\calA_0}))) = 0$ for $i > 0$ and $m$ large and divisible. 
\end{lemma}

\begin{proof} 
We consider the irreducible components of $X_-$. There are three types of components: 
\begin{enumerate}
\item[(a)] a pseudoelliptic whose section was contracted at the wall $\calA_0$;
\item[(b)] a component along which a pseudoelliptic from case $(a)$ is attached;
\item[(c)] a component not in either of the above cases.
\end{enumerate}

The pair $(X_-, S_- + F_{\calA_0})$ is slc and the linear series $|K_{X_-} + S_- + F_{\calA_0}|$ is semi-ample by Proposition \ref{prop:abundance}. It induces a morphism $g : X_- \to X_0$ which is necessarily an isomorphism on the components in cases $(a)$ and $(c)$ above. 

Suppose $X'$ is a component in case $(b)$. Then it is attached to a pseudoelliptic $Z$ in case $(a)$ along a fiber component $G$. As explained in La Nave (see Section 4.3 and Theorem 7.1.2 in \cite{ln}), the contraction of the section of a component to form $Z$ at the wall $\calA_0$ may be a log flipping contraction inside of the total space of a one parameter degeneration with central fiber $X_-$. In this case, $Z$ is a type $I$ pseudoelliptic attached along an irreducible pseudofiber $G$ to an intermediate (pseudo)fiber $G \cup A$ on $X'$ (see Figure \ref{figure:3}). The coefficient $\mathrm{Coeff}(A, F_\calA)$ given by the sum of weights of fibers on $Z$ as can be seen from La Nave's local toric model and the morphism $g : X_- \to X_0$ contracts $A$. In particular $\mathrm{Coeff}(A, F_{\calA_0}) = 1$.

\begin{figure}[h!]
	\includegraphics[scale=.75]{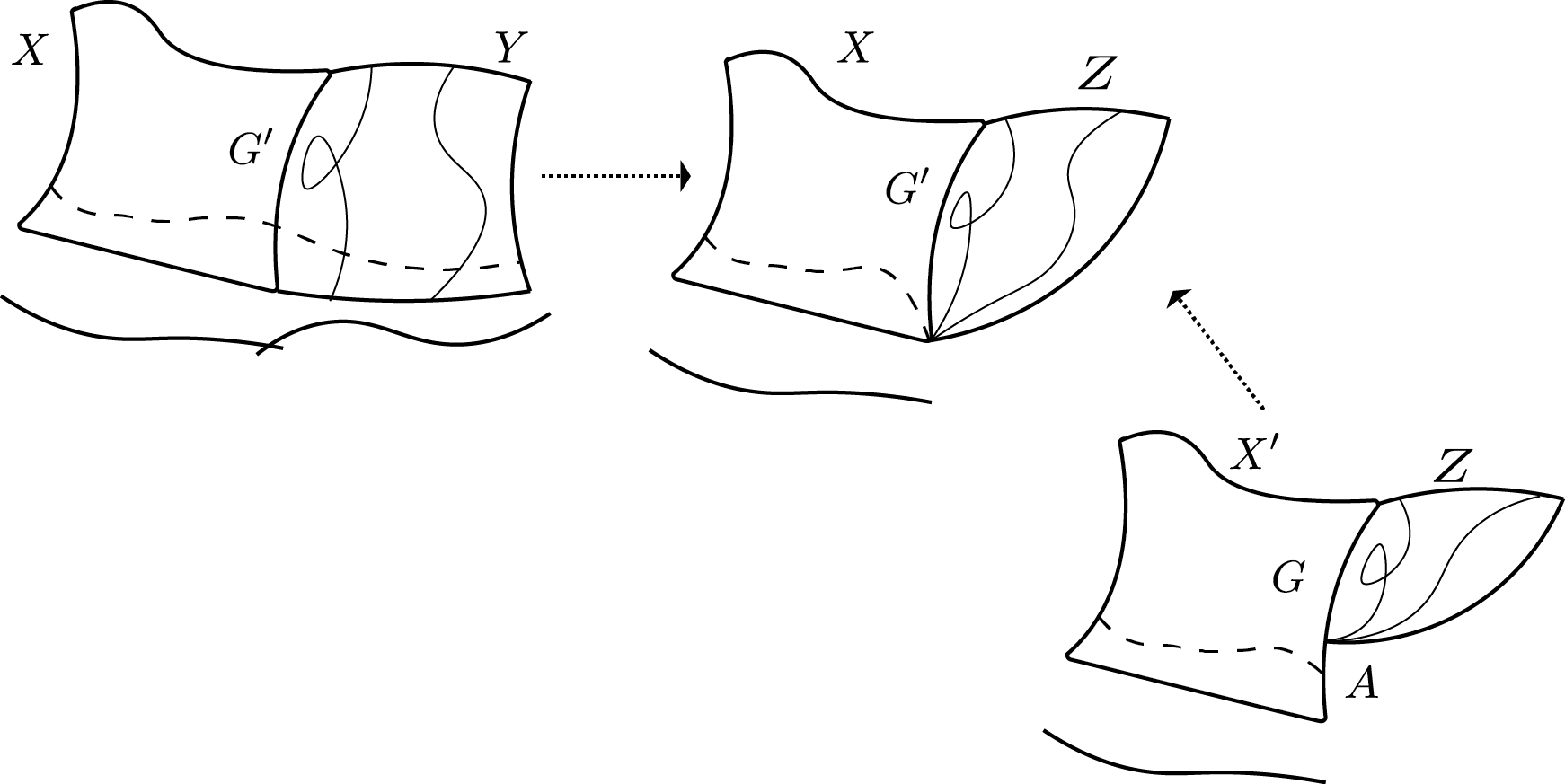}
	\caption{From left to right, the $\calA_+$, $\calA_0$ and $\calA_-$ stable models. The sum of the weights of the marked pesudofibers on $Z$ is equal to the coefficient of $A$ in $F_\calA$.  }\label{figure:3}
\end{figure} 

Thus $g : X_- \to X_0$ is precisely the contraction of these rational curves $A$ produced by La Nave's flips. Denote $S_- + F_{\calA_0} = \Delta$. Then by Proposition \ref{prop:gr}, 
$$
R^1g_*\calO_{X_-}(m(K_{X_-} + \Delta)) = 0.
$$
On the other hand, $g_*\calO_{X_-}(m(K_{X_-} + \Delta)) = \calO_{X_0}(m(K_{X_0} + g_*\Delta))$  by Proposition \ref{logcanonical}, since $g_*^{-1}g_*\Delta + \mathrm{Exc}(g) = \Delta$ as each curve $A$ appears with coefficient $1$. Therefore
$$
H^1(X_-, \calO_{X_-}(m(K_{X_-} + \Delta))) = H^1(X_0,\calO_{X_0}(m(K_{X_0} + g_*\Delta))) = 0
$$
since $(X_0, g_*\Delta) = (X_0, S_0 + F_{\calA_0})$ is the $\calA_0$-stable model.  \end{proof}

Now we can proceed as in the proof of Theorem \ref{thm:reduction}: let $\pi : (X_- \to C, S_- + F_{\calA_-}) \to B$ be an $\calA_-$-weighted stable family of elliptic surfaces over a normal base $B$. Then the construction of
$$
\Proj_{B}\left(\bigoplus_{k} \pi_*\calO_{X_-}(km_0(K_{X_-} + S_- + F_{\calA_-})\right)
$$
commutes with base change and produces a family $\pi_0 : (X_0 \to C, S_0 + F_{\calA_0})$ of $\calA_0$-stable elliptic surfaces and realizes the morphism $\epsilon_{\calA_0}^-$. Applying this construction to $B = \calE_{v, A_-}$ with the universal family yields $\tilde{\epsilon}_{\calA_0}^-$. \end{proof}

\begin{remark} Note that in the above construction, the $\calA_0$-stable family $(X_0 \to C, S_0 + F_{\calA_0})$ associated to the $\calA_-$-stable family $(X_- \to C, S_- + F_{\calA_-})$ has the same base curve $C$. This is because a marked curve is $\calA_0$-stable if and only if it is $\calA_-$-stable where $\calA_0$ is one of the walls for the space of weighted stable curves. That is, the reduction morphism $\calM_{g, \calA_0} \to \calM_{g,\calA_-}$ is a canonical isomorphism. In particular there is a commutative diagram
$$
\xymatrix{\calE_{v,\calA_-} \ar[r]^{\epsilon_{\calA_0}^-} \ar[d] & \calE^m_{v,\calA_0} \ar[d] & \calE_{v,\calA_+} \ar[d] \ar[l]_{\epsilon_{\calA_0}^+} \\ \calM_{g, \calA_-} \ar@{=}[r]& \calM_{g,\calA_0} & \calM_{g,\calA_-} \ar[l]}
$$
showing compatibility with the reduction morphisms on Hassett spaces. 

\end{remark}

Let $\tilde{\epsilon}_{\calA_0}^+ := \widetilde{\rho}_{\calA_+,\calA}$ and $\epsilon_{\calA_0}^+ := \rho_{\calA_+, \calA}$ be the reduction morphisms of the previous section. Then we have a commuting diagram
$$
\xymatrix{
\calU_{v,\calA_-} \ar[rd]_{\tilde{\epsilon}_{\calA_0}^-} \ar[dd] &  & \calU_{v,\calA_+} \ar[ld]^{\tilde{\epsilon}_{\calA_0}^+} \ar[dd] \\
& \calU_{v,\calA_0} \ar[dd] & \\
\calE_{v,\calA_-} \ar[rd]_{\epsilon_{\calA_0}^-} & & \calE_{v,\calA_+} \ar[ld]^{\epsilon_{\calA_0}^+} \\
& \calE_{v,\calA_0} &}. 
$$

We want to compare $\calA_+$-, $\calA_0$-, and $\calA_-$-stable families of elliptic surfaces over the same base $B$. To do this, it is natural to consider the fiber product
$$
\calE_{v,\calA_-} \times_{\calE_{v,\calA_0}} \calE_{v,\calA_+} =: \mathfrak{F}.
$$ 
Pulling back the universal families gives a commutative diagram 
$$
\xymatrix{
\frakU_- \ar[rd] \ar[rdd] & & \frakU_+ \ar[ld] \ar[ldd] \\
& \frakU_0 \ar[d] & \\
& \frakF & }
$$
Then a map $B \to \frakF$ is equivalent to a commutative diagram 
$$
\xymatrix{
X_- \ar[rd] \ar[rdd] & & X_+ \ar[ld] \ar[ldd] \\
& X_0 \ar[d] & \\
& B & }
$$
of compatible families $X_0, X_\pm \to B$ of $\calA_0$-stable (resp. $\calA_\pm$-) stable elliptic surfaces. 

We show that the diagram 
$$
\xymatrix{\frakU_- \ar[rd] & & \frakU_+ \ar[ld] \\ & \frakU_0 & } 
$$
is a \emph{universal flip} in the following sense: 

\begin{prop}\label{prop:flip} For any normal and irreducible base $B$ and map $B \to \frakF$ with generic point mapping to the interior of the moduli space, the induced diagram 
$$
\xymatrix{X_- \ar[rd]_{g_-} & & X_+ \ar[ld]^{g^+} \ar@{-->}[ll]^{\varphi}   \\ & X_0 &}
$$
is a $(K_{X_+} + S_+ + F_{\calA_-})$-flip of the total spaces. 
\end{prop}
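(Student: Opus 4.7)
The plan is to verify the three defining conditions of a log flip: that $g^+$ and $g^-$ are small birational morphisms over $X_0$, that $-\left(K_{X_+} + S_+ + F_{\calA_-}\right)$ is $g^+$-ample, and that $K_{X_-} + S_- + F_{\calA_-}$ is $g^-$-ample, working on the total spaces $X_\pm \to B$. Since $B$ is normal and irreducible with generic point mapping into the interior of $\frakF$, and since on the interior the three moduli stacks $\calE_{v,\calA_-}, \calE_{v,\calA_0}, \calE_{v,\calA_+}$ agree (Corollary \ref{cor:chambers}), the rational map $\varphi: X_+ \dashrightarrow X_-$ is an isomorphism over a dense open subset, and all wall-crossing behavior is concentrated over a proper closed subset of $B$. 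To analyze this locus, I would reduce to a one-parameter family by restricting along a smooth curve in $B$ meeting the wall-crossing locus transversally in a general point, and then argue the general statement follows by flatness and base change from the universal families of Theorem \ref{thm:increase} and Theorem \ref{thm:reduction}.

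In the one-parameter setting, by Theorem \ref{thm:proper} combined with Theorem \ref{Teo:appendix:giovanni} of Inchiostro's appendix, the only birational transformations occurring across a type $\mathrm{W_{\mathrm{II}}}$ wall are precisely La Nave's log flips \cite[Theorem 7.1.2]{ln}, described explicitly in Section \ref{sec:WII}. In this local toric model, $g^+$ contracts a section component $S_0$ of a pseudoelliptic inside the threefold $X_+$, while $g^-$ contracts the arithmetic genus zero component $A$ of the corresponding intermediate (pseudo)fiber inside $X_-$. Both contracted loci are curves of codimension two, so $g^\pm$ are small and $\varphi$ is an isomorphism in codimension one. Moreover La Nave's explicit model guarantees that $S_+$ (respectively $A$) is $\Q$-Cartier on $X_+$ (resp. $X_-$), so the relevant log canonical divisors are $\Q$-Cartier on the total spaces.

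For the ampleness conditions, $K_{X_-/B} + S_- + F_{\calA_-}$ is $\pi_-$-ample by $\calA_-$-stability of the family and hence $g^-$-ample. For $g^+$-anti-ampleness of $K_{X_+} + S_+ + F_{\calA_-}$, observe that $(X_+, S_+ + F_{\calA_0})$ is $\calA_0$-stable by Theorem \ref{thm:increase}, so $K_{X_+/B} + S_+ + F_{\calA_0}$ is $\pi_+$-nef with $(K_{X_+} + S_+ + F_{\calA_0}).S_0 = 0$ by Proposition \ref{prop:adjunction} (this is exactly the definition of $\calA_0$ being on the $\mathrm{W_{\mathrm{II}}}$ wall for that section component). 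Linearity of intersection numbers along the segment $\calA(t) = t\calA_0 + (1-t)\calA_-$, together with Proposition \ref{prop:adjunction} applied at $\calA_-$, then forces $(K_{X_+} + S_+ + F_{\calA_-}).S_0 < 0$ on each contracted section component, and positive on non-contracted curves in fibers. This gives $g^+$-anti-ampleness of $K_{X_+} + S_+ + F_{\calA_-}$ restricted to the exceptional locus, and combined with smallness of $g^+$ verifies the flip condition.

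The main obstacle is guaranteeing the $\Q$-Cartier property on the three-dimensional total spaces $X_\pm$, rather than just on the central fibers where the analysis of Section \ref{sec:WII} takes place; the fiberwise information is insufficient to conclude that the flipped contraction on the central fiber extends to a flip of total spaces in general. La Nave's explicit toric local model is what closes this gap, giving the needed $\Q$-Cartierness, after which the universal property of the fiber product $\mathfrak{F}$ and functoriality of the constructions in Theorems \ref{thm:reduction} and \ref{thm:increase} propagate the result from the one-parameter case to an arbitrary normal irreducible base $B$.
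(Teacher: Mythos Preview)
Your approach is essentially the same as the paper's: both arguments verify smallness by noting the maps are isomorphisms over the open locus of irreducible surfaces, reduce the ampleness check to a one-parameter base (the paper justifies this by ``relative ampleness is a fiberwise condition,'' which is the clean way to say what you are doing), and then invoke La Nave's \cite[Theorem 7.1.2]{ln} for the explicit flip on the threefold total space. Your extra intersection-theoretic reasoning via Proposition \ref{prop:adjunction} is correct but unnecessary once you are citing La Nave anyway.

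One slip to fix: you write that ``$(X_+, S_+ + F_{\calA_0})$ is $\calA_0$-stable by Theorem \ref{thm:increase}.'' This is wrong on both counts. First, $(X_+, S_+ + F_{\calA_0})$ is \emph{not} $\calA_0$-stable; the point of being on the wall is exactly that $K_{X_+} + S_+ + F_{\calA_0}$ is only nef (with $(K_{X_+} + S_+ + F_{\calA_0}).S_0 = 0$), and the $\calA_0$-stable model is $X_0$. Second, Theorem \ref{thm:increase} constructs the morphism from the $\calA_-$ side, not the $\calA_+$ side; the relevant reference for $X_+ \to X_0$ is the reduction morphism $\rho_{\calA_0,\calA_+}$ of Theorem \ref{thm:reduction}. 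Your subsequent sentence (nefness with zero intersection on $S_0$) is the correct statement, so this is a phrasing error rather than a gap in the argument.
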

\begin{proof}
Let $V \subset B$ be the open locus over which the elliptic surfaces are irreducible and let $Z = B \setminus V$. By assumption $V$ is nonempty and the morphisms $X_- \to X_0$ and $X_+ \to X_0$ are isomorphisms over $V$. Thus the exceptional locus $\mathrm{Exc}(\varphi)$ lies over $Z$. On each fiber over $Z$, the map $X_+ \to X_0$ contracts the section of a pseudoelliptic component and $X_- \to X_0$ contracts a curve in an attaching fiber. Therefore the $\mathrm{Exc}(\varphi)$ is codimension at least $2$. 

We need to show that $-(K_{X_+} + S_+ + F_{\calA_-})$ is $g_+$-ample and $K_{X_-} + \varphi_*(S_+ + F_{\calA_-})$ is $g_+$-ample. Note that $\varphi_*(S_+ + F_{\calA_-}) = S_- + F_{\calA_-}$, where by abuse of notation, we write $F_{\calA}$ for $\calA$-weighted fibers on any of the birational models. Since $g_-$ and $g_+$ are proper, relative ampleness is a fiberwise condition. Thus it suffices to check this after pulling back to a smooth curve $B' \to B$ so without loss of generality, we may take $B$ to be an irreducible smooth curve so that $V = B \setminus \{p\}$. 

In this case $X_+ \to X_0$ is the contraction of the section in a component of the central fiber $(X_+)_p$. It is then proven in \cite[Theorem 7.1.2]{ln} that $X_+ \to X_0$ is a flipping contraction induced by $K_{X_+} + S_+ + F_{\calA_-}$ with log flip $X_- \to X_0$. 
 \end{proof}

\begin{cor}\label{cor:decrease} The morphism $\epsilon_{\calA_0}^-$ is an isomorphism. \end{cor}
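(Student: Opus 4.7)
The plan is to show $\epsilon_{\calA_0}^-$ is a proper, quasi-finite, birational morphism onto a normal target and then invoke a stacky version of Zariski's Main Theorem. First, $\epsilon_{\calA_0}^-$ is a morphism between proper Deligne-Mumford stacks (Theorem \ref{thm:proper}), hence proper; and the target $\calE_{v,\calA_0}$ is normal by construction (Definition \ref{def:eva}). On the open substack parametrizing irreducible minimal relative log canonical models over smooth curves, the morphism restricts to the identity since no type $\wii$ transition occurs for irreducible surfaces on either side of the wall. Hence $\epsilon_{\calA_0}^-$ is birational on every irreducible component.

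Next I would verify that $\epsilon_{\calA_0}^-$ is quasi-finite with trivial relative automorphisms by describing the fibers over geometric points. Given a geometric point $[(X_0 \to C, S_0 + F_{\calA_0})] \in \calE_{v,\calA_0}$, any preimage is an $\calA_-$-stable surface $(X_- \to C, S_- + F_{\calA_-})$ whose $\calA_0$-stable model is $X_0$, that is, the linear series $|m(K_{X_-} + S_- + F_{\calA_0})|$ induces the contraction $g : X_- \to X_0$ identified in the proof of Theorem \ref{thm:increase}. This $g$ contracts precisely the rational curves $A$ appearing in intermediate (pseudo)fibers $G \cup A$ that attach type I pseudoelliptic components created by La Nave's flip. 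By the explicit toric local model of \cite[Theorem 7.1.2]{ln} together with the classification of relative log canonical models in Theorem \ref{thm:thm1}, the extraction of $A$ from $X_0$ with the correct discrepancy is unique up to unique isomorphism, so the fiber of $\epsilon_{\calA_0}^-$ over $[X_0]$ is a single reduced point with trivial automorphism group relative to the target.

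Putting these three properties together and invoking Zariski's Main Theorem for DM stacks (proper, quasi-finite, and birational onto a normal target implies an isomorphism), the corollary follows. Equivalently, one can package the argument by directly constructing an inverse: given a family of $\calA_0$-stable elliptic surfaces $(X_0 \to C, S_0 + F_{\calA_0}) \to T$ over a normal base, perform a canonical extraction of the divisor $A$ along each attaching fiber of a type I pseudoelliptic, and check that the resulting family is $\calA_-$-stable and compatible with base change. The main technical obstacle will be verifying that the pointwise uniqueness of the extraction $X_- \to X_0$ upgrades to functoriality over a normal base $T$; I expect this to follow from realizing $X_-$ as the relative log canonical model of an auxiliary pair on a canonical blowup of $X_0$ along the attaching fiber locus, combined with invariance of log plurigenera (Theorem \ref{thm:basechange}) to guarantee that formation of this relative log canonical model commutes with base change.
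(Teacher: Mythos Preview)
Your approach is essentially the same as the paper's, only far more detailed. The paper's entire proof is the one-liner ``$\epsilon_{\calA_0}^-$ is a proper bijection and our moduli spaces are normal,'' relying implicitly on the same Zariski's Main Theorem argument you spell out; you have correctly unpacked the bijectivity claim via uniqueness of the extraction of $A$, and you have flagged the stacky issue of relative automorphisms that the paper glosses over.

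A couple of minor comments. First, your justification of injectivity on geometric points can be simplified: the morphism $g: X_- \to X_0$ is the log canonical contraction for $(X_-, S_- + F_{\calA_0})$, and the curves $A$ it contracts are precisely the log canonical places of $(X_0, S_0 + F_{\calA_0})$ lying over the points where type I pseudoelliptics are attached; extracting a prescribed set of log canonical places is unique, so you don't need to invoke the toric local model directly. Second, representability of $\epsilon_{\calA_0}^-$ (needed for the stacky Zariski's Main Theorem) follows immediately because $X_- \to X_0$ is birational, so any automorphism of $X_-$ inducing the identity on $X_0$ is itself the identity. Your alternative proposal of constructing an explicit inverse would also work but is, as you note, more laborious than necessary here.
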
 

\begin{proof} $\epsilon_{\calA_0}^-$ is a proper bijection and our moduli spaces are normal. \end{proof}

\begin{remark}\label{rem:infinitessimal} Since we normalize the moduli spaces, we make no claims about the infinitessimal structure of $\epsilon_{\calA_0}^-$. Indeed the deformation theories of $\calA_0$ and $\calA_-$ broken elliptic surfaces may be very different. \end{remark} 

\subsection{The ``wall'' at $a = 1$}

In this section we discuss an analogous behavior to the flipping morphism $\epsilon_{\calA_0}^{-} : \calE^m_{v,A_-} \to \calE^m_{v,\calA_0}$ that occurs in the limit as a coefficient $a \to 1$. 

Indeed if we take $X_- = X' \cup Z$ as in the proof of of Theorem \ref{thm:increase} so that $X'$ is an elliptic component, $Z$ is a pseudoelliptic component of type $I$ attached to $X'$ along an intermediate fiber $G \cup A$, then we saw that the morphism $\epsilon_{\calA_0}^{-}$ contracts the fiber component $A$. Locally on $X'$ around the fiber $G \cup A$, this contraction of $A$ is the transition from an intermediate to a twisted fiber Section \ref{sec:local}. In both cases, this contraction occurs when the intermediate fiber components $G$ and $E$ are both marked with coefficient $a = 1$ and in both cases, this induces a morphism on moduli spaces:

\begin{prop} \label{prop:1-e} Let $\calB = (b_1, \ldots, b_r)$ and fix $j$ such that $b_j = 1$. Let $\calA < \calB$ be a weight vector with $a_i = b_i$ for $i \neq j$ and $a_j = 1 - \epsilon$ where $0 < \epsilon \ll 1$. Then there are morphisms $\theta_j : \calE_{v,\calA} \to \calE_{v,\calB}$ and $\tilde{\theta}_j : \calU_{v,\calA} \to \calU_{v,\calB}$ making the following diagram commute:
$$
\xymatrix{\calU_{v,\calA} \ar[r]^{\tilde{\theta}_j} \ar[d] & \calU_{v,\calB} \ar[d] \\ \calE_{v,\calA} \ar[r]^{\theta_j} & \calE_{v,\calB}}
$$
\end{prop}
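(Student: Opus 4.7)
The plan is to adapt the proof of Theorem \ref{thm:increase}. Given a family $\pi : (X \to C, S + F_\calA) \to B$ of $\calA$-stable elliptic surfaces over a normal base $B$, I would construct the $\calB$-stable model as the relative Proj of the section ring of $K_{X/B} + S + F_\calB$, show its formation commutes with base change, and then by Proposition \ref{normfunctor} this defines the morphism $\theta_j$ on the level of pseudofunctors. Applying the construction to the universal family $\calU_{v,\calA}$ simultaneously produces the compatible morphism $\tilde\theta_j$.

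First I would verify that $K_{X/B} + S + F_\calB$ is $\Q$-Cartier, $\pi$-nef, and $\pi$-semiample. Note $F_\calB - F_\calA = \epsilon A$ is supported on the rational components $A$ of the intermediate $j$-th fibers. Each such $A$ is $\Q$-Cartier since the surface has only quotient singularities in a neighborhood of an intermediate fiber, so $K_{X/B} + S + F_\calB$ is $\Q$-Cartier. For $\epsilon$ small enough, $(X_b, S_b + F_{\calB,b})$ is slc for every $b \in B$, since locally around an intermediate fiber $(X_b, S_b + A + E)$ is log canonical (it arises as the log resolution of the twisted fiber pair with both exceptional divisors marked by $1$). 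By Theorem \ref{thm:thm1}, $(K_{X_b} + S_b + F_{\calB,b}) \cdot A = 0$, which is the defining condition of the intermediate-to-twisted transition at $a_j = 1$, while positivity is preserved on all other slc centers; hence the divisor is $\pi$-nef, and $\pi$-semiampleness follows from Proposition \ref{prop:abundance}. The induced relative contraction $g : X \to X_\calB$ over $B$ contracts exactly the $A$ components, replacing each intermediate $j$-th fiber by its twisted model.

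The key technical step is the vanishing
\[
H^i\bigl(X_b, \calO_{X_b}(m(K_{X_b} + S_b + F_{\calB,b}))\bigr) = 0, \qquad i > 0,
\]
for each $b \in B$ and $m$ sufficiently divisible, which by cohomology and base change yields the required basechange compatibility of $\pi_*\calO_X(m(K_{X/B} + S + F_\calB))$. I would factor through $g_b : X_b \to (X_\calB)_b$: by Proposition \ref{prop:gr}, $R^i(g_b)_*\calO_{X_b}(m(\cdots)) = 0$, since $g_b$ contracts the rational curves $A$ with $A^2 < 0$ (see Table \ref{tab:table1}) on which the line bundle has degree zero. By Proposition \ref{logcanonical} the pushforward is $\calO_{(X_\calB)_b}(m(K + S + F_\calB))$, and the vanishing of its higher cohomology follows from Theorem \ref{thm:vanishing} applied to the $\calB$-broken stable elliptic surface $(X_\calB)_b$. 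With basechange established, the relative Proj then gives a family of $\calB$-stable elliptic surfaces over $B$, together with a compatible family of base curves via Lemma \ref{basechangeforC}.

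The main subtlety I expect is bookkeeping for pseudoelliptic trees attached along intermediate $j$-th fibers: by equation (\ref{eq:A}), raising $a_j$ by $\epsilon$ raises $\mathrm{Coeff}(A_\alpha, F_\calA)$ on the parent intermediate fiber of any such tree by $\epsilon$. For $\epsilon$ small enough, however, this stays within a single chamber away from walls of types $\wi$ and $\wiii$, so no further birational transformations intervene beyond the $A$-contractions described above, and verifying both this smallness condition and the slc property of $(X, S + F_\calB)$ uniformly across the family are the principal sanity checks.
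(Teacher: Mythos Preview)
Your proposal is correct and follows essentially the same approach as the paper: verify that $K_X + S + F_\calB$ is nef and $\Q$-Cartier, identify the semiample contraction as the intermediate-to-twisted transition at the $j$-th fiber, establish vanishing via Grauert--Riemenschneider for the contraction together with vanishing on the target, and then take the relative $\Proj$ to produce the $\calB$-stable family functorially. Your write-up is in fact slightly more careful than the paper's in two places: you consistently work with $F_\calB$ in the vanishing argument (the paper's printed proof writes $F_\calA$ there, which appears to be a slip, since Proposition~\ref{logcanonical} requires the exceptional $A$ to carry coefficient~$1$), and you explicitly justify the vanishing on the contracted surface $(X_\calB)_b$ rather than leaving it implicit in the appeal to Leray.
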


\begin{proof} Since we are taking $\epsilon \ll 1$, then $K_X + S + F_\calB$ is a nef $\Q$-Cartier divisor on a $\calA$-stable elliptic surface $(X \to C, S + F_\calA)$. Therefore $K_X + S + F_\calB$ is semiample and by the results of Section \ref{sec:local}, there are two possibilities for the Iitaka map $\varphi:= \varphi_{m(K_X + S + F_\calB)} : X \to X'$ depending on the fiber $F_j$ whose coefficient is changing: 
\begin{itemize} 
\item the fiber $F_j$ is a smooth or stable fiber (type $\mathrm{I}_n$) so that the birational model does \emph{not} change when $b_j = 1$ and $\varphi$ is the identity;
\item the fiber $F_j$ is not type $\mathrm{I}_n$ so that it is an intermediate fiber given by a union $A \cup E$ of a reduced component $A$ and a nonreduced component $E$. The Iitaka map $\varphi$ is the contraction of $A$ to produce a twisted fiber. 
\end{itemize} 

In the first case there is nothing to prove. In the second, 
$$
R^1\varphi_*(\calO_X(m(K_X + S + F_\calA))) = 0
$$
by Proposition $\ref{prop:gr}$ and $\varphi_*(\calO_X(m(K_X + S + F_\calA))) = \calO_{X'}(m(K_{X'} + \varphi_*(S + F_\calA)))$ by Proposition \ref{logcanonical}. It follows that $H^1(X,\calO_X(m(K_X + S + F_\calA))) = 0$ by the Leray spectral sequence. 

Now if $\pi : (X \to C, S + F_\calA) \to B$ is a family of $\calA$-stable elliptic surfaces, then as in the construction of reduction morphisms and flipping morphisms, 
$$
\Proj_B\left(\bigoplus_{k} \pi_*\calO_X(km_0(K_X + S + F_\calB))\right)
$$
gives a family $\calB$-stable elliptic surfaces over $B$. This construction is compatible with base change by the above vanishing and induces the required morphism $\theta_j$.

The morphism $\tilde{\theta}_j$ is induced by applying the above to the universal family $\calU_{v,\calA} \to \calE_{v,\calA}$.  \end{proof} 

\begin{cor}\label{isomorphism} In the situation above, the morphism $\theta_j$ is inverse to the reduction morphism $\rho_{\calB,\calA}$. In particular, $\calE_{v,\calA} \cong \calE_{v,\calB}$. \end{cor}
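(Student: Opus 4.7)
The plan is to verify that $\theta_j$ from Proposition \ref{prop:1-e} and the reduction morphism $\rho_{\calA,\calB}\colon\calE_{v,\calB}\to\calE_{v,\calA}$ are mutually inverse (interpreting the $\rho_{\calB,\calA}$ of the statement as this reduction map, which goes in the correct direction to serve as an inverse of $\theta_j$). First I would confirm that $\rho_{\calA,\calB}$ exists by Theorem \ref{thm:reduction}: the interpolation $\calA(t)=(1-t)\calA+t\calB$ only moves the $j$-th coordinate from $1-\epsilon$ to $1$, which for $\epsilon$ small crosses no wall in its interior and at the endpoint meets only the type $\wi$ boundary wall at $a_j=1$, so the hypothesis excluding type $\wii$ walls is satisfied.

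By Proposition \ref{normfunctor}, it suffices to check that both compositions act as the identity on families over normal bases. Let $\pi\colon(X\to C,S+F_\calA)\to B$ be a family of $\calA$-stable elliptic surfaces over a normal base. Proposition \ref{prop:1-e} realises $\theta_j$ as $\mathrm{Proj}_B$ of the $\calB$-log canonical ring and yields a divisorial contraction $\varphi\colon X\to X'$ that collapses the arithmetic genus zero component $A$ of each $j$-th intermediate fiber, with $(X'\to C,S+F_\calB)$ having twisted $j$-th fibers. I would then argue that $\rho_{\calA,\calB}$ applied to $(X',S+F_\calB)$ recovers $(X,S+F_\calA)$: since $\varphi_*^{-1}F_\calB+\mathrm{Exc}(\varphi)=A+E$, Proposition \ref{logcanonical} gives
\[
\varphi_*\calO_X\big(m(K_{X/B}+S+F_\calA+\epsilon A)\big)=\calO_{X'}\big(m(K_{X'/B}+S+F_\calB)\big),
\]
and because $A$ is $\varphi$-exceptional and supported on the exceptional locus, the extra $\epsilon A$ does not affect the pushforward. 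Thus the section rings on $X'$ for $(K_{X'/B}+S+F_\calA)$ and those on $X$ for $(K_{X/B}+S+F_\calA)$ agree, so taking $\mathrm{Proj}$ returns $X$. The reverse composition is symmetric: starting from a $\calB$-stable family, $\rho_{\calA,\calB}$ produces the intermediate model along with the canonical contraction morphism back to the twisted model, and $\theta_j$ then recovers the twisted family.

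The step I expect to require the most care is justifying that the $\mathrm{Proj}$ construction of Theorem \ref{thm:reduction} applied to the twisted total space $X'$ genuinely produces the intermediate total space $X$, since the underlying birational transformation at a type $\wi$ wall runs in the ``blow-up'' direction rather than the ``contraction'' direction natural to $\mathrm{Proj}$. I would address this by analysing the construction fiberwise, using the classification of Section \ref{sec:local} (Proposition \ref{prop:lct} and Theorem \ref{thm:thm1}) to identify the local $\mathrm{Proj}$ with the canonical intermediate blow-up of each twisted fiber, and then globalising via the base-change compatibility of Theorem \ref{thm:basechange}. Once the two compositions are identity on every normal base, Proposition \ref{normfunctor} promotes this to identity morphisms of the normalised stacks, giving $\theta_j$ and $\rho_{\calA,\calB}$ as inverse equivalences and hence $\calE_{v,\calA}\cong\calE_{v,\calB}$.
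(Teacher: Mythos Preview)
The paper gives no proof for this corollary; it is stated as an immediate consequence of Proposition~\ref{prop:1-e}. Your approach is correct in spirit but considerably more elaborate than what the paper intends, and one of your intermediate steps is shaky.

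The cleanest argument, in line with the paper's treatment of the analogous Corollary~\ref{cor:decrease}, is the following. On the dense open locus of irreducible elliptic surfaces whose $j$-th marked fiber is smooth (type $\mathrm{I}_0$), the Weierstrass, intermediate, and twisted models all coincide, so both $\theta_j$ and $\rho_{\calA,\calB}$ restrict to the identity there. Hence each composition $\rho_{\calA,\calB}\circ\theta_j$ and $\theta_j\circ\rho_{\calA,\calB}$ agrees with the identity on a dense open. Since the moduli spaces are separated and normal (in particular reduced), two morphisms agreeing on a dense open are equal. This bypasses entirely the ``blow-up direction'' issue you flag.

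Regarding your pushforward computation: the displayed identity you derive from Proposition~\ref{logcanonical} relates the $\calA$-section ring on $X$ to the $\calB$-section ring on $X'$, but the construction of $\rho_{\calA,\calB}$ in Theorem~\ref{thm:reduction} takes $\mathrm{Proj}$ of the $\calA$-weighted ring on the $\calB$-model $X'$. On the twisted model, the divisor ``$F_\calA$'' is ambiguous: if interpreted as $(1-\epsilon)E$, then $K_{X'}+S+(1-\epsilon)E$ is in fact still ample (since $E^2=0$ on the twisted model), so its $\mathrm{Proj}$ returns $X'$ rather than the intermediate model $X$. Your claim that ``the section rings on $X'$ for $(K_{X'/B}+S+F_\calA)$ and those on $X$ for $(K_{X/B}+S+F_\calA)$ agree'' thus does not follow from what you wrote, and your anticipated ``tricky step'' is exactly where this argument breaks down. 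The fiberwise resolution you sketch would work, but appealing to uniqueness of log canonical models (or the density argument above) is cleaner and avoids tracking the conventions for $F_\calA$ across birational models.
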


\begin{remark} As in Remark \ref{rem:infinitessimal}, the validity of the above corollary hinges on the fact that we are defining our moduli spaces to be the normalizations of the appropriate pseudofunctors. In general the deformation theories of $\calA$-stable and $\calB$-stable models might differ depending on the choice of functor of stable pairs and we can only hope for $\theta_j$ to be some type of partial normalization. \end{remark}

\appendix
\section{Normalizations of algebraic stacks}
\label{appendix}

In this appendix, we justify the fact that we only work with normal base schemes throughout the paper. Specifically, the goal is to prove that in certain situations, the normalization of an algebraic stack is uniquely determined by its values on normal base schemes (Proposition \ref{uniquenorm}) and that a morphism between normalizations of algebraic stacks can be constructed by specifying it on the category of normal schemes (Proposition \ref{normfunctor}). This material is probably well known but we include it here for lack of a suitable reference.  

If $X$ is a locally Noetherian scheme, the normalization $\nu : X^{\nu} \to X$ is defined as the normalization of $X$ in its total ring of fractions. We denote by $|X|$ (resp. $|\calX|$) the underlying topological space of points of a scheme (resp. algebraic stack). We begin with some facts about normalizations of schemes. 

\begin{lemma} \label{normfact} \cite[Tag 035Q]{stacks-project} Let $X$ be a locally Noetherian scheme;

\begin{enumerate}

\item the normalization $X^{\nu} \to X$ is integral, surjective and induces a bijection on irreducible components;

\item for any normal scheme $Z$ and morphism $Z \to X$ such that each irreducible component of $Z$ dominates an irreducible component of $X$, there exists a unique factorization $Z \to X^{\nu} \to X$. 

\end{enumerate}
\end{lemma}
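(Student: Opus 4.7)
The construction of $\nu\colon X^\nu \to X$ factors through the reduction $X_{\mathrm{red}} \to X$, so I would first reduce to the case that $X$ is reduced (both parts of the statement are unaffected, since $X$ and $X_{\mathrm{red}}$ have the same underlying topological space and share the same minimal primes, and any map from a reduced scheme $Z$ factors through $X_{\mathrm{red}}$). The question is local on $X$, so next I would pass to the affine situation $X = \Spec A$ with $A$ reduced Noetherian, where $X^\nu = \Spec A'$ and $A' \subset Q(A)$ is the integral closure of $A$ in its total ring of fractions. The key algebraic input is the product decomposition
\[
Q(A) = \prod_i \kappa(\mathfrak p_i), \qquad A' = \prod_i \overline{A/\mathfrak p_i},
\]
indexed by the finitely many minimal primes of $A$, where $\overline{A/\mathfrak p_i}$ denotes the integral closure inside the fraction field $\kappa(\mathfrak p_i)$.

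For part (1), integrality of $A \to A'$ is built into the construction. Integral morphisms are closed, so the image of $\nu$ is closed, and it contains each minimal prime $\mathfrak p_i$ (realised as the generic point of the $i$-th factor of $\Spec A'$), giving surjectivity. The bijection on irreducible components is then immediate from the product decomposition: the minimal primes of $A'$ are exactly the kernels of the projections to each factor, and these contract bijectively to the $\mathfrak p_i$.

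For part (2), I would work affine-locally with a ring map $\varphi\colon A \to B$ where $B$ is normal (and hence reduced). The domination hypothesis translates to the statement that each minimal prime $\mathfrak q \subset B$ contracts to a minimal prime of $A$. Consequently $\varphi$ sends non-zerodivisors of $A$ (the complement of the union of minimal primes) to non-zerodivisors of $B$, and so extends uniquely to a ring map $Q(\varphi)\colon Q(A) \to Q(B)$. For any $\alpha \in A'$ satisfying a monic equation $\alpha^n + a_1 \alpha^{n-1} + \cdots + a_n = 0$ with $a_i \in A$, the image $Q(\varphi)(\alpha)$ satisfies the same equation with coefficients $\varphi(a_i) \in B$, hence is integral over $B$; normality of $B$ then forces $Q(\varphi)(\alpha) \in B$. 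This produces the required factorization $A \to A' \to B$, and uniqueness is automatic because any factorization is pinned down after inverting non-zerodivisors, where it must coincide with $Q(\varphi)$.

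The only technical point that remains is to check that these affine-local factorizations glue to a morphism $Z \to X^\nu$ of schemes. This is routine given functoriality of the integral-closure construction together with the uniqueness of the local factorizations, which automatically forces compatibility on overlaps. I expect no serious obstacle here, as the entire argument is driven by the canonical extension to total rings of fractions once the domination hypothesis has been invoked.
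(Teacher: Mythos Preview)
The paper does not supply its own proof of this lemma; it is stated with a bare citation to \cite[Tag 035Q]{stacks-project}. Your argument is a correct and complete sketch of the standard proof found there: reduce to the affine reduced Noetherian case, use the product decomposition $Q(A) \cong \prod_i \kappa(\mathfrak p_i)$ to read off part (1), and for part (2) translate the domination hypothesis into the statement that non-zerodivisors map to non-zerodivisors so that the map extends to total quotient rings, then invoke normality of $B$. There is nothing to compare against in the paper itself, and no gap in what you have written.
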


\begin{lemma}\label{smoothnorm}\cite[Tag 07TD]{stacks-project} Let $X \to Y$ be a smooth morphism of locally Noetherian schemes. Let $Y^{\nu} \to Y$ be the normalization of $Y$. Then $X \times_Y Y^{\nu} \to X$ is the normalization of $X$. \end{lemma}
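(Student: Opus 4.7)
The plan is to verify the defining universal property of the normalization for the scheme $T := X \times_Y Y^{\nu}$ equipped with its projection to $X$. This splits into two independent tasks: (i) show $T$ is normal, and (ii) show the morphism $T \to X$ is integral and induces a bijection on irreducible components with each component of $T$ dominating a component of $X$. Together with Lemma \ref{normfact} these force $T \to X$ to be the normalization.

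For (i), the projection $T \to Y^{\nu}$ is smooth since smoothness is stable under base change. Because $Y^{\nu}$ is normal, it satisfies Serre's conditions R1 and S2, and smooth morphisms preserve R1 and S2 on the source. Thus $T$ satisfies R1 + S2, and by Serre's criterion $T$ is normal. This is the cleanest input and relies only on the standard fact that smooth-over-normal is normal.

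For (ii), integrality of $T \to X$ is immediate from the stability of integral morphisms under base change, applied to the integral morphism $Y^{\nu} \to Y$. For the component structure, I would use that $X \to Y$ is flat (hence sends generic points of components of $X$ to generic points of components of $Y$) combined with the fact that $Y^\nu \to Y$ is a bijection on irreducible components; together these imply the analogous bijection for the base change $T \to X$, and each component of $T$ dominates a component of $X$. Applying Lemma \ref{normfact}(2) to the normal scheme $T$ mapping to $X$ then produces a factorization $T \to X^{\nu} \to X$. Conversely, $X^{\nu}$ is normal and the composition $X^{\nu} \to X \to Y$ sends components of $X^{\nu}$ dominantly into components of $Y$, so by Lemma \ref{normfact}(2) applied to $Y^{\nu} \to Y$ we obtain a unique factorization $X^{\nu} \to Y^{\nu}$, which combined with $X^{\nu} \to X$ yields a morphism $X^{\nu} \to X \times_Y Y^{\nu} = T$ via the universal property of the fibre product. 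Uniqueness of these factorizations forces the two morphisms $T \leftrightarrows X^{\nu}$ to be mutually inverse.

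The main obstacle will be step (ii), specifically the component-theoretic claim: verifying that every irreducible component of $T$ dominates a component of $X$ requires some care when $X$ and $Y$ are merely locally Noetherian and possibly have embedded or non-reduced structure. An alternative that sidesteps the direct component analysis is to observe that $T \to X$ is integral with $T$ normal, and is an isomorphism over the preimage of the normal locus of $Y$ (which is a schematically dense open once one reduces to the reduced setting). Any integral morphism from a normal scheme to $X$ that is an isomorphism over a schematically dense open subset and whose components map dominantly must agree with the normalization by the universal property, which finishes the argument.
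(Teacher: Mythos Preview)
The paper does not give its own proof of this lemma; it is stated with a bare citation to \cite[Tag 07TD]{stacks-project}. So there is no argument in the paper to compare yours against.

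Your outline is correct and is essentially the standard one. A couple of places deserve a word more of justification. For the component claim in (ii), the cleanest route is the one you hint at in your last paragraph: reduce first to $Y$ reduced (smoothness gives $X \times_Y Y_{\mathrm{red}} = X_{\mathrm{red}}$, so nothing is lost), then use that $T \to X$ is an isomorphism over the dense open $U = f^{-1}(\text{normal locus of }Y)$; since the smooth projection $T \to Y^{\nu}$ sends generic points of $T$ to generic points of $Y^{\nu}$, the open $T|_U$ is dense in $T$ as well, forcing every component of $T$ to dominate a component of $X$. For the final ``mutually inverse'' step, note that uniqueness in Lemma \ref{normfact}(2) only controls maps \emph{into} a normalization, so $X^{\nu} \to T \to X^{\nu} = \mathrm{id}$ is immediate, while $T \to X^{\nu} \to T = \mathrm{id}$ should be checked via the universal property of the fibre product together with the uniqueness of the lift $T \to Y^{\nu}$.
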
 

This motivates the following definitions: 

\begin{definition} Let $\calX$ be a locally Noetherian algebraic stack. We say that $\calX$ is \textbf{normal} if there is a smooth surjection $U \to \calX$ where $U$ is a normal scheme.  A \textbf{normalization} of $\calX$ is a representable morphism 
$$
\nu : \calX^\nu \to \calX
$$
from an algebraic stack $\calX^{\nu}$ such that for any scheme $U$ and any smooth morphism $U \to \calX$, the pullback $\calX^{\nu} \times_{\calX} U \to U$ is the normalization of $U$. 
\end{definition}

\begin{lemma} Let $\calX$ be a locally Noetherian algebraic stack. Then a normalization $\nu : \calX^{\nu} \to \calX$ exists and it is unique up to unique isomorphism. \end{lemma}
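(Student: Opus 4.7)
The plan is to construct the normalization explicitly from a smooth atlas and its associated groupoid presentation, then deduce uniqueness from the representability condition together with descent.

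For existence, choose a smooth surjection $p : U \to \calX$ from a scheme $U$ and form the groupoid $R \rightrightarrows U$ presenting $\calX$, where $R = U \times_\calX U$ and the source and target maps $s, t : R \to U$ are smooth. Let $U^\nu \to U$ be the normalization of the (locally Noetherian) scheme $U$. By Lemma \ref{smoothnorm} applied to the smooth maps $s$ and $t$, both pullbacks $R \times_{s,U} U^\nu$ and $R \times_{t,U} U^\nu$ are canonically identified with the normalization $R^\nu \to R$, which furnishes normalized source and target maps $s^\nu, t^\nu : R^\nu \to U^\nu$. The composition $R \times_{t,U,s} R \to R$ is smooth, so by the same lemma together with the universal factorization in Lemma \ref{normfact}(2), it lifts uniquely to a morphism $R^\nu \times_{t^\nu, U^\nu, s^\nu} R^\nu \to R^\nu$, and one checks that the identity and inverse maps also lift uniquely. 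One then sets $\calX^\nu := [U^\nu / R^\nu]$; the induced morphism $\calX^\nu \to \calX$ is representable by construction and smooth-locally realizes the normalization of $U$.

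To check $\calX^\nu \to \calX$ satisfies the defining property at an arbitrary smooth morphism $V \to \calX$, consider the smooth cover $V \times_\calX U \to V$. By Lemma \ref{smoothnorm}, the pullback $V \times_\calX U^\nu \to V \times_\calX U$ is the normalization, and by descent along the smooth cover it follows that $\calX^\nu \times_\calX V \to V$ is the normalization of $V$ (this uses that the formation of the normalization commutes with smooth base change in both directions along a groupoid presentation of $V$).

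For uniqueness, suppose $\nu_i : \calY_i \to \calX$ for $i = 1, 2$ are two normalizations. Pulling back along $p : U \to \calX$, both produce the normalization $U^\nu \to U$, and pulling back further along $s$ and $t$ both produce $R^\nu \to R$, all compatibly. Hence $\calY_1$ and $\calY_2$ both arise as $[U^\nu / R^\nu]$ for the same groupoid, and descent yields a unique isomorphism $\calY_1 \cong \calY_2$ commuting with the maps to $\calX$.

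The main obstacle is verifying that the groupoid structure genuinely lifts to $R^\nu \rightrightarrows U^\nu$, i.e.\ that composition, identities, and inverses all descend to the normalizations in a way compatible with the groupoid axioms. This is not automatic from Lemma \ref{smoothnorm} alone, but follows by repeatedly invoking the universal property in Lemma \ref{normfact}(2) for each relevant smooth map between schemes of finite type, since uniqueness of each lift forces the axioms to hold on the nose.
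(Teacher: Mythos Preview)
Your proof is correct and follows essentially the same approach as the paper: construct $\calX^\nu$ as the quotient $[U^\nu/R^\nu]$ of the normalized groupoid using Lemma~\ref{smoothnorm}, verify the defining property via smooth descent, and deduce uniqueness from the uniqueness of scheme normalizations on a presentation. The paper is terser (deferring details to the Stacks Project), whereas you spell out the lifting of the groupoid structure maps more explicitly; one small slip is your reference to ``schemes of finite type'' at the end, which should read ``locally Noetherian schemes'' to match the hypotheses of Lemma~\ref{normfact}.
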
 

\begin{proof} The proof closely follows \cite[Tag 07U4]{stacks-project} which proves the claim for algebraic spaces. Indeed let $R \rightrightarrows U$ be a smooth groupoid presentation for $\calX$. Then by Lemma \ref{smoothnorm} one sees that the pullback of $R$ to $U^{\nu}$ under both morphisms is isomorphic to $R^{\nu}$. One can then check as in \textit{loc. cit.} that $R^{\nu} \rightrightarrows U^{\nu}$ is a smooth groupoid and define $\calX^{\nu} = [U^{\nu}/R^{\nu}]$ with morphism to $\calX$ induced by $U^{\nu} \to U$ and $R^{\nu} \to R$. 

Normality is local on the base in the smooth topology \cite[Tag 034F]{stacks-project} so that for any scheme $T$ and smooth morphism $T \to \calX$, we can check normality of $T \times_{\calX} \calX^{\nu}$ by pulling back to the smooth cover $U \to \calX$. Here the result follows from Lemma \ref{smoothnorm}. Finally uniqueness is clear from the construction.  \end{proof} 

\begin{lemma}\label{normstack} Let $\calX$ be a locally Noetherian algebraic stack, then;

\begin{enumerate}

\item $\calX^{\nu}$ is normal;
\item $\calX^{\nu} \to \calX$ is an integral surjection that induces a bijection on irreducible components;
\item for any normal algebraic stack $\calZ$ and a morphism $\calZ \to \calX$ such that every irreducible component of $\calZ$ dominates an irreducible component of $\calX$, there exists a unique factorization $\calZ \to \calX^{\nu} \to \calX$. 

\end{enumerate} \end{lemma}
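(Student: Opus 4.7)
My plan is to reduce all three claims to the scheme-theoretic statements of Lemma \ref{normfact} via the groupoid presentation $\calX^{\nu} = [U^{\nu}/R^{\nu}]$ constructed in the previous lemma, using that normalization commutes with smooth base change (Lemma \ref{smoothnorm}).

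Claim (1) is essentially immediate from the definition: $U^{\nu} \to \calX^{\nu}$ is a smooth surjection from a normal scheme. For claim (2), I would argue that the listed properties are smooth-local on the target. Pulling $\calX^{\nu} \to \calX$ back along the smooth surjection $U \to \calX$ yields $U^{\nu} \to U$, which is integral and surjective by Lemma \ref{normfact}(1); since these properties descend along smooth covers, the same holds for $\calX^{\nu} \to \calX$. For the bijection on irreducible components, I would invoke the description of $|\calX|$ as the quotient of $|U|$ by the equivalence relation induced by $|R| \rightrightarrows |U|$, and similarly for $\calX^{\nu}$. Since $|U^{\nu}| \to |U|$ and $|R^{\nu}| \to |R|$ each induce bijections on irreducible components and are compatible with the groupoid structure maps (by Lemma \ref{smoothnorm}), the induced map on $R$-orbits of irreducible components is a bijection.

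Claim (3) is the main content. Given $\calZ \to \calX$ with $\calZ$ normal and the dominance hypothesis, I would choose a smooth presentation $S \rightrightarrows V$ of $\calZ$ with $V$ a normal scheme, and, after passing to a smooth refinement if necessary, lift $V \to \calZ \to \calX$ to a scheme morphism $V \to U$ compatible on the level of groupoids, so that we also get $S \to R$. The dominance hypothesis for $\calZ \to \calX$ translates (since smooth morphisms send generic points to generic points of the target components above them) into the statement that each irreducible component of $V$ dominates an irreducible component of $U$, and similarly for $S \to R$ (using that $S \to V$ is smooth, hence each component of $S$ dominates one of $V$, and the image in $U$ is then forced to be a component). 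Applying Lemma \ref{normfact}(2) yields unique factorizations $V \to U^{\nu}$ and $S \to R^{\nu}$; uniqueness plus the fact that the groupoid structure maps $R \to U$, $U \to U$ (units), and $R \times_{U} R \to R$ (composition) lift canonically to normalizations (again by Lemma \ref{smoothnorm} and uniqueness in Lemma \ref{normfact}(2)) forces the two factorizations to assemble into a morphism of groupoids $(S \rightrightarrows V) \to (R^{\nu} \rightrightarrows U^{\nu})$, inducing $\calZ \to \calX^{\nu}$.

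The step I expect to be the main obstacle is the compatibility verification in (3): ensuring that the factorizations $V \to U^{\nu}$ and $S \to R^{\nu}$ really do commute with source, target, identity, and composition maps, and that the resulting morphism of stacks is independent of the choice of presentation of $\calZ$. Both should follow from the uniqueness clause of Lemma \ref{normfact}(2) applied to the various compositions, but some care is needed to check that each auxiliary morphism (e.g.\ $R^{\nu} \times_{U^{\nu}} R^{\nu} \to R^{\nu}$) truly satisfies the dominance hypothesis needed to invoke that uniqueness. Uniqueness of $\calZ \to \calX^{\nu}$ itself then reduces to uniqueness for $V \to U^{\nu}$, since two such factorizations would agree after pulling back along the smooth cover $V \to \calZ$.
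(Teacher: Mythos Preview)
Your proposal is correct and follows essentially the same approach as the paper: reduce to the scheme case via a groupoid presentation, apply Lemma~\ref{normfact}, and invoke uniqueness to check compatibility with the groupoid data and descend. The paper's proof is terser (it constructs the presentation of $\calZ$ by first pulling back $U$ along $\calZ \to \calX$ and then covering the resulting stack by a scheme, which automatically gives the map to $U$), but the content is the same as what you outline, and your identification of the compatibility check as the main point of care is exactly right.
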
 

\begin{proof} The proof follows the analagous result \cite[Tag 0BB4]{stacks-project} for algebraic spaces. $(1)$ is clear and $(2)$ follows from Lemma \ref{normfact} and descent. 

For $(3)$ let $U \to \calX$ be a smooth surjection and $R = U \times_{\calX} U \rightrightarrows U$. Pulling back to $\calZ$ gives a smooth morphism $\calY:= U \times_{\calX} \calZ \to \calZ$. Let $U' \to \calY$ be a smooth cover of $\calY$ by a scheme and $U'$. The composition $U' \to \calZ$ is a smooth cover with groupoid presentation $R' : U' \times_{\calZ} U' \rightrightarrows U'$ and a commutative square
$$
\xymatrix{R' \ar@<-.5ex>[d] \ar@<.5ex>[d] \ar[r] & R \ar@<-.5ex>[d] \ar@<.5ex>[d] \\ U' \ar[r] & U}.
$$

The conditions on $\calZ \to \calX$ imply that we can apply Lemma \ref{normfact} to obtain unique factorizations $R' \to R^{\nu}$ and $U' \to U^{\nu}$. By uniqueness, these morphisms are compatible with the groupoid data so that we get a unique factorization $\calZ \to \calX^{\nu}$ by descent.\end{proof}

Now we are ready for the main results of this appendix. 

\begin{prop}\label{normfunctor} Let $\calX$ and $\calY$ be locally Noetherian algebraic stacks. Suppose that for each normal scheme $T$, there exist functors
$$
f_T : \calX(T) \to \calY(T) 
$$
compatible with base change and such that the induced morphism on points $|f| : |\calX| \to |\calY|$ is dominant on irreducible components. Then $f_T$ induces a unique representable morphism
$$
f^{\nu} : \calX^{\nu} \to \calY^{\nu}.
$$
\end{prop}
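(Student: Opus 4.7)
The plan is to construct $f^{\nu}$ by smooth descent along a presentation of $\calX^\nu$. Since $\calX^\nu$ is normal, choose a smooth surjection $\pi : U \to \calX^\nu$ from a normal scheme $U$, and form the smooth groupoid $R = U \times_{\calX^\nu} U \rightrightarrows U$, where $R$ is an algebraic space, normal since it is smooth over $U$. Set $\xi := \nu_{\calX} \circ \pi : U \to \calX$ and $\varphi := f_U(\xi) : U \to \calY$. Compatibility of the $f_T$ with base change along $\Spec \kappa(u) \to U$ yields $|\varphi|(u) = |f|(|\xi|(u))$ for every $u \in |U|$. Since $\pi$ is smooth surjective and $\nu_\calX$ induces a bijection on irreducible components by Lemma \ref{normstack}(2), the generic point of each irreducible component of $U$ maps to a generic point of an irreducible component of $\calX$; by the hypothesis on $|f|$ this then maps to a generic point of an irreducible component of $\calY$. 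Lemma \ref{normstack}(3) therefore produces a unique lift $\tilde\varphi : U \to \calY^\nu$ through $\nu_\calY$.

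To descend $\tilde\varphi$ to a morphism $\calX^\nu \to \calY^\nu$, I need a $2$-isomorphism $p_1^*\tilde\varphi \cong p_2^*\tilde\varphi$ on $R$ satisfying the cocycle condition on $R \times_{s,U,t} R$. Choose a smooth cover $V \to R$ from a normal scheme $V$ and put $q_i := p_i \circ (V \to R) : V \to U$. The canonical $2$-isomorphism $q_1^*\xi \cong q_2^*\xi$ coming from the groupoid structure on $R$, combined with base change compatibility of the $f_T$, induces a $2$-isomorphism $q_1^*\varphi \cong q_2^*\varphi$ in $\calY(V)$. Uniqueness of the lift in Lemma \ref{normstack}(3) applied to the normal scheme $V$ then produces a unique $2$-isomorphism $q_1^*\tilde\varphi \cong q_2^*\tilde\varphi$ in $\calY^\nu(V)$; the same uniqueness applied to a normal scheme cover of $V \times_R V$ forces this datum to descend to $R$, and applied to a normal scheme cover of $R \times_{s,U,t} R$ forces the cocycle identity. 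Smooth descent along the groupoid $R \rightrightarrows U$ then yields the required morphism $f^\nu : \calX^\nu \to \calY^\nu$. Uniqueness of $f^\nu$ is immediate from Lemma \ref{normstack}(3), and representability follows from the representability of $\nu_\calX$ and $\nu_\calY$ together with the factorization $\nu_\calY \circ f^\nu = (\text{the morphism } \calX^\nu \to \calY) \circ \text{id}$.

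The main obstacle will be the bookkeeping of $2$-isomorphisms through the layers of descent, since the hypothesis provides functors $f_T$ only for normal schemes and every piece of descent data must be verified after pulling back to normal scheme covers of the algebraic spaces $R$, $V \times_R V$, and $R \times_{s,U,t} R$. However, because at each layer the uniqueness in Lemma \ref{normstack}(3) pins down the lift canonically, the cocycle conditions should become automatic consequences of this uniqueness rather than requiring any independent verification.
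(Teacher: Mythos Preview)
Your proof is correct and follows essentially the same strategy as the paper: build the morphism from a normal smooth presentation of $\calX^\nu$, use base-change compatibility of the $f_T$ for descent, and invoke the universal property of normalization (Lemma \ref{normstack}(3)). The only organizational difference is that the paper first descends the maps $U^\nu \to \calY$ to a morphism $g : \calX^\nu \to \calY$ and then applies Lemma \ref{normstack}(3) \emph{once} to factor $g$ through $\calY^\nu$, whereas you lift to $\calY^\nu$ first and then descend. The paper's ordering is marginally cleaner: a single application of the universal property to a stack morphism avoids having to argue, as you do, that the descent $2$-isomorphisms and cocycle identities are forced by repeated appeals to uniqueness of lifts on normal covers of $R$, $V \times_R V$, and $R \times_{s,U,t} R$. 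Both routes arrive at the same place; neither proof spells out representability beyond gesturing at the factorization through $\nu_\calY$.
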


\begin{proof} Let $U \to \calX$ be a smooth surjection from a scheme $U$ and let $U^{\nu} \to U$ be the normalization. Then $U^{\nu} \to \calX$ is an integral surjection that induces a bijection on irreducible components by Lemma \ref{normstack} (2). Let $\xi \in \calX(U^{\nu})$ be the object inducing this morphism. Then we have an object $f_T(\xi) \in \calY(U^{\nu})$ inducing a morphism $U^{\nu} \to \calY$. By assumption this is compatible with the pullbacks to $R^{\nu} = U^{\nu} \times_{\calX^{\nu}} \times U^{\nu}$ and thus induces a morphism $g : \calX^{\nu} \to \calY$. 

The map $|g| : |\calX| \to |\calY|$ factors as
$$
\xymatrix{|\calX^\nu| \ar[r]^{|\nu|} \ar[rd]_{|g|} & |\calX| \ar[d]^{|f|} \\ & |\calY|}.
$$
By Lemma \ref{normstack} (2) and the assumptions on $|f|$, $|g|$ is dominant on irreducible components. Therefore there is a unique factorization $f^{\nu} : \calX^{\nu} \to \calY^{\nu}$ by Lemma \ref{normstack} (3).  \end{proof}

\begin{prop}\label{uniquenorm} Let $\calX$ and $\calY$ be separated locally Noetherian algebraic stacks. Suppose that for each normal scheme $T$, there is an isomorphism $f_T : \calX(T) \cong \calY(T)$ compatible with base change. Then there is an isomorphism $f: \calX^{\nu} \to \calY^{\nu}$. \end{prop}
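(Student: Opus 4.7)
The plan is to apply Proposition \ref{normfunctor} in both directions and verify that the resulting morphisms are mutually inverse, exploiting the explicit construction used in the proof of Proposition \ref{normfunctor}.

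First I would verify the hypothesis of Proposition \ref{normfunctor}. Residue fields are normal, so for each point $x \in |\calX|$ with representative $\Spec k(x) \to \calX$ the functor $f_{\Spec k(x)}$ produces a morphism $\Spec k(x) \to \calY$, hence a well-defined map $|f|:|\calX|\to|\calY|$; base change compatibility makes this independent of the chosen representative. Since $f_T$ is an isomorphism for every normal $T$, the inverse functors $f_T^{-1}$ define a map $|g|:|\calY|\to|\calX|$ that is two-sided inverse to $|f|$, so $|f|$ is a bijection on points and in particular is dominant on irreducible components. Applying Proposition \ref{normfunctor} to $f$ and to $f^{-1}$ therefore produces representable morphisms $f^\nu:\calX^\nu\to\calY^\nu$ and $g^\nu:\calY^\nu\to\calX^\nu$.

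Next I would show $g^\nu\circ f^\nu=\mathrm{id}_{\calX^\nu}$; the argument for $f^\nu\circ g^\nu=\mathrm{id}_{\calY^\nu}$ is symmetric. Pick a smooth cover $U\to\calX$ by a scheme; then $U^\nu\to\calX^\nu$ is a smooth cover. By the construction in the proof of Proposition \ref{normfunctor}, the composition $U^\nu\to\calX^\nu\xrightarrow{f^\nu}\calY^\nu\xrightarrow{\nu}\calY$ is the morphism classified by the object $f_{U^\nu}(\xi)\in\calY(U^\nu)$, where $\xi\in\calX(U^\nu)$ is the tautological object from $U^\nu\to U\to\calX$. Applying $g^\nu$ and composing with $\calX^\nu\to\calX$ produces the morphism classified by $f^{-1}_{U^\nu}\bigl(f_{U^\nu}(\xi)\bigr)=\xi$, which is precisely the morphism $\nu\circ\mathrm{id}_{\calX^\nu}$ restricted to $U^\nu$. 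Two morphisms $U^\nu\to\calX^\nu$ whose compositions with $\nu$ agree are themselves equal by the uniqueness clause of Lemma \ref{normstack}(3), since $\calX^\nu$ is normal and every irreducible component of $U^\nu$ dominates an irreducible component of $\calX$. Hence $g^\nu\circ f^\nu$ agrees with $\mathrm{id}_{\calX^\nu}$ on the smooth cover $U^\nu\to\calX^\nu$, and descent along this cover yields $g^\nu\circ f^\nu=\mathrm{id}_{\calX^\nu}$.

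The main obstacle will be bookkeeping the $2$-categorical compatibilities: the equality $f^{-1}_{U^\nu}\circ f_{U^\nu}=\mathrm{id}$ on objects is really a natural isomorphism of functors, and one has to check that the coherences coming from base change of $f$ and $f^{-1}$ along $R^\nu\rightrightarrows U^\nu$ descend consistently, so that the groupoid-level object produced by $g^\nu\circ f^\nu$ is canonically isomorphic, not merely isomorphic, to the tautological object. Here the rigidity provided by Lemma \ref{normstack}(3)—uniqueness of factorization through the normalization—is what collapses these ambiguities, since the target $\calY$ (and $\calX$) is separated so that automorphisms of morphisms are controlled. Once this is established, descent along the smooth groupoid presentation of $\calX^\nu$ finishes the proof.
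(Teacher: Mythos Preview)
Your proof is correct and follows essentially the same strategy as the paper: apply Proposition \ref{normfunctor} in both directions, then verify that the two composites equal the identity by showing they commute with the normalization maps $\nu$. The paper packages the first step slightly differently by first extending $f_T$ to normal algebraic stacks $\calT$ via descent, which lets it track the tautological object $\xi\in\calX(\calX^\nu)$ directly rather than working on a scheme cover $U^\nu$; but the underlying computation is identical.

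The one genuine difference is in the concluding step. You invoke the uniqueness clause of Lemma \ref{normstack}(3) to deduce $g^\nu\circ f^\nu=\mathrm{id}$ from $\nu\circ(g^\nu\circ f^\nu)=\nu$; the paper instead reduces to $\calY^{\mathrm{red}}$, observes that $\nu$ is an isomorphism over a dense open, and uses separatedness to conclude that $fg$ agrees with the identity everywhere. Your route is cleaner and in fact does not use the separatedness hypothesis at all, since Lemma \ref{normstack}(3) already provides the needed rigidity. Note, though, that your argument is most transparently phrased by applying Lemma \ref{normstack}(3) directly with $\calZ=\calX^\nu$ and the morphism $\nu:\calX^\nu\to\calX$ (both $g^\nu\circ f^\nu$ and $\mathrm{id}$ are factorizations, hence equal), rather than on the cover $U^\nu$ followed by descent; this sidesteps the $2$-categorical bookkeeping you flag at the end.
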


\begin{proof} First let $\calT$ be a normal algebraic stack. Then there is a smooth cover $U \to \calT$ where $U$ is normal giving a groupoid presentation $R \rightrightarrows U$ of $\calT$. Since normality is local in the smooth topology \cite[Tag 034F]{stacks-project}, $R$ is normal and we have equivalences $\calX(R) \cong \calY(R)$ and $\calX(U) \cong \calY(U)$ compatible with base change by the two morphisms $R \rightrightarrows U$. By descent, this induces an equivalence $f_\calT : \calX(\calT) \cong \calY(\calT)$ compatible with base change by a normal algebraic stack. Denote the inverse by $g_{\calT}$.  

By Proposition \ref{normfunctor} there exist morphisms $f : \calX^{\nu} \to \calY^{\nu}$ and $g : \calY^{\nu} \to \calX^{\nu}$ induced by $f_T$ and its inverse. The map $\calX^{\nu} \to \calX$ is induced by an object $\xi \in \calX(\calX^{\nu})$ and under the equivalence described in the preceding paragraph, $f_{\calX^{\nu}}(\xi) \in \calY(\calX^{\nu})$ corresponds to the composition $\calX^{\nu} \to \calY^{\nu} \to \calY$. Similarly, if $\xi' \in \calY(\calY^{\nu})$ is the object inducing the normalization $\calY^{\nu} \to \calY$, then $g_{\calY^{\nu}}(\xi') \in \calX(\calY^{\nu})$ corresponds to the composition $\calY^{\nu} \to \calX^{\nu} \to \calX$. 

By compatibility of the equivalences with pullbacks, we get that $g^*\xi = g_{\calY^{\nu}}(\xi')$ so that $$\xi' = f_{\calY^{\nu}}g^*\xi = g^*f_{\calX^{\nu}} \xi \in \calY(\calY^{\nu})$$. But the latter is the object corresponding to the composition 
$$
\calY^{\nu} \to \calX^{\nu} \to \calY^{\nu} \to \calY.
$$
Therefore $\nu \circ f \circ g = \nu$, i.e. the morphism $fg : \calY^{\nu} \to \calY^{\nu}$ commutes with the normalization $\calY^{\nu} \to \calY$. 

Since the normalization factors uniquely through $\calY^{red}$, we may suppose that $\calY$ is reduced. Then $\nu$ is an isomorphism over a dense open subset of each irreducible component of $\calY$. Therefore $fg$ must agree with the identity over this dense open subset so $fg = \mathrm{id}_{\calY^{\nu}}$, since $\calY^{\nu}$ is separated. Applying the same argument to $\calX^{\nu}$ yields that $gf = \mathrm{id}_{\calX^{\nu}}$. \end{proof}

\begin{remark} Note that $\calX^{\nu}(T)$ is \emph{not} necessarily equal to $\calX(T)$ for $T$ normal even though $\calX^{\nu}$ is uniquely determined by the values of $\calX(T)$ for $T$ normal. Indeed this fails even for schemes. For example the inclusion of the node of nodal curve has multiple lifts to the normalization. It is an interesting question to determine a functorial way to define the normalization of $\calX$ directly as a category fibered in groupoids over schemes without knowing a priori that $\calX$ is algebraic. 
\end{remark}

\section{Small contractions of $1$-parameter families of elliptic surfaces}
\label{sec:appendix}
The goal of this appendix is to control the birational transformations one has to deal with to find the stable limit of a family of  $\calA$-weighted broken elliptic surfaces over a DVR (see Theorem \ref{Teo:appendix:giovanni}). In particular, we want to show that we can assume the only flip that happens is the flip present in the work of La Nave (see Section \ref{sec:WII}).

We work with a family of $\calA$-broken elliptic surfaces $(f:X \to C,S+F_\calA)$ over a DVR $R$. Assume that the generic fiber of
$X \to \Spec(R)$ is normal. Let $\calB \le \calA$ such that for every $\calB <\calB' \le \calA$, the divisor $K_X+S+F_{\calB'}$ is $\mathbb{Q}$-Cartier and ample, but $K_X+S+F_{\calB}$ is only nef.
To reach the desired conclusion, it suffices to show that when taking the stable model of $(X,S+F_\calB)$, the only codimension two exceptional curves that log abundance contracts are section components (see Theorem \ref{Teo:appendix:giovanni}).

\subsection{Intersection pairings on an elliptic fibration}
We recall from Definition \ref{def:twisted} that an intermediate fiber is the nodal union of a rational component $A$, and a (possibly non-reduced) arithmetic genus one component $E$. Furthermore, the section meets the fiber along the smooth locus of $A$. The rational map that replaces an intermediate fiber with a twisted one, is a regular morphism in a neighbourhood of each intermediate fiber, and such a morphism contracts the rational component. For what follows, we will use the notation $A_\calA$ to denote $\sum a_i A_i$, where $A_i$ are the rational components of an intermediate fiber. Moreover, we will use the notation $E_\calB = \sum  b_i E_i$ to denote the sum of twisted fibers and Weierstrass fibers. We note that the divisor $E_\calB \subset X$ contains \emph{both} the twisted fibers, as well as the twisted components of the intermediate fibers (i.e. the $E$ components). 

First we need to understand how the intersection pairing works on the irreducible components of the special fiber. We can divide those into two types: either irreducible elliptic surfaces, or irreducible pseudoelliptic surfaces. An irreducible divisor on an elliptic (resp. pseudoelliptic) surface is either supported on a fiber component (resp. pseudofiber component), or it is a multisection (resp. pseudomultisection). Thus we need to understand what happens if a negative curve is either of those types of divisors.

We start with a lemma that slightly generalizes \cite[Lemma II.5.6]{mir3}.

\begin{lemma}\label{lemma:Ssquare:negative}Let $(f: X \to C,S+A_{\calA}+E_{\calB})$ be an irreducible broken (slc) elliptic surface. Then:
\begin{enumerate}
\item $S^2 \le 0$, and if $S^2=0$ then the only singular fibers of $f: X \to C$ are twisted fibers. 
\item If in addition to $S^2 =0$, the divisor $K_X+S+A_\calA+E_\calB$ is nef and $(K_X+S+A_\calA+E_\calB).S=0$, then $K_X+S+A_\calA+E_\calB$ is not big. \end{enumerate}
 \end{lemma}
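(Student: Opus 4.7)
For (1), I would compute $S^2$ via adjunction together with the canonical bundle formula of Theorem~\ref{canonical}. Since $S \cong C$, adjunction on the lc pair $(X,S)$ reads
\[
(K_X + S) \cdot S = 2g(C) - 2 + \sum_i\left(1 - \tfrac{1}{d_i}\right),
\]
where the sum runs over the cyclic quotient singularities of $X$ through which $S$ passes; these occur precisely over the twisted fibers, with $d_i$ the local index. On the other hand, the canonical bundle formula yields $K_X \cdot S = 2g(C) - 2 + \deg\LL + \Delta \cdot S$, where $\Delta$ is supported on the nonreduced components of the intermediate and twisted $\mathrm{II}, \mathrm{III}, \mathrm{IV}$ fibers. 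Subtracting, and using the entries of Table~\ref{tab:table1} together with $E \cdot S = 1/m$ at a twisted fiber of multiplicity $m$, the expression collapses to
\[
S^2 = -\deg\LL + \sum_{\mathrm{twisted}} \frac{1}{|A_i^2|}.
\]
I would then compare this, fiber by fiber, against the identity $12\deg\LL = \chi_{\mathrm{top}}(Y)$ for $Y \to X$ the minimal smooth semi-resolution, which writes $\deg\LL$ as a sum of Euler contributions $e_j/12$ over all singular fibers. A direct tabulation shows $e_i/12 = 1/|A_i^2|$ for each admissible twisted type $\mathrm{II}, \mathrm{III}, \mathrm{IV}, \mathrm{I}_0^*, \mathrm{II}^*, \mathrm{III}^*, \mathrm{IV}^*$, so each such fiber contributes zero to $S^2$, while every non-twisted singular fiber and every $\mathrm{I}_n^*$ twisted fiber with $n > 0$ contributes a strictly negative amount. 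This gives $S^2 \le 0$, with equality forcing every singular fiber to be of (admissible) twisted type.

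For (2), my plan is to argue by contradiction using Hodge index on the smooth model. Suppose $D := K_X + S + A_\calA + E_\calB$ is big, and let $\pi : Y \to X$ be the minimal smooth semi-resolution. Then $\pi^*D$ is big and nef on the smooth projective surface $Y$, so $(\pi^*D)^2 > 0$, while the projection formula gives $\pi^*D \cdot \pi^*S = D \cdot S = 0$ and $(\pi^*S)^2 = S^2 = 0$. The Hodge index theorem on $Y$ then forces $\pi^*S$ to be numerically trivial, contradicting the fact that $\pi^*S$ meets a general fiber of $Y \to C$ transversely in one point. Hence $D$ is not big.

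The principal obstacle is the local numerical verification in (1), especially for the starred twisted types $\mathrm{II}^*, \mathrm{III}^*, \mathrm{IV}^*$, whose intermediate model carries additional cyclic quotient singularities of $X$ on the $A$ component (reflected in the fractional values of $A^2$ in Table~\ref{tab:table1}). Once the cancellation $e_i/12 = 1/|A_i^2|$ is checked for each admissible type, the rest is a straightforward sign check: part (1) becomes a sum of non-positive contributions, and part (2) reduces to a formal Hodge-index contradiction using only the hypotheses that $K_X + S + A_\calA + E_\calB$ is nef, $S^2 = 0$, and $(K_X + S + A_\calA + E_\calB) \cdot S = 0$.
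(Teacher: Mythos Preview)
Your strategies for both parts are genuinely different from the paper's, and your numerical plan for (1) does work in the normal case: the identity $e_i/12 = 1/|A_i^2|$ holds for each of the types $\mathrm{II},\mathrm{III},\mathrm{IV},\mathrm{I}_0^*,\mathrm{II}^*,\mathrm{III}^*,\mathrm{IV}^*$, and for $\mathrm{I}_n^*$ with $n>0$ one gets a strictly negative contribution, so the tabulation you describe goes through. Likewise, the Hodge index argument for (2) is perfectly sound on a smooth model.

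The paper instead argues (1) by a monotonicity reduction with no case analysis at all: replacing each intermediate fiber by the corresponding twisted fiber via the contraction $p:X\to Y$ of the $A$-component, one computes $p^*(S')=S+\alpha A$ with $\alpha=-(S.A)/A^2>0$, hence $(S')^2=S^2+\alpha(S.A)>S^2$. Thus the presence of any intermediate fiber forces $S^2<0$, and one is reduced to the purely twisted model, where the inequality $S^2\le -\deg\LL$ is quoted from \cite{tsm} and \cite{calculations}. This bypasses the fiberwise Euler-number bookkeeping entirely, and in particular applies uniformly to the slc (isotrivial $j=\infty$) case. Your argument leans on $12\deg\LL=\chi_{\mathrm{top}}(Y)$ for a \emph{smooth} minimal model $Y$; when the generic fiber is nodal there is no smooth minimal model, only a semi-smooth one, and both Noether's formula and the usual additivity (which uses $\chi(E_{\mathrm{gen}})=0$) need to be reworked. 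That is the one real gap in (1).

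For (2) the paper is more elementary than Hodge index: once $S^2=0$, part (1) forces all fibers to be twisted, hence irreducible, so $A_\calA=0$ and both $K_X$ and $E_\calB$ are supported on (full) fibers. Then every cross term in the expansion of $(K_X+S+E_\calB)^2$ vanishes directly from $S^2=0$, $(K_X+S+E_\calB).S=0$, and $F.F'=0$ for fiber classes, giving $(K_X+S+E_\calB)^2=0$ and hence non-bigness. Your route via Hodge index also works, but note that ``smooth semi-resolution'' is not the right object when $X$ is non-normal; you would need to pass to the normalization first and then resolve to obtain an honest smooth surface on which Hodge index applies.
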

\begin{proof}
(1): First observe that, since the computation is local around $S$, we can replace all the Weierstrass fibers which are cusps, with intermediate fibers. This will not affect $S^2$.
 
 Now observe that we can replace all the intermediate fibers with twisted fibers.
 Indeed, if $p:X\to Y$ is the contraction of the rational component of an intermediate fiber, $A_1$, and $S':=p_*(S)$, then
 $$(S')^2 = (p_*S).S' = S.p^*(S') = S^2 + \alpha\cdot S.A_1,$$  
 where $p^*(S')=S+\alpha A_1$, as $A_1$ is irreducible.
 However, $0=p^*(S').A_1=S.A_1+\alpha \cdot A_1^2$ and $A_1^2 < 0$ since $A_1$ is an exceptional curve.
 Therefore $\alpha >0$, and $S^2 < (S')^2$, and so we can replace the singular fibers with twisted fibers and obtain a new surface $Y$ such that if $S'$ is the proper transform of $S$,
 then $S^2 \le (S')^2$ with equality only if $X=Y$. 
 
Since now the only singular fibers are twisted, the surface $f: Y \to C$ is the coarse space of a twisted elliptic surface $\calY \to \calC$ obtained using the construction of twisted stable maps (see \cite{tsm}). We abuse notation and call the section of $Y$ by $S$, so that we have an elliptic fibration $(f: Y \to C, S)$. By \cite[Proposition 5.3]{tsm} and \cite[Theorem 6.1]{calculations} we have the following $$2g(C) - 2 =(K_Y + S).S \ge \deg(\LL) + 2g(C) - 2 + S^2,$$ and so $S^2 \le -\deg(\LL) \leq 0$, since $\LL$ always has non-negative degree. 

(2): From the proof of (1), we see that to have $S^2=0$, we require that the surface $X$ has only twisted fibers (so in particular $A_\calA = 0$). In this case all fibers are irreducible, and since the generic fiber of $f: X \to C$ has trivial canonical divisor, the canonical divisor of the surface $K_X$ is supported on fibers. 

Suppose that $(K_X + S +E_\calB).S = 0$ (recall that all fibers are twisted so that $A_\calA = 0$). Then we see that $$(K_X + S + E_\calB)^2 = K_X^2 + K_X.S + 2K_X.E_\calB +  E_\calB^2 + E_\calB.S + (K_X + S + E_\calB).S.$$ Recall that $(K_X + S + E_\calB).S = 0$ by hypothesis. Furthermore, $E_\calB^2 = 0$ as $E_\calB$ is supported on fibers, and the same for $K_X^2$ and $K_X.E_\calB$. Finally, $K_X.S + E_\calB.S = (K_X + S + E_\calB).S - S^2 = 0 - 0 = 0$. \end{proof}

In order to conclude that that no other flips occur, we need to check that whenever we contract a multisection (resp. pseudomultisection) we have to contract the whole surface component.

 \begin{lemma}\label{lemma:first:contract:the:section}
Let $(f: X \to C, sS + A_\calA + E_\calB)$ be as above, with $s>0$, and let $M \neq S$ be an irreducible multisection of $f$. Assume that $K_X+S+A_\calA + E_\calB$ is $f$-nef. Then
\begin{enumerate} \item If $(K_X+sS+A_\calA+E_\calB).M \le 0$, then $(K_X+sS+A_\calA+E_\calB).S \le 0$. \item Moreover, if both are 0 and $K_X+S+A_\calA + E_\calB$ is nef, the divisor $K_X+sS+A_\calA+E_\calB$ is not big. \end{enumerate}
\end{lemma}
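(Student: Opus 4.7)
My plan is to prove (1) by a direct comparison of $D.M$ and $d\cdot D.S$, where $D := K_X + sS + A_\calA + E_\calB$, exploiting that $M$ meets every fiber with multiplicity $d$ while $S$ meets it with multiplicity $1$; then to derive (2) from a self-intersection calculation after unpacking the equality cases in (1).

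For (1), I would invoke the canonical bundle formula of Theorem \ref{canonical} to write $K_X = f^*(K_C + \LL) + \Delta$, where $\Delta$ is effective, supported on the nonreduced components of type $\mathrm{II}, \mathrm{III}, \mathrm{IV}$ intermediate fibers, and disjoint from $S$. Using the fiber-class identities $A_i.M + n_i E_i.M = d$ on each intermediate fiber together with $E^W_j.M = d$ on Weierstrass fibers and $E^T_k.M = d/m_k$ on twisted fibers (of multiplicity $m_k$), expansion and collection of terms yield
\[
D.M - d\,D.S \;=\; \Delta.M + s(S.M - dS^2) + \sum_i (1 - a_i n_i)\,E_i.M,
\]
with the first two summands non-negative (by $S^2 \le 0$ from Lemma \ref{lemma:Ssquare:negative}(1) and effectivity of $\Delta$) and the last summand non-positive (since $a_i n_i > 1$ in every valid intermediate range, as tabulated in Theorem \ref{thm:thm1}).

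The main obstacle is that this decomposition has the form $[\ge 0] + [\le 0]$, so the comparison is not immediate. To close it I would combine two further inputs: the upper bound $E_i.M \le d/n_i$ coming from $A_i.M \ge 0$; and the $f$-nefness of $K_X + S + A_\calA + E_\calB$ on each $A_i$ and $E_i$, which constrains the $a_i$ compatibly with the Kodaira fiber type. A case-by-case analysis across the Kodaira types should show that whenever $D.S > 0$, the positive contributions $\Delta.M + s(S.M - dS^2)$, together with $d\cdot D.S$, strictly dominate the negative intermediate sum $\sum_i (a_i n_i - 1) E_i.M$, forcing $D.M > 0$. The contrapositive then yields (1); this bookkeeping is the technical heart of the argument.

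For (2), under the hypothesis $D.M = D.S = 0$, the decomposition above degenerates: $s(S.M - dS^2) = 0$ forces $S.M = S^2 = 0$ (using $s > 0$), and each $(1 - a_i n_i)E_i.M = 0$ forces $E_i.M = 0$, which in turn gives $\Delta.M = 0$. By Lemma \ref{lemma:Ssquare:negative}(1), $S^2 = 0$ implies that $X$ has only twisted fibers, so $A_\calA = 0$ and $\Delta = 0$, and we are reduced to $D = f^*(K_C + \LL) + sS + E_\calB$ with $E_\calB$ a sum of twisted fibers marked with coefficient one. A direct self-intersection computation using $(f^*(K_C+\LL))^2 = 0$, $S^2 = 0$, and $(E^T_k)^2 = 0$ (from $F \equiv m_k E^T_k$ numerically) yields
\[
D^2 \;=\; 2s\Bigl(\deg(K_C + \LL) + \sum_k 1/m_k\Bigr) \;=\; 2s\cdot D.S \;=\; 0,
\]
so $D$ is not big, completing the proof.
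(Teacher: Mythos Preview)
Your decomposition $D.M - d\,D.S = \Delta.M + s(S.M - dS^2) + \sum_i (1-a_i n_i)E_i.M$ is correct, and you correctly identify that it has the form $[\ge 0] + [\le 0]$. But this is precisely where the argument breaks down, and the hand-waved ``case-by-case analysis across the Kodaira types'' does not close it. Your only control on the negative sum is $E_i.M \le d/n_i$, giving $\sum_i(a_in_i-1)E_i.M \le d\sum_i(a_i - 1/n_i)$; subtracting, you would need $\deg(K_C+\LL) + sS^2 + \sum_i 1/n_i + \sum_j b_j + \sum_k 1/m_k \ge 0$, but $sS^2$ can be very negative and $\deg(K_C+\LL)$ can be $-1$, so this fails in general. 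The $f$-nefness hypothesis constrains each $a_i$ individually but gives no useful global bound to balance $sS^2$. A further issue: the canonical bundle formula of Theorem~\ref{canonical} requires $X$ to be \emph{minimal}, but the surfaces arising in this appendix may have non-minimal intermediate fibers in the double locus, so the explicit form of $\Delta$ and the multiplicity table are not directly available.

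Your part (2) inherits the problem: from $D.M = D.S = 0$ you get only that the \emph{sum} $\Delta.M + s(S.M-dS^2) + \sum_i(1-a_in_i)E_i.M$ vanishes, not that each term does; since the terms have mixed signs you cannot extract $S^2 = 0$ this way. The paper's approach avoids both difficulties by the single device of contracting all the $A_i$ components to pass to the twisted model $p:X\to Y$. On $Y$ every fiber is irreducible, so $K_Y + E'_\calB$ is numerically a pullback from $C$ and one has the \emph{exact} equality $(K_Y+E'_\calB).M' = m(K_Y+E'_\calB).S'$. The work then reduces to two monotonicity statements (Steps 1 and 2) comparing intersection numbers on $X$ and $Y$, each of which follows from a short local computation using only $f$-nefness on $A_i$; no sign-balancing or fiber-type casework is needed, and the equality case for (2) falls out by tracing when the monotonicity inequalities are equalities.
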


\begin{remark}
To deal with the pseudoelliptic case, it is convenient not to assume that $\textrm{Coeff}(S)=1$. This is because, if $p:(X,S+F_\calA) \to (Y,F'_\calA:=p_*(F_\calA))$ is the contraction
of a section, then $p^*(K_Y+F'_\calA)=K_X+F_\calA+\alpha S$, and $\alpha$ might not (and in general will not) be one. 
\end{remark}

\begin{proof} 
Let $Y$ be the surface obtained by contracting the rational component of an intermediate fiber $A_1$, and let $p:X \to Y$ the contraction morphism.
 We will denote with $S'$ (resp. $E'_\calB$, $A'_\calA$ and $M'$) the push-forward $p_*(S)$ (resp. $p_*(E_\calB)$, $p_*(A_\calA)$ and $p_*(M)$).  The proof proceeds in two steps.
 \begin{Step_2} We wish to show that
  $(K_X+sS+A_\calA+E_\calB).S \le (K_{Y}+sS'+A'_\calA+E'_\calB).S'$
 \end{Step_2}
The computation can be performed locally around $p(A_1)$, as away from here $p$ is an isomorphism, therefore it suffices to show this for the contraction of a single fiber component $A_1$, and thus $A'_\calA=0$. We will drop the subscript $\calA$.

If $p^*(S')=S+\alpha A$, we have $0=p^*(S').A = (S+\alpha A).A=S.A+\alpha \cdot A^2$ since $A$ is irreducible, thus
$$p^*(S)=S+\alpha A \text{ and }\alpha=-\frac{S.A}{A^2}$$
If $\beta $ is such that $K_X=p^*K_Y+\beta A$, as before we have $K_X.A = (p^*(K_Y) + \beta A).A = \beta \cdot A^2$, so 
$$K_X=p^*(K_Y)+\beta A \text{ and }\beta=\frac{K_X.A}{A^2}$$
Observe now that the following equalities hold:
\begin{enumerate}
\item: $(S')^2=(p_*S).S'=S.p^*(S')=(S+\alpha A).S=S^2-\frac{S.A}{A^2}(S.A)$;
\item: $S'.E_\calB'=(S+\alpha A).E_\calB=-\frac{(S.A)(A.E_\calB)}{A^2}$;
\item: $K_X.S=K_Y.S'+\beta (S.A)=K_Y.S'+\frac{K_X.A}{A^2}(S.A)$
\end{enumerate}
Therefore using (3) we have
$$(K_X+sS+A+E_\calB).S =K_Y.S'+\frac{K_X.A}{A^2}(S.A)+sS^2+A.S$$
and using (1) and (2), since $A'=0$,
$$ (K_Y+sS'+A'_\calA+E'_\calB)S'=K_Y.S'+s(S'^2)+S'.E_\calB' =
 K_{Y}.S'+sS^2+s\frac{(S.A)^2}{-(A)^2}+\frac{(S.A)(A.E_\calB)}{-(A)^2}$$
Thus we need to show that $$K_Y.S'+\frac{K_X.A}{A^2}(S.A)+sS^2+A.S  \le
 K_{Y}.S'+sS^2+s\frac{(S.A)^2}{-(A)^2}+\frac{(S.A)(A.E_\calB)}{-(A)^2}$$
Since $A$ is an exceptional curve $A^2 <0$ and $S.A>0$. Thus we can multiply both sides by
$\frac{-(A)^2}{S.A}$ and we need to show that $$-K_X.A-A^2 \le s(S.A)+E_\calB.A,$$
but this holds since $K_X+S+A+E_\calB$ is $f$-nef by hypothesis.

  \begin{Step_2} We now wish to show that
  $(K_{Y}+A'_\calA+E'_\calB).M' \le (K_X+A_\calA+E_\calB).M$ 
  \end{Step_2}
As before we assume that $A_\calA=A_1$ and $A'_\calA=0$. We also drop the subscript $\calA$.

Proceeding as above we have the following equality, since $A$ is irreducible:
$$(K_{Y}+E_\calB').p_*(M)=p^*(K_{Y}+E_\calB').M=(K_X+\gamma A+E_\calB).M$$
To show the desired result it is enough to show that $\gamma\le1$ (recall that $A$ is an effective $\mathbb{Q}$-divisor).
Observe that $K_Y$ is supported on fiber components since the generic fiber of $f$ has relative canonical divisor 0. Thus, let $E$ be the genus one component of the intermediate fiber containing $A$, and let $E':=p_*(E)$.
Since $E'$ is supported on a fiber, $(K_Y+E'_\calB).E'=0$. But then
$$0=(K_X+\gamma A+E_\calB).E \le (K_X+sS+A+E_\calB).E$$ since $K_X+sS+A+E_\calB$ is $f$-nef. Thus
$\gamma(A . E) \le A.E$ since $S\cap E= \emptyset$, and $\gamma \le 1$

$\text{ }$
\\
\begin{bf}Conclusion:\end{bf}

Now, let $Y$ be the surface obtained from $X$ contracting $A_\calA$, let $p:X \to Y$ be the contraction morphism and let $S':=p_*(S)$, $M':=p_*(M)$ and $E_\calB':=p_*(E_\calB)$.
We want to motivate the following inequalities:
 \begin{align*}
     (K_X+sS+A_\calA+E_\calB).M &\ge (K_X+A_\calA+E_\calB).M  
     \ge (K_Y+E_\calB').M'\\ & = m(K_Y+E_\calB').S' \ge m(K_Y+sS'+E_\calB').S'\ge \\
     & \ge m(K_X+sS+A_\calA+E_\calB).S
 \end{align*}
 The first inequality follows since $M \neq S$ and $M$ is irreducible.
 The second one follows from Step 2.
For the equality, since the generic fiber of $g:Y \to C$ is a stable curve of genus one, the canonical of the generic fiber is trivial, and so $K_Y$ is supported on some fiber components. But all the fibers are irreducible, so there is a $\mathbb{Q}$-divisor $D$ on $C$
such that $K_{Y}+E'_\calB$ is numerically equivalent to $g^*(D)$.
Thus $$(K_{Y}+E'_\calB).M'=\deg(K_{Y}+E'_\calB)_{|M}=\deg(g^*(D))_{|M}=\deg(g_{|M'})\deg(D)= m(K_{Y}+E'_\calB).S',$$ where $m$ is the degree of $g_{|M'}:M' \to C$.

The third inequality follows from Lemma \ref{lemma:Ssquare:negative}. Finally Step 1 gives the desired fourth one.

$\text{ }$

If both $(K_X+sS+A_\calA+E_\calB).M=(K_X+sS+A_\calA+E_\calB).S=0$, all the inequalities above are equalities. In
particular the last one is an equality, and from the proof of Step 1, we see that log-abundance contracts $A_\calA$,
and the stable model of
$(X,sS+A_\calA+E_\calB)$
is the same as that of $(Y,sS'+E_\calB')$. Therefore to 
show that $K_X+sS+A_\calA+E_\calB$ is not big, it is enough to show that
$K_Y+sS'+E_\calB'$ is not big. It is easy to see that $p^*(K_Y+sS'+E_\calB')=K_X+sS+A_\calA+E_\calB$, so $K_Y+sS'+E_\calB'$ is nef. Moreover
since $s>0$, from the third  inequality we get $(S')^2=0$. Then 
 Lemma \ref{lemma:Ssquare:negative}
 applies.
\end{proof}

We now discuss a similar situation in the case in which $X$ is a pseudoelliptic. The result we need will be Corollary \ref{Cor:dont:contract:multi:on:psudo}, which is a consequence of Lemma
\ref{lemma:first:contract:the:section} and the following Lemma.

\begin{lemma}\label{lemma:the:coef:of:S:pulling:back:the:lc:on:a:psudo:positive}
Let $(X,D)$ be a psudoelliptic surface.
Assume that $K_X+D$ is non-negative on each psudo-fiber component, and let $p:X' \to X$ be the morphism which contracts the section $S$. Let $\alpha$ be such that
$p^*(K_X+D)=K_{X'}+D'+\alpha S$ where $D'=p_*^{-1}(D)$. Then $\alpha \ge 0$, and if $\alpha=0$ and $K_X+D$ is nef, then it is not big. 
\end{lemma}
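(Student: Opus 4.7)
The plan is to reduce $\alpha \geq 0$ to an intersection calculation on the associated elliptic surface $X'$, and to obtain non-bigness via Kodaira's lemma applied to the fact that $K_X+D$ has degree $0$ on every fiber of $f' : X' \to C$. The first observation is that restricting $p^*(K_X+D)$ to $S$ gives, by projection formula, $p^*(K_X+D) \cdot S = (K_X+D) \cdot p_*S = 0$ since $p_*S$ is a point on $X$. Hence $\alpha S^2 = -(K_{X'}+D') \cdot S$, and since $p$ contracts $S$ to a point we have $S^2 < 0$, so the sign of $\alpha$ agrees with the sign of $(K_{X'}+D') \cdot S$. In particular, this equation will also guarantee $(K_{X'}+D'+\alpha S)\cdot S = 0$, which is the identity that lets the projection formula be used cleanly below.

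For part (1), let $G'$ be any irreducible fiber component of $f'$. By the projection formula together with $(K_{X'}+D'+\alpha S)\cdot S = 0$,
\[
(K_{X'}+D'+\alpha S)\cdot G' \;=\; p^*(K_X+D)\cdot G' \;=\; (K_X+D)\cdot p_*G',
\]
and $p_*G'$ is the corresponding pseudo-fiber component of $X$, so this number is $\geq 0$ by hypothesis. Writing a single fiber as $F' = \sum m_j G_j'$ with positive multiplicities $m_j$ and summing, $(K_{X'}+D'+\alpha S)\cdot F' \geq 0$. On the other hand, $F'$ is linearly equivalent to a general fiber $F_{\mathrm{gen}}$ disjoint from $\Supp(D')$, so $K_{X'}\cdot F' = 0$ (adjunction on a smooth genus one fiber, together with Theorem \ref{canonical}) and $D'\cdot F' = 0$, while $S\cdot F' = 1$. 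Therefore $(K_{X'}+D'+\alpha S)\cdot F' = \alpha$, and we conclude $\alpha \geq 0$.

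For part (2), assume $\alpha = 0$ and $K_X+D$ nef. Then $N := K_{X'}+D' = p^*(K_X+D)$ is nef on $X'$, and the computation in part (1) gives $N\cdot F' = 0$ for every fiber $F'$. Suppose for contradiction that $K_X+D$ is big; then $N^2 = (K_X+D)^2 > 0$ and $N$ is big, so by Kodaira's lemma $N \equiv A + \epsilon E$ for some ample $\Q$-divisor $A$, effective $\Q$-divisor $E$, and $\epsilon > 0$. Choosing a general smooth fiber $F$ not contained in $\Supp(E)$, which exists because $E$ has support on only finitely many fibers, gives $N\cdot F = A\cdot F + \epsilon E\cdot F \geq A\cdot F > 0$, contradicting $N\cdot F = 0$. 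Hence $K_X+D$ is not big.

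The only real subtlety is ensuring the intersection hypothesis transfers from pseudo-fiber components on $X$ to fiber components on $X'$; this works precisely because the vanishing $(K_{X'}+D'+\alpha S)\cdot S = 0$ absorbs the exceptional correction in the projection formula, so no additional contribution from $S$ enters the sum over components of a fiber.
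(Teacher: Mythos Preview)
Your argument is correct and follows essentially the same route as the paper: both proofs intersect $p^*(K_X+D)$ with a fiber of $f':X'\to C$ and use that $K_{X'}$ and $D'$ are supported on fiber components to conclude the intersection equals $\alpha$. A couple of minor remarks: your preliminary computation with $S$ (giving $\alpha S^2 = -(K_{X'}+D')\cdot S$) and the claim that $(K_{X'}+D'+\alpha S)\cdot S = 0$ is needed for the projection formula are both unnecessary---the projection formula $p^*(K_X+D)\cdot G' = (K_X+D)\cdot p_*G'$ holds on the nose, with no exceptional correction to absorb. Also, the paper simply picks one irreducible (non-multiple) fiber rather than summing over components, which is slightly cleaner. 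For part~(2), the paper argues directly that since $K_{X'}+D'$ is trivial on fibers the semiample map factors through $C$, whereas you derive the same contradiction via Kodaira's lemma; both are standard and equally valid.
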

\begin{proof}
 Let $F$ be an irreducible pseudofiber of $X$, let $f:X' \to C$ be the associated elliptic fibration, and let $F':=p_*^{-1}(F)$. Up to replacing $F$,
 we can assume that $F'$ is not a multiple fiber. By the non-negativity assumption of $K_X+D$,
 $$0 \le (K_X+D).F=(p^*(K_{X}+D)).F'=(K_{X'}+D'+\alpha S).F'.$$
 But now $D'$ is supported on some fiber components, and so is $K_{X'}$ since the generic fiber of $f$ is a stable curve of genus one. Then
 $(K_{X'}+D'+\alpha S).F' =\alpha(S'.F')=\alpha$ since $F'$ is an irreducible fiber, which is not a multiple fiber.
 
 If $\alpha=0$, to take the the log canonical model of $(X',D')$ we have to contract the fibers of $f$, and
 so $K_{X'}+D'=p^*(K_X+D)$ is not big. As a result $K_X+D$ is not big as well. 
\end{proof}

\begin{cor}\label{Cor:dont:contract:multi:on:psudo}
Let $(X,D)$ be a pseudoelliptic surface, let $M \subseteq X$ be an irreducible pseudomultisection, and assume that $K_X+D$ is nef.
If $(K_X+D).M=0$ then $K_X+D$ is not big.
\end{cor}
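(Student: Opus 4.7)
The plan is to reduce the corollary to Lemma \ref{lemma:first:contract:the:section} by passing to the associated elliptic surface. Let $p : X' \to X$ denote the contraction of the section in the associated elliptic fibration $f : X' \to C$, and write $p^*(K_X + D) = K_{X'} + D' + \alpha S$ with $D' := p_*^{-1}(D)$, as in Lemma \ref{lemma:the:coef:of:S:pulling:back:the:lc:on:a:psudo:positive}. That lemma gives $\alpha \ge 0$ and already handles the case $\alpha = 0$, so I may assume $\alpha > 0$. Moreover, since $(X, D)$ is slc, the log canonical inequality $a(S, X, D) \ge -1$ translates into $\alpha \le 1$.

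Next I would compute the key intersection numbers on $X'$. The proper transform $M' := p_*^{-1}(M)$ is an irreducible multisection of $f$ distinct from $S$ (a pseudomultisection of $X$ pulls back to a multisection of $X'$, and $M' \neq S$ because $p(S)$ is a point while $M$ is a curve). By the projection formula,
\[
(K_{X'} + \alpha S + D').M' = p^*(K_X+D).M' = (K_X+D).M = 0,
\]
while $(K_{X'} + \alpha S + D').S = p^*(K_X+D).S = 0$ since $p$ contracts $S$. Thus both intersection numbers required to invoke Lemma \ref{lemma:first:contract:the:section} with $s = \alpha$ already vanish.

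Finally I would verify the nefness hypotheses of Lemma \ref{lemma:first:contract:the:section}. The divisor $K_{X'} + \alpha S + D' = p^*(K_X+D)$ is nef as the pullback of a nef divisor, so in particular it is $f$-nef, supplying the hypothesis of part (1). For part (2), rewrite $K_{X'} + S + D' = p^*(K_X+D) + (1-\alpha)S$: on any curve $C \neq S$ the intersection is $p^*(K_X+D).C + (1-\alpha)(S.C) \ge 0$ because $S.C \ge 0$ and $\alpha \le 1$, while $(K_{X'} + S + D').S = (1-\alpha)S^2 \ge 0$ by Lemma \ref{lemma:Ssquare:negative} which gives $S^2 \le 0$. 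Then Lemma \ref{lemma:first:contract:the:section}~(2) yields that $K_{X'} + \alpha S + D'$ is not big, and since this equals $p^*(K_X+D)$ with $p$ birational, $K_X + D$ cannot be big either. The only non-routine step is the nefness verification of $K_{X'} + S + D'$, which hinges on the two bounds $\alpha \le 1$ (slc-ness) and $S^2 \le 0$ (Lemma \ref{lemma:Ssquare:negative}); once those are in hand the conclusion is immediate.
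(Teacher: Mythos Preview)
Your overall strategy---pulling back to the associated elliptic surface $X'$ and invoking Lemma \ref{lemma:first:contract:the:section} with $s=\alpha$---matches the paper's proof exactly. The paper is terse and does not verify the lemma's hypotheses, so your extra care is reasonable in principle.

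However, your verification that $K_{X'}+S+D'$ (coefficient $1$ on $S$) is nef contains a sign error. You correctly compute $(K_{X'}+S+D').S=(1-\alpha)S^2$, but then assert this is $\ge 0$; since $1-\alpha\ge 0$ and $S^2\le 0$, the product is $\le 0$, and strictly negative whenever $\alpha<1$ and $S^2<0$. So $K_{X'}+S+D'$ is in general \emph{not} nef, and the stated hypothesis of Lemma \ref{lemma:first:contract:the:section}(2) cannot be checked this way. (For the $f$-nefness hypothesis you also supply the wrong divisor---you check coefficient $\alpha$, while the lemma asks for coefficient $1$---though your ``curves $C\neq S$'' argument in fact does establish $f$-nefness of the coefficient-$1$ divisor, since fiber components satisfy $C\neq S$. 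Alternatively, this $f$-nefness is built into Definition \ref{def:pseudo}: for a pseudoelliptic \emph{pair} the associated elliptic surface is a relative log canonical model, hence $K_{X'}+S+D'$ is even $f$-ample.)

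The fix is to look inside the proof of Lemma \ref{lemma:first:contract:the:section}(2): the only place global nefness is used is to conclude that the pushforward $K_Y+sS'+E'_\calB$ is nef, and for that one needs nefness of the coefficient-$s$ divisor $K_{X'}+\alpha S+D'$, not the coefficient-$1$ divisor. You have already observed that $K_{X'}+\alpha S+D'=p^*(K_X+D)$ is nef, so the argument goes through---just not for the reason you wrote.
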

\begin{proof}Let $f:X' \to C$ be the elliptic fibration with section $S$ such that $X$ is obtained by contracting $S$ on $X'$.
 Let $p:X' \to X$ be the contraction morphism, and let $D':=p^{-1}_*(D)$. Finally let $M':=p_*^{-1}(M)$.
 
 From Lemma \ref{lemma:the:coef:of:S:pulling:back:the:lc:on:a:psudo:positive} there is an $\alpha \ge 0$ such that
 $p^*(K_X+D)=K_{X'}+D'+\alpha S$. If $\alpha=0$ the result follows from Lemma \ref{lemma:the:coef:of:S:pulling:back:the:lc:on:a:psudo:positive},
 otherwise assume $\alpha > 0$. Then we have
 $$0 =(K_X+D).M=(K_{X'}+D'+\alpha S).M'.$$ But from Lemma \ref{lemma:first:contract:the:section} the divisor $K_{X'}+D'+\alpha S$ is not big, 
 and so $K_{X}+D$ is not big.
\end{proof}

We want to show that the only flip that happens is the one of La Nave. In particular, with the notation of the beginning of the Appendix, we need to make sure that if on a pseudoelliptic component $Z$ the intermediate component of an intermediate pesudofiber is not $K_X+S+F_\calB$-positive, then to take the stable model of $(X,S+F_\calB)$ we
contract $Z$. This is the content of the following Lemma.
Here is where we use the assumption that the twisted component of an intermediate
fiber has weight one.

\begin{lemma}\label{lemma:dont:contract:A:cp:on:a:psudo}
Let $(X,D+G)$ be a pseudoelliptic pair
(Definition \ref{def:pseudo}), where we write the divisor as a sum of two effective $\mathbb{Q}$-divisors $D$ and $G$. Assume that
$\operatorname{Supp}(G)$ is irreducible, and $\operatorname{Supp}(D)$ has a component $E_0$ such that $\textrm{Coeff}(E_0,D) = 1$.
 Let $F \subset X$ be the rational component of an intermediate pseudofiber, let $E \subset D$ be the associated twisted component, and suppose that $\mathrm{Coeff}(E,D) = 1$.
Assume that for a certain $0\le \beta <1$, the divisor $K_X+D+\beta G$ is nef, but for any other $\beta < \beta' \le 1$ the divisor $K_X+D+\beta' G$ is ample.

If $(K_X+D+\beta G).F=0$ then $K_X+D+\beta G$ is not big.
\end{lemma}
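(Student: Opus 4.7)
The plan is to imitate the strategy of Lemma \ref{lemma:first:contract:the:section} and Lemma \ref{lemma:the:coef:of:S:pulling:back:the:lc:on:a:psudo:positive}: pass to the associated elliptic surface, contract the rational component $F$ to replace the intermediate pseudofiber by a twisted one without changing bigness, and then reduce the resulting intersection-theoretic problem to a Lemma \ref{lemma:first:contract:the:section}-type argument.

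First I would apply Lemma \ref{lemma:the:coef:of:S:pulling:back:the:lc:on:a:psudo:positive} to the morphism $p : X' \to X$ from the associated elliptic surface $(f : X' \to C, S)$, writing $p^*(K_X + D + \beta G) = K_{X'} + D' + \beta G' + \alpha S$ with $\alpha \geq 0$. When $\alpha = 0$, Lemma \ref{lemma:the:coef:of:S:pulling:back:the:lc:on:a:psudo:positive} gives the conclusion immediately; otherwise assume $\alpha > 0$ and work with the strict transform $F' = p_*^{-1}F$ sitting in the intermediate fiber $F' \cup E^*$ of $f$, for which the projection formula gives $(K_{X'} + D' + \beta G' + \alpha S) \cdot F' = 0$ and likewise zero intersection with $S$.

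The next step is to pin down the shape of $G$: the ampleness of $K_X + D + \beta' G$ for $\beta' > \beta$ forces $G \cdot F > 0$. Combined with $F^2 < 0$ this rules out $G$ supported on $F$, and combined with $\mathrm{Coeff}(E, D) = 1$ and log-canonicity of $(X, D+G)$ it rules out $G$ supported on $E$, so $\Supp G$ irreducible forces $G$ to be a pseudomultisection and $G'$ a multisection of $f$. I then contract $F'$ via $q' : X' \to X''$: because $\mathrm{Coeff}(E, D) = 1$ and the relevant intersection number vanishes, the discrepancy along $F'$ vanishes, so $q'^{*}(K_{X''} + D'' + \beta G'' + \alpha S'') = K_{X'} + D' + \beta G' + \alpha S$ with double primes denoting pushforwards. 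This preserves the square of the divisor and the identity $L'' \cdot S'' = 0$, where $L'' := K_{X''} + D'' + \beta G'' + \alpha S''$; on $X''$ the former intermediate fiber is now a twisted fiber of coefficient one and $G''$ is a multisection.

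The main obstacle is now to conclude $L''^{\,2} = 0$ on $X''$. The plan is to further contract any remaining rational components of intermediate fibers of $f$ to pass to a full twisted model $Z$, on which the vertical class $K_Z + (D_Z)_V$ is numerically the pullback of a divisor from $C$, so that $(K_Z + (D_Z)_V) \cdot M = m_G \, (K_Z + (D_Z)_V) \cdot S_Z$ for the degree-$m_G$ multisection induced by $G''$. Combining this identity with the vanishing $L'' \cdot S'' = 0$, the nefness of $L''$, and Lemma \ref{lemma:Ssquare:negative} (providing $S_Z^{\,2} \leq 0$ with the equality case fully characterized), the intersection-number bookkeeping should collapse to $L^2 = 0$. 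Adapting the chain of inequalities in the proof of Lemma \ref{lemma:first:contract:the:section} to accommodate the extra horizontal term $\beta G''$, while using the coefficient-one hypothesis on $E_0$ to preserve the relevant nefness at each stage, is where I expect the technical weight of the proof to lie.
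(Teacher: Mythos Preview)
Your setup (passing to the associated elliptic surface, handling $\alpha=0$ via Lemma \ref{lemma:the:coef:of:S:pulling:back:the:lc:on:a:psudo:positive}, showing $G$ must be a pseudomultisection, and checking the discrepancy along $F'$ vanishes) matches the paper's opening, but after that your plan diverges and has a genuine gap.

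The chain of inequalities in Lemma \ref{lemma:first:contract:the:section} is driven by two \emph{horizontal} vanishings: $L\cdot M=0$ for a multisection $M$ gives the upper bound, and $L\cdot S=0$ gives the lower bound. In your situation the two vanishings you have are $L'\cdot S=0$ and $L'\cdot F'=0$, where $F'$ is a \emph{vertical} curve. The horizontal class $G'$ you produced satisfies only $L'\cdot G'\ge 0$ by nefness, not $=0$. Passing to the twisted model makes the vertical part $K_Z+D_Z$ numerically a pullback from $C$, but the full divisor $L_Z=K_Z+D_Z+\beta G_Z+\alpha S_Z$ still carries the horizontal term $\beta G_Z$, and the identity $(K_Z+D_Z)\cdot M=m_G\,(K_Z+D_Z)\cdot S_Z$ does not by itself force $L_Z^2=0$: if you expand $L_Z^2$ using $L_Z\cdot S_Z=0$ you are left with a combination of $S_Z^2$, $S_Z\cdot G_Z$ and $G_Z^2$ that need not vanish. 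So the ``technical weight'' you flag is not just bookkeeping --- the inequality chain cannot close without additional input.

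The missing input, and the crux of the paper's argument, is the bound $0<\alpha<1$. This is where the hypotheses $\beta<1$ and $(X,D+G)$ lc are actually used: writing $p^*G=G'+aS$ with $a>0$ (since $G\cdot S>0$ and $S^2<0$) and $p^*(K_X+D+G)=K_{X'}+D'+G'+bS$ with $b\le 1$ by log canonicity, one gets $\alpha=b+a(\beta-1)<1$. The paper then argues by contradiction: if $K_X+D+\beta G$ were big, the stable model of $(X',D'+\alpha S+\beta G')$ would contract $S$. Passing to an intermediate model $Y$ where only the log-trivial fiber components (including $F'$) are contracted but $S$ survives, both $E$ and $E_0$ become twisted fibers of coefficient one on $Y$. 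Then Proposition \ref{prop:adjunction} gives $(K_Y+S_Y+D_Y+\beta G_Y)\cdot S_Y\ge 0$, and since $S_Y^2<0$ and $\alpha<1$ this forces $(K_Y+\alpha S_Y+D_Y+\beta G_Y)\cdot S_Y>0$, contradicting that log abundance contracts $S_Y$. Your outline never produces $\alpha<1$ and never invokes Proposition \ref{prop:adjunction}, which is precisely how the coefficient-one hypotheses on $E$ and $E_0$ enter.
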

\begin{proof}
 We use the same notation as in Corollary \ref{Cor:dont:contract:multi:on:psudo}: let $G':=p_*^{-1}(G)$, $F':=p_*^{-1}(F)$, $D':=p_*^{-1}(D)$, $E_0':=p_*^{-1}(E_0)$
 and $E':=p_*^{-1}(E)$ .
 Let $\alpha$ be such that $p^*(K_X+D+\beta G)=K_{X'}+D'+\beta G'+\alpha S$. From \cite[Corollary 3.53]{km}, the pairs $(X,D+\beta G)$ and $(X',D'+\beta G'+\alpha S)$ have the same stable model. So if $\alpha=0$ we have the desired result from Lemma
 \ref{lemma:the:coef:of:S:pulling:back:the:lc:on:a:psudo:positive}.
 Thus we can assume $\alpha > 0$.
 
 From Lemma \ref{lemma:Ssquare:negative}
 we can also assume $S^2<0$. Then we have $p^*(G)=G'+aS$ for some $a$, and $0=G.S+aS^2$. Therefore since $G.S >0$ we have $a > 0$. Since the pair $(X,D+G)$ is lc, there is a $b$ such that
 $p^*(K_X+D+G)=K_{X'}+D'+G'+bS$ with $b \le 1$. But then 
 \begin{align*}
p^*(K_X+D+\beta G)&=p^*(K_X+D+G)+p^*((\beta-1)G)\\&=K_{X'}+D'+\beta G'+bS+a(\beta-1)S \\&=K_{X'}+D'+\beta G'+\alpha S   
 \end{align*}
 So $\alpha=b+a(\beta -1)$, but by hypothesis $\beta <1$ and $a>0$, so $a(\beta -1)<0$ and so $b+a(\beta -1)<b$. Since $b \le 1$ we conclude that $ \alpha < 1$.  Combining it with $\alpha > 0$, we get $0 < \alpha <1$.
 
 We proceed by contradiction. If $K_X+D+\beta G$ was big and $(K_X+D+\beta G).F=0$, the stable model of $(X,D+\beta G)$ is a surface
 obtained from
 $X$ contracting $F$ and maybe some other curve. But the stable model of $(X,D+\beta G)$ is the same as the stable model of $(X',D'+\alpha S+\beta G')$. Therefore the stable model of $(X',D'+\alpha S+\beta G')$ would also be a surface, and it would be obtained by
 contracting $S, F'$ and possibly some other curve. Let $S,F',\{B_i\}_{i=1}^n$ be the curves we need to contract.
 From Lemma \ref{lemma:first:contract:the:section}, the curves $B_i$ are fiber components. Furthermore,
 we do not contract a whole
 fiber by the bigness assumption. Then, since all the fibers have at most two irreducible components, $B_i^2<0$, $(F')^2<0$, $F'.B_j =0$ and
 $B_i.B_j=0$ for $i \neq j$.
 
 Let  $H_1,H_2$ be two irreducible and reduced fibers
 of $X'\to C$ which are not cusps, and let $Y$ be the stable model of $$(X',D'+H_1+H_2+\alpha S+\beta G').$$
 Observe that the contraction morphism $q:X' \to Y$ contracts just $F'$ and $\{B_i\}_{i=1}^n$ and does not contract $S$. Let $D_Y:=q_*D'$, $S_Y:=q_*S'$, $E_Y:=q_*(E')$,
 $(E_0)_Y:=q_*(E_0')$
 and $G_Y:=q_*G'$
 But then: $$q^*(K_Y+D_Y+\alpha S_Y+\beta G_Y)=K_{X'}+D'+\alpha S+\beta G'+\sum c_i B_i +c_F F'$$ for certain coefficients $c_i$ and $c_F$. Observe then that
 $$q^*(K_Y+D_Y+\alpha S_Y+\beta G_Y).B_i=(K_{X'}+D'+\alpha S+\beta G').B_i=0$$
  for every $i$ and similarly for $F'$, since $B_i$ and $F'$ are contracted by log abundance. Then the coefficients $c_F$ and $c_i$ are 0.

 Therefore $K_Y+D_Y+\alpha S_Y+\beta G_Y$ is nef, and from \cite[Corollary 3.53]{km}
 the stable model of $(X',D'+\alpha S+\beta G')$ is the same as the stable model of $(Y,D_Y+\alpha S_Y+ \beta G_Y)$.
 So in particular
 to take the stable model of  $(Y,D_Y+\alpha S_Y+ \beta G_Y)$ we have to contract $S_Y$. Since $K_Y+D_Y+\alpha S_Y+\beta G_Y$ is already
 nef,
 to take the stable model of $(Y,D_Y+\alpha S_Y+ \beta G_Y)$ we use log abundance. 

 But by hypothesis, $E_Y$ has coefficient one in $D_Y$. Therefore the divisor $D_Y$ contains two twisted fibers with
 coefficient one, namely $E_Y$ and $(E_0)_Y$. Moreover $S_Y^2<0$ from Lemma \ref{lemma:Ssquare:negative} and from the bigness
 assumption. Therefore
 $$0 \le (K_Y+D_Y+\beta G_Y+S_Y).S_Y<(K_Y+D_Y+\beta G_Y+\alpha S_Y).S_Y,$$
 where the first inequality follows from Proposition \ref{prop:adjunction},
 and the second one follows since $\alpha <1$. Then log abundance does not contract $S_Y$,
 which is a contradiction.
 \end{proof}
 
 This is the final step. We need to make sure that, whenever we contract a pseudoelliptic surface, we do not create new intermediate fibers with the twisted component having coefficients which are not one. This could happen if
 we contract a pseudoelliptic with a marked irreducible pseudofiber to its attaching component. 
 
 \begin{lemma}\label{Lemma:all:pseudofibers:irred}
 Assume that $(X,D)$ is an irreducible pseudoelliptic surface with $K_X+D$ nef but not big. Let $G\neq 0$ be a union of pseudofibers and assume that $(X,D+G)$ is a stable pair.
 Assume finally that all the pseudofibers of $X$ are irreducible (i.e. there are no intermediate pseudofibers). Then the log-canonical model of $(X,D)$ is a point.
 \end{lemma}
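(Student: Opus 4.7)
The plan is to apply Proposition \ref{rational} to narrow the Iitaka dimension of $K_X + D$ and then to rule out the intermediate case by an intersection-theoretic computation modeled on part (d) of the proof of Theorem \ref{thm:finitewalls}. First I would show that the associated elliptic surface $f : Y \to C$ (with $\mu : Y \to X$ contracting the section $S$) satisfies $C \cong \PP^1$ and $\deg \LL \in \{1,2\}$: if $f$ were properly elliptic then $K_Y + S$ would be big by Proposition \ref{properlyelliptic}, forcing $K_X + D$ to be big and contradicting the hypothesis; and the existence of the pseudoelliptic contraction via Corollary \ref{cor:p1} rules out $g(C) = 1$ (which would force $\deg \LL = 0$ and make $Y$ a trivial product whose section does not contract). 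When $\deg \LL = 2$, Proposition \ref{K3} either yields $K_X + D$ big (contradiction when $D \neq 0$) or gives $D = 0$ with $K_X \sim_\Q 0$, in which case $|m(K_X+D)|$ is trivial for $m$ divisible enough and the log canonical model is automatically a point.

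The main case is $\deg \LL = 1$, where Proposition \ref{rational} leaves three possibilities: Iitaka dimension $2$ (bigness, contradicting the hypothesis), dimension $0$ (yielding $K_X + D \sim_\Q 0$ and hence the desired conclusion), or dimension $1$. The main obstacle is eliminating the Iitaka dimension $1$ case, in which Proposition \ref{rational} produces a rational multisection $\Sigma \subset Y$ of $f$ disjoint from $S$ with $K_X + D \sim_\Q \mu_*\Sigma$, and the log canonical map contracts $X$ onto a rational curve.

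To rule this out I would write $\mu^*(K_X + D) = K_Y + \alpha S + \tilde D$ with $\tilde D = \mu^{-1}_* D$ and $\alpha \ge 0$ by Lemma \ref{lemma:the:coef:of:S:pulling:back:the:lc:on:a:psudo:positive}. Since all pseudofibers of $X$ are irreducible, all fibers of $f$ are irreducible; combined with the canonical bundle formula (Theorem \ref{canonical}) and the fact that $\tilde D$ is supported on fibers, this forces $K_Y + \tilde D \equiv f^* D'$ for some $\Q$-divisor $D'$ on $C \cong \PP^1$. Lifting a generic fiber of the log canonical morphism to $Y$ produces a multisection $M$ of $f$ of degree $m \ge 1$ disjoint from $S$, and the projection formula together with the exceptionality of $S$ for $\mu$ yield the two identities
\[
0 \;=\; \mu^*(K_X+D) \cdot M \;=\; (K_Y + \tilde D) \cdot M \;=\; m \deg D',
\]
\[
0 \;=\; \mu^*(K_X+D) \cdot S \;=\; \alpha S^2 + (K_Y + \tilde D) \cdot S \;=\; \alpha S^2 + \deg D'.
\]
Hence $\deg D' = 0$ and $\alpha S^2 = 0$. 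If $\alpha > 0$ this gives $S^2 = 0$, i.e.\ $\deg \LL = -S^2 = 0$, contradicting $\deg \LL = 1$. If $\alpha = 0$ then $K_Y + \tilde D \equiv 0$ on $\PP^1$ and hence $K_X + D \sim_\Q 0$, contradicting $\kappa(K_X + D) = 1$. Either way the Iitaka dimension one case is impossible, and we are in case (iii) of Proposition \ref{rational}, as desired.

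The irreducibility of all pseudofibers is used crucially to write $K_Y + \tilde D$ as a pullback from $C$, which makes the two intersection numbers above computable. The hypothesis $G \neq 0$ with $(X, D+G)$ stable does not appear to enter the argument directly; it is presumably needed to ensure that the lemma is applied in the intended context within the stable reduction process.
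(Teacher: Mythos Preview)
Your proof is correct and the core computation---pulling back to the associated elliptic surface, writing $\mu^*(K_X+D)=K_Y+\alpha S+\tilde D$, using irreducibility of all fibers to write $K_Y+\tilde D\equiv f^*D'$, and then evaluating on a multisection disjoint from $S$ and on $S$ itself to force $\alpha S^2=0$---is exactly the heart of the paper's argument as well. The difference lies in how each argument secures $S^2<0$ to finish.

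The paper uses the stability hypothesis on $(X,D+G)$ directly: since $(X,D+G)$ is stable, the pullback $K_{X'}+S'+p_*^{-1}(D+G)$ is big on the associated elliptic surface, and Lemma~\ref{lemma:Ssquare:negative} then gives $(S')^2<0$ in one stroke. From there the paper simply argues $0\le (K_{X'}+\alpha S'+p_*^{-1}D).S'=\alpha (S')^2$, hence $\alpha=0$ and the generic fiber collapses. In other words, the role of the hypothesis $G\neq 0$ with $(X,D+G)$ stable---which you flagged as apparently unused---is precisely to furnish $S^2<0$ without any case analysis.

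Your route instead classifies by $\deg\LL$ via Propositions~\ref{properlyelliptic}, \ref{rational}, \ref{K3} and then, in the main $\deg\LL=1$ case, derives the contradiction $S^2=0$ versus $\deg\LL=1$. This works, but two small points: (i) the identity ``$\deg\LL=-S^2$'' you invoke is not literally available on a model with twisted fibers; what you actually need (and what Lemma~\ref{lemma:Ssquare:negative} gives) is the inequality $S^2\le -\deg\LL$, which is enough; (ii) you should also dispose of the case $g(C)=0$, $\deg\LL=0$ (not only $g(C)=1$), again by noting that the section of a trivial product has $S^2=0$ and so no pseudoelliptic arises. Both are easy fixes. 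The paper's approach is shorter; yours has the virtue of showing that once one already knows $S^2<0$ (e.g.\ because a pseudoelliptic exists), the stability of $(X,D+G)$ is not strictly needed for the conclusion.
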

 \begin{remark}
 The hypothesis on Lemma \ref{Lemma:all:pseudofibers:irred}
 automatically implies that all but at most one pseudofiber are twisted.  In fact, if there were 2 fibers with coefficient one, the pair $(X,D+G)$ would not be lc.
 
 \end{remark}
 \begin{proof}
 Let $p:X' \to X$ be the contraction of the section $S'$, and let $\alpha$ be such that $$K_{X'}+\alpha S'+p_*^{-1}(D)=p^*(K_X+D)$$ 
 Since $(X,D+G)$ is stable, from \cite[Corollary 3.53]{km}, 
 $X$ is obtained from $X'$ taking the stable model of $(X',S'+p_*^{-1}(D+G))$.
 So in particular $K_{X'}+S'+p_*^{-1}(D+G)$ is big, thus $S^2 <0$ from Lemma \ref{lemma:Ssquare:negative}.
 
 From Lemma
 \ref{lemma:the:coef:of:S:pulling:back:the:lc:on:a:psudo:positive}, $\alpha \ge 0$. Moreover, the lc model of $(X,D)$ is the same as the lc model of $(X',D')$, where
 $D':=\alpha S'+p_*^{-1}(D)$. So to take the stable model of $(X',D')$ we need to contract the section, and to prove the lemma, it suffices to show that we also need to contract the generic fiber. Namely, it suffices to show that
 $\alpha =0$.
 
 Consider an irreducible curve $M \subseteq X$ such that $M \cap p(S')=\emptyset$. Let $M'$
 be its proper transform. Then $M'$ is a multi-section, so
 it gets contracted upon taking the stable model of $(X',D')$.
 Therefore $$0=(K_{X'}+D').M'=(K_{X'}+\alpha S'+p_*^{-1}(D)).M,$$ but $S'$ and $M'$ do not intersect, so $(K_{X'}+p_*^{-1}(D)).M=0$.
 
 Finally let $f:X' \to C$ be the morphism to a curve, and let $d:=\deg (f_{|M})$.
 Since all the fiberd of $f$ are irreducible, $K_{X'}+p_*^{-1}(D)$ is supported on some fibers. So there
 is a divisor $G \subseteq C$ such that $K_{X'}+p_*^{-1}(D)=f^*(G)$. Thus:
 $$0=(K_{X'}+p_*^{-1}(D)).M=(K_{X'}+p_*^{-1}(D)).(dS')$$
 Since $K_{X'}+D'$ is nef, $0 \le (K_{X'}+\alpha S'+p_*^{-1}(D)).S'$, but
 since $S^2 <0$, we must have $\alpha=0$ as required.
 \end{proof}
 
 \begin{remark}
 The previous lemma fits in the picture as follows. With the notation of the beginning of this section, assume that $Z$ is a pseudoelliptic component of the closed fiber of $(X,S+F_\calA)$. Suppose that taking the stable model of $(X,S+F_\calB)$ contracts the pseudoelliptic component $Z$ to a curve $E$ (which will be a component of the double locus of the closed fiber of $X$). Then $Z$ must contain an intermediate fiber with twisted component $E'$ with coefficient one. Then the push-forward of $E'$ under the contraction of $Z$, marks $E$ with coefficient one.
 This guarantees that, after the contraction of $Z$, all the new intermediate fibers that appear have the twisted component marked with one.
 \end{remark}
 
We are finally ready to state our main result, which shows that we can control the birational transformations one has to deal with to find the stable limit of a family of  $\calA$-broken elliptic surfaces over a DVR.

\begin{theorem}\label{Teo:appendix:giovanni}
  Assume that $(f: X \to C,S+F_\mathcal{A}) \to \Spec(R)$ is a family of $\mathcal{A}$-broken elliptic surfaces over a DVR $R$. Write $F_{\mathcal{A}}=F_{\mathcal{B}}+G$ where $G$ is an effective $\mathbb{Q}$-Cartier divisor. Assume finally that $(X,S+F_\mathcal{A})$ is stable, and that $K_X+S+F_{\mathcal{B}}$ is nef.
  
  Then the codimension two exceptional locus arising from taking the stable model of $(X, S + F_\calB)$ will be a union of components of the section of the closed fiber.
  In particular, if $G$ is irreducible and $\epsilon$ is small enough, to take the stable model of $(X,S+F_\calB- \epsilon G)$ we only need to perform some divisorial contractions and a flip of La Nave.
 \end{theorem}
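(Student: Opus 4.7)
The plan is to run the relative log MMP for the pair $(X, S + F_\calB)$ over $\Spec(R)$. Since $K_X + S + F_\calB$ is nef and $\mathbb{Q}$-Cartier by hypothesis, relative log abundance yields a projective log canonical morphism $\phi : X \to X'$. The divisorial part of its exceptional locus accounts for the usual $\wi$ and $\wiii$ transformations of Section \ref{sec:stablereduction}, so the task is to control its codimension two exceptional locus, i.e.\ curves $C \subset X$ with $(K_X + S + F_\calB) \cdot C = 0$. Any curve dominating $\Spec(R)$ maps through $\phi$ onto a curve dominating $\Spec(R)$ and so is not contracted; hence every such $C$ lies in the closed fiber $X_0$, and therefore inside a single irreducible component $Z \subset X_0$, which by Definition \ref{def:broken} is either elliptic (with a section component $S_Z$) or pseudoelliptic.

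The heart of the argument is then a case analysis ruling out every possibility for $C$ apart from a section component, assembled from the preceding lemmas. If $C$ is a multisection of an elliptic $Z$, Lemma \ref{lemma:first:contract:the:section} implies both $(K_X + S + F_\calB) \cdot S_Z = 0$ and that $(K_X + S + F_\calB)|_Z$ is not big, so $\phi$ collapses $Z$ to lower dimension and $C$ belongs to a contracted divisor, not to the small exceptional locus. If $C$ is a pseudomultisection of a pseudoelliptic $Z$, Corollary \ref{Cor:dont:contract:multi:on:psudo} yields the same conclusion. If $C$ is the rational component $A$ of an intermediate pseudofiber on a pseudoelliptic $Z$, Lemma \ref{lemma:dont:contract:A:cp:on:a:psudo} applies, the hypothesis that the twisted component $E$ carries coefficient one being ensured by Convention \ref{conv:lcmodel}, and $Z$ again collapses divisorially; the degenerate situation in which $Z$ contains only irreducible pseudofibers is handled by Lemma \ref{Lemma:all:pseudofibers:irred}. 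The remaining possibility is that $C$ is an irreducible (pseudo)fiber component which is neither a multisection nor an $A$-component: such a $C$ either intersects $K_X + S + F_\calB$ strictly positively (as happens for a multiplicity $d$ twisted fiber along a gluing, where the adjunction computation from the proof of Proposition \ref{vanishing>0} gives intersection $1/d$), or, if contracted at all, does so as part of a divisor of $X$ that extends across the one-parameter family, realizing a $\wi$ or $\wiii$ divisorial step rather than a small contraction.

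For the second assertion, having produced the $\calB$-stable model $X'$, the weight $\calB - \epsilon G$ lies in the interior of a chamber for $\epsilon$ sufficiently small, so the stable model of $(X, S + F_\calB - \epsilon G)$ differs from that of $(X, S + F_\calB)$ by a single wall-crossing of the log MMP. By the first assertion, any small contraction appearing in this wall-crossing is of a section component, and by the local toric analysis of \cite[Theorem 7.1.2]{ln} (see also Section \ref{sec:WII}) such a small contraction is flipped by replacing the section with the $A$-component of an intermediate (pseudo)fiber on a newly formed type I pseudoelliptic, which is exactly La Nave's flip. The main obstacle in executing this plan is verifying that the preceding lemmas genuinely exhaust every configuration of a non-section exceptional curve --- in particular, that fiber components on elliptic components always extend in a divisor of the threefold, and that no pseudofiber configuration escapes the trichotomy multisection / $A$-component / attaching twisted fiber.
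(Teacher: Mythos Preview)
Your overall strategy is the same as the paper's, and the case analysis via Lemmas \ref{lemma:first:contract:the:section}, \ref{lemma:dont:contract:A:cp:on:a:psudo}, \ref{Lemma:all:pseudofibers:irred} and Corollary \ref{Cor:dont:contract:multi:on:psudo} is used in exactly the way you describe. The genuine gap is precisely the one you flag at the end: the treatment of fiber components on an \emph{elliptic} component $Y$ of the closed fiber. Your claim that such a curve, if contracted, ``does so as part of a divisor of $X$ that extends across the one-parameter family'' is not justified, and in particular you have not handled the rational $A$-component of an intermediate fiber on an elliptic $Y$ (Lemma \ref{lemma:dont:contract:A:cp:on:a:psudo} is stated only for pseudoelliptics). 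A fiber component of the central fiber need not be the whole fiber of a divisorial contraction of the threefold, so the dichotomy ``strictly positive or divisorial'' does not follow from what you have written.

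The paper closes this gap with a short intersection-theoretic observation that you are missing. First one argues separately, as you do, that a genus one $E$-component cannot be codimension two exceptional: contracting it forces either a $\wi$ contraction across the family (if $E$ lies in a marked fiber) or the contraction of an entire attached component (if $E$ lies in the double locus). For every \emph{other} fiber component $C$ on an elliptic $Y$, one observes that $G|_{X_p}$ is supported on fibers, pseudofibers, and $A$-components, hence $C \cdot G \le 0$ (indeed $A \cdot G = (a-b)A^2 < 0$ and an irreducible twisted or Weierstrass fiber has $C \cdot G = 0$). Since $(X, S+F_\calA)$ is stable, $(K_X + S + F_\calB + G)\cdot C > 0$, and combining these gives $(K_X + S + F_\calB)\cdot C > 0$. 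Thus no such $C$ is contracted at all, and the only remaining candidates for the small exceptional locus are section components. This single inequality replaces your unproven extension-to-a-divisor claim and simultaneously disposes of the $A$-components on elliptic pieces.
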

 
 \begin{proof} Let $p$ be the closed point of $\Spec(R)$.
  The components of $X_p$ are the union of pseudoelliptic surfaces and elliptic surfaces. In each of these surfaces, each divisor is either a fiber component, a component of $S_p$,
  or a multisection.
  
  Assume that $Y$ is an elliptic surface, which is an irreducible component of the closed fiber of $X \to \Spec(R)$. Let $C \subseteq Y$ be a curve we have to contract which is an irreducible component of codimension two of the exceptional locus. Then $C$ cannot be the genus one component of an intermediate fiber. Otherwise, contracting $C$ would either contract a divisor that meets the generic fiber (if $C \subseteq \operatorname{Supp}(F_\calB)$, we would contract a genus one component of an intermediate fiber of the generic fiber), or we would contract an irreducible component of the closed fiber (if $C$ is in the double locus of the closed fiber). In either case, $C$ would not be an irreducible component of codimension two of the exceptional locus.
  
  Therefore $C$ cannot be a fiber component. Indeed, $G_p$ is a union of fibers, pseudofibers, and intermediate components of intermediate fibers.
  So  $C.G \le 0$, and $(K_X+S+F_\mathcal{B}+G).C>0$ as  $(X,S+F_\mathcal{A})$ is stable. Thus $(K_X+S+F_\mathcal{B}).C>0$. Moreover
  from Lemma \ref{lemma:first:contract:the:section}, if $C$ is a multisection so that $C \neq S$, then the whole component $Y$ gets contracted. Thus the only curves we are left with are section components, and so any irreducible component of codimension two of the exceptional locus contained in an elliptic component of the closed fiber of $X$ must be a section component.
  
  Assume finally that $Y$ is pseudoelliptic. Then $C$ can either be a pesudomultisection, or a pseudofiber component. Proceeding as above, $C$ cannot be the genus one component of an intermediate pseudofiber. However, if we contract $C$ we also need to contract $Y$ from
  Corollary \ref{Cor:dont:contract:multi:on:psudo} and Lemma \ref{lemma:dont:contract:A:cp:on:a:psudo}. Thus any irreducible component of codimension two of the exceptional locus is not contained in a pseudoelliptic component.

We now prove that if we further reduce the weights on $F_{\mathcal{B}}$ to get
$F_{\mathcal{D}}$, to take the stable
model of $(X,S+F_{\mathcal{D}})$ the only flip we need to perform running the MMP is the one of La Nave.

Up to reducing a single weight at a time, we can assume that $F_\mathcal{A} - F_\mathcal{B}$, on the generic fiber, is irreducible, so let $G:=
\operatorname{Supp}(F_\mathcal{A} - F_\mathcal{B})$.
Then observe that $G \cap X_p$ is the union of a \emph{single} fiber component on an elliptic component, and some psudo-fiber components.
Then if $Y$ is a divisor of $X$ that gets contracted taking the stable model of $(X,S+F_{\mathcal{B}})$,
it gets contracted through a step of the MMP taking the stable model of $(X,S+F_{\mathcal{D}})$. So to take the stable model
of $(X,S+F_{\mathcal{D}})$, we start by performing these divisorial contractions.
By considering $(K_X+S+F_\mathcal{B})_{|Z}^2$ and $(K_X+S+F_\mathcal{B}-\epsilon G)_{|Z}^2$
for every irreducible
component $Z$ of $X_p$, and for every twisted component of an intermediate fiber of $X_\eta$, if $\epsilon$
is small enough, no other divisorial contraction is required. Next we consider the flips.

The section component that gets contracted to take the stable model of $(X,S+F_\mathcal{B})$, gets flipped through the flip of La Nave. Observe
that such a section component, if it does exist, is unique from the irreducibility of $G$.
We now obtain a new threefold $(X',S+F_\mathcal{B}-\epsilon G)$. We want to show that
if $\epsilon$
is small enough, the new pair is stable. We need to find the log-non-positive curves.

First, observe that for every $0<\epsilon$ such that $F_\mathcal{B}-\epsilon G$ is effective, the
pair $(X',S+F_\mathcal{B}-\epsilon G)$ is lc. Therefore also $(X',S+F_\mathcal{B})$ is lc. Moreover, by the uniqueness of the stable model,
the stable model of $(X',S+F_\mathcal{B})$ is $(X,S+F_\mathcal{B})$. Namely, to produce the stable model
of $(X',S+F_{\mathcal{B}})$ we need to contract the flipped curves.
Therefore it is easy to see that $K_{X'}+S+F_\mathcal{B}$ is nef.

The  the flipped curves are positive curves for $(X',S+F_\mathcal{B}-\epsilon G)$, by the definition of flip.
If $Z$ is a psudo-elliptic component of $X'$ that intersects $G$, let $Z'$ be its associated elliptic surface and let $p:Z' \to Z$ be the contraction of the
section $S_Z$. For every $\epsilon$, let
$L_Z(\epsilon):=(K_{X'}+S+F_{\mathcal{B}}-\epsilon G)_{|Z}$ and let $L_{Z'}(\epsilon):=p^*(L_Z(\epsilon))$.
Since $Z$ does not get contracted taking the stable model of $(X',S+F_\mathcal{B})$, we see that $L_{Z'}(0) \equiv p_*^{-1}(L_Z(0))+\alpha S_Z$
and $\alpha >0$.
Then one can see that if $\epsilon$ is small enough, the pair $(Z,(F_{\mathcal{B}}-\epsilon G)_{|Z})$ is the stable model of
$(Z',(\alpha - \epsilon)S_Z+p_*^{-1}((F_{\mathcal{B}}-\epsilon G)_{|Z}))$. In particular, there are no non-positive curves in any psudoelliptic component.

On an elliptic component, choosing $\epsilon$ small enough, any section component is a positive curve since no contraction happens in Hassett's space.
Any fiber component away from the flipped curve remains positive, and the twisted component of the new intermediate fiber,
introduced by the flip of La Nave, remains positive since we have no contractions on the psudoelliptic components. The flipped curve
is positive by the definition of a flip, and from Lemma \ref{lemma:first:contract:the:section}, no multisection
is a negative curve. Then for $\epsilon$ small enough there are no negative curves, so $(X',S+F_\mathcal{B}-\epsilon G)$ is ample. \end{proof}

\bibliographystyle{alpha}\bibliography{master}
\end{document}